\newtheorem{theorem}{Theorem}[section]
\newtheorem{lemma}[theorem]{Lemma}
\newtheorem{proposition}[theorem]{Proposition}
\newtheorem{corollary}[theorem]{Corollary}
\newtheorem{claim}[theorem]{Claim}
\theoremstyle{definition}
\newtheorem{definition}[theorem]{Definition}
\newtheorem{definitions}[theorem]{Definitions}
\newtheorem{example}[theorem]{Example}
\newtheorem{examples}[theorem]{Examples}
\newtheorem{definitions and remarks}[theorem]{Definitions and Remarks}
\theoremstyle{remark}
\newtheorem{remark}[theorem]{Remark}
\newtheorem{remarks}[theorem]{Remarks}
\numberwithin{equation}{section}
\newcommand{\codim}{\mathrm{codim}\,}
\newcommand{\graph}{\mathrm{graph}\,}
\newcommand{\bdry}{\mathrm{bdry}\,}
\newcommand{\Der}{\mathrm{Der}}
\newcommand{\lcm}{\mathrm{lcm}}
\newcommand{\reduced}{\mathrm{red}}
\newcommand{\al}{{\alpha}}
\newcommand{\be}{{\beta}}
\newcommand{\de}{{\delta}}
\newcommand{\De}{{\Delta}}
\newcommand{\ga}{{\gamma}}
\newcommand{\Ga}{{\Gamma}}
\newcommand{\la}{{\lambda}}
\newcommand{\La}{{\Lambda}}
\newcommand{\p}{{\partial}}
\newcommand{\s}{{\sigma}}
\newcommand{\vp}{{\varphi}}
\newcommand{\io}{{\iota}}
\newcommand{\Th}{{\Theta}}
\newcommand{\IN}{{\mathbb N}}
\newcommand{\IQ}{{\mathbb Q}}
\newcommand{\IA}{{\mathbb A}}
\newcommand{\IR}{{\mathbb R}}
\newcommand{\IC}{{\mathbb C}}
\newcommand{\IK}{{\mathbb K}}
\newcommand{\IL}{{\mathbb L}}
\newcommand{\IZ}{{\mathbb Z}}
\newcommand{\cC}{{\mathcal C}}
\newcommand{\cD}{{\mathcal D}}
\newcommand{\cF}{{\mathcal F}}
\newcommand{\cI}{{\mathcal I}}
\newcommand{\cJ}{{\mathcal J}}
\newcommand{\cK}{{\mathcal K}}
\newcommand{\cM}{{\mathcal M}}
\newcommand{\cO}{{\mathcal O}}
\newcommand{\cQ}{{\mathcal Q}}
\newcommand{\cR}{{\mathcal R}}
\newcommand{\cS}{{\mathcal S}}
\newcommand{\um}{\underline{m}}
\newcommand{\br}{\pmb{r}}
\newcommand{\bx}{\pmb{x}}
\newcommand{\by}{\pmb{y}}
\newcommand{\bu}{\pmb{u}}
\newcommand{\tbv}{\widetilde{\pmb{v}}}
\newcommand{\tbw}{\widetilde{\pmb{w}}}
\newcommand{\tbx}{\widetilde{\pmb{x}}}
\newcommand{\tby}{\widetilde{\pmb{y}}}
\newcommand{\tbz}{\widetilde{\pmb{z}}}
\newcommand{\tbu}{\widetilde{\pmb{u}}}
\newcommand{\bv}{\pmb{v}}
\newcommand{\bz}{\pmb{z}}
\newcommand{\bt}{\pmb{t}}
\newcommand{\bw}{\pmb{w}}
\newcommand{\bal}{\pmb{\al}}
\newcommand{\bbe}{\pmb{\be}}
\newcommand{\bga}{\pmb{\ga}}
\newcommand{\bla}{\pmb{\la}}
\newcommand{\bde}{\pmb{\de}}
\newcommand{\bxi}{\pmb{\xi}}
\newcommand{\bzeta}{\pmb{\zeta}}
\newcommand{\bzero}{\pmb{0}}
\newcommand{\bone}{\pmb{1}}
\newcommand{\tx}{{\widetilde x}}
\newcommand{\ta}{{\widetilde a}}
\newcommand{\tb}{{\widetilde b}}
\newcommand{\tg}{{\widetilde g}}
\newcommand{\tu}{{\widetilde u}}
\newcommand{\tv}{{\widetilde v}}
\newcommand{\tw}{{\widetilde w}}
\newcommand{\tz}{{\widetilde z}}
\newcommand{\tD}{{\widetilde D}}
\newcommand{\tE}{{\widetilde E}}
\newcommand{\tF}{{\widetilde F}}
\newcommand{\tG}{{\widetilde G}}
\newcommand{\tH}{{\widetilde H}}
\newcommand{\tM}{{\widetilde M}}
\newcommand{\tN}{{\widetilde N}}
\newcommand{\tR}{{\widetilde R}}
\newcommand{\tU}{{\widetilde U}}
\newcommand{\tV}{{\widetilde V}}
\newcommand{\tW}{{\widetilde W}}
\newcommand{\tbal}{{\widetilde \bal}}
\newcommand{\tbde}{{\widetilde \bde}}
\newcommand{\tbga}{{\widetilde \bga}}
\newcommand{\teta}{{\widetilde \eta}}
\newcommand{\tPhi}{{\widetilde \Phi}}
\newcommand{\tPsi}{{\widetilde \Psi}}
\newcommand{\hf}{{\widehat f}}
\newcommand{\hg}{{\hat g}}
\newcommand{\hPhi}{{\widehat \Phi}}
\newcommand{\wcO}{{\widehat \cO}}
\newcommand{\llb}{{[\![}}
\newcommand{\rrb}{{]\!]}}
\newcommand{\RN}[1]{%
  \textup{\uppercase\expandafter{\romannumeral#1}}%
}
\begin{document}
\title[Monomialization of a quasianalytic morphism]{Monomialization of a quasianalytic morphism}

\author[A.~Belotto da Silva]{Andr\'e Belotto da Silva}
\author[E.~Bierstone]{Edward Bierstone}
\address[A.~Belotto da Silva]{Universit\'e Aix-Marseille, Institut de Math\'ematiques de Marseille
(UMR CNRS 7373), Centre de Math\'ematiques et Informatique, 39 rue F. Joliot Curie, 13013 Marseille, France. Present address:  IMJ-PRG, CNRS 7586, Universit\'e de Paris, Institut de Math\'ematiques de Jussieu Paris Rive Gauche, B\^at. Sophie Germain, Place Aur\'elie Nemours, F-75013, Paris, France.}
\email{andre.belotto@imj-prg.fr}
\address[E.~Bierstone]{University of Toronto, Department of Mathematics, 40 St. George Street, Toronto, ON, Canada M5S 2E4}
\email{bierston@math.utoronto.ca}
\thanks{Research supported by NSERC Discovery Grant RGPIN-2017-06537 (Bierstone)}

\subjclass[2010]{Primary 03C64, 14E05, 26E10, 32S45; Secondary 03C10, 14E15, 30D60, 32B20}

\keywords{monomialization, logarithmic derivation, resolution of singularities, blowing-up, power substitution,
quasianalytic, Denjoy-Carleman class, o-minimal structure, quantifier elimination, quasianalytic continuation, 
algebraic and analytic varietes}

\begin{abstract}
We prove a monomialization theorem for mappings in general classes of infinitely differentiable
functions that are called quasianalytic. Examples include Denjoy-Carleman classes, the class of $\cC^\infty$ functions definable in a
polynomially bounded $o$-minimal structure, as well as the classes of
real- or complex-analytic functions, and algebraic functions over any field of characteristic zero.
The monomialization theorem asserts that a mapping in a quasianalytic class can be transformed
to a mapping whose components are monomials with respect to suitable local coordinates, by
sequences of simple modifications of the source and target---local blowings-up and power substitutions
in the real cases, in general, and local blowings-up alone in the algebraic or analytic cases. 
Monomialization is a version of resolution of singularities for a mapping. We show that
it is not possible, in general, to monomialize by global blowings-up, even in the real-analytic case.
\end{abstract}

\maketitle

\vspace{-1.3cm}
\renewcommand{\abstractname}{R\'{e}sum\'{e}}
\begin{abstract}
On d\'emontre un th\'eor\`eme de monomialisation pour les morphismes dans une classe de fonctions quasi-analytiques. Ces
classes comprennent, par exemple, les classes de Denjoy-Carleman, la classe des fonctions $\cC^\infty$ d\'efinissables dans une structure $o$-minimale 
polynomialement born\'ee, ainsi que les classes des fonctions analytiques r\'eelles ou complexes, 
et de fonctions alg\'ebriques sur un corps de caract\'eristique nulle. Le th\'eor\`eme de monomialisation affirme 
qu'on peut transformer un morphisme dans une classe quasi-analytique en un autre morphisme dont les composantes 
sont des mon\^omes dans des coordonn\'ees locales convenables, par une suite de modifications de la source et du but. Dans le cas r\'eel g\'en\'eral, celles-ci sont des \'eclatements locaux et ramifications; dans le cas analytique ou alg\'ebrique,
elles sont simplement des \'eclatements locaux. On montre qu'il n'est pas possible, en g\'en\'eral, de monomialiser par des \'eclatements globaux, 
m\^eme dans le cas analytique r\'eel.
 \end{abstract}

\setcounter{tocdepth}{1}
\tableofcontents

\section{Introduction}\label{sec:intro}
The subjects of this article are monomialization and relative desingularization theorems
for mappings in general classes that are called quasianalytic.
Quasianalytic classes are classes of infinitely differentiable functions that are characterized by three simple axioms
including \emph{quasianalyticity}---injectivity of the Taylor series homomorphism at any point---and the implicit 
function theorem (see Definition \ref{def:quasian}).
Examples over $\IR$ include the class or real-analytic functions,
quasianalytic Denjoy-Carleman classes (objects of study in
classical real analysis), and the class of $\cC^\infty$ functions which are definable in a given 
polynomially bounded $o$-minimal structure (in model theory). 
The quasianalytic framework covers also complex-analytic mappings, as well as algebraic morphisms 
(regular morphisms of schemes of finite type over a field $\IK$ of characteristic zero)
provided that a neighbourhood of a point, in this case, is 
understood to mean an \'etale neighbourhood, so that the implicit function theorem holds. 

We prove that 
a mapping in a real quasianalytic class can be transformed by sequences of simple modifications
of the source and target (sequences of local blowings-up and local power substitutions) to a mapping whose
components are monomials with respect to suitable local coordinate systems; see Theorems 
\ref{thm:mainR3} and \ref{thm:mainR2}.
In the general algebraic and analytic cases, we show, moreover, that only local blowings-up are needed for monomialization 
(Theorems \ref{thm:mainA3} and \ref{thm:mainA2}).

Our results are best possible in the real cases, because of the following two phenomena.
First, we show that it is not possible, in general, to monomialize
a proper mapping by global blowings-up of the source and target, even in the real-analytic case 
(see Theorem \ref{thm:counterex} and Example \ref{ex:nonreg}, which is related to \cite[Sect.\,3]{BP}; there is no such counterexample 
for algebraic or proper complex-analytic morphisms). 
Secondly, we show that the use of power substitutions
is necessary in general quasianalytic classes because of a phenomenon that does not occur in
the classes of algebraic
or analytic functions---a quasianalytic function defined on a half-line may admit no extension to a quasianalytic
function in a neighbourhood of the boundary point (Nazarov, Sodin, Volberg \cite{NSV}; see \S\ref{subsec:power}).
The quasianalytic continuation theorem of \cite{BBC} intervenes in our proof of monomialization for real quasianalytic
classes, in this context. Applications of the monomialization theorem in quasianalytic geometry and model theory
are given in \S\ref{subsec:power}.

Our results in the algebraic and analytic cases include new proofs of Cutkosky's local monomialization theorems
\cite{C1,C3}. The latter establish local monomialization along a valuation. A basic difference in our approach 
is that we give a winning strategy for a version of Hironaka's game in the context of monomialization---Alice 
wins by applying our
method to choose the next local blowing-up at \emph{any} point of the fibre, which her opponent Bob is free
to pick on his turn.
On the other hand, we use \'etale-local blowings-up in the algebraic case (see Definitions \ref{def:defs}),
rather than Zariski-local blowings-up as in \cite{C1,C1Fourier}. Cutkosky shows that (along a valuation) 
the stronger result follows from the \'etale version; we plan to include this in a later article using methods developed 
here, but in this paper we prefer to emphasize
the quasianalytic framework.

Monomialization is a version of resolution of singularities for a mapping (classical resolution of singularities
is the case of target dimension one).
The monomialization problem in the algebraic case
has an extensive literature, going back at least to Zariski, and including or related to
the work of Akbulut and King on mappings
of complex surfaces \cite{AkKing}, the toroidalization, factorization and semistable reduction
results of Abramovich, Adiprasito, 
Denef, Karu, Liu, Matsuki, Temkin and W{\l}odarczyk \cite{ADK,AK,AKMW,ALT,W}, and the monomialization theorems of Cutkosky and of Denef \cite{D}. 
Toroidal and semistable morphisms are, in particular, monomial; the main distinction
of monomialization in relation to the preceding results on toroidalization and semistable reduction, is the requirement
of concrete transformation of the source and target of a given morphism by blowings-up with smooth centres.
In addition to \cite{C1,C3}, Cutkosky has proved monomialization by global blowings-up
for dominant projective morphisms in dimension three \cite{C2}. 

Our point of view is rather that of analysis, and algebro-geometric techniques used in the preceding
results (for example, valuations, Zariski-Riemann manifold) do not appear in our work, although the preceding articles
have had an important impact.
Indeed, even basic notions of commutative algebra like flatness are
not available in local rings of quasianalytic functions, which are not known to be Noetherian in general. On the other
hand, resolution of singularities does hold in quasianalytic classes \cite{BMinv,BMselecta}, even
for ideals that are not necessarily finitely generated \cite[Thm.\,3.1]{BMV}. The axioms for a quasianalytic class
are meant to capture the minimal properties of a ring of functions that are needed for desingularization
of ideals. In the analytic and algebraic cases, the proofs of our main theorems admit significant
simplifications, as we will point out in Section \ref{sec:LocalMon}.

The main new techniques developed in this article are based on logarithmic derivatives (logarithmic
with respect to the exceptional divisor that intervenes on blowing up) that are tangent to the fibres
of morphism (see \S\ref{subsec:deriv} and Section \ref{sec:tang}). In this context, we formulate
a theorem on resolution of singularities of an ideal relative to a monomial morphism (see Theorems
\ref{thm:corideal} and \ref{thm:ideal}) that we will prove together with the monomialization theorems 
above, within a common inductive scheme.
The notion of \emph{log derivations tangent to a morphism} coincides with that of \emph{relative
log derivations}, of origin in work of Grothendieck and Deligne (see \cite[Ch.\,IV]{Ogus}),
and plays a part here, as well as in \cite{ATWprog}, which is analogous to that played by standard logarithmic
derivations in the proof of resolution of singularities in \cite{BMfunct}. 
We believe the techniques of this article may be useful for global monomialization in the algebraic and
proper complex-analytic cases, 
perhaps combined with methods of \cite{ATW,ATWprog} (and \cite{BBAdvances} in low dimensions).

The paper of Dan Abramovich, Michael Temkin and Jaros{\l}aw
W{\l}odarczyk \cite{ATWprog} on relative desingularization is an independent work, of which we learned of a preliminary version 
in July 2019, before posting our article on arXiv that month. We are grateful to the authors for pointing out
connections between some of the techniques in our Sections \ref{sec:fit}, \ref{sec:tang} and logarithmic algebraic
geometry, and also to Mikhail Sodin for details of the non-extension phenomenon in \cite{NSV}. We would
like to thank also the reviewers and editors for their very helpful detailed comments.

\medskip
We begin by stating our main results. Details on the notions of blowing up and power substitutions
involved will be given in \S\ref{subsec:thms} following.

Let $\cQ$ denote a quasianalytic class, and let $M,\,N$ denote
manifolds (smooth spaces) of class $\cQ$; say $m=\dim M$, $n=\dim N$ (see Section \ref{sec:quasian}).
Let $\Phi: M\to N$ denote a mapping of class $\cQ$; we will say that $\Phi$ is a $\cQ$-morphism, or
simply a morphism if the class is understood.

\begin{definition}\label{def:monom1} \emph{Monomial morphism.} The $\cQ$-morphism $\Phi$ is
\emph{monomial at} a point $a\in M$ is there are coordinate systems (of class $\cQ$),
\begin{equation}\label{eq:coords1}
\begin{aligned}
(\bu,\bv,\bw) &= (u_1,\ldots,u_r, v_1,\ldots,v_s, w_1,\ldots, w_t),\\
(\bx,\by,\bz) &= (x_1,\ldots,x_p, y_1,\ldots,y_q,z_{q+1},\ldots, z_{s'}),
\end{aligned} 
\end{equation}
centred at $a$ and $b=\Phi(a)$ (respectively),
where $r+s+t=m$, $p+s'=n$ and $q\leq s\leq s'$, in which $\Phi$ can be written
\begin{equation}\label{eq:mon1}
\begin{aligned}
x_j &= \bu^{\bal_j},  &\quad j&=1,\ldots,p,\\
y_k &= \bu^{\bbe_k}(\xi_k + v_k),  &\quad k&=1,\ldots,q,\\
z_l &= v_l, &\quad l&=q+1,\ldots,s,\\
z_l &= 0, &\quad l&=s+1,\ldots,s',
\end{aligned}
\end{equation}
where
\begin{enumerate}
\item the exponent vectors $\bal_j = (\al_{j1},\ldots,\al_{jr}) \in \IN^r$, $j=1,\ldots,p$, are linearly independent over $\IQ$ (in particular, $r\geq p$);
\item for each $k=1,\ldots,q$, $\bbe_k = (\be_{k1},\ldots,\be_{kr}) \in \IN^r$ is nonzero and $\IQ$-linearly dependent on
$\{\bal_1,\ldots,\bal_p\}$, and $\xi_k \neq 0$.
\end{enumerate}
We say that $\Phi$ is a \emph{monomial morphism} if it is monomial at every point $a\in M$.
\end{definition}

\begin{remarks}\label{rem:mon1}
(1) A morphism of the form
\begin{equation}\label{eq:mon2}
\begin{aligned}
x_j &= \bu^{\bal_j},  &\quad j&=1,\ldots,p,\\
z_l &= v_l, &\quad l&=1,\ldots,s,\\
z_l &= 0, &\quad l&=s+1,\ldots,s',
\end{aligned}
\end{equation}
with respect to coordinate systems $(\bu,\bv,\bw) = (u_1,\ldots,u_r, v_1,\ldots,v_s, w_1,\ldots, w_t)$ and
$(\bx,\bz) = (x_1,\ldots,x_p, z_1,\ldots, z_{s'})$, where $0\leq s\leq s'$ and the exponent vectors $\bal_j = (\al_{j1},\ldots,\al_{jr}) \in \IN^r$, $j=1,\ldots,p$, are linearly independent over $\IQ$, can be rewritten 
in the form  \eqref{eq:mon1} in coordinates centred at 
any point $a$ and at $b=\Phi(a)$ (respectively). On the other hand, 
a monomial morphism can be transformed to a morphism satisfying \eqref{eq:mon2} in suitable coordinate charts, by performing local blowings-up; see Remark \ref{rk:ReducingNormalForm} for a precise statement. 
Compare with Theorem \ref{thm:mainR4}.

\medskip\noindent
(2) In the algebraic case, \eqref{eq:mon1} (or \eqref{eq:mon2}) should be understood at a (closed) point $a\in M$
in the following way. Let $\IL = \IK_a$ denote the residue field of $a$. Then there is an \'etale
morphism $\varepsilon: U\to M$ such that $\varepsilon(a')=a$, $\IK_{a'} = \IL$, and $\Phi\circ\varepsilon$ is given by 
\eqref{eq:mon1} with $\xi_k \in \IL$, with respect to \'etale local coordinates (or ``uniformizing parameters'') at $a'$ and $b$.
(In general, $\IL$ may be strictly larger than $\IK_b$, so the mapping given by \eqref{eq:mon1}
involves $\times_{\IK_b} \IL$ in the image). See also \S\ref{subsec:alg}.
\end{remarks}

As in resolution of singularities, our proofs of the monomialization theorems involve keeping
careful track of the exceptional divisors of the blowings-up involved, and exploiting their combinatorial
structure. It is natural to assume from the beginning that there are simple normal crossings divisors
$D$ on $M$ and $E$ on $N$. A \emph{simple normal crossings (SNC) divisor} means a union of
smooth closed hypersurfaces that simultaneously have only transverse intersections. A
$\cQ$-\emph{morphism} $\Phi: (M,D) \to (N,E)$ denotes a $\cQ$-morphism $\Phi: M\to N$ such that
$\Phi^{-1}(E)$ is SNC \emph{as a space} and its support lies in that of $D$ (i.e., the
ideal of $\Phi^{-1}(E)$ is locally a product of powers of the ideals of certain components of $D$).

\begin{definition}\label{def:monom2}\emph{Monomial morphism compatible with divisors.}
We say that a $\cQ$-morphism $\Phi: (M,D) \to (N,E)$ is \emph{monomial at} a point $a\in M$ if
$\Phi: M \to N$ is monomial at $a$ as in Definition \ref{def:monom1}, with respect to
local coordinate systems \eqref{eq:coords1} in which $D=\{u_1 \cdots u_r =0\}$ and 
$E =\{x_1\cdots x_p \cdot y_1 \cdots y_q=0\}$.

In this case, the coordinate systems $(\bu,\bv,\bw)$ and $(\bx,\by,\bz)$ will be called 
$\Phi$-\emph{monomial}. The variables $\bw = (w_1,\ldots,w_t)$ will be called $\Phi$-\emph{free}.
A coordinate system in which every component of a divisor is a coordinate subspace will
be called \emph{compatible} with the divisor.
\end{definition}

In our monomomialization theorems \ref{thm:mainR3} and \ref{thm:mainA3} following,
$\Phi: (M,D) \to (N,E)$ denotes a $\cQ$-morphism.

\begin{definition}\label{def:semiproper} (In the real quasianalytic or complex analytic cases) 
a family of morphisms $\{\s_\la: V_\la \to M\}$ is a \emph{semiproper
covering} of $M$ if $\{\s_\la\}$ is a covering of $M$ (i.e., $\bigcup \s_\la(V_\la) = M$)
and, for every compact subset $K$ of $M$, there are finitely many
compact $K_i \subset V_{\la(i)}$ such that $\bigcup_i\s_{\la(i)}(K_i) = K$. 
\end{definition}

\begin{theorem}[Monomialization of a real quasianalytic morphism]\label{thm:mainR3}
Suppose that $\cQ$ is a real quasianalytic class, and that $\Phi:M\to N$ is proper. Then there is
a countable family of commutative diagrams
\begin{equation}\label{eq:mainR2}
\begin{tikzcd}
(V_\la, D_\la) \arrow{d}{\Phi_{\la}} \arrow{r}{\s_\la} & (M,D) \arrow{d}{\Phi}\\
(W_\la, E_\la) \arrow{r}{\tau_\la} & (N,E)
\end{tikzcd}
\end{equation}
where
\begin{enumerate}
\item each $\s_\la$ and $\tau_\la$ is a composite of finitely many smooth local blowings-up
and local power substitutions, compatible with $D$ and $E$, respectively;
\item the families of morphisms $\{\s_\la: V_\la \to M\}$ and $\{\tau_\la: W_\la \to N\}$
are semiproper coverings of $M$ and $N$;
\item each $\Phi_\la: (V_\la, D_\la) \to (W_\la, E_\la)$ is a monomial morphism.
\end{enumerate}
\end{theorem}

\begin{theorem}[Monomialization of an analytic or algebraic morphism]\label{thm:mainA3}
Suppose that $\cQ$ is either the class of $\IR$- or $\IC$-analytic functions, or the class of algebraic
functions over a field $\IK$ of characteristic zero. In the analytic case, suppose also that
$\Phi:M\to N$ is proper. Then there is a countable family (in the analytic case), or a finite family
(in the algebraic) of commutative diagrams \eqref{eq:mainR2}, where
\begin{enumerate}
\item each $\s_\la$ and $\tau_\la$ is a composite of finitely many smooth local blowings-up,
compatible with $D$ and $E$, respectively;
\item the families of morphisms $\{\s_\la\}$ and $\{\tau_\la\}$ are coverings of $M$ and $N$,
semi-{\allowbreak}proper in the analytic case;
\item each $\Phi_\la: (V_\la, D_\la) \to (W_\la, E_\la)$ is a monomial morphism.
\end{enumerate}
\end{theorem}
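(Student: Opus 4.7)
The plan is to derive Theorem \ref{thm:mainA3} from the local Theorem \ref{thm:mainA2} by a standard covering argument exploiting quasi-compactness (algebraic case) or second countability and properness (analytic case). All of the substantive work is in the local theorem; the global statement reduces to gathering the local data into a family with the correct covering behaviour on both $M$ and $N$.

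First I would cover the source. For each $a \in M$, Theorem \ref{thm:mainA2} supplies an open (resp.\ \'etale) neighbourhood $V^{(a)}$ of $a$ together with a finite collection of diagrams satisfying properties (1)--(3) locally. The family $\{V^{(a)}\}_{a\in M}$ is an open cover of $M$. In the algebraic case $M$ is of finite type over $\IK$ and hence quasi-compact, so I extract a finite subcover; in the analytic case $M$ is second-countable and Lindel\"of, so I extract a countable subcover. Gathering the finitely many local diagrams associated to each chosen $V^{(a)}$ yields a countable (resp.\ finite) family $\{(V_\la,D_\la)\to(M,D),\,(W_\la,E_\la)\to(N,E),\,\Phi_\la\}$ in which the source morphisms $\s_\la$ collectively cover $M$ and each $\Phi_\la$ is already monomial with the required divisorial compatibility.

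Next I would arrange that $\{\tau_\la\}$ covers all of $N$, and not merely a neighbourhood of $\Phi(M)$. In the analytic case, properness of $\Phi$ implies $\Phi(M)$ is closed in $N$, so $N\setminus\Phi(M)$ is open; pick a countable (resp.\ finite, in the algebraic case) open covering $\{W^{(b)}\}$ of this complement by open sets disjoint from $\Phi(M)$, and adjoin to the family a ``trivial'' diagram for each one, namely $V_\la=\emptyset$, $W_\la\to N$ the inclusion of $W^{(b)}$, $D_\la=\emptyset$, $E_\la = E\cap W^{(b)}$, and $\Phi_\la$ the empty morphism. Such a morphism is vacuously monomial, its source carries no divisor to transform, and divisorial compatibility holds trivially since no blowings-up or substitutions are performed. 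In the algebraic case, the same procedure uses quasi-compactness of $N\setminus \Phi(M)$ to keep the new diagrams finite in number.

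Finally I would verify semiproperness of both families in the analytic case. Given a compact $K\subset M$, each $a\in K$ lies in the image of a compact neighbourhood $\bigcup_{\la}\s_\la(K_\la^{(a)})$ furnished by Theorem \ref{thm:mainA2}, so by compactness $K$ is covered by finitely many such compact neighbourhoods; intersecting the corresponding $K_\la^{(a_i)}$ with $\s_\la^{-1}(K)$ gives finitely many compacts in the $V_\la$ whose $\s_\la$-images union to $K$. The analogous statement for $\{\tau_\la\}$ follows identically from the compact sets $L_\la\subset W_\la$ in the local theorem, with the trivial inclusions contributing nothing to $\Phi(M)$ and handling $N\setminus\Phi(M)$ directly via their own inclusions. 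There is no genuine obstacle beyond careful bookkeeping with indices and with the trivial diagrams; the whole argument is mechanical once Theorem \ref{thm:mainA2} is in hand.
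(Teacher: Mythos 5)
Your overall strategy is exactly what the paper intends: the text offers no separate proof of Theorem \ref{thm:mainA3}, stating only that it is an immediate consequence of Theorem \ref{thm:mainA2}, and the reduction is indeed the covering argument you describe (Lindel\"of/second countability in the analytic case, quasi-compactness in the algebraic case, and padding the target family with inclusions of open subsets paired with empty sources, which are vacuously monomial and are local blowings-up with empty centre). However, the semiproperness bookkeeping in your last paragraph has a genuine gap on both sides of the diagram. On the source side, you extract the countable subfamily from the cover $\{V^{(a)}\}$ and then assert that an arbitrary $a\in K$ ``lies in the image of a compact neighbourhood $\bigcup_\la\s_\la(K_\la^{(a)})$''; but that compact neighbourhood is built from morphisms $\s_\la^{(a)}$ that belong to your family only if $a$ is one of the chosen base points. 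The fix is to extract the subfamily from the cover of $M$ by the \emph{interiors} of the compact sets $\bigcup_\la\s_\la(K_\la^{(a)})$, $a\in M$, rather than by the $V^{(a)}$.

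On the target side the statement does not ``follow identically'': the sets $\bigcup_\la\tau_\la(L_\la^{(a_i)})$ are compact neighbourhoods only of the points $\Phi(a_i)$, and the base points $a_i$ chosen to cover $M$ give no control over how much of $\Phi(M)$ these neighbourhoods cover, since $\Phi(V^{(a_i)})$ may be far larger than a compact neighbourhood of $\Phi(a_i)$; so a compact subset of $N$ meeting $\Phi(M)$ away from the $\Phi(a_i)$ is not obviously expressible as a finite union of images of compacts. Moreover, in the algebraic case $\Phi$ is not assumed proper, so $\Phi(M)$ need not be closed and $N\setminus\Phi(M)$ need not admit a covering by open sets disjoint from $\Phi(M)$. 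Both problems disappear if you let the trivial diagrams range over a semiproper covering of \emph{all} of $N$ by inclusions of open charts (in the algebraic case a single diagram with $W_\la=N$, $\tau_\la=\mathrm{id}$, $V_\la=\emptyset$ suffices): nothing in the statement requires $\tau_\la(W_\la)$ to meet $\Phi(\s_\la(V_\la))$, so the covering and semiproperness of $\{\tau_\la\}$ are then carried entirely by the padding, and the $\tau_\la$ coming from Theorem \ref{thm:mainA2} are needed only to make the diagrams commute.
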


In the case that $\dim M = \dim N$ and $\Phi$ is dominant, we can prove 
Theorem \ref{thm:mainR3} with the stronger conclusion that each $\Phi_\la =$ identity 
(see Theorem \ref{thm:transftoid}).

\begin{theorem}[Counterexample to global monomialization of
a real-analytic morphism]\label{thm:counterex}
A proper morphism $\Phi: V\to W$ of real-analytic manifolds
cannot, in general, be monomialized by global blowings-up of the source and target, 
even over some neighbourhood of a given point of $\Phi(V)$. 
\end{theorem}

We will show in Section \ref{sec:ex}, in fact, that
$\Phi$ cannot, in general, be transformed by a proper real-bimeromorphic morphism $\tau$
of the target and a proper surjection $\s$ of the source, to a mapping that is regular in the sense of Gabrielov (Definition \ref{def:regular}).

We are grateful to Jan Denef for asking about a bimeromorphic morphism $\tau$.

\begin{theorem}[Relative desingularization of an ideal]\label{thm:corideal}
Let $\Phi: (M,D) \to (N,E)$ denote a monomial morphism of class $\cQ$ and let
$\cI$ denote an ideal sheaf on $M$ (we assume that the stalk of $\cI$ at any point
is generated by local sections defined in a common neighbourhood, though we do not assume
that $\cI$ is finitely generated; see Definition \ref{def:priv}). 
Then there is a countable family (finite, in the algebraic case) of commutative diagrams \eqref{eq:mainR2},
where conditions (1), (2), (3) of Theorem \ref{thm:mainR3} (or of Theorem \ref{thm:mainA3}, in the analytic
and algebraic cases) hold, together with the following: 
\begin{enumerate}
\item[(4)] for each $\la$, the pull-back $\s_\la^*(\cI)$ is principal and monomial.
\end{enumerate}
\end{theorem} 

\subsection{Monomialization theorems}\label{subsec:thms}

\begin{definitions and remarks}\label{def:defs} 
(1) A \emph{smooth blowing-up} means a blowing-up with smooth centre. A \emph{(smooth) local blowing-up} 
$\s: V' \to V$ is a composite $\io\circ\rho$ of an inclusion of an open subset $\io: U\hookrightarrow V$, and a (smooth)
blowing-up $\rho: V'\to U$. We will always assume that our blowings-up are smooth, unless we state otherwise.
(In the algebraic case, an open subset is understood to mean with respect to the \'etale topology, so 
that a \emph{local blowing-up} $\s$ means the composite $\varepsilon\circ\rho$ of an \'etale mapping
$\varepsilon: U\to V$ and a smooth blowing-up $\rho: V'\to U$.)

\medskip
\noindent
(2) A \emph{power substitution} means, \emph{roughly speaking}, a morphism $\rho: U' \to U$ of the form
$u_i = \tu_i^{k_i}$, $i=1,\ldots,r$, where each $k_i$ is a positive integer, and $U$ is a
chart with coordinates $\bu=(u_1,\ldots,u_r)$. In order to cover $U$, given $\rho$, we introduce the
mapping
$$
P: \coprod_{\pmb{\varepsilon}} U_{\pmb{\varepsilon}} \to U,
$$
where $\coprod$ denotes disjoint union, $\pmb{\varepsilon}= (\varepsilon_1,\ldots,\varepsilon_r)\in \{-1,1\}^r$, each $U_{\pmb{\varepsilon}}$ is 
a copy of $U'$, and $\rho_{\pmb{\varepsilon}} := P|_{U_{\pmb{\varepsilon}}}$ is given by $u_i = \varepsilon_i\tu_i^{k_i}$, $i=1,\ldots,r$
(so that $\rho = \rho_{\bone}$, $\bone = (1,\ldots, 1)$), and where the product is taken over
all  $\pmb{\varepsilon} \in \{-1,1\}^r$ such that $\varepsilon_i = 1$ whenever $k_i$ is odd.
We assume that $U$ is chosen in suitable symmetric product
form, so that $P$ will be surjective. In our monomialization theorems, power substitutions $\rho$
will always be extended to more general \emph{power substitutions} $P$ defined in this way. 
(Of course, we can accomplish the same thing by using power substitutions $u_i = \pm\tu_i^{k_i}$ and
increasing the number of morphisms $\s_\la$, $\tau_\la$.)
A \emph{local
power substitution} $V' \to V$ is a composite $\io\circ P$ of the inclusion of a suitable coordinate
chart $\io: U\hookrightarrow V$ and an extended power substitution $P: V' \to U$.
\end{definitions and remarks}

\begin{definitions}\label{def:compat}\emph{Blowings-up and power substitutions compatible with divisors.}
We say that a (local) blowing-up $\s: M' \to M$ is \emph{compatible} with an SNC divisor $D$ on $M$ if 
$D$ and the centre $C$
of $\s$ have simple normal crossings (i.e., locally, there are coordinates in which each component of $D$
is a coordinate hypersurface and $C$ is a coordinate subspace). In this case, $\Phi^{-1}(D)$ is SNC; in 
particular, the \emph{transform} $D' := \Phi^{-1}(D)_\reduced$ of $D$ by $\s$ is an SNC divisor
(where the subscript \emph{red} means ``with its reduced structure'').

A blowing-up $\s: M' \to M$ will be called \emph{combinatorial} if it satisfies the stronger condition that
its centre is (locally) an intersection of components of $D$.

We say that a (local) power substitution $\rho: M'\to M$ is \emph{compatible} with $D$ if it is defined as in Definitions 
\ref{def:defs}(2) with respect to coordinates $(\bu,\bv)=(u_1,\ldots,u_r,v_1,\ldots,v_s)$, where
$D=\{u_1 \cdots u_r =0\}$ and $\rho$ is trivial in every variable $v_k$. Again, $\Phi^{-1}(D)$ is SNC, and
the \emph{transform} 
$D' := \Phi^{-1}(D)_\reduced$ of $D$ is an SNC divisor.

We inductively define \emph{compatibility} with $D$ of a morphism $\s: \tM \to M$ given by a finite 
composite of local blowings-up and power substitutions. If $\s: \tM \to M$ is compatible with $D$,
then the \emph{transform} $\tD$ of $D$ by $\s$ is well-defined as an SNC divisor on $\tM$, and
$\s^{-1}(D)_\reduced = \tD$, so that $\s$ induces a morphism $\s: (\tM,\tD)\to (M,D)$.
\end{definitions}

In Theorems \ref{thm:mainR2}, \ref{thm:mainA2}, \ref{thm:mainR4} and \ref{thm:mainR5} following, 
$\Phi: (M,D) \to (N,E)$ denotes a $\cQ$-morphism. Theorems \ref{thm:mainR3} and \ref{thm:mainA3} are 
immediate consequences of Theorems \ref{thm:mainR2} and \ref{thm:mainA2}.

\begin{theorem}[Local monomialization of a real quasianalytic morphism compatible with divisors]\label{thm:mainR2}
Suppose that $\cQ$ is a real quasianalytic class. Let $a\in M$. Then there are open neighbourhoods
$V$ of $a$ and $W$ of $b= \Phi(a)$, and a finite number of commutative diagrams \eqref{eq:mainR2}, where
\begin{enumerate}
\item each $\s_\la$ and $\tau_\la$ is a composite of finitely many smooth local blowings-up
and local power substitutions, compatible with $D$ and $E$, respectively;
\item the families of morphisms $\{\s_\la\}$ and $\{\tau_\la\}$ cover $V$ and $W$, respectively,
and there are compact subsets $K_\la \subset V_\la$, $L_\la \subset W_\la$, such that
$\bigcup \s_\la(K_\la)$ and $\bigcup \tau_\la(L_\la)$ are (compact) neighbourhoods of $a$ and $b$,
respectively;
\item each $\Phi_\la$ is a monomial morphism (i.e., locally of the form \eqref{eq:mon1}, where
$D_\la=\{u_1 \cdots u_r =0\}$ and 
$E_\la =\{x_1\cdots x_p \cdot y_1 \cdots y_q=0\}$).
\end{enumerate}
\end{theorem}

\begin{theorem}[Local monomialization of an analytic or algebraic morphism compatible with divisors]\label{thm:mainA2}
Suppose that $\cQ$ is either the class of $\IR$- or $\IC$-analytic functions, or the class of algebraic
functions over a field $\IK$ of characteristic zero. Let $a\in M$. Then there are open neighbourhoods
$V$ of $a$ and $W$ of $b= \Phi(a)$, and a finite number of commutative diagrams \eqref{eq:mainR2},
where
\begin{enumerate}
\item each $\s_\la$ and $\tau_\la$ is a composite of finitely many smooth local blowings-up,
compatible with $D$ and $E$, respectively;
\item the families of morphisms $\{\s_\la\}$ and $\{\tau_\la\}$ cover $V$ and $W$, respectively, 
and, in the analytic case, 
there are compact subsets $K_\la \subset V_\la$, $L_\la \subset W_\la$, such that
$\bigcup \s_\la(K_\la)$ and $\bigcup \tau_\la(L_\la)$ are neighbourhoods of $a$ and $b$,
respectively;
\item each $\Phi_\la$ is a monomial morphism (i.e., locally of the form \eqref{eq:mon1}, where
$D_\la=\{u_1 \cdots u_r =0\}$ and 
$E_\la =\{x_1\cdots x_p \cdot y_1 \cdots y_q=0\}$).
\end{enumerate}
\end{theorem}

\begin{remark}\label{rem:idea} 

\emph{Rough idea of the proofs.} Theorems \ref{thm:mainR2} and
\ref{thm:mainA2} will be proved by transforming $\Phi$ to monomial form, one component at a time (with respect to local coordinates). 
Suppose that $\Phi = (\Phi_1,\ldots\Phi_n)$, where $\Phi^{(k)} := (\Phi_1,\ldots,\Phi_k)$ is in monomial form \eqref{eq:mon1}
($k<n$); in particular, $\Phi^{(k)}=(\Theta_r,0,\ldots,0)$, $r\leq k$, where $\Theta_r$ is \emph{dominant} (or generically submersive; i.e., $s'=s$).
We can, therefore, reduce to the case that $\Phi^{(k)}$ is monomial and dominant by replacing $\Phi^{(k)}$ with $\Theta_r$;
we keep the notation 
$\Phi^{(k)}$ here for convenience. Note that $\Phi^{(k+1)} = (\Phi^{(k)},\Phi_{k+1})$ is not necessarily dominant. Then we
will make transformations preserving monomiality of $\Phi^{(k)}$, so that $\Phi_{k+1}$ is transformed to the form required, within an inductive 
scheme that involves three steps
(see Section \ref{sec:LocalMon}).

\medskip\noindent
I.A. Resolution of singularities of an ideal relative to a monomial morphism (see Theorem \ref{thm:ideal}
below).

\medskip\noindent
I.B. Transformation of $\Phi_{k+1}$ to a function of class $\cQ$ of the form $g(u,v) + \vp(u,v,w)$, with the property that
$\Psi^{(k+1)} := (\Phi^{(k)}, \vp) =(\Phi^{(k)}, \Phi_{k+1} - g)$ is in monomial form and
\begin{equation}\label{eq:introg}
dg \wedge d \Phi_1 \wedge \cdots \wedge d \Phi_k = 0.
\end{equation}
(equivalently, the formal Taylor expansion of $g$ at $a$ 
has the form $\sum g_{\bga\bde}u^{\bga}v^{\bde}$,
where $\bga$ is $\IQ$-linearly dependent on $\bal_1,\ldots,\bal_p$ whenever $g_{\bga\bde}\neq0$.)
In this case, we will say that $\Phi^{(k+1)}$ is in \emph{pre-monomial} form. (For simplicity of language,
 we are keeping the same notation $\Phi^{(k)}$ and $\Phi^{(k+1)}$ here, after the transformations
involved; also in II following.) See Definition \ref{def:PreMonomial} and Example \ref{ex:PreMonomial} below, where we also elucidate the difference between $\Phi^{(k+1)}$ and $\Psi^{(k+1)}$.

\medskip\noindent
II. Transformation to $g=0$; i.e., transformation of a pre-monomial morphism $\Phi^{(k+1)}$ to a monomial morphism $\Phi^{(k+1)}$.

\medskip
Steps I.A and I.B involve 
techniques of a differential nature (see \S\ref{subsec:deriv} and Section \ref{sec:tang}), whereas
Step II uses methods that are largely combinatorial, developed in Section \ref{sec:premon}.
Section \ref{sec:premon} is the only part of the paper where there is a dichotomy between the cases of
analytic or algebraic morphisms, and the general real quasianalytic case (in particular, the
only part where power substitutions are needed in the latter case; cf.\ \S\ref{subsec:power}). 

In \S\S\ref{subsec:deriv}, \ref{subsec:power}, we will try to give an idea of new techniques introduced, as well as of applications.
\end{remark}

Each of the theorems above can be stated equivalently in the following two different ways.
(We give the equivalent statements only for Theorem \ref{thm:mainR3}, and leave the
other theorems to the reader.)

\begin{theorem}\label{thm:mainR4}
Suppose that $\cQ$ is a real quasianalytic class, and that $\Phi:M\to N$ is proper. Then there is
a countable family of commutative diagrams \eqref{eq:mainR2}, where conditions (1),
(2) of Theorem \ref{thm:mainR3} hold, together with the following variant of (3):
\begin{enumerate}
\item[(3)] each $\Phi_\la: (V_\la, D_\la) \to (W_\la, E_\la)$ is a proper morphism of the
form \eqref{eq:mon2} in suitable local coordinate charts of class $\cQ$, where
$D_\la=\{u_1 \cdots u_r =0\}$ and 
$E_\la =\{x_1\cdots x_p = 0\}$.
\end{enumerate}
\end{theorem}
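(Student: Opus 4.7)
The plan is to prove Theorem \ref{thm:mainR4} by two observations: first, that the conclusion (3) of Theorem \ref{thm:mainR4} (the chart form \eqref{eq:mon2}) is at least as strong as conclusion (3) of Theorem \ref{thm:mainR3} (the pointwise form \eqref{eq:mon1}), via the explicit rewriting promised in Remark \ref{rem:mon1}(1); and second, that the monomializing family of diagrams produced by the proof of Theorem \ref{thm:mainR3} already satisfies the chart form \eqref{eq:mon2}. Together these give Theorem \ref{thm:mainR4} using the same family of diagrams that witnesses Theorem \ref{thm:mainR3}, with conditions (1) and (2) carried over unchanged.

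For the rewriting (chart form implies pointwise form), I would proceed as follows. Given $\Phi_\la$ in form \eqref{eq:mon2} on a local chart and a point $a$ in that chart, I partition the source divisor indices by $I := \{i : u_i(a) = 0\}$, reorder so that $I = \{1, \ldots, r'\}$, and take $(u_i)_{i \in I}$ as the new $\tbu$ centred at $a$. Each source-to-target monomial then factors as $x_j = \tbu^{\tbal_j}\cdot V_j$, where $V_j := \prod_{i \notin I} u_i^{\al_{ji}}$ is a unit at $a$. If $V_j \equiv 1$, then $x_j$ already serves as an $\bx$-component of \eqref{eq:mon1}; otherwise $V_j = \xi_j + \tv_j$ with $\xi_j := V_j(a) \neq 0$ and $\tv_j := V_j - \xi_j$ vanishing at $a$ off the divisor $D$, and $x_j$ becomes a $\by$-component. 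A maximal subset of the $\tv_j$'s functionally independent from the shifts $v_l - v_l(a)$ joins those shifts to form the new $\bv$-block; the remaining shifted source variables form the $\bw$-block. The $\IQ$-linear dependence of the resulting $\bbe_k$'s on the $\bal_j$'s is inherited from the exponent relations on $I$ among the original $\bal$-vectors of \eqref{eq:mon2}.

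For the second observation, I would examine the construction of the diagrams \eqref{eq:mainR2} in the proof of Theorem \ref{thm:mainR3} (as outlined in Remark \ref{rem:idea}): the inductive procedure processes the components $\vp_k$ of $\Phi$ one at a time inside fixed local coordinate charts of $V_\la$ and $W_\la$ (not in coordinates centred at varying base points), and each step places the next component into the monomial chart form. The resulting $\Phi_\la$ is thus of the form \eqref{eq:mon2} on each local chart comprising $V_\la$, with $D_\la = \{u_1 \cdots u_r = 0\}$ and $E_\la = \{x_1 \cdots x_p = 0\}$ throughout. Properness of each $\Phi_\la$ and the covering conditions transfer from Theorem \ref{thm:mainR3} without modification. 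The main technical obstacle lies in the rewriting step when several of the units $V_j$ are functionally dependent at $a$: one must carefully select which $V_j$'s give rise to genuinely independent new $\tv_j$-coordinates and which are forced by the $\IQ$-linear relations among the $\tbal_j$'s to reduce already to pure monomial $\bx$-form, all while preserving the divisor compatibility required by Definition \ref{def:monom2}.
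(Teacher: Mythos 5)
Your first observation (that the chart form \eqref{eq:mon2} implies the pointwise form \eqref{eq:mon1}) is correct but runs in the wrong direction: it is what one uses to deduce Theorem \ref{thm:mainR3} \emph{from} Theorem \ref{thm:mainR4} (this is Remark \ref{rem:mon1}(1), which the paper obtains from Theorem \ref{thm:LogFittingCharacterization}), so it contributes nothing to proving Theorem \ref{thm:mainR4}. The whole burden therefore falls on your second observation, and that observation is false: the family produced by Theorem \ref{thm:mainR3} is monomial only in the pointwise sense of Definition \ref{def:monom1}, and at points where the source divisor has fewer local components than the target divisor there are genuinely nonempty $\by$-blocks. For example, $\Phi(u_1,u_2)=(u_1u_2,\,u_1u_2^2)$ with $D=\{u_1u_2=0\}$ and $E=\{x_1x_2=0\}$ is monomial everywhere near the origin (at $(0,c)$, $c\neq 0$, taking $\tu_1=u_1u_2$ and $v=u_2-c$ gives $x_1=\tu_1$, $x_2=\tu_1(c+v)$, i.e., $p=q=1$), but it is not of the form \eqref{eq:mon2} on any chart containing such a point: both components of $E$ pull back to the single component $\{u_1=0\}$, while \eqref{eq:mon2} forces the number of components of $E_\la$ at the image point to be at most $p\leq r=1$ there. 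No choice of coordinates fixes this; one must blow up further in the target (and correspondingly in the source).

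The missing ingredient is precisely Lemma \ref{lem:TranslationTrick}: given the monomial morphisms $\Phi_\la$ from Theorem \ref{thm:mainR3}, one applies further combinatorial blowings-up $\s$, $\tau$ of source and target after which, at every point of the fibre, the number $e(\tb)$ of components of the target divisor is at most $i(a)=p$; this forces $q=0$ in \eqref{eq:mon1}, so the components $y_k=\bu^{\bbe_k}(\xi_k+v_k)$ disappear and the morphism takes the form \eqref{eq:mon2}. That is the paper's (one-line) proof of the implication. The ``main technical obstacle'' you identify, about functionally dependent units $V_j$, concerns only the rewriting in the unneeded direction; the real obstacle---eliminating the $\by$-components---is not addressed by your argument at all.
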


Theorem \ref{thm:mainR4} $\implies$ \ref{thm:mainR3}, by Theorem \ref{thm:LogFittingCharacterization},
and \ref{thm:mainR3} $\implies$ \ref{thm:mainR4}, by Lemma \ref{lem:TranslationTrick} (see Remark \ref{rk:ReducingNormalForm}). The
equivalence of \ref{thm:mainR3} and \ref{thm:mainR5} following is a topological exercise.

\begin{theorem}\label{thm:mainR5}
Suppose that $\cQ$ is a real quasianalytic class, and that $\Phi:M\to N$ is proper.
Then there is a commutative diagram of $\cQ$-morphisms
\begin{equation}\label{eq:mainR5}
\begin{tikzcd}
(\tM, \tD) \arrow{d}{\tPhi} \arrow{r}{\s} & (M,D) \arrow{d}{\Phi}\\
(\tN, \tE) \arrow{r}{\tau} & (N,E)
\end{tikzcd}
\end{equation}
where
\begin{enumerate}
\item over every relatively compact open subset of $M$ (or $N$),
$\s$ (or $\tau$) is a composite of finitely many morphisms
of the following three kinds:
\begin{enumerate}
\item (semiproper) morphism given by a covering $\coprod U_i \to U$ by finitely many 
coordinate charts of class $\cQ$ compatible with the divisor;
\item compatible (global) blowing-up;
\item compatible power substitution (in coordinate charts $U_i$);
\end{enumerate}
\smallskip
\item $\tPhi$ is a monomial morphism.
\end{enumerate}
\end{theorem}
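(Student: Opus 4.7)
The plan is to derive Theorem \ref{thm:mainR5} from Theorem \ref{thm:mainR3} by a gluing construction; the converse direction is analogous and easier. Applying Theorem \ref{thm:mainR3}, I obtain a countable family of commutative diagrams \eqref{eq:mainR2} with $\{\s_\la\}$ and $\{\tau_\la\}$ semiproper coverings of $M$ and $N$, and each $\Phi_\la$ monomial. The natural candidate is
$$
\tM := \coprod_\la V_\la, \quad \tN := \coprod_\la W_\la, \quad \tD := \coprod D_\la, \quad \tE := \coprod E_\la,
$$
together with $\s := \coprod \s_\la$, $\tau := \coprod \tau_\la$, and $\tPhi := \coprod \Phi_\la$. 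Commutativity of \eqref{eq:mainR5} and monomiality of $\tPhi$ are immediate from the componentwise conditions. It remains to verify condition (1).

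First I would refine the family so that, for every relatively compact open $U \subset M$, only finitely many $V_\la$ map into $U$. Exhausting $M$ by compacts $K_1 \subset K_2 \subset \cdots$, semiproperness supplies, for each $n$, a finite subfamily and compacts $K_{\la,n} \subset V_\la$ whose images cover $K_n$. Re-indexing so that each use of a given $\la$ for different $n$ is counted separately, and replacing each $V_\la$ by a relatively compact open neighbourhood of the associated $K_{\la,n}$ (restricting the composite of local operations that defines $\s_\la$ accordingly, which is still a finite composite of local blowings-up and local power substitutions), I secure the desired local finiteness while preserving semiproper coverage of $M$. The same is done for $N$ in a compatible way.

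Fix a relatively compact open $U \subset M$. By refinement, $\s^{-1}(U)$ is a finite disjoint union of pieces $V_{\la_i}'$, $i=1,\ldots,r$, and each $\s_{\la_i}$ is a finite composite of local blowings-up and local power substitutions. Each such local operation factors as the inclusion of a coordinate chart (or open subset) followed by a global blowing-up or power substitution on that chart. I would process the $\s_{\la_i}$ in parallel, stage by stage: at each step apply operation (a) to cover the current space by the coordinate charts needed, taking as many copies as required so that the next stage of each $\s_{\la_i}$ lives on its own chart; then apply the corresponding global operation (b) or (c) on each copy. Since there are only finitely many $\s_{\la_i}$ and each has only finitely many stages, this terminates in finitely many operations of kinds (a)--(c) and realizes $\s^{-1}(U) \to U$. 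The identical recipe produces $\tau$ over relatively compact opens of $N$.

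The converse (\ref{thm:mainR5} $\Rightarrow$ \ref{thm:mainR3}) is easy: cover $M$ by countably many relatively compact opens $U_j$, unwind $\s$ over each $U_j$ as a finite composite of (a)--(c) operations, and read off a countable family $\{\s_\la: V_\la \to M\}$ of local blowings-up and local power substitutions, which is semiproper by construction. The main obstacle in the direct implication is the combinatorial bookkeeping in the parallel processing step---keeping track of which copy of a chart corresponds to which $\s_{\la_i}$, and ensuring each intermediate chart cover is compatible with the evolving divisor---but this is organizational rather than conceptual, and introduces no new techniques beyond those already used in the proofs of Theorems \ref{thm:mainR2} and \ref{thm:mainR3}.
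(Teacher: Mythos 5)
The paper offers no proof of this statement, describing the equivalence with Theorem \ref{thm:mainR3} as ``a topological exercise''; your proposal carries out exactly that exercise by the natural route (disjoint-union construction plus refinement to local finiteness plus parallel unwinding of the stages), and the skeleton of the argument is correct. One place where you should tighten the reasoning is the local-finiteness refinement: exhausting $M$ by compacts $K_n$ and replacing each $V_\la$ by a relatively compact neighbourhood of $K_{\la,n}$ does \emph{not} by itself prevent infinitely many pieces from having image meeting a fixed relatively compact open $U$, because the images of pieces with large index $n$ could still accumulate near $U$. The standard fix is to take $K_n \subset \mathrm{int}\,K_{n+1}$, apply semiproperness to each annular compact $A_n := K_n \setminus \mathrm{int}\,K_{n-1}$, intersect the resulting compacts $K_{\la,n}$ with $\s_\la^{-1}(A_n)$, and then shrink each $V_\la$ to a relatively compact neighbourhood $V_{\la,n}$ whose image is contained in $\mathrm{int}\,K_{n+1}\setminus K_{n-2}$; local finiteness then follows because a given $U$ sits inside some $K_N$ and hence meets only the images of pieces with $n\leq N+2$. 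With this adjustment, the rest of your parallel-processing argument (padding each stage with trivial blowings-up or power substitutions on the other copies, and allowing repeated charts in operation (a)) works as you describe, as does your sketch of the converse implication.
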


\subsection{Derivations tangent to a morphism}\label{subsec:deriv} The purpose of this
subsection is to motivate the calculus of derivations tangent to the fibres of a morphism
that we will develop in Section \ref{sec:tang}, and, in this context, to formulate a theorem
on resolution of singularities of an ideal relative to a monomial morphism (see Theorems
 \ref{thm:ideal} and \ref{thm:corideal}) that we will 
prove in Section \ref{sec:LocalMon} simultaneously with the monomialization theorems 
above, as part of the inductive scheme.

We recall that, in resolution of singularities of a sheaf of ideals $\cI$ on a smooth space $M$ in characteristic zero, the
\emph{derivative ideals} $\cD^k(\cI)$ generated by derivations of order $\leq k$
of local sections of $\cI$ play a very important part; in particular, they determine a \emph{coefficient
ideal} which, after restriction to a \emph{maximal contact hypersurface}, can be
assumed to admit a resolution of singularities, by induction on dimension. The ideal $\cI$
has order $\mu$ at a given point if and only if an appropriately defined coefficient ideal
has order greater than or equal to a corresponding value depending on $\mu$ (coming from the fact that,
if $\cI$ has order $\mu$ at a point, then $\cD^k(\cI)$ has order $\geq \mu - k$). The problem of order reduction of $\cI$ (leading to desingularization)
is therefore equivalent to that of order reduction of the coefficient ideal, provided
we know that, after a sequence of suitable blowings-up, the transform of the coefficient
ideal of $\cI$ equals the coefficient ideal of the transform of $\cI$.

The coefficient ideal (and maximal contact) are local constructions that are not
uniquely determined by $\cI$, so that some notion of functoriality must be built into the
inductive scheme to guarantee that the coefficient ideal is resolved by blowings-up that 
are independent of the local choices. In the approach of \cite{BMfunct}, this is achieved
by using ideals $\cD_D^k(\cI)$ of derivatives that are logarithmic with respect to the
exceptional divisor $D$. In a local coordinate system $(\bu, \bv) = (u_1,\ldots,u_r,v_1,\ldots,v_s)$
where $D= \{u_1\cdots u_r=0\}$, the sheaf $\cD_D$ of \emph{logarithmic derivations}
(sometimes denoted $\Der_M(-\log D)$) is generated by
\begin{equation}\label{eq:deriv1}
u_i\frac{\p}{\p u_i}, \ \ i=1,\ldots, r,\quad \text{ and }\quad \frac{\p}{\p v_l}, \ \ l=1,\ldots, s.
\end{equation}
Logarithmic derivatives have a particularly simple law of transformation by blowings-up
that are compatible with $D$ \cite[Lemma 3.1]{BMfunct}, which can be used to show that
the transformed coefficient ideal equals the coefficient ideal of the transform of $\cI$ in
a very natural way \cite[Thm.\,3.10]{BMfunct}.

A key technique in our proof of monomialization is a calculus of \emph{logarithmic
derivatives that are tangent to the fibres of a morphism} (Section \ref{sec:tang};
some of the ideas appear
in \cite{BdSJA,BdS,BdSIbero}). 
If $\Phi: (M,D) \to (N,E)$ is a monomial morphism, written, say, in the form \eqref{eq:mon1}
in coordinates at a point $a\in M$, where $D = \{u_1\cdots u_r=0\}$, then the stalk at $a$ of 
the sheaf $\De^\Phi$ of \emph{logarithmic derivations} \emph{tangent to} $\Phi$ is generated by
\begin{equation}\label{eq:logder}
\sum_{i=1}^r \ga_{ji} u_i\frac{\p}{\p u_i}, \ \ j=1,\ldots, r-p,\quad \text{ and }\quad \frac{\p}{\p w_l}, \ \ l=1,\ldots, t,
\end{equation}
where $\bga_j = (\ga_{j1},\ldots,\ga_{jr})\in \IQ^r$, $j=1,\ldots,r-p$, form a basis of the
orthogonal complement of the $\IQ$-linear subspace spanned by $\bal_1,\ldots,\bal_p$
with respect to the standard scalar product $\langle \bga,\bal\rangle$. The sheaf of derivations $\De^\Phi$
is the subsheaf of $\cD_D$ of log derivations that annihilate the components of $\Phi$; note that
(assuming $\Phi$ is dominant; i.e., $s'=s$)
$\De^\Phi$ also annihilates any function $g(u,v,w)$ in our class such that 
$dg\wedge d \Phi_1 \wedge \cdots \wedge d \Phi_n = 0$
(cf.\ Remark \ref{rem:idea}).

Let us write $\cQ$ to denote also the sheaf of germs of functions of class $\cQ$ on $M$. 
Recall that, in general,
we are not assuming that the stalks of $\cQ$ are Noetherian. In a quasianalytic class,
the appropriate analogue of sheaves of ideals of finite type is the following.

\begin{definition}\label{def:priv}
We say that a sheaf of ideals $\cI \subset \cQ$ (or, more generally, a sheaf of $\cQ$-modules
$\cM$ is \emph{privileged} if the stalk of $\cI$ (or $\cM$) at any point $a$
is generated by local sections defined in a common neighbourhood of $a$. If $\cI \subset \cQ$
is a privileged ideal (sheaf), then  $\cQ/\cI$ is quasicoherent.
\end{definition}

Given a privileged ideal (sheaf) $\cI$, we define a chain of privileged ideals (in analogy with
the log derivative ideals $\cD_D^k(\cI)$),
$$
\cI = \cI_0^{\Phi} \subset \cI_1^{\Phi} \subset \cdots \subset \cI_k^{\Phi} \subset \cdots,
$$
by $\cI_{k+1}^{\Phi} := \cI_k^{\Phi} + \De^{\Phi}(\cI_k^{\Phi})$, $k=0,1,\ldots,$ and we set
$\cI_\infty^{\Phi} := \sum \cI_k^{\Phi}$; cf.\,\cite[\S\,3.1]{BdSJA},\,\cite[\S\,5.1]{BdS}.

\smallskip
We introduce the \emph{log differential order} $\nu_a(\cI,\Phi)$ of $\cI$ \emph{relative to} $\Phi$
as the smallest $\nu \in \IN \cup \{\infty\}$ such that $\cI_\infty^{\Phi} := \cI_\nu^{\Phi}$. Note that,
if $\Phi =$ constant and $D=\emptyset$, then $\nu_a(\cI,\Phi)$ equals the standard order of $\cI$ at $a$.
(Compare with \cite[\S\,3.1]{BdSJA},\,\cite[Def.\,5.1]{BdS}.
In the case $\Phi =$ constant, the log differential order has also been defined
in \cite[3.6.9]{ATW}.)

In Section \ref{sec:LocalMon}, the ideal $\cI_\infty^\Phi$ and reduction of the log differential order
relative to $\Phi$ play parts analogous to those of the coefficient ideal and reduction of the order
of $\cI$, in resolution of singularities. There is, however, an important technical difficulty---the
transformation law for $\cI_\infty^\Phi$ is not as simply related to that for $\cI$ as in the
case of the standard log derivations $\cD_D$. For this reason, we have to build appropriate
control of the log derivations $\De^\Phi$ tangent to $\Phi$ into the inductive scheme. We will,
in fact, prove the monomialization theorems above simultaneously with the following.

\begin{theorem}
\label{thm:ideal}
Let $\Phi: (M,D) \to (N,E)$ denote a monomial morphism of class $\cQ$ and let
$\cJ$ denote a privileged ideal sheaf on $M$ which is $\De^\Phi$-closed; i.e., $\De^\Phi(\cJ)\subset \cJ$.
Then there is a countable family (finite, in the algebraic case) of commutative diagrams \eqref{eq:mainR2},
where conditions (1), (2), (3) of Theorem \ref{thm:mainR3} (or of Theorem \ref{thm:mainA3}, in the analytic
and algebraic cases) hold, together with the following: for each $\la$,
\begin{enumerate}
\item[(4)] $\s_\la^*(\De^\Phi) = \De^{\Phi_\la}$, where $\s_\la^*(\De^\Phi)$ denotes the pull-back 
by $\s_\la$ of the sheaf of log derivations $\De^\Phi$ tangent to $\Phi$;
\smallskip
\item[(5)] the pull-back $\s_\la^*(\cJ)$ of $\cJ$ is principal and monomial.
\end{enumerate}
\end{theorem}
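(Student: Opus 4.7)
Plan: My plan is to prove Theorem \ref{thm:ideal} as part of the simultaneous induction (on $\dim M$) with the main monomialization theorems, adapting Hironaka's desingularization scheme to the fibred setting. Fix $\Phi$-monomial coordinates at a point $a \in M$ so that $\De^\Phi$ is generated by \eqref{eq:logder}. The $\De^\Phi$-closedness of $\cJ$ forces a strong local structure: closure under $\p/\p w_l$ reduces its generation to germs in $(\bu,\bv)$ alone, and closure under the log derivations $\sum_i \ga_{ji} u_i \p/\p u_i$ imposes a toric decomposition. Passing to Taylor series $\sum f_{\bga,\bde}\,\bu^\bga \bv^\bde$, the ideal splits into isotypic components under the weights $\bga \mapsto (\langle \bga_j, \bga\rangle)_j$, each of which already lies in $\cJ$. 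Thus $\cJ$ is morally a ``$\De^\Phi$-equivariant'' ideal, and one must principalize it by blowings-up (and, in the real case, power substitutions) that respect the monomial form of $\Phi$.

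The core of the proof is to define an upper semicontinuous invariant $\inv_a(\cJ,\Phi)$ whose maximum locus is a smooth $\De^\Phi$-invariant center $C$ with simple normal crossings with $D$. Since $\cJ$ is $\De^\Phi$-closed, the log-differential order $\nu_a(\cJ,\Phi)$ is identically zero and so cannot serve alone; I would build $\inv$ as a lexicographic tuple consisting of (i) the order of $\cJ$ at $a$, (ii) a monomial exponent capturing the exceptional contribution along $D$, and (iii) the invariant of a ``coefficient ideal'' obtained by restriction to a maximal-contact hypersurface. Crucially, the maximal-contact hypersurfaces should be chosen among the $\De^\Phi$-invariant ones (intersections of $\{u_i = 0\}$'s and level sets of the $v_l$'s), so that the restriction inherits a monomial morphism of smaller source dimension and the restricted coefficient ideal remains $\De^\Phi$-closed, enabling the induction on $\dim M$. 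Given $\inv$, I perform the local blowing-up with center $C$, preceded in the real quasianalytic case by a power substitution to handle non-integral exponents as in \S\ref{subsec:power}. Because $C$ is $\De^\Phi$-invariant it saturates to fibres of $\Phi$ over a smooth subvariety $C' \subset N$; blowing up $C'$ in the target yields the commuting lift $\tau$, and a chart-by-chart computation shows the lifted morphism $\tPhi$ is again monomial compatible with the transformed divisors, and satisfies $\s^*\De^\Phi = \De^{\tPhi}$, giving conclusions (1)--(4). A standard (if intricate) argument then shows $\inv$ strictly drops in lex order after each such step, so after finitely many steps $\s^*\cJ$ is principal and monomial, yielding (5).

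The main obstacle is the definition of $\inv$ and the verification that it behaves well under admissible blowings-up. Classical desingularization provides a template, but here the coefficient ideal must be constructed so that its transform under an admissible blow-up agrees with the coefficient ideal of the transform \emph{and} remains $\De^\Phi$-closed relative to the transformed monomial morphism; this requires the calculus of log derivations tangent to a morphism developed in Section \ref{sec:tang}, and in particular a transformation law for $\De^\Phi$ under blowings-up compatible with $D$ analogous to \cite[Lemma 3.1]{BMfunct}. The quasianalytic framework compounds the difficulty, since stalks of $\cQ$ need not be Noetherian: one must argue throughout with privileged ideals (Definition \ref{def:priv}) and invoke the desingularization results of \cite{BMV}. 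Moreover, power substitutions alter the exponent lattice on $\bu$, so the invariant must be insensitive to such substitutions---a point that does not arise in the algebraic or analytic cases and is handled by the combinatorial techniques of Section \ref{sec:premon}.
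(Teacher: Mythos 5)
Your plan differs substantially from the paper's actual proof of Theorem~\ref{thm:ideal}: the paper does not construct a master upper-semicontinuous invariant with a maximal locus; it proves the theorem as assertion I.A.b($m,n$) in a six-part simultaneous induction (I.A.a, I.A.b, I.A.c, I.B.a, I.B.b, II). The proof of I.A.b itself is short: apply I.A.a, which either principalizes $\cJ$ or produces a regular vector field in $\De^\Phi$ (hence a $\Phi$-free coordinate $\bw$); then restrict to $\{\bw=0\}$, invoke Lemma~\ref{rk:BasisFormalIdeal} to see $\cJ$ and $\cJ|_{\bw=0}$ agree formally, apply I.A.b($m-1,n$), and conclude by the division axiom. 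The hard work is in I.A.a, which does \emph{not} define a global invariant but instead splits off one component $\psi$ of $\Phi$, compares $\De^\Phi$ with $\De^\vp$ for the truncated morphism $\vp$, and inducts on the auxiliary quantity $\nu_a(\cI,\vp)$ after passing to Weierstrass--Tschirnhausen normal form. Your observation that $\nu_a(\cJ,\Phi)\equiv 0$ correctly identifies why $\nu_a(\cdot,\Phi)$ alone is useless here, and your idea of restricting to $\De^\Phi$-invariant hypersurfaces is in the same spirit as the paper's restriction to free variables, so the high-level template is recognizable even if the realization is quite different.

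There are, however, two genuine gaps. First, the assertion that a smooth $\De^\Phi$-invariant center $C$ ``saturates to fibres of $\Phi$ over a smooth subvariety $C'\subset N$'' is false. Take $\Phi:(\IR^4,D)\to(\IR^2,E)$, $\Phi(u_1,u_2,u_3,v)=(u_1u_2,v)$ with $D=\{u_1u_2u_3=0\}$, $E=\{x_1=0\}$; then $\De^\Phi$ is generated by $u_1\p/\p u_1-u_2\p/\p u_2$ and $u_3\p/\p u_3$, and $C=\{u_1=u_3=0\}$ is $\De^\Phi$-invariant, yet $\Phi(C)=\{x_1=0\}$ has $\Phi^{-1}(\{x_1=0\})=\{u_1u_2=0\}\supsetneq C$. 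So the source blow-up of $C$ does not arise as the lift of a target blow-up, and the commuting diagram in Theorem~\ref{thm:ideal} cannot be produced this way. (The paper instead allows source-only combinatorial blowings-up, with $\tau=\mathrm{id}$, and treats the target modifications with separate lemmas in Section~\ref{sec:premon}.) Second, and more fundamentally, the proposal leaves the invariant $\inv$ undefined and does not explain how the transform of the ``coefficient ideal'' under an admissible blow-up agrees with the coefficient ideal of the transform; the paper explicitly flags (\S\ref{subsec:deriv}) that the transformation law for $\cI_\infty^\Phi$ is \emph{not} as well-behaved as for the classical $\cD_D^k(\cI)$, and this is precisely why the authors abandon a centrally defined invariant in favour of building control of $\De^\Phi$ (condition (4)) directly into the inductive scheme via Lemma~\ref{lem:BUPreservingMonomialDeriv} and Corollary~\ref{cor:DerCombinatorialBU}. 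Acknowledging the obstacle without resolving it leaves the proposal incomplete at its core step.
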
 

Theorem \ref{thm:corideal} will be proved as a corollary of
Theorem \ref{thm:ideal} also in Section \ref{sec:LocalMon}.

\subsection{Power substitutions---significance and applications}\label{subsec:power}
In this subsection, we show that a morphism in a quasianalytic class
(in particular, in a quasianalytic Denjoy Carleman class, or in a class of $\cC^\infty$
functions definable in a polynomially bounded $o$-minimal structure; see Section \ref{sec:quasian})
cannot, in general, be monomialized by sequences of only local blowings-up of the source and
target. We also give applications of monomialization by local blowings-up and power
substitutions, to problems in subanalytic (or sub-quasianalytic) geometry
which cannot be solved using local blowings-up alone, even in the real-analytic case.
It may be interesting to compare the use of power substitutions in our monomialization theorems
with the rather different way that they intervene in \cite{ATW}.

\begin{examples}\label{ex:halfline}
These examples are based on a construction due to Nazarov, Sodin and Volberg \cite[\S\,5.3]{NSV}:
Given a suitable quasianalytic Denjoy-Carleman class $\cC_M$ (see \S\ref{subsec:quasian}), there is
a function $g \in \cC_M([0,1))$ which admits no extension to a function in $\cC_{M'}((-\de,1))$, for any 
quasianalytic Denjoy-Carleman class $\cC_{M'}$ (for example, we can take the
sequence $M=(M_k)_{k\in \IN}$, where $M_k = (\log k)^k$).

By adding a linear term to $g$ if necessary, we can assume that $g(0)=0$ and $g'(0)\neq 0$.
Let $\Phi: (-1,1)\to \IR^2$ denote the $\cC_M$-morphism $\Phi(x) = (x^2,g(x^2))$. Assume
that $\Phi$ can be monomialized by local blowings-up. There are no nontrivial blowings-up
of the source, and blowings-up over the origin in the target lead to a morphism of the same
form as $g$. Monomialization of $\Phi$ must necessarily be achieved, therefore, by a
coordinate change in the target; this is not possible because of non-extendability of $g$.

We can also consider the $o$-minimal structure $\IR_{\cC_M}$ given by expansion of the
real field by restricted functions of class $\cC_M$ (cf.\ \cite{RSW}), and the quasianalytic
class $\cQ$ of $\cC^\infty$ functions that are locally definable in $\IR_{\cC_M}$. By
\cite[Thm.\,1.6]{BBC}, any function $h\in\cQ((-1,1))$ belongs to a Denjoy-Carleman
class {$\cC_{M^{(p)}}$}, for some positive integer $p$, where $M^{(p)}$ is the sequence
obtained from $M = (M_k)$ by a shift $M^{(p)}_k := M_{pk}$. For example, the
sequence $M=(M_k)_{k\in \IN}$, where $M_k = (\log(\log k))^k$, also satisfies the conditions
of \cite{NSV} and, in this case, each shifted class $\cC_{M^{(p)}}$ is quasianalytic 
(cf.\,\cite[Example 6.7]{Nelim}); it follows that the morphism
$\Phi$ above also cannot be monomialized in the class $\cQ$ by local blowings-up alone.
\end{examples}

Power substitutions are important not only for technical reasons as suggested by Examples \ref{ex:halfline}. Theorem \ref{thm:transftoid} following is a
consequence of our monomialization theorems. 
(In the case that $\dim M = \dim N$, this is a \emph{local factorization} 
statement for a real quasianalytic morphism.)

\begin{theorem}\label{thm:transftoid}
Suppose that $\cQ$ is a real quasianalytic class and that $\Phi: (M,D)\to (N,E)$ is 
a proper $\cQ$-morphism, where $\Phi: M\to N$ is generically of rank $m=\dim M$.
Then Theorem \ref{thm:mainR3} holds with the stronger conclusion that
each $\Phi_\la = \text{identity}_{V_\la}\times 0$; i.e., $(\bx,\bz,\tbz) = (\bu,\bv,\bzero_{n-m})$
in local coordinates. (In the real algebraic case, $\{\Phi_\la\}$ is
a finite family.)
\end{theorem}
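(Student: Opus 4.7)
The plan is to derive Theorem \ref{thm:transftoid} as a refinement of Theorem \ref{thm:mainR4}, using the generic-rank hypothesis to collapse the surviving monomial data to the identity. Applying Theorem \ref{thm:mainR4} first produces commutative diagrams \eqref{eq:mainR2} in which each $\Phi_\la$ is already in the $y$-free form \eqref{eq:mon2}, with source coordinates $(\bu,\bv,\bw)$ satisfying $r+s+t=m$, and target coordinates $(\bx,\bz)$ with $p+s'=n$.

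The first step is to show that the rank hypothesis forces $t=0$ (no $\bw$ variables) and $p=r$. Since $\s_\la$ and $\tau_\la$ are composites of blowings-up and power substitutions, their differentials are invertible on a dense open set, so $d\Phi_\la$ has generic rank $m$ wherever $d\Phi$ does. A direct Jacobian computation for \eqref{eq:mon2} at a point with all $u_i\neq 0$ yields generic rank exactly $p+s$: the $\bw$-columns and the $z_l=0$ rows vanish, and the remaining block is block-diagonal with rank $p$ (by the $\IQ$-linear independence of the $\bal_j$) and rank $s$ contributions. Together with $p\leq r$, the identity $p+s=m=r+s+t$ then gives $t=0$ and $p=r$, so that $A=(\al_{ji})\in\IN^{r\times r}$ has nonzero determinant.

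What remains is to transform the surviving monomial map $\bu\mapsto\bu^A$ into the identity by finitely many further compatible local blowings-up and local power substitutions on source and target. I would proceed toric-combinatorially, by induction on $|\det A|$. The basic tools are: source power substitutions $u_i=\tu_i^{k_i}$, which rescale columns of $A$; target power substitutions $x_j=\tx_j^{k}$, which (when the corresponding row is divisible by $k$) rescale rows; and compatible smooth blowings-up of codimension-two coordinate subspaces, which realize in each chart elementary transvection-type operations on the columns (source blow-up) or rows (target blow-up) of $A$. Iterating these according to a Smith-normal-form strategy reduces $A$ to the identity, after which, upon the natural renaming of the $s'-s=n-m$ vanishing target coordinates as $\tbz$, $\Phi_\la$ takes the stated form $(\bx,\bz,\tbz)=(\bu,\bv,\bzero_{n-m})$.

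The hard part is this final toric reduction: at each stage one must choose the correct chart of the next auxiliary blowing-up so that the transformed $A$ stays in $\IN^{r\times r}$ (the only obstacle to realizing arbitrary $GL_r(\IZ)$ operations by nonnegative transvections) and so that the simple-normal-crossings structure of $D_\la$ and $E_\la$ is preserved. This is a purely combinatorial problem, essentially the same as in the premonomial-to-monomial reduction treated in Section \ref{sec:premon}, whose machinery should apply directly here. The finiteness of the family of diagrams in the real algebraic case is inherited from Theorem \ref{thm:mainR4}.
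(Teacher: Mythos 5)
Your high-level strategy is the same as the paper's: monomialize by Theorem \ref{thm:mainR4} (equivalently \ref{thm:mainR3}), observe that the generic-rank-$m$ hypothesis forces the surviving monomial morphisms into the square form $\bu\mapsto\bu^A$ in the $\bu$-variables (with $p=r$, $t=0$, $q=0$, $s'=s$), and then reduce $A$ to the identity by further compatible local blowings-up and power substitutions. Your rank computation establishing $t=0$ and $p=r$ is correct, and you correctly locate the remaining combinatorics in Section \ref{sec:premon}: the paper's proof is precisely ``Theorems \ref{thm:mainR3}, \ref{thm:mainA3} and Lemma \ref{lem:transftoid}'', where Lemma \ref{lem:transftoid}
(\S\ref{subsec:realquasian}) is the combinatorial factorization lemma that carries out the reduction of $\bu^A$ to the identity.

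Two cautions about the part you left as a sketch. First, ``induction on $|\det A|$'' is not a workable induction variable as stated: source power substitutions $u_i = \tu_i^{k_i}$ \emph{multiply} $|\det A|$ by $k_i$, combinatorial blowings-up leave it unchanged (they effect unimodular column or row transvections), and only target power substitutions (applicable when a row is uniformly divisible) decrease it. The paper instead performs an ordered Gauss-type column reduction: a source power substitution makes all entries of a column divisible by its pivot and a target power substitution normalizes the pivot to $1$, then combinatorial blowings-up clear the rest of that column; iterating column by column reduces $(\al_{ji})$ to the identity. Your ``Smith normal form strategy'' points in the right direction but would need this ordering spelled out to guarantee the entries stay in $\IN$. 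Second, and more substantively, you do not address the semiproper-covering requirement (condition (2) of Theorem \ref{thm:mainR3}). The entire point of the careful cube bookkeeping in Definitions \ref{def:combblup}, \ref{def:powersubst}, Remark \ref{rem:powersubst}, and conclusion (3) of Lemma \ref{lem:transftoid} ($\bigcup_\nu\s_\nu(C^{p+q}_\nu) = C^{p+q}$) is to guarantee that the finite family of local modifications actually covers a compact neighbourhood; ``choose the correct chart at each stage'' is not by itself enough, and you also need \emph{even} power substitutions in the target to have well-defined liftings to the source (Remark \ref{rem:powersubst}). These are exactly the issues that make the paper formulate Lemma \ref{lem:transftoid} in terms of standard coordinate charts and cubes rather than as a purely formal matrix reduction.
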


\begin{proof} This follows from Theorems \ref{thm:mainR3}, \ref{thm:mainA3}
and Lemma \ref{lem:transftoid}.
\end{proof}

For a real quasianalytic class $\cQ$, we can define \emph{sub-quasianalytic} functions
and sub-quasianalytic sets in an obvious way generalizing subanalytic. (In particular,
a sub-quasianalytic function is a function whose graph is sub-quasianalytic.)

\begin{theorem}[Characterization of a sub-quasianalytic function]\label{thm:subquasian}
Let $\cQ$ denote a real quasianalytic class, and let $f:N\to \IR$ be a continuous
function. Then $f$ is sub-quasianalytic if and only if there is a countable semiproper covering 
$\{\tau_\la: W_\la \to N\}$ of $N$ of class $\cQ$, such that
\begin{enumerate}
\item each morphism $\tau_\la$ is a composite of finitely many smooth local blowings-up
and power substitutions;
\smallskip
\item each function $f_\la := f\circ\tau_\la$ is quasianalytic of class $\cQ$.
\end{enumerate}
\end{theorem}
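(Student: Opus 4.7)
The plan is to deduce the nontrivial direction from Theorem \ref{thm:transftoid}, applied to a uniformization of the graph of $f$; the reverse implication is almost immediate. For $(\Leftarrow)$, assume such a covering exists. The graph $\Gamma_{f_\la}\subset W_\la\times\IR$ of each $f_\la\in\cQ$ is quasianalytic, and $\Gamma_f = \bigcup_\la (\tau_\la\times\id_\IR)(\Gamma_{f_\la})$. Given any compact $K\subset N$, semiproperness of $\{\tau_\la\}$ supplies finitely many compacts $K_i\subset W_{\la(i)}$ whose images cover $K$, whence $\Gamma_f\cap (K\times\IR)$ is a finite union of images of compact quasianalytic sets under $\cQ$-morphisms, hence sub-quasianalytic; thus $f$ itself is sub-quasianalytic.

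For $(\Rightarrow)$, suppose $f$ is continuous and sub-quasianalytic. I would first apply uniformization of closed sub-quasianalytic sets to $\Gamma_f\subset N\times\IR$, which is $n$-dimensional (where $n=\dim N$) as the graph of a continuous function on $N$: this produces an $n$-dimensional $\cQ$-manifold $M$ and a proper $\cQ$-morphism $\Phi: M\to N\times\IR$ with $\Phi(M)=\Gamma_f$. Set $\Psi := \pi_N\circ\Phi$ and $g := \pi_\IR\circ\Phi\in\cQ(M)$; then $g = f\circ\Psi$. The morphism $\Psi: M\to N$ is proper (because $\pi_N|_{\Gamma_f}$ is proper, by continuity of $f$), surjective, and generically of rank $n=\dim M$. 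Applying Theorem \ref{thm:transftoid} to $\Psi:(M,\emptyset)\to (N,\emptyset)$ yields commutative diagrams
\begin{equation*}
\begin{tikzcd}
V_\la \arrow{d}{\id} \arrow{r}{\s_\la} & M \arrow{d}{\Psi}\\
W_\la \arrow{r}{\tau_\la} & N
\end{tikzcd}
\end{equation*}
in which $V_\la=W_\la$, $\Psi\circ\s_\la=\tau_\la$, the family $\{\tau_\la\}$ is a countable semiproper covering of $N$, and each $\s_\la,\tau_\la$ is a composite of smooth local blowings-up and power substitutions. Then $f\circ\tau_\la = f\circ\Psi\circ\s_\la = g\circ\s_\la$, which lies in $\cQ(V_\la)$ because $g\in\cQ(M)$ and composition with local blowings-up and power substitutions preserves $\cQ$.

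The main obstacle is the uniformization step itself, namely the existence of a proper $\cQ$-morphism from an $n$-dimensional $\cQ$-manifold onto the closed sub-quasianalytic set $\Gamma_f$. In the real-analytic setting this is Hironaka's uniformization theorem, derived from resolution of singularities, and the quasianalytic analogue should follow by an essentially identical argument from the quasianalytic resolution theorems cited in the introduction (\cite{BMinv,BMselecta,BMV}); one may alternatively build such a uniformization directly out of a presentation of $\Gamma_f$ as the image of a proper $\cQ$-morphism (of possibly larger dimensional source), followed by a dimension reduction along the fibres. All the remaining ingredients---properness and surjectivity of $\Psi$, control of $\dim M$, and invariance of $\cQ$ under the source modifications---are routine verifications.
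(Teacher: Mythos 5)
Your proposal is correct and follows essentially the same route as the paper: uniformize $\graph f$ by a proper $\cQ$-morphism from an $n$-dimensional manifold, compose with the projection to $N$, and apply Theorem \ref{thm:transftoid} to conclude that $f\circ\tau_\la = (f\circ\Phi)\circ\s_\la$ is quasianalytic. The uniformization step you flag as the main obstacle is exactly what the paper invokes without comment (it is the standard consequence of quasianalytic resolution of singularities), and your more detailed treatment of the ``if'' direction is a harmless elaboration of what the paper dismisses as clear.
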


\begin{proof} The ``if'' direction is clear. Conversely, suppose that $f$ is sub-quasianalytic. Then
there is a proper $\cQ$-morphism $\Psi: M\to N\times \IR$ such that $\dim M = \dim N$ and $\Psi(M) = \graph f$.
Let $\pi_N$ and $\pi_{\IR}$ denote
the projections of $N\times\IR$ to $N$ and $\IR$ (respectively), and set $\Phi := \pi_N\circ\Psi: M \to N$,
so that $f\circ\Phi = \pi_{\IR}\circ\Psi$ is quasianalytic.

By Theorem \ref{thm:transftoid}, there are countable semiproper coverings $\{\s_\la\}$ and $\{\tau_\la\}$
of $M$ and $N$ 
by finite sequences of local blowings-up
and power substitutions, such that, for each $\la$, $\Phi\circ\s_\la = \tau_\la$; therefore, 
$f\circ\tau_\la = (f\circ\Phi)\circ\s_\la$ is quasianalytic.
\end{proof}

\begin{remark}\label{rem:subquasian}
If $f$  is sub-quasianalytic and $N$ has an SNC divisor $E$, then there is a semiproper covering 
$\{\tau_\la\}$ as in the theorem, 
where each $\tau_\la$ is compatible with $E$.
\end{remark}

Theorem \ref{thm:subquasian} in the real
analytic case was proved in \cite{BMarcan}. Note that, even in the real-analytic case, the
analogous assertion using local blowings-up alone is not true. (A real-valued function $f$
can be transformed to analytic by sequences of local blowings-up of the source if and only if $f$ is subanalytic
and \emph{arc-analytic}; i.e., analytic on every real-analytic arc \cite{BMarcan}.) Theorem
\ref{thm:subquasian}, in general, has been proved by Rolin and Servi using model-theoretic techniques \cite[Thm.\,4.2]{RSer}.
Rolin and Servi show that quantifier elimination in the structure defined by restricted functions of class $\cQ$
(together with reciprocal and $n$th roots) is a direct consequence of Theorem \ref{thm:subquasian} (see
\cite[Sect.\,4]{RSer}). 
Theorems \ref{thm:rect1},  \ref{thm:rect2} following are strong versions for real quasianalytic classes, in general, 
of Hironaka's rectilinearization theorem for subanalytic sets \cite[Thm.\,7.1]{HiroPisa},
\cite[Thm.\,0.2]{BMihes}, and a variant in \cite[Sect.\,4]{RSer}.

\begin{theorem}[Rectilinearization I]\label{thm:rect1} 
Let $N$ denote a manifold of real quasianalytic
class $\cQ$, and let $X$ denote a sub-quasianalytic subset of $N$. Assume that $N$ has pure dimension $n$.
Then there is a countable semiproper covering 
$\{\tau_\la: W_\la \to N\}$ of $N$ of class $\cQ$, such that, for each $\la$,
\begin{enumerate}
\item $W_\la \cong \IR^n$ and  $\tau_\la$ is a composite of finitely many smooth local blowings-up
and power substitutions compatible with the exceptional divisors;
\smallskip
\item $\tau_\la^{-1}(X)$ is a union of orthants of $\IR^n_\la$.
\end{enumerate}
\end{theorem}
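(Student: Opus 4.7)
The plan is to deduce Theorem \ref{thm:rect1} from the monomialization Theorem \ref{thm:mainR3} together with a quasianalytic version of Hironaka's uniformization of closed sub-quasianalytic sets. First I would reduce to the case that $X = \Psi(M)$ is the image of a proper $\cQ$-morphism $\Psi: M \to N$ with $\dim M \leq n$. Every sub-quasianalytic subset of $N$ is locally a finite Boolean combination of closed sub-quasianalytic sets, and the family of finite unions of coordinate orthants of $\IR^n$ is itself a Boolean algebra, so one may assume $X$ is closed. A closed sub-quasianalytic set can then be written as the image of a proper $\cQ$-morphism from a manifold of dimension $\dim X$: this is the quasianalytic analog of Hironaka's uniformization, which follows from resolution of singularities in quasianalytic classes (\cite{BMinv}, \cite{BMselecta}, or equivalently Corollary \ref{cor:ideal} applied with $\Phi$ constant, which reduces $\De^\Phi$ to the full sheaf of logarithmic derivations).

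Next I would apply Theorem \ref{thm:mainR3} to $\Psi : (M,\emptyset)\to(N,\emptyset)$. This produces a countable family of commutative diagrams \eqref{eq:mainR2}, with semiproper coverings $\{\s_\mu : V_\mu \to M\}$ and $\{\tau_\mu : W_\mu \to N\}$ and monomial $\Psi_\mu : V_\mu \to W_\mu$ of the form \eqref{eq:mon1}. By refining with additional coordinate chart inclusions, interpreted as local blowings-up with empty centre, one may assume each $W_\mu$ is diffeomorphic to $\IR^n$, with the monomial coordinates $(\bx,\by,\bz)$ serving as global coordinates.

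For each $\mu$ one then analyzes $\tau_\mu^{-1}(X) \subset W_\mu$. Since $\Psi_\mu$ is monomial, the image of any coordinate orthant of $V_\mu$ is determined combinatorially by the sign patterns of the monomials $\bu^{\bal_j}$ and $\bu^{\bbe_k}$: each such orthant maps into a single orthant of $W_\mu$, in the locus $\{z_{s+1}=\cdots=z_{s'}=0\}$. After inserting further compatible power substitutions on $V_\mu$ to adjust the parities of the components of the exponent vectors $\bal_j$, each image becomes exactly a coordinate face of $W_\mu$. Combining this with the semiproperness of $\{\s_\mu\}$ and the commutativity of \eqref{eq:mainR2} yields that $\Psi_\mu(V_\mu)$ is a finite union of coordinate orthants of $W_\mu\cong \IR^n_\mu$.

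The main obstacle will be the equality $\tau_\mu^{-1}(X) = \Psi_\mu(V_\mu)$. The forward inclusion is immediate from commutativity; the reverse is not automatic, because a point $q\in W_\mu$ with $\tau_\mu(q)\in X=\Psi(M)$ only lifts a priori to \emph{some} $V_{\mu'}$ via the semiproper cover $\{\s_{\mu'}\}$, not necessarily to $V_\mu$. Overcoming this requires a further refinement step in which each $\tau_\mu$ is restricted, by additional compatible local blowings-up and power substitutions, to the portion of $N$ over which the chosen monomial chart $V_\mu$ does surject onto the fibre of $\Psi$. The semiproperness of $\{\s_\mu\}$ provides the locally finite bookkeeping needed to make this refinement possible in a compatible way across all $\mu$, in parallel with the fibre-product argument used for Hironaka's subanalytic rectilinearization in \cite[Thm.\,0.2]{BMihes}.
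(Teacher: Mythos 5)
Your proposal takes a genuinely different route from the paper, but it has a real gap at the final step that you yourself identify and then wave away.

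The paper's proof is considerably shorter and bypasses the image-equality problem entirely. It writes $X$ as a Boolean combination of closed sub-quasianalytic sets $X_{kj}$ and introduces the squared distance functions $f_{kj}(x) = d(x,X_{kj})^2$, which are continuous and sub-quasianalytic. Theorem \ref{thm:subquasian} (itself a consequence of Theorem \ref{thm:transftoid}) gives a semiproper covering $\{\tau_\la\}$ by finite composites of local blowings-up and power substitutions that makes each $f_{kj}\circ\tau_\la$ quasianalytic; a further resolution of singularities of $\prod f_{kj}\circ\tau_\la$ makes each of these a monomial times a unit. Since $X_{kj} = \{f_{kj} = 0\}$, each $\tau_\la^{-1}(X_{kj})$ is then a union of coordinate hyperplanes, and the Boolean combination gives a union of orthants. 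The distance function acts as the bridge between the geometric set and a functional description, so the proof never has to ask whether a single monomial chart $V_\mu$ surjects onto the relevant fibre of $\Psi$.

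In your approach, the reverse inclusion $\tau_\mu^{-1}(X) \subset \Psi_\mu(V_\mu)$ is not a minor bookkeeping matter: it is the crux. A point $q \in W_\mu$ with $\tau_\mu(q) \in X = \Psi(M)$ lifts to some chart $V_{\mu'}$ of the semiproper cover of $M$, but $\mu' \neq \mu$ in general, and the diagrams in \eqref{eq:mainR2} are not fibre products, so $q$ need not lift to $V_\mu$ at all. Your statement that ``a further refinement step'' restricting $\tau_\mu$ will fix this is not a proof; it is a restatement of the difficulty. Resolving it would require something like the argument the paper actually carries out for Theorem \ref{thm:rect2}, where after applying Theorem \ref{thm:transftoid} one identifies the ``bad locus'' $X_\la \subset \IR^k_\la \backslash E_\la$ (points hitting $\overline{X}\backslash E$ but not $X\backslash E$) and applies Theorem \ref{thm:subquasian} to the auxiliary distance function $d(x,X_\la)^2$ to excise it by further blowings-up and power substitutions. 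So even if you pursue your monomialization-of-$\Psi$ strategy, you are eventually forced back to a distance-function argument of the type the paper uses from the start. I would recommend adopting the paper's argument directly, or, if you wish to keep your structure, modeling the missing step explicitly on the proof of Theorem \ref{thm:rect2}.
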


\begin{proof}
We can assume that $N = \IR^n$ and that $X = \bigcup_{k=1}^p (X_{k1}\backslash X_{k2})$,
where each $X_{kj}$ is closed sub-quasianalytic. For all $k$ and $j=1,2$, let $f_{kj}(x) = d(x, X_{kj})^2$, 
were $d$ denotes the Euclidean distance. Then each $f_{kj}$ is
sub-quasianalytic. By Theorem \ref{thm:subquasian}, there is a countable semiproper covering
$\{\tau_\la\}$ of $N$ of class $\cQ$ satisfying condition (1), such that, for all $k,\,j,\,\la$, $f_{kj}\circ\tau_\la$ is quasianalytic
of class $\cQ$. By resolution of singularities of $\prod_{k,j}f_{kj}\circ\tau_\la$, for each $\la$, 
after further local blowings-up if necessary, we can
assume that each $f_{kj}\circ\tau_\la$ is a monomial times a unit; thus each $\tau_\la^{-1}(X)$ is a union
of orthants in $\IR^n$. 
\end{proof}

\begin{theorem}[Rectilinearization II]\label{thm:rect2} 
Let $N$ denote a manifold of real quasianalytic
class $\cQ$, with SNC divisor $E$. Assume that $N$ has pure dimension $n$. Let $X$ denote a sub-quasianalytic 
subset of $N$. Then there is a countable semiproper covering 
$\{\tau_\mu: W_\mu \to N\}$ of $N$ of class $\cQ$, such that, 
\begin{enumerate}
\item for each $\mu$, $W_\mu$ is a copy\, $\IR^n_\mu$ of $\IR^n$ 
and  $\tau_\mu$ is a composite of finitely many smooth local blowings-up and power substitutions compatible 
with the divisors;
\smallskip
\item $X\backslash E = \cup_\mu \tau_\mu(\IR^{k(\mu)}_\mu\backslash E_\mu)$, where,  for each $\mu$, 
$\IR^{k(\mu)}_\mu$ is a coordinate subspace of $\IR^n_\mu$ (of dimension $k(\mu)$)
transverse to the divisor $E_\mu \subset \IR^n_\mu$, and $\tau_\mu$ restricts to 
an embedding on each connected component of\,  $\IR^{k(\mu)}_\mu\backslash E_\mu$.
\end{enumerate}
\end{theorem}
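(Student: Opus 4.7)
The plan is to refine the rectilinearization of Theorem \ref{thm:rect1} by one additional power substitution per orthant, unfolding each orthant in the preimage of $X\setminus E$ into (the complement of a divisor in) a full coordinate subspace, on which the composed morphism becomes an embedding on each connected component.

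First I would apply Theorem \ref{thm:rect1} to $X$ on $N$, working compatibly with $E$ in the sense of Remark \ref{rem:subquasian}. This yields a countable semiproper covering $\{\tau_\la: W_\la \to N\}$ with $W_\la \cong \IR^n_\la$, each $\tau_\la$ a composite of smooth local blowings-up and power substitutions compatible with $E$, such that $\tau_\la^{-1}(E)$ lies in a union of coordinate hyperplanes of $\IR^n_\la$ and
\[
\tau_\la^{-1}(X) \;=\; \bigcup_\al Q_\al, \quad Q_\al = \{u_i = 0 : i \in I_0(\al)\} \cap \{\ep_i u_i > 0 : i \in I_+(\al)\}.
\]
After a further refinement (for instance by applying the theorem to $X\setminus E$ in place of $X$), I would arrange that every orthant $Q_\al$ contributing to $\tau_\la^{-1}(X\setminus E)$ is disjoint from $\tau_\la^{-1}(E)$; this amounts to ensuring that no coordinate index in $I_0(\al)$ or among the free coordinates of $Q_\al$ indexes a hyperplane of $\tau_\la^{-1}(E)$.

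Next, for each pair $(\la,\al)$ I would define $\tau_\mu := \tau_\la \circ P_\al$, where $P_\al$ is the extended power substitution with $u_i = \ep_i \tu_i^2$ for $i \in I_+(\al)$ and $u_i = \tu_i$ otherwise, and take the coordinate subspace
\[
\IR^{k(\mu)}_\mu := \{\tu_i = 0 : i \in I_0(\al)\} \subset \IR^n_\mu, \qquad k(\mu) = n - |I_0(\al)|.
\]
The transform $E_\mu := \tau_\mu^{-1}(E)_\reduced$ is a union of coordinate hyperplanes of $\IR^n_\mu$ containing each $\{\tu_i = 0\}$ for $i \in I_+(\al)$, so $\IR^{k(\mu)}_\mu$ is transverse to $E_\mu$, giving condition (1). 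The connected components of $\IR^{k(\mu)}_\mu \setminus E_\mu$ are indexed by the signs of $\tu_i$ for $i \in I_+(\al)$; on each such component those signs are fixed, so $P_\al$ is a diffeomorphism onto $Q_\al$, and $\tau_\la$ is an isomorphism there since its exceptional locus has been absorbed into $E_\mu$. Hence $\tau_\mu$ restricts to an embedding with image contained in $\tau_\la(Q_\al) \subset X\setminus E$, and the union over $(\la,\al)$ exhausts $X\setminus E$ because $\{\tau_\la\}$ covers $N$ and $\bigcup_\al Q_\al = \tau_\la^{-1}(X)$.

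To ensure that the augmented family $\{\tau_\mu\}$ remains a semiproper covering of $N$, I would also keep the original $\tau_\la$'s, paired trivially with the full $\IR^n_\la$ as coordinate subspace (corresponding to $I_0=I_+=\emptyset$). The main obstacle I expect is the embedding claim on each connected component: it rests on $\tau_\la$ being injective outside its exceptional divisor and on arranging at the outset that every hyperplane appearing in $\tau_\mu^{-1}(E)$, along with every exceptional divisor introduced on $\IR^n_\mu$, is incorporated into $E_\mu$. Once these are in place, semiproperness of the augmented family is automatic, as each $P_\al$ is finite and surjective onto its image and semiproperness is stable under composition.
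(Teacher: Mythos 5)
Your approach is genuinely different from the paper's, but it has a gap that I do not think is easily repaired.

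The paper does not refine the rectilinearization of Theorem~\ref{thm:rect1}. Instead it sets $k=\dim(X\setminus E)$, chooses a proper $\cQ$-morphism $\Phi:M\to N$ from a $k$-dimensional manifold $M$ whose image lies in $\overline{X\setminus E}$ (and misses only a set of dimension $<k$), applies Theorem~\ref{thm:transftoid} to factor $\Phi$ as $\mathrm{identity}\times 0$ after local blowings-up and power substitutions in both source and target, and then inducts on $k$. The coordinate subspaces $\IR^{k}_\la$ are the images $\Phi_\la(V_\la)$, so they are automatically transverse to the exceptional divisor and their dimension is controlled from the outset. A final application of Theorem~\ref{thm:subquasian} handles the points of $\IR^k_\la\setminus E_\la$ that map into $\overline{X\setminus E}$ but not $X\setminus E$ (which form a set $X_\la$ of dimension $<k$).

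The gap in your argument is that after Theorem~\ref{thm:rect1}, an orthant $Q_\al$ comprising $\tau_\la^{-1}(X\setminus E)$ can be contained in an exceptional hypersurface of $\tau_\la$ that is not a component of $\tau_\la^{-1}(E)$---for instance, an exceptional divisor introduced during the further resolution in the proof of Theorem~\ref{thm:rect1} whose center lies away from $E$. Ensuring disjointness from $\tau_\la^{-1}(E)$ (your refinement step) does not prevent this, nor does working with $X\setminus E$ in place of $X$. Once this happens, two of the theorem's conclusions fail simultaneously for the corresponding $\mu$: if you incorporate all accumulated exceptional hypersurfaces into $E_\mu$ (as the transversality convention in the statement requires, and as you acknowledge at the end), then $\IR^{k(\mu)}_\mu=\{\tu_i=0:i\in I_0(\al)\}$ is \emph{contained in}, not transverse to, $E_\mu$; and if you do not, then $Q_\al$ lies in the exceptional locus of $\tau_\la$, which is collapsed, so $\tau_\mu$ is not an embedding on the components of $\IR^{k(\mu)}_\mu\setminus E_\mu$. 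Your construction therefore works only for orthants disjoint from the entire accumulated exceptional divisor $D_\la$, and there is no reason for Theorem~\ref{thm:rect1} to produce an orthant decomposition of $\tau_\la^{-1}(X\setminus E)$ with this property: any blowing-up whose center meets $\tau_\la^{-1}(X\setminus E)$ produces an offending orthant. Handling the lower-dimensional images of such orthants requires an induction on dimension, which is essentially what the paper's choice of the $k$-dimensional covering manifold $M$ is designed to provide; without some such device the proposal is incomplete. (A secondary issue: keeping the original $\tau_\la$ paired with the full $\IR^n_\la$ to preserve semiproperness does not give $\tau_\la(\IR^n_\la\setminus E_\la)\subset X\setminus E$, so it would break the equality in condition~(2).)
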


\begin{proof} 
Let $k= \dim (X\backslash E) \leq n$. Let
$\Phi: M\to N$ be a proper $\cQ$-morphism from a $\cQ$-manifold $M$ of dimension $k$, such that
$\Phi(M) \subset \overline{X\backslash E}$ with complement of dimension $<k$,
$\dim X\backslash (E \cup \Phi(M)) < k$, and $\Phi$ has generic rank $k$. 
By resolution of singularities of $\Phi^{-1}(E)$, we can assume that $M$ has an SNC divisor $D$ and
that $\Phi$ is a morphism $(M,D) \to (N,E)$.

By Theorem \ref{thm:transftoid}, there are countable semiproper 
coverings $\{\s_\la\}$ and $\{\tau_\la\}$ of $M$ and $N$ (respectively) by finite sequences of local blowings-up
and power substitutions, compatible with the exceptional divisors, 
such that, for each $\la$, $\Phi\circ\s_\la = \tau_\la\circ \Phi_\la$ and 
$\Phi_\la = \text{identity}_{V_\la}\times 0$. We can assume that, for each $\la$, $W_\la$ is a copy $\IR^n_\la$
of $\IR^n$ (a coordinate chart) and $\Phi_\la(V_\la)$ is a $k$-dimensional coordinate subspace $\IR^k_\la$ of $\IR^n_\la$,
transverse to the exceptional divisor $E_\la$ of $\tau_\la$.

Since all local blowings-up and power substitutions are compatible with the divisors,
we can assume that each $\tau_\la$ restricts to an embedding on every component of $\IR^n_\la\backslash E_\la$.
The result follows easily in the case that $X$ is closed, using induction on $k$.

In general, by induction on $k$, it is enough to find a sub-quasianalytic subset $Y$ of $X$ such that 
$\dim Y < k$ and $X\backslash Y$ satisfies the conclusion of the theorem. Let $\{\tau_\mu: W_\mu \to N\}$
denote a covering satisfying the conclusion of the theorem for $\overline{X}$. For each $\mu$, there is a subset
$X_\mu$ of $\IR^{k(\mu)}_\mu \backslash E_\mu$ given by all points of the latter
that are mapped to the complement of $X\backslash E$;
$X_\mu$ is sub-quasianalytic and $\dim X_\mu < k$.

After applying Theorem \ref{thm:subquasian}
and Remark \ref{rem:subquasian} to the sub-quasianalytic functions
$f_\mu(x)=d(x,X_\mu)^2$ on $\IR^{k(\mu)}_\mu$ and the SNC divisor $E_\mu$, and then monomializing
the pullbacks of the $f_\mu$ (exactly as in the proof of Theorem \ref{thm:rect1}), we obtain a covering
(for which we use the same notation $\{\tau_\mu: W_\mu \to N\}$) satisfying conditions (1) and (2) of the theorem,
except that now $ \cup_\mu \tau_\mu(\IR^{k(\mu)}_\mu\backslash E_\mu) \subset X\backslash E$, with complement
$Y$ in $X\backslash E$ of dimension $< k$, as required.
\end{proof}

\section{Counterexample to monomialization by global blowings-up}\label{sec:ex}

\begin{definition}\label{def:regular}
A morphism $\vp: V\to W$ of real-analytic manifolds is \emph{regular} at a point $a\in V$,
in the sense of Gabrielov \cite{Gab}, if $\dim \cO_{\vp(a)}/\ker \vp^*_a$ equals the generic
rank of $\vp$ at $a$. We recall that $\vp^*_a: \cO_{\vp(a)} \to \cO_a$ denotes the homomorphism
of local rings induced by $\vp$, and that $\dim \cO_{\vp(a)}/\ker \vp^*_a$ is the smallest dimension
of an analytic subset of some neighbourhood of $\vp(a)$ in $W$ that contains the image of a
small neighbourhood of $a$. We say that $\vp: V\to W$ is \emph{regular} if it is regular at 
every point of $V$.
\end{definition}

A monomial mapping (or any polynomial mapping) is regular since its image is
semialgebraic (with respect to monomial coordinates), and every semialgebraic set lies in a real algebraic set of the same
dimension, by a theorem of {\L}ojasiewicz \cite{Loj} (see \cite[Thm.\,2.13]{BMihes}).

We will construct a proper real-analytic morphism which cannot be transformed
to a regular morphism by global bowings-up of the source and target; see Example \ref{ex:nonreg} below.
This example, in fact, provides a stronger negative result: we construct a morphism $\Phi: S^3 = V \to W =\IR^4$ with
the following property. Given a commutative diagram
\begin{equation}\label{eq:transfreg1}
\begin{tikzcd}
V' \arrow{d}{\Phi'}\arrow{r}{\s} & V \arrow{d}{\Phi}\\
W' \arrow{r}{\tau} & W\,,
\end{tikzcd}
\end{equation}
where $\tau$ is a proper bimeromorphic morphism, and $\s: V'\to V$ is a proper surjective real-analytic mapping
of generic rank $3$ from a real-analytic space $V'$ of pure dimension $3$, then $\Phi'$ is \emph{not regular}.
We can even establish the preceding property after shrinking over any neighbourhood
of $0\in W$.

Example \ref{ex:nonreg}
a variation of \cite[Example 3.1]{BP}, and involves the following
two mappings.
\begin{enumerate}
\item The function $g: (-\infty, 1/\de\pi) \to \IR$ given by
$$
g(s) := \sin \theta(s), \  \text{ where }\  \theta(s) := \frac{1}{\de s - 1/\pi}\,,
$$
and $\de>0$ is small. The graph of $g(s)$ lies in no $1$-dimensional
subanalytic subset of $\IR^2$.
\item The \emph{Osgood mapping} $\Th: \IR^2\to\IR^3$ defined as
$$
\Th(u,v) := (u,uv,uv e^v).
$$
No nonzero analytic function defined in a neighbourhood of $0 \in \IR^3$ vanishes
of the image of any neighbourhood of $0\in\IR^2$.
\end{enumerate}

Using (1), it is easy to define a function $h: (-\infty, 1/\de\pi) \to \IR^2$
such that every point of the disk $\{(s,x,y)\in\IR^3: s=1/\de\pi,\, x^2+y^2\leq 1\}$ is a limit
point of the graph of $h(s)$, and therefore the graph of $h$ 
lies in no subanalytic subset of $\IR^3$ of dimension $< 3$; for example
\begin{equation}\label{eq:h}
h(s) := \sin \theta(s) \left(\cos \theta(s)^2, \sin \theta(s)^2\right) .
\end{equation}

\medskip
We will need Lemma \ref{lemma:bimero} below, on the indeterminacy locus of (the inverse of) a proper bimeromorphic 
morphism with smooth target. Definition of a real bimeromorphic morphism, in general, raises technical questions that we do not
address, but the following remark provides a working definition in the case of smooth target.

\begin{remark}\label{remark:bimero}
Let $\tau: Z' \to Z$ denote a morphism of real-analytic spaces (of the same dimension), where $Z$ is smooth
(say, of pure dimension). We say that $\tau$ is
\emph{bimeromorphic}
if there are lower-dimensional closed analytic subsets $Y\subset Z$, $Y'\subset Z'$ such that 
$\tau(Y') \subset Y$ and $\tau$ restricts
to an isomorphism $Z'\backslash Y' \to Z\backslash Y$, where the inverse $F = \tau^{-1}$ is a \emph{meromorphic
mapping} $Z \dashrightarrow Z'$. The latter means that, for any $a\in Z'$, if we write
$F = (F_1,\ldots,F_N)$ in a neighbourhood of $b=\tau(a)$ with respect to coordinates in a local smooth embedding space for $Z'$ at $a$,
then each component function $F_j$ of $F$ extends to some neighbourhood of $b$ as a quotient
of (locally defined) analytic functions. The \emph{indeterminacy locus} $E$ of $F$ is the subset of $Z$
consisting of points to which $F$ does not admit an analytic extension; $E$ is a closed analytic subset
of $Y$. Let $D:=\tau^{-1}(E)$. If $\tau$ is \emph{proper}, then $\tau(D)=E$.

For example, if $Z'$ denotes Whitney's umbrella $\{x^2-y^2z = 0\}$, then the projection $\tau$
of $Z'$ to the $(x,y)$-plane $Z$ is bimeromorphic (in fact, birational). The indeterminacy locus $E$ of $\tau^{-1}$
is $\{y=0\}$; note that $\tau(D) = \{0\}$ and  $Z'\backslash D$ is not dense in $Z'$.

\end{remark}

\begin{lemma}\label{lemma:bimero}
Let $\tau: Z' \to Z$ denote a proper bimeromorphic morphism of real-analytic spaces,
where $Z$ is smooth. Then the indeterminacy locus of $\tau^{-1}$ has codimension $\geq 2$.
\end{lemma}

\begin{proof}
Let $E$ denote the indeterminacy locus of $F = \tau^{-1}$.
Suppose that $E$ has codimension $1$. Let $D := \tau^{-1}(E)$. Since $\tau$ is proper, $\tau(D)=E$,
$\codim D = 1$ and there is an open subset $V$ of $D$ at any point of which $\tau: D\to E$
is a local isomorphism of codimension $1$ submanifolds.

Let $a\in V$ and let $b=\tau(a)$. Write $F = (F_1,\ldots,F_N)$ in a neighbourhood of $b$, 
as in Remark \ref{remark:bimero}. Then there are local coordinates 
$(x,y) = (x,y_1,\ldots,y_n)$ for $Z$ at $b$ in which
$E= \{x=0\}$ and any component $F_j$ of $F$ can be expressed in a neighbourhood of $b$ as a \emph{bounded} quotient of
real-analytic functions $f(x,y)/g(x,y)$. Then, after factoring $x$ to a high enough power from the numerator and denominator,
we can assume that $g(0,y)$ is not identically zero, contradicting the condition that all nearby points of $E$ are indeterminacies.
\end{proof}

\begin{example}\label{ex:nonreg}
Define $\Phi: V \to W$ by
$\Phi := \pi_0\circ \psi$, where $\pi_0: Z\to W:=\IR^4$ is the blowing-up of the origin,
and $\psi: V:=S^3 \to Z$ is the composite of
\begin{align*}
p: S^3 &\to \IR^4\\
(s,u,v,w) &\mapsto (s, u+h_1(s), uv + h_2(s), uve^v),
\end{align*}
where $S^3 = \{(s,u,v,w): s^2+u^2+v^2+w^2 = 1\}$, 
$h(s) = (h_1(s),h_2(s))$ is given by \eqref{eq:h}, and
$$
\io: \IR^4 \hookrightarrow Z,
$$
where $\io$ is the inclusion of the ``$z$-coordinate chart''; i.e., the coordinate chart on which $\pi_0$
is given by $(t,x,y,z) \mapsto (tz, xz, yz, z)$ ($\{z=0\}$ is the exceptional divisor of $\pi_0$
in this chart). Note that
$\ker p^*_a = 0$ at every $a \in \{u=v=0\}$.

Consider a commutative diagram \eqref{eq:transfreg1} as above. We claim that
$\Phi'$ is not \emph{regular}; more precisely, 
$\ker (\Phi')^*_a = 0$ at certain points $a\in V'$. To see this,
consider the commutative diagram
\begin{equation}\label{eq:strtransf}
\begin{tikzcd}
Z' \arrow{d}{\pi}\arrow{r}{\tau'} & Z \arrow{d}{\pi_o}\\
W' \arrow{r}{\tau} & W\,,
\end{tikzcd}
\end{equation}
where $\pi$ is the strict transform of $\pi_0$; i.e., $\pi$ is the induced morphism from the smallest
closed subspace of the fibre product $W' \times_W Z$ containing the complement of $p_1^{-1}(\tau^{-1}(0)$, where
$p_1: W' \times_W Z \to W'$ is the projection. Then $\tau'$ is a proper bimeromorphic morphism, so that, by
Lemma \ref{lemma:bimero}, the indeterminacy locus $E\subset Z$ of $\tau'$ has codimension $\geq 2$.
(In the special case that $\tau$ is a composite of finitely many smooth blowings-up, there is a commutative
diagram \eqref{eq:strtransf}, where $\tau'$ is also a finite composite of smooth blowings-up.)

Note that $\dim Z' = 4$ and that $\pi$ has rank $4$ outside an analytic subset of dimension $<4$. In
particular, the homomorphism of local rings $\pi_b^*$ is injective, for all $b$ in (the $4$-dimensional part of) $Z'$.
Let $D = (\tau')^{-1}(E)$; thus $\tau'$ is an isomorphism outside $D$.

Now consider the commutative diagram 
\begin{equation}\label{eq:transfreg2}
\begin{tikzcd}
V'' \arrow{rd}{\psi'} \arrow[hookrightarrow]{r} & V'\times_{W'} Z' \arrow{d}{\vp} \arrow{r} & V'\arrow{d}{\Phi'}\\
& Z' \arrow{r}{\pi} & W'\,,
\end{tikzcd}
\end{equation}
where $V''$ is the smallest closed analytic subspace of the fibre-product $V'\times_{W'} Z'$ containing the
complement of $\vp^{-1}(D)$; $\dim V'' = 3$. Let $\psi': V''\to Z'$ and
$\s': V'' \to V'$ denote the induced morphisms.

Note that, if $\Phi'$ is regular, then $\psi'$ is regular, at least at every point $c$ in the $3$-dimensional part of $V''$, since
any nonzero element of $\ker (\Phi')^*_{\s'(c)} \subset \cO_{W', \Phi'(\s'(c))}$ pulls
back to a nonzero element of $\ker (\psi')^*_c$. If $U$ is an open subset of $Z$ over which $\tau'$ is an
isomorphism (which we regard as the identity mapping), then $\Phi'|_{(\psi\circ\s)^{-1}(U)}$ factors
through $Z'$ as $\pi\circ (\psi\circ\s)$, and, over $U$, we can identify $V''$ with $V'$, and
$\psi'$ with $\psi\circ\s$.

Let $\Ga\subset Z$ denote the curve 
$\{(t,x,y,z)\in \IR^4: x=h_1(t),\,y = h_2(t),\, z=0,\,t < 1/\de\pi\}$. The curve $\Ga$ cannot lie entirely in the
indeterminacy locus $E$ of $\tau'$, since any subanalytic set containing $\Ga$ is of dimension $\geq 3$.
Therefore, $\Ga$ lifts to a unique curve $\Ga' \subset Z'$, and $\Ga'$ intersects $D$ in a discrete
set. It follows that 
$\psi'$ is not regular at most points of the inverse image of $\Ga'$. Therefore, $\Phi'$ is not regular.
\end{example}

\section{Quasianalytic classes}\label{sec:quasian}

Quasianalytic classes are classes of infinitely differentiable functions which can 
be characterized by three simple axioms. We list the axioms in \S\ref{subsec:quasian}
on real quasianalytic classes below (see Definition \ref{def:quasian}), but the classes
of compex-analytic functions or algebraic functions over any field $\IK$ of characteristic zero
also satisfy these axioms. Derivatives in the complex analytic case are with respect to
complex analytic (i.e., holomorphic) coordinates, and in the algebraic case with respect
to \'etale coordinates (or uniformizing parameters).

Throughout the article, we try to use a language that makes sense for all classes considered. This
presents a challenge in the algebraic case (particularly when the ground field $\IK$ is not algebraically
closed), because local neighbourhoods and local blowings-up mean local in the \'etale topology, 
and the value of a function at a closed point $a$ makes sense only as an
element of the residue field $\IK_a$, which is a finite
extension of $\IK$. Some of the background notions for the algebraic case are recalled
in \S\ref{subsec:alg}. The use of \'etale neighbourhoods and the necessary field extensions
will usually be implicitly understood in the remainder of the paper, but we will try to provide
guidance where it seems important.

In the remaining sections of the article, when we deal simultaneously with all classes
considered, we will use $\cQ$ to denote the class of functions. In particular,
in the analytic or algebraic cases, $\cQ$ will be used for the classes $\cO$ of analytic
or regular functions. We will use the notation $\cO$ only when the discussion is
restricted to the analytic or algebraic cases. We will write $\cQ_M$ (or simply $\cQ$, if
there is no possibility of confusion) to denote the sheaf of local rings of functions of
class $\cQ$ on a smooth space $M$ in the category. See \S\ref{subsec:quasian} following.

We use standard multiindex notation: Let $\IN$ denote the nonnegative integers. If $\al = (\al_1,\ldots,\al_n) \in
\IN^n$, we write $|\al| := \al_1 +\cdots +\al_n$, $\al! := \al_1!\cdots\al_n!$, $x^\al := x_1^{\al_1}\cdots x_n^{\al_n}$,
and $\p^{|\al|} / \p x^{\al} := \p^{\al_1 +\cdots +\al_n} / \p x_1^{\al_1}\cdots \p x_n^{\al_n}$. We write $(i)$ for the
multiindex with $1$ in the $i$th place and $0$ elsewhere.

\subsection{Real quasianalytic classes}\label{subsec:quasian}
We consider a class of functions $\cQ$ given by the association, to every 
open subset $U\subset \IR^n$, of a subalgebra $\cQ(U)$ of $\cC^\infty (U)$ containing
the restrictions to $U$ of polynomial functions on $\IR^n$, and closed under composition 
with a $\cQ$-mapping (i.e., a mapping whose components belong to $\cQ$). 
We assume that $\cQ$ determines a sheaf of local $\IR$-algebras of $\cC^\infty$ functions on $\IR^n$,
for each $n$, which we denote $\cQ_{\IR^n}$ (or simply $\cQ$).

\begin{definition}\label{def:quasian}\emph{Quasianalytic classes.}
We say that $\cQ$ is \emph{quasianalytic} if it satisfies the following three axioms:

\begin{enumerate}
\item \emph{Closure under division by a coordinate.} If $f \in \cQ(U)$ and
$$
f(x_1,\dots, x_{i-1}, a, x_{i+1},\ldots, x_n) = 0,
$$
where $a \in \IR$,  then $f(x) = (x_i - a)h(x),$ where $h \in \cQ(U)$.

\smallskip
\item \emph{Closure under inverse.} Let $\varphi : U \to V$
denote a $\cQ$-mapping between open subsets $U$, $V$ of $\IR^n$.
Let $a \in  U$ and suppose that the Jacobian matrix
$(\partial \varphi/\partial x)(a)$ is invertible. Then there are neighbourhoods $U'$ of $a$ and $V'$ of 
$b := \varphi(a)$, and a $\cQ$-mapping  $\psi: V' \to U'$ such that
$\psi(b) = a$ and $\psi\circ \varphi$  is the identity mapping of
$U '$.

\smallskip
\item \emph{Quasianalyticity.} If $f \in \cQ(U)$ has formal Taylor expansion zero
at $a \in U$, then $f$ is identically zero near $a$.
\end{enumerate}
\end{definition}

\begin{remarks}\label{rem:axioms} (1)\, Axiom \ref{def:quasian}(1) implies that, 
if $f \in \cQ(U)$, then all partial derivatives of $f$ belong to $\cQ(U)$. 

\smallskip\noindent
(2)\, Axiom \ref{def:quasian}(2) is equivalent to the property that the implicit function theorem holds for functions of 
class $\cQ$.  It implies that the reciprocal of a nonvanishing function of class $\cQ$ is also of class $\cQ$.

\smallskip\noindent
(3)\, Our two main examples of quasianalytic classes are quasianalytic Denjoy-Carleman classes (see \S\ref{subsec:DC}),
and the class of $\cC^\infty$ functions definable in a given polynomially bounded
$o$-minimal structure. In the latter case, we can define a quasianalytic class $\cQ$ in the axiomatic
framework above by taking $\cQ(U)$ as the subring of $\cC^\infty(U)$ of functions
$f$ such that $f$ is definable in some neighbourhood of any point of $U$ (or, equivalently,
such that $f|_V$ is definable, for every relatively compact definable open $V\subset U$); 
the axiom of quasianalyticity is satisfied by \cite{Mil}, and the division and inverse properties
are immediate from definability and the corresponding $\cC^\infty$ assertions.
\end{remarks} 

The elements of a quasianalytic class $\cQ$ will be called \emph{quasianalytic functions}. 
A category of manifolds and mappings of class $\cQ$ can be defined in a standard way. The category 
of $\cQ$-manifolds is closed under blowing up with centre a $\cQ$-submanifold \cite{BMselecta}.

Resolution of singularities holds in a quasianalytic class \cite{BMinv,BMselecta}. Resolution of
singularities of a sheaf of ideals requires only that the ideal (sheaf) be privileged (Definition \ref{def:priv}), rather than
finitely generated; see \cite[Thm.\,3.1]{BMV}.

\subsection{Quasianalytic Denjoy-Carleman classes}\label{subsec:DC}

\begin{definition}\label{def:DC}
Let $M = (M_k)_{k\in \IN}$ denote a sequence of positive real numbers such that $M_0\leq M_1$ and $M$ is \emph{logarithmically
convex}; i.e., the sequence $(M_{k+1} / M_k)$ is nondecreasing.
A \emph{Denjoy-Carleman
class} $\cQ = \cC_M$ is a class of $\cC^\infty$ functions determined by the following condition: A function 
$f \in \cC^\infty(U)$ (where $U$ is open in $\IR^n$) is of class $\cC_M$ if, for every compact subset $K$ of $U$,
there exist constants $A,\,B > 0$ such that
\begin{equation}\label{eq:DC}
\left|\frac{\p^{|\al|}f}{\p x^{\al}}\right| \leq A B^{|\al|} \al! M_{|\al|}
\end{equation}
on $K$, for every $\al \in \IN^n$.
\end{definition}

We use the notation $M$ to denote a sequence (as opposed to a manifold or smooth space)
only in this section and in Examples \ref{ex:halfline}, in order to be consistent with standard
notation for Denjoy-Carleman classes.

\begin{remark}\label{rem:DC}
The logarithmic convexity assumption implies that
$M_jM_k \leq M_0M_{j+k}$, for all $j,k$, and that
the sequence $((M_k/M_0)^{1/k})$ is nondecreasing.
The first of these conditions guarantees that $\cC_M(U)$ is a ring, and 
the second that $\cC_M(U)$ contains the ring $\cO(U)$ of real-analytic functions on $U$,
for every open $U\subset \IR^n$.
(If $M_k=1$, for all $k$, then $\cC_M = \cO$.)
\end{remark}

If $X$ is a closed subset of $U$, then $\cC_M(X)$ will denote the ring of restrictions to $X$
of $\cC^\infty$ functions which satisfy estimates of 
the form \eqref{eq:DC}, for every compact $K\subset X$.

A Denjoy-Carleman class $\cQ = \cC_M$ is a quasianalytic class in the sense of Definition \ref{def:quasian}
if and only if the sequence
$M = (M_k)_{k\in \IN}$ satisfies the following two assumptions in addition to those
of Definition \ref{def:DC}.
\begin{enumerate}
\item[(a)] $\displaystyle{\sup \left(\frac{M_{k+1}}{M_k}\right)^{1/k} < \infty}$.

\smallskip
\item[(b)] $\displaystyle{\sum_{k=0}^\infty\frac{M_k}{(k+1)M_{k+1}} = \infty}$.
\end{enumerate}

It is easy to see that the assumption (a) implies that $\cC_M$ is closed under differentiation.
The converse of this statement is due to S. Mandelbrojt \cite{Mandel}. In a Denjoy-Carleman class
$\cC_M$, closure under differentiation is equivalent to the axiom \ref{def:quasian}(1) of closure under division by a
coordinate---the converse of Remark \ref{rem:axioms}(1) is a consequence of the fundamental
theorem of calculus.

According to the Denjoy-Carleman theorem, the class $\cC_M$ is quasianalytic (axiom \ref{def:quasian}(3))
if and only if the assumption (b) holds \cite[Thm.\,1.3.8]{Horm}.

Closure of a Denjoy-Carleman class $\cC_M$ under composition is due to Roumieu \cite{Rou} and closure under
inverse to Komatsu \cite{Kom}; see \cite{BMselecta} for simple proofs. A Denjoy-Carleman class $\cQ = \cC_M$ satisfying 
the assumptions (a) and (b)
above is thus a quasianalytic class, in the sense of Definition \ref{def:quasian}.

If $\cC_M$, $\cC_N$ are Denjoy-Carleman classes, then $\cC_M(U) \subseteq \cC_N(U)$, for all $U$,
if and only if $\sup \left(M_k /N_k\right)^{1/k} < \infty$ (see \cite[\S1.4]{Th1}); in this case, we write
$\cC_M \subseteq \cC_N$. For any given Denjoy-Carleman class $\cC_M$, there is a function in 
$\cC_M((0,1))$ which is nowhere in any given smaller class \cite[Thm.\,1.1]{Jaffe}.

\begin{remark}\label{rem:model} \emph{Model theory.}
Let $\cC_M$ denote a quasianalytic Denjoy-Carleman class, and let $\IR_{\cC_M}$ denote the expansion 
of the real field by
restricted functions of class $\cC_M$ (i.e., restrictions to closed cubes of $\cC_M$-functions, extended 
by $0$ outside the cube).
Then $\IR_{\cC_M}$ is an $o$-minimal structure, and $\IR_{\cC_M}$ is both polynomially bounded 
and model-complete \cite{RSW}. 
\end{remark}

\subsection{The algebraic case}\label{subsec:alg}
A \emph{manifold} in this case will mean a smooth variety, i.e., a smooth separated
scheme of finite type over a field $\IK$ of characteristic zero, 
and a \emph{morphism} will mean a regular morphism, i.e., a
morphism of such schemes. We will call a regular morphism also an \emph{algebraic morphism}
(or an \emph{algebraic function} if the target scheme is the affine line)---perhaps a mild abuse
of terminology, but convenient in analogy with ``analytic'' or ``quasianalytic'' morphism.

In the case of algebraic (regular) functions over $\IK$,
formal Taylor expansions and partial derivatives are defined with respect to \'etale coordinates
at a closed point $a$ of a manifold $M$. 
It is enough to prove our local monomialization theorems at a closed point $a$ because
any open covering of the set of closed points of a closed subset $X$ of $M$ is an open
covering of $X$.

The \emph{value} $f(a)$ of an algebraic function $f$ at a (closed) point $a\in M$
makes sense as the element induced by $f$ in $\cO_a/\um_a$; 
$\IK_a := \cO_a/\um_a$ is the \emph{residue field} of $a$, and is a finite extension of the base field $\IK$
($\cO$ denotes the structure sheaf of $M$).
Note that, if $\Phi: M \to N$ is a morphism and $b=\Phi(a)$, then $\IK_b \subset \IK_a$,
but they need not be equal, in general.

An \emph{\'etale coordinate chart} $U \subset M$ is a Zariski-open subset $U$ of $M$
together with an \'etale morphism $U \to \IA^n_{\IK}$. The affine coordinates $x_1,\ldots,x_n$ of
$\IA^n_{\IK}$ are regular functions on $U$; $(x_1,\ldots,x_n)$ is called a system of \emph{\'etale
coordinates} or \emph{uniformizing parameters} (see \cite[Lemma 3.3]{BMinv} for an
explanation).

A formula in \'etale coordinates for a morphism $\Phi: U\to V$, where $U$ and $V$ are \'etale coordinate
charts at $a$ and $b=\phi(a)$ (respectively) should be understood as a formula for the
induced morphism $\Phi\times_{\IK}\IL: U\times_{\IK}\IL\to V\times_{\IK}\IL$, where $\IL$ is either the
residue field $\IK_a$, or a finite extension of $\IK_a$.

An \'etale coordinate chart $U$ determines a \emph{formal Taylor series homomorphism}
$T_a: \cO_a \to \IK_a\llb X\rrb$, $X=(X_1,\ldots,X_n)$, at every $a\in U$; $T_a$ is the unique
ring homomorphism $\cO_a \to \IK_a\llb X\rrb$ such that $T_a x_i = x_i(a) + X_i$, $i=1,\ldots,n$. The
Taylor homomorphism induces an isomorphism of complete local rings $\wcO_a \to \IK_a\llb X\rrb$.

\emph{Partial derivatives} of a regular (or algebraic) function on $U$ with respect to \'etale coordinates 
$(x_1,\ldots,x_n)$ are regular functions on $U$ which can be defined as follows. If $\al \in \IN^n$ and
$f\in \cO(U)$, then there is a unique element $f_\al \in\cO(U)$ such that $D^\al(T_a f)(X) 
= (T_a f_\al)(X)$, for all $a\in U$, where $D^\al$ denotes the formal derivative $\p^{|\al|}/\p X^\al$.
We call $f_\al$ the \emph{partial derivative} $\p_\al f$ or $\p^{|\al|}f/\p x^\al$.
See \cite[Section 3]{BMinv} for a concrete description of the notions above; in particular, $\p_\al$ is given 
by an explicit formula in \cite[Lemma 3.5]{BMinv}.

It is important to distinguish between the notions of \'etale coordinate chart, as above,
and \emph{\'etale open neighbourhood} of a point or \emph{\'etale open subset} of $M$,
which means not a (Zariski-)open subset at all, but rather an \'etale mapping $\varepsilon: V \to M$
(i.e., the analogue for the \emph{\'etale topology} of a Zariski-open subset). An \'etale mapping
is open in the Zariski topology, so that the image of an \'etale mapping $\varepsilon$ is a Zariski-open subset.
An \'etale neighbourhood $\varepsilon: V \to M$ and \'etale coordinates (or uniformizing parameters)
at a point of $V$ will often be used together; cf.\ Remark \ref{rem:mon1}(2).

\section{Logarithmic Fitting ideal characterization of a monomial morphism}\label{sec:fit}

The purpose of this section is to show that a dominant morphism $\Phi: (M,D) \to (N,E)$ of
quasianalytic class $\cQ$ is monomial if and only if the log Fitting ideal of maximal minors
(of the log Jacobian matrix) of $\Phi$ is everywhere generated by a unit 
(Theorem \ref{thm:LogFittingCharacterization}---a version of the \emph{rank theorem}
for logarithmic dervatives; cf.\ \cite[Ch.\,IV]{Ogus}). 
This characterization of monomiality is used, for example, to show that monomial is an open condition
in the source of a morphism (Corollary \ref{cor:MonOpen}).

Let $(M,D)$ denote a manifold with SNC divisor, of class $\cQ$. We denote by $\Omega^1_M(-\log D)$ 
the sheaf of modules over the structure sheaf $\mathcal{Q}$ of $M$ of logarithmic differential 1-forms. In coordinates
\begin{equation}\label{eq:logcoords1}
(\bu,\bw) = (u_1,\ldots,u_r, w_1,\ldots,w_t)
\end{equation}
compatible with $D$ at a point $a\in M$ (where $D=\{u_1 \cdots u_r =0\}$), the local sections of $\Omega^1_M(-\log D)$ 
are generated by
 \begin{equation*}
 \frac{du_1}{u_1}, \ldots, \frac{du_r}{u_r},\,  dw_{1}, \ldots, dw_{t}.
 \end{equation*}
The sheaf $\Omega^k_M(-\log D)$ of logarithmic $k$-forms over $M$ is defined in terms of $\Omega^1_M(-\log D)$
in the standard way.

Consider a $\cQ$- morphism $\Phi: (M,D) \to (N,E)$. Let $n=\dim N$.

\begin{lemma}[Pullbacks of log differentials are well-defined]\label{lem:LogFittingWellDefined}
For all $k=1,\ldots,n$,
$\sigma^{\ast}\left(  \Omega^k_N(-\log E) \right)$ is a subsheaf of $\Omega^k_M(-\log D)$.
\end{lemma}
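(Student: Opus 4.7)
Since $\Omega^k_M(-\log D)=\bigwedge^k\Omega^1_M(-\log D)$ and pullback commutes with the exterior algebra structure, the plan is to reduce to the case $k=1$ and verify the inclusion on stalks. I will choose compatible local coordinates on both sides and check that $\Phi^*$ sends each of the standard generators of $\Omega^1_N(-\log E)$ into $\Omega^1_M(-\log D)$.

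First I would fix $a\in M$, set $b=\Phi(a)$, and pick coordinates $(\bu,\bw)=(u_1,\ldots,u_r,w_1,\ldots,w_t)$ at $a$ compatible with $D$, together with coordinates $(\by,\bz)=(y_1,\ldots,y_q,z_1,\ldots,z_{n-q})$ at $b$ compatible with $E$, so that $D=\{u_1\cdots u_r=0\}$ and $E=\{y_1\cdots y_q=0\}$. The stalk $\Omega^1_N(-\log E)_b$ is generated over $\cQ_{N,b}$ by $dy_1/y_1,\ldots,dy_q/y_q,dz_1,\ldots,dz_{n-q}$. For the forms $dz_l$ the pullbacks $\Phi^*(dz_l)=d(z_l\circ\Phi)$ are exact $\cQ$-forms, hence already sections of the subsheaf $\Omega^1_M\hookrightarrow\Omega^1_M(-\log D)$.

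The substantive case is $\Phi^*(dy_j/y_j)$. Setting $f:=y_j\circ\Phi\in\cQ_a$, the defining hypothesis that $\Phi^{-1}(E)_{\mathrm{red}}$ is an SNC divisor contained in $D$ yields a local factorization $f=\bu^{\bga_j}h_j$, with $\bga_j=(\ga_{j1},\ldots,\ga_{jr})\in\IN^r$ and $h_j\in\cQ_a$ a unit. From this, a direct computation gives
\[
\Phi^{\ast}\!\left(\frac{dy_j}{y_j}\right)=\frac{df}{f}=\sum_{i=1}^{r}\ga_{ji}\,\frac{du_i}{u_i}+\frac{dh_j}{h_j},
\]
exhibiting the pullback as a sum of a logarithmic and a regular $\cQ$-form, so that it lies in $\Omega^1_M(-\log D)_a$.

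The main (and effectively only) difficulty is the monomial factorization $f=\bu^{\bga_j}h_j$. In the algebraic and complex-analytic cases it follows by a standard SNC argument: the radical of the principal ideal generated by $f$ is the ideal of a union of some coordinate hyperplanes $\{u_i=0\}$, and iterated application of the division axiom \ref{def:quasian}(1) peels off each power $u_i^{\ga_{ji}}$, with quasianalyticity \ref{def:quasian}(3) preventing the process from continuing indefinitely; the SNC assumption on $\Phi^{-1}(E)_{\mathrm{red}}$ then rules out any further hypersurface component of $\{h_j=0\}$ passing through $a$, forcing $h_j$ to be a unit by axiom \ref{def:quasian}(2). In the real quasianalytic case this is where the stronger interpretation of ``SNC divisor'' (highlighted in the parenthetical remark preceding Definition \ref{def:monom2}) is used---the factorization must be extracted intrinsically from that definition, and this is the step I would have to justify with the most care.
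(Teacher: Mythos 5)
Your argument is correct and is essentially the paper's own proof: reduce to $1$-forms, pull back the standard generators, and use the local factorization $y_j\circ\Phi=\bu^{\bga_j}h_j$ with $h_j$ a unit, which the paper likewise extracts directly from the hypothesis that $\Phi^{-1}(E)_{\reduced}$ is an SNC divisor contained in $D$, arriving at the same identity $\Phi^*(dy_j/y_j)=\sum_i\ga_{ji}\,du_i/u_i+dh_j/h_j$. The paper simply asserts that factorization as the very content of the (divisorial, not merely set-theoretic) hypothesis on $\Phi^{-1}(E)_{\reduced}$, so your additional care in justifying it in the real quasianalytic case is a welcome elaboration rather than a different method.
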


\begin{proof}
Consider coordinate systems (compatible with the divisors) \eqref{eq:logcoords1} at a point $a\in M$ and
\begin{equation}\label{eq:logcoords2}
(\bx,\bz) = (x_1,\ldots,x_p,z_{1},\ldots, z_s)
\end{equation}
at $b=\Phi(a)\in N$, where $E=\{x_1\cdots x_p=0\}$.
Since $\Phi^{-1}(E)$ is SNC and $\Phi^{-1}(E)_{\reduced} \subset D$,
each $x_j = \pmb{u}^{\pmb{\gamma}_j} U_j$ at $a$,
for some $\pmb{\gamma}_j\in\IN^r$ and unit $U_i$. Therefore,
\[
\frac{dx_j}{x_j} = \frac{d \pmb{u}^{\pmb{\gamma}_j}}{\pmb{u}^{\pmb{\gamma}_j}} + \frac{d U_j}{U_j},\quad j=1,\ldots,p,
\]
and the assertion follows.
\end{proof}

\begin{definition}\label{def:LogFittingIdeal}
The \emph{logarithmic Fitting ideal sheaf} $\mathcal{F}_{n-k}(\Phi)$ associated to $\Phi$ is the ideal subsheaf of $\mathcal{Q}$ 
whose stalk $\mathcal{F}_{n-k}(\Phi)_a$ at $a\in M$ can be described (in coordinates \eqref{eq:logcoords1} at $a$)
in the following way. 
If $\omega$ is a logarithmic $k$-form at $b=\Phi(a)$, then
\begin{equation*}
\Phi^*\omega = \sum_{I,J} B^{\omega}_{I,J}(\pmb{u},\pmb{w}) \frac{du_{i_1}}{u_{i_1}}\wedge\cdots\wedge\frac{du_{i_l}}{u_{i_l}}
\wedge dw_{j_1} \wedge\cdots\wedge dw_{j_{k-l}},
\end{equation*}
where the sum is over all pairs $(I,J)$ with $I = (i_1,\ldots i_l),\, 1\leq i_1<\cdots < i_l \leq r$, and
$J = (j_1,\ldots <  j_{k-l}),\, 1\leq j_1<\cdots j_{k-l}\leq t$, 
and where the coefficients $B^{\omega}_{I,J}$ are germs of functions of class $\cQ$ at $a$, by Lemma \ref{lem:LogFittingWellDefined}. Then $\mathcal{F}_{n-k}(\Phi)_a$ is generated by the set of coefficients
$B_{I,J}^{\omega}(\pmb{u},\pmb{w})$, for all $\omega \in \Omega_{N,b}^k(-\log E)$ and all $(I,J)$.
\end{definition}

\begin{remark}\label{rem:dominant} A morphism $\Phi: (M,D) \to (N,E)$ is dominant (i.e., generically submersive)
if and only if the log Fitting ideal $\mathcal{F}_0(\Phi)$ of maximal minors is everywhere non-zero.
\end{remark}

\begin{theorem}\label{thm:LogFittingCharacterization}
A dominant $\cQ$-morphism $\Phi: (M,D) \to (N,E)$ is monomial at a point $a\in M$ if and only if the 
log Fitting ideal $\mathcal{F}_{0}(\Phi)_a$ is generated by a unit.
\end{theorem}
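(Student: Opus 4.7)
The plan is to read both implications off an explicit computation of $\Phi^*\omega_0$, where $\omega_0$ is a local trivialization of the line bundle $\Omega^n_N(-\log E)$ at $b=\Phi(a)$. For the $(\Rightarrow)$ direction, I assume $\Phi$ is monomial at $a$ in coordinates \eqref{eq:coords1} (note $s=s'$ because $\Phi$ is dominant), take
\[
\omega_0 := \bigwedge_{j=1}^{p}\frac{dx_j}{x_j}\wedge\bigwedge_{k=1}^{q}\frac{dy_k}{y_k}\wedge\bigwedge_{l=q+1}^{s}dz_l,
\]
and pull back. Since each $\bbe_k$ lies in the $\IQ$-span of $\bal_1,\ldots,\bal_p$, any term in the expansion of $\bigwedge_k dy_k/y_k$ containing a $\sum_i\be_{ki}\,du_i/u_i$ factor is killed on wedging against $\bigwedge_j(dx_j/x_j)$, so only the product $\bigwedge_k dv_k/(\xi_k+v_k)$ survives. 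Expanding $\bigwedge_j(\sum_i\al_{ji}du_i/u_i)$ over increasing $p$-tuples $I=(i_1<\cdots<i_p)$ yields coefficients $\det(\al_{j,i_k})/\prod_k(\xi_k+v_k)$; the $\IQ$-linear independence of the $\bal_j$ supplies a nonzero $p\times p$ minor, and $\prod_k(\xi_k+v_k)$ is a unit at $a$, so some coefficient of $\Phi^*\omega_0$ is a unit, i.e.\ $\mathcal{F}_0(\Phi)_a=\cQ_a$.

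For $(\Leftarrow)$, fix coordinates $(\bu,\bw)$ at $a$ with $D=\{u_1\cdots u_r=0\}$ and $(\tilde\bx,\tilde\bz)$ at $b$ with $E=\{\tilde x_1\cdots\tilde x_{p'}=0\}$, $p'+s'=n$. By the argument in the proof of Lemma \ref{lem:LogFittingWellDefined}, I write $\Phi^*\tilde x_j=\bu^{\bga_j}U_j$ with $\bga_j\in\IN^r$ and $U_j$ a unit at $a$, and set $F_l:=\Phi^*\tilde z_l$. Since $\Omega^n_N(-\log E)_b$ has rank one, $\mathcal{F}_0(\Phi)_a$ is generated by the $n\times n$ minors of the log Jacobian matrix $J$ of $\Phi$. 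Evaluated at $a$ (where $u_i=0$), this matrix has block form
\[
J(a) = \begin{pmatrix} G & H\\ 0 & K \end{pmatrix},
\]
with $G=(\ga_{ji})\in\IZ^{p'\times r}$ and $K=(\p_{w_k}F_l(a))$. The hypothesis forces $\rk J(a)=n$, which by the block-upper-triangular structure splits as $\rk G=p'$ and $\rk K=s'$. The first condition says the $\bga_j$ are $\IQ$-linearly independent; the second, combined with Axiom \ref{def:quasian}(2), lets me replace $s'$ of the $w_k$ by new coordinates $v_l:=F_l$, so in coordinates $(\bu,\bv,\tbw)$ I have $\Phi^*\tilde z_l=v_l$ and $\Phi^*\tilde x_j=\bu^{\bga_j}\,U_j(\bu,\bv,\tbw)$ for (new) units $U_j$.

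To absorb the $U_j$, I choose $p'$ linearly independent columns of $G$ and invert the resulting $p'\times p'$ integer submatrix, producing a rational right inverse $H=(h_{ij})\in\IQ^{r\times p'}$ with $GH=I_{p'}$. Setting $V_i:=\prod_{k=1}^{p'}U_k^{h_{ik}}$ and $u_i':=u_iV_i$, the direct computation
\[
\prod_i V_i^{\ga_{ji}} \;=\; \prod_k U_k^{\sum_i\ga_{ji}h_{ik}} \;=\; \prod_k U_k^{(GH)_{jk}} \;=\; U_j
\]
gives $(\bu')^{\bga_j}=\Phi^*\tilde x_j$. The Jacobian of $(\bu,\bv,\tbw)\mapsto(\bu',\bv,\tbw)$ at $a$ is block-diagonal with $\mathrm{diag}(V_1(a),\ldots,V_r(a))$ on the $\bu$-block and identities elsewhere, hence nonsingular, and $\{u_i'=0\}=\{u_i=0\}$ locally, so $(\bu',\bv,\tbw)$ is a valid coordinate system compatible with $D$. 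In these coordinates $\Phi$ has the form \eqref{eq:mon2} with $\bal_j=\bga_j$, a special case of \eqref{eq:mon1} at $a$ (with $q=0$); therefore $\Phi$ is monomial at $a$.

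The step I expect to be the main obstacle is making sense of the fractional powers $U_j^{h_{ij}}$. My plan is to fix a common denominator $N$ of the $h_{ij}$ and apply Axiom \ref{def:quasian}(2) to $V^N-U_j=0$ at $V=U_j(a)^{1/N}$, where $\p(V^N-U_j)/\p V=NV^{N-1}\ne 0$; this produces an $N$-th root of class $\cQ$. In the real case one must first ensure $U_j>0$ near $a$, if necessary by the divisor-compatible sign change $\tilde x_j\mapsto -\tilde x_j$ on the target. In the complex analytic case one picks a branch of $U_j(a)^{1/N}$. In the algebraic case one passes to the étale neighborhood cut out by $V^N=U_j$, as permitted by Remark \ref{rem:mon1}(2). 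With the $V_i$ in hand, the remainder of the argument is routine.
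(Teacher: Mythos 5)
Your forward direction is essentially the paper's computation and is correct. The reverse direction, however, breaks at the step where you assert that $\rk J(a)=n$ ``splits as $\rk G=p'$ and $\rk K=s'$''. For a block upper-triangular matrix one only has the inequality
\begin{equation*}
\rk\begin{pmatrix} G & H\\ 0 & K\end{pmatrix}\;\geq\;\rk G+\rk K,
\end{equation*}
which can be strict, so full rank of $J(a)$ does \emph{not} force $\rk G=p'$; it does force $\rk K=s'$ (the bottom rows have zero left block), but the exponent vectors $\bga_j$ need not be $\IQ$-linearly independent. Concretely, take $\Phi:(\IR^2,\{u_1=0\})\to(\IR^2,\{x_1x_2=0\})$, $x_1=u_1$, $x_2=u_1(1+w)$: here $\mathcal{F}_0(\Phi)_0$ is generated by the unit $1/(1+w)$, yet $\bga_1=\bga_2=(1)$ and $\rk G=1<2=p'$; the independence of the rows of $J(0)=\bigl(\begin{smallmatrix}1&0\\1&1\end{smallmatrix}\bigr)$ comes entirely from the $dw$-column. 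Your argument would put this $\Phi$ in the form \eqref{eq:mon2} in divisor-compatible coordinates centred at $0$, which is impossible since $D$ has only one component there, while two $\IQ$-independent exponents in $\IN^1$ cannot exist.

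This is precisely the phenomenon that the $y_k=\bu^{\bbe_k}(\xi_k+v_k)$ components of \eqref{eq:mon1} (with $\bbe_k$ dependent on the $\bal_j$) are designed to capture, and it is why the paper's proof of the converse proceeds by induction on the target components (the Claim): writing $x_{l+1}=\bu^{\bga_{l+1}}(\xi_{l+1}+h_{l+1})$, the unit coefficient of the relevant log $n$-form forces \emph{either} $\bga_{l+1}$ to be independent of the previously constructed exponents (the only case your argument sees), \emph{or} some $\p_{w_\kappa}(h_{l+1})$ to be a unit, in which case the implicit function theorem converts $h_{l+1}$ into a new coordinate $v_k$. Your unit-absorption device $u_i'=u_iV_i$ with $V_i=\prod_k U_k^{h_{ik}}$ (including your handling of the fractional powers, which is fine) applies only in the first case; to repair the proof you must add the dichotomy and the implicit-function-theorem step, at which point you have essentially reconstructed the paper's induction. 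The preliminary reduction $z_l=v_l$ via the rank theorem is sound and agrees with the paper.
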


\begin{proof}
Let $b=\Phi(a)$. 
First assume that $\Phi$ is monomial at $a$. Consider $\Phi$-monomial coordinate systems $(\pmb{u},\pmb{v},\pmb{w})$ and $(\pmb{x},\pmb{y},\pmb{z})$ at $a$ and $b$, respectively (as in Definitions \ref{def:monom1}, \ref{def:monom2}). Since
$\Phi$ is dominant, $s'=s$ and
\begin{align*}
\frac{dx_j}{x_j} &= \frac{d\pmb{u}^{\pmb{\alpha}_j}}{\pmb{u}^{\pmb{\alpha}_j}} = \sum_{i=1}^{r} \alpha_{ji} \frac{d u_i}{u_i},\\
\frac{dy_k}{y_k}&= \frac{d\left(\pmb{u}^{\pmb{\beta}_k}(\xi_k + v_k)\right)}{\pmb{u}^{\pmb{\beta}_k}(\xi_k+v_k)} = \sum_{i=1}^{r} \beta_{ji} \frac{d u_i}{u_i} + \frac{dv_k}{\xi_k +v_k},\\
dz_l &=dv_l.
\end{align*}
It is easy to see that $\mathcal{F}_0(\Phi)_a$ is generated by a unit, using the conditions that the $\pmb{\alpha}_j$ are 
$\mathbb{Q}$-linearly independent, each $\pmb{\beta}_k$ is linearly dependent on $\{\pmb{\alpha}_j\}$ and each $\xi_k\neq 0$.

Secondly, assume that $\mathcal{F}_{0}(\Phi)_a$ is generated by a unit. Consider coordinate system $(\pmb{u},\pmb{w})$
at $a$ and $(\pmb{x},\pmb{z})$ at $b$, compatible with $D$ and $E$, as in \eqref{eq:logcoords1}, \eqref{eq:logcoords2}. 
By hypothesis, one of the coefficients of
\[
\Phi^*\left(\frac{dx_1}{x_1} \wedge \ldots \frac{dx_p}{x_p} \wedge dz_{1}\wedge \ldots \wedge d z_s\right)
\]
is a unit at $a$. In particular (using Lemma \ref{lem:LogFittingWellDefined}), one of the coefficients of
\[
\Phi^*\left(dz_{1}\wedge \ldots \wedge d z_{s}\right)
\]
is a unit. Write $z_k = f_k(\pmb{u},\pmb{w})$. Then
\[
dz_k = \sum_{i=1}^r \partial_{u_i}(f_k) u_i \frac{du_i}{u_i} + \sum_{j=1}^{t} \partial_{w_j}(f_k) dw_j;
\]
and the coefficients in the first sum are not units. By the rank theorem (submersion theorem), 
we can assume that:
\[
z_k = w_k, \quad k=1, \ldots, s.
\]

On the other hand, we can write
\[
x_j = \pmb{u}^{\pmb{\gamma}_j}\left(\xi_j + h_j(\pmb{u},\pmb{w}) \right),
\]
where $\xi_j \neq 0$ is a constant and $h_j(0,0) =0$, $j=1, \ldots, p$. After a change of coordinates, we can assume 
that $x_1 = \pmb{u}^{\pmb{\gamma}_1}$. We will prove the following assertion by induction on $l=1, \ldots, p$;
the case $l=p$ means that $\Phi$ is monomial at $a$.

\begin{claim}[$l$]\label{claim}
There exists $d_l$, $0\leq d_{l} < l$, a coordinate change in $(\bu, \bw)$ and a re-ordering of the variables $x$, such that, writing  $(x_1,\ldots,x_{l})= (x_1,\ldots,x_{l-d_{l}},y_1, \ldots, y_{d_{l}})$ we have
\[
\begin{aligned}
x_j &= \pmb{u}^{\pmb{\alpha}_j}, &\quad &j=1,\ldots, l-d_{l},\\
y_k &= \pmb{u}^{\pmb{\beta}_k}(\xi_k + v_k), &\quad& k=1,\ldots, d_{l},
\end{aligned}
\]
in coordinates $(\bu,\bw) = (u_1,\ldots,u_r,v_1,\ldots,v_{d_l},w_1,\ldots,w_{t-d_l})$ at $a$,
where the $\bal_j$ are $\mathbb{Q}$-linearly independent, each $\pmb{\beta}_k$ is $\mathbb{Q}$-linearly dependent
on $\{\pmb{\alpha}_j\}$, and each $\xi_k \neq 0$. 
\end{claim}
\medskip
\noindent
To prove Claim \ref{claim}($l+1$) assuming \ref{claim}($l$), 
consider the log differential form
\[
dz_{1}\wedge \ldots \wedge d z_{s} \wedge \frac{dx_1}{x_1} \wedge \ldots\wedge  \frac{dx_{l-d_{l}}}{x_{l-d_{l}}} 
\wedge \frac{dy_{1}}{y_1}\wedge \ldots \wedge \frac{dy_{d_{l}}}{y_{d_l}} \wedge \frac{dx_{l+1}}{x_{l+1}},
\]
which includes a unit as coefficient, and note that
\[
\frac{dx_{l+1}}{x_{l+1}} =  \frac{d\pmb{u}^{\pmb{\gamma}_{l+1}}}{\pmb{u}^{\pmb{\gamma}_{l+1}}}  
+ \frac{1}{\left(\xi_{l+1} + h_{l+1}(\pmb{u},\pmb{v})\right)} \left( \sum_i \partial_{u_i}(h_{l+1}) u_i \frac{d u_i}{u_i} 
+ \sum_k \partial_{v_k}(h_{l+1}) d v_k\right).
\]
We see that
\begin{enumerate}
\item either $\pmb{\gamma}_{l+1}$ is $\mathbb{Q}$-linearly independent of all $\pmb{\alpha}_j$, in which case there is
a coordinate change in $(\bu, \bv)$ preserving the previous forms, after which
$x_{l+1} = \pmb{u}^{\pmb{\gamma}_{l+1}}$,
and we take $d_{l+1} = d_l$, $\pmb{\alpha}_{l+1-d_{l}} = \pmb{\gamma}_{l+1}$;
\item or there exists an index $\kappa \geq s$ such that $\partial_{w_{\kappa}}(h_{l+1})$ is a unit. In this case, by
the implicit function theorem, we can change coordinates so that
\[
x_{l+1} = \pmb{u}^{\pmb{\gamma}_{l+1}}(\xi_{l+1}+w_{\kappa}),
\]
and we can take $d_{l+1} = d_l +1$, $\pmb{\beta}_{d_{l+1}} = \pmb{\gamma}_{l+1}$ and $v_{d_{l+1}} = w_{\kappa}$.
\end{enumerate}

\vspace{-\baselineskip}
\end{proof}

Theorem \ref{thm:LogFittingCharacterization} has two important corollaries.

\begin{corollary}[Monomiality is an open property in the source]\label{cor:MonOpen}
Let $\Phi: (M,D) \to (N,E)$ denote a $\cQ$-morphism. Suppose that $\Phi$ is monomial at $a \in M$. Then there is 
neighborhood $U$ of $a$ such that $\Phi|_{U}$ is everywhere monomial.
\end{corollary}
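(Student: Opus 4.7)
The plan is to reduce the statement to the dominant case, then invoke Theorem \ref{thm:LogFittingCharacterization} together with the fact that being a unit is an open condition.

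\textbf{Reduction to the dominant case.} Since $\Phi$ is monomial at $a$, there are coordinates $(\bu,\bv,\bw)$ at $a$ and $(\bx,\by,\bz)$ at $b=\Phi(a)$ as in \eqref{eq:mon1}, compatible with $D$ and $E$. In particular, in a neighbourhood $U$ of $a$ where these coordinates are defined, $\Phi(U)$ is contained in the smooth $\cQ$-submanifold
\[
N' := \{z_{s+1} = \cdots = z_{s'} = 0\}
\]
of $N$, which has dimension $p+s$ and meets $E$ transversely. Hence $\Phi|_U$ factors as $\iota \circ \Phi'$, where $\iota: N'\hookrightarrow N$ is the inclusion and $\Phi': (U, D|_U) \to (N', E\cap N')$ is a $\cQ$-morphism. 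Reading off \eqref{eq:mon1} in the $\bz$-truncated target, $\Phi'$ is monomial at $a$ and dominant in a neighbourhood of $a$.

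\textbf{Spreading via the log Fitting ideal.} By Theorem \ref{thm:LogFittingCharacterization}, $\mathcal{F}_0(\Phi')_a$ is generated by a unit in $\cQ_a$. Because $E\cap N'$ has finitely many local components through $b$ and $\Omega^{\dim N'}_{N',b}(-\log (E\cap N'))$ is thus generated over $\cQ_b$ by finitely many standard log forms, the log Fitting ideal $\mathcal{F}_0(\Phi')$ is, on some open neighbourhood $V$ of $a$ in $U$, generated by finitely many sections of $\cQ_V$; one of these sections takes a nonzero value at $a$. Since nonvanishing is an open condition, after shrinking $V$ we may assume this section is a unit at every point $a' \in V$. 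Hence $\mathcal{F}_0(\Phi')_{a'}$ is generated by a unit for every $a'\in V$, and the converse direction of Theorem \ref{thm:LogFittingCharacterization} (applied to the dominant morphism $\Phi'$) gives that $\Phi'$ is monomial at every $a' \in V$.

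\textbf{Lifting monomiality from $\Phi'$ back to $\Phi$.} Fix $a'\in V$ and let $b' := \Phi(a') \in N'$. Monomiality of $\Phi'$ at $a'$ provides $\Phi'$-monomial coordinates $(\bx',\by',\bz')$ at $b'$ on $N'$ compatible with $E\cap N'$. After shrinking $V$ so that only the components of $E$ through $b$ persist through $b'$, the coordinates $(z_{s+1},\ldots,z_{s'})$ on $N$ used to cut out $N'$ remain transverse to $N'$ at $b'$; adjoining them to $(\bx',\by',\bz')$ yields $\cQ$-coordinates at $b'$ on $N$ compatible with $E$. In these coordinates, together with $\Phi'$-monomial source coordinates at $a'$, the map $\Phi = \iota\circ\Phi'$ has precisely the form \eqref{eq:mon1} (the last block $z_l = 0$ for $l>s$ coming for free from $\Phi(V)\subset N'$). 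Thus $\Phi$ is monomial at $a'$, and $V$ is the required neighbourhood.

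\textbf{Main obstacle.} The only delicate point is the reduction step: one must verify that the local containment $\Phi(U)\subset N'$ and the transversality of $N'$ to $E$ really persist in a Zariski (or $\cQ$-) neighbourhood, so that the target divisor splits cleanly as $(E\cap N') \sqcup \{z_{s+1}=0,\ldots,z_{s'}=0\}$. Once this geometric setup is in place, the rest is the openness of the unit condition combined with Theorem \ref{thm:LogFittingCharacterization}.
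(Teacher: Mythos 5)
Your proof is correct and follows essentially the same route as the paper: both rest on Theorem \ref{thm:LogFittingCharacterization} together with the observation that a generator of the log Fitting ideal which is a unit at $a$ remains a unit on a neighbourhood. The only difference is cosmetic --- you handle the non-dominant case by factoring $\Phi$ through the coordinate subspace $N'=\{z_{s+1}=\cdots=z_{s'}=0\}$ and applying the dominant case to $\Phi'$, whereas the paper invokes the intermediate Fitting ideal $\cF_{n-(p+q+s)}(\Phi)$ directly; your lifting step (extending the $\Phi'$-monomial coordinates from $N'$ to coordinates on $N$ compatible with $E$) does need the observation that the proof of Theorem \ref{thm:LogFittingCharacterization} changes only source coordinates and leaves the divisorial target coordinates $x_j,y_k$ untouched, so that the extension can be taken to be the original $x_j,y_k$ and compatibility with $E$ (not merely with $E\cap N'$) is preserved.
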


\begin{proof}
If $\Phi$ is dominant, the result is an immediate consequence of Theorem \ref{thm:LogFittingCharacterization}. 
In general, we can write $\Phi$ at $a$ as in Definitions \ref{def:monom1}, \ref{def:monom2}. Then it follows from
Theorem \ref{thm:LogFittingCharacterization} that $\mathcal{F}_{n-(p+q+s)}(\Phi)_a$ is generated by a unit
and then that $\Phi$ is monomial in a neighbourhood of $a$.
\end{proof}

\begin{corollary}[Monomiality is preserved by combinatorial blowings-up and power substitutions]\label{cor:MonCombinatorialBU}
Let $\Phi:(M,D) \to (N,E)$ be a monomial morphism.
\begin{enumerate}
\item If $\sigma: (\widetilde{M},\widetilde{D}) \to (M,D)$ is a combinatorial blowing-up, then the 
morphism $\widetilde{\Phi} = \Phi \circ \sigma$ is monomial.
\smallskip
\item If $\sigma: (\widetilde{U},\widetilde{D}) \to (U,D)$ is a power substitution (over a suitable coordinate chart $U$ of $M$), 
then the morphism $\widetilde{\Phi} = \Phi \circ \sigma$ is monomial.
\smallskip
\item If $\tau: (\widetilde{N},\widetilde{E}) \to (N,E)$ is a combinatorial blowing-up and there exists a morphism 
$\widetilde{\Phi}: (M,D) \to (\widetilde{N},\widetilde{E})$ such that $\tau \circ \widetilde{\Phi} = \Phi$, then 
$\widetilde{\Phi}$ is monomial.
\smallskip
\item If $\tau: (\widetilde{V},\widetilde{E}) \to (V,E)$ is a power substitution (over a coordinate chart $V$ of $N$) and
there exists a morphism $\widetilde{\Phi}: (M,D) \to (\widetilde{V},\widetilde{E})$ such that $\tau \circ \widetilde{\Phi} = \Phi$,
then $\widetilde{\Phi}$ is monomial.
\end{enumerate}
\end{corollary}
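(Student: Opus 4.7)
The plan is to apply Theorem \ref{thm:LogFittingCharacterization} in each of the four parts, after first reducing to the dominant case as in the proof of Corollary \ref{cor:MonOpen}. Explicitly, if $\Phi$ is monomial at $a$ in the form \eqref{eq:mon1} with $s'>s$, I would factor through the $\cQ$-submanifold $\{z_{s+1}=\cdots=z_{s'}=0\}$ of $N$ to obtain a dominant monomial morphism compatible with the induced divisor; equivalently, one replaces $\cF_0$ by the log Fitting ideal $\cF_{n-(p+q+s)}$ throughout, and the same argument runs.

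The core observation I would establish first is a \emph{log change-of-basis lemma}: for a combinatorial blowing-up or a compatible power substitution $\sigma:(\widetilde{M},\widetilde{D})\to(M,D)$, in log-compatible coordinate systems on source and target the pullbacks $\sigma^{\ast}(du_i/u_i)$ and $\sigma^{\ast}(dw_j)$ are $\IQ$-linear combinations of the generators $d\widetilde{u}_k/\widetilde{u}_k,\,d\widetilde{w}_l$ of $\Omega^1_{\widetilde{M}}(-\log\widetilde{D})$, and the change-of-basis matrix has nonzero rational determinant. (In the combinatorial blowing-up case the matrix is unimodular integral; in the power substitution case it is diagonal with positive integer entries $k_i$.) The analogous statement holds for $\tau^{\ast}$ on the target side. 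Consequently, $\sigma^{\ast}$ (resp.\ $\tau^{\ast}$) carries any log volume form to a nonzero rational multiple of a log volume form.

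For parts (1) and (2), by Theorem \ref{thm:LogFittingCharacterization} there is $\omega\in\Omega^n_N(-\log E)_b$, $b=\Phi(a)$, such that $\Phi^{\ast}\omega=c\cdot\eta_M$ with $c$ a unit at $a$ and $\eta_M$ a log volume form at $a$. For any $\widetilde{a}\in\sigma^{-1}(a)$,
\[
\widetilde{\Phi}^{\ast}\omega\;=\;\sigma^{\ast}(\Phi^{\ast}\omega)\;=\;(c\circ\sigma)\cdot\sigma^{\ast}\eta_M\;=\;\lambda\,(c\circ\sigma)\cdot\eta_{\widetilde{M}},
\]
for some $\lambda\in\IQ^{\times}$ and a log volume form $\eta_{\widetilde{M}}$ at $\widetilde{a}$. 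Since $c$ is a unit at $\sigma(\widetilde{a})=a$, so is $c\circ\sigma$ at $\widetilde{a}$; thus $\cF_0(\widetilde{\Phi})_{\widetilde{a}}$ is the unit ideal. For parts (3) and (4), given the factorization $\Phi=\tau\circ\widetilde{\Phi}$, $\tau^{\ast}\omega$ is a top log form on $(\widetilde{N},\widetilde{E})$ at $\widetilde{\Phi}(a)$ by the change-of-basis lemma, and
\[
\widetilde{\Phi}^{\ast}(\tau^{\ast}\omega)\;=\;(\tau\circ\widetilde{\Phi})^{\ast}\omega\;=\;\Phi^{\ast}\omega\;=\;c\cdot\eta_M
\]
exhibits a unit in $\cF_0(\widetilde{\Phi})_a$. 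Theorem \ref{thm:LogFittingCharacterization} then yields monomiality of $\widetilde{\Phi}$.

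The main obstacle I anticipate is bookkeeping in (2) and (4): the ``power substitution'' of Definitions \ref{def:defs}(3) is actually a disjoint union of sign-twisted components $P_{\bep}$, and one must verify that each component individually satisfies the change-of-basis lemma and that the reduction to the dominant case is compatible with the chart-by-chart structure of the extended power substitution. A minor secondary point in the algebraic case is that ``unit at $a$'' and ``unit at $\widetilde{a}$'' are to be read with the residue-field caveat of Remarks \ref{rem:mon1}(2), but functoriality of the residue field extension $\IK_a\subset\IK_{\widetilde{a}}$ makes the transfer of the non-vanishing of $c$ automatic.
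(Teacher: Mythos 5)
Your proof is correct and takes exactly the same route as the paper, which simply invokes Theorem \ref{thm:LogFittingCharacterization}; you supply the details the paper omits, and your log change-of-basis observation (unimodular for combinatorial blowings-up, diagonal with positive integer entries for power substitutions) is the right key fact. One small imprecision worth flagging: when $n<m$ the pullback $\Phi^{\ast}\omega$ lies in $\Omega^n_M(-\log D)_a$, a free module of rank $\binom{m}{n}$, so it is generally a \emph{sum} of basis log $n$-forms with one coefficient a unit, not a single unit times a single ``log volume form'' as your displayed identity $\Phi^{\ast}\omega=c\cdot\eta_M$ suggests. This does not harm the argument: your lemma shows that $\sigma^{\ast}$ acts on $\Omega^n_{M}(-\log D)$ via the $n$-th exterior power of a $\IQ$-invertible matrix (composed with pullback of functions), so the ideal generated by the coefficients --- which is precisely $\cF_0$ --- is carried to the corresponding ideal for $\widetilde{\Phi}$, and the unit survives. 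With that rephrasing, parts (1)--(2) are complete; parts (3)--(4) are even simpler since, as you note, $\widetilde{\Phi}^{\ast}(\tau^{\ast}\omega)=\Phi^{\ast}\omega$ exhibits the unit directly in $\cF_0(\widetilde{\Phi})_a$ without any change of basis on the source.
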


\begin{proof}
This follows again from Theorem \ref{thm:LogFittingCharacterization}.
\end{proof}

\section{Logarithmic derivatives tangent to a morphism}\label{sec:tang}

In this section, we develop two important ideas that have been introduced informally in \S\ref{subsec:deriv}
and Remark \ref{rem:idea}---derivations tangent to a morphism (see \S\ref{subsec:tang}) and pre-monomial form
(\S\ref{subsec:premonom}). 
We introduce two notions of log differential order 
relative to a monomial morphism---in \S\ref{subsec:ideal} for
an ideal, and in \S\ref{subsec:premonom} for a morphism that is ``partially monomial'' 
(already in monomial form except for its last component, in local coordinates).
To each notion, we associate a certain \emph{Weierstrass-Tschirnhausen
normal form}; these normal forms will play important technical parts in the inductive proofs of our main theorems
(Lemma \ref{lem:NormalFormPartialMonomial}, for example, in transforming a partially monomial
morphism to pre-monomial form, for the purpose of monomialization of a morphism, one component
at a time).

\medskip
Let $(M,D)$ denote manifold with SNC divisor $D$, of
quasianalytic class $\cQ$. We use $\cQ$ also to denote the structure sheaf of $M$. Let $\cD_D$ or $\Der_M(-\log D)$
denote the sheaf of $\cQ$-modules of logarithmic derivations (i.e., derivations which are tangent to $D$). Recall that,
in coordinates $(\bu,\bw) = (u_1,\ldots,u_r, w_1,\ldots,w_t)$
compatible with $D$ at a point $a\in M$ (where $D=\{u_1 \cdots u_r =0\}$), the local sections of $\cD_D$
are generated by
 \begin{equation*}
 u_1 \frac{\partial}{\partial u_1}, \ldots, u_r \frac{\partial}{\partial u_r} ,\, \frac{\partial}{\partial w_{1}} ,\ldots, \frac{\partial}{\partial w_{t}}.
 \end{equation*}

\subsection{Subsheaves of logarithmic derivatives}\label{subsec:subsheaf} Let $\De$ denote a sheaf of 
$\cQ$-submodules of $\cD_D$. Throughout this section, we assume that $\De$ is privileged (see Definition \ref{def:priv})
although, for the theorems in this article, it is enough to consider $\De$ of finite type. Given a privileged ideal (sheaf)
$\cI \subset \cQ$, let $\De(\cI)$ denote the privileged ideal whose stalk at each point $a\in M$ is
$$
\De(\cI)_a := \{X(f): f\in \cI_a,\, X \in \De_a \}.
$$
If $f\in \cQ(M)$, we also write $\De(f) := \De(\cI)$, where $\cI$ is the principal ideal generated by $f$.

\begin{definitions}\label{def:ChainAndClosure}\emph{Closure by repeated derivatives, and log differential order.}
Given a privileged ideal $\mathcal{I}$ and a privileged submodule $\De$ of $\cD_D$, we define a chain
of privileged ideals,
\begin{equation}\label{eq:chain}
\mathcal{I} = \mathcal{I}_0^{\De} \subset \mathcal{I}_1^{\De} \subset \mathcal{I}_2^{\De} \subset \cdots \subset \mathcal{I}_k^{\De} \subset \cdots,
\end{equation}
where $\mathcal{I}_{k+1}^{\De} = \mathcal{I}_{k}^{\De} + \De(\mathcal{I}_{k}^{\De})$,  $k=0,1,\ldots$, and we define
the \emph{closure}  of $\mathcal{I}$ by $\De$ as the (privileged) ideal
\[
\mathcal{I}_{\infty}^{\De} := \sum_{k=0}^{\infty} \mathcal{I}_k^{\De}.
\] 

Given $a\in M$, we define the \emph{log differential order} $\mu_a(\mathcal{I},\De)$ of $\cI$ \emph{relative to} $\De$
as the smallest $\mu \in \mathbb{N} \cup \{\infty\}$ such that
$\mathcal{I}_{\mu}^{\De} \cdot \mathcal{Q}_{a} = \mathcal{I}_{\infty}^{\De} \cdot \mathcal{Q}_{a}$.
(By convention, $\mu_a(\mathcal{I},\De):=\infty$ if $\cI = 0$.)
\end{definitions}

If the local rings $\mathcal{Q}_a$ are Noetherian (for example, if $\mathcal{Q}$ is the algebraic or analytic class), then 
$\mu_a(\mathcal{I},\De)$ is finite.

\begin{example}\label{ex:ToroidalHull}\emph{Toroidal hull.}
The \emph{toroidal hull} of an ideal $\cI$ (of finite type) is the closure of $\cI$ by $\cD_D$ (see \cite[Def.-Thm.\,17]{Ko},
\cite[\S\,2.2]{ATW}).
\end{example}

\begin{lemma}[Properties of closure by $\De$]\label{lem:BasicPropClosure}\hfill
Let $\mathcal{I}$, $\mathcal{J}$ denote privileged ideals, and $\De$, $\De_1$, $\De_2$ privileged submodules 
of $\cD_D$.
\begin{enumerate}
\item If $\mathcal{I} \subset \mathcal{J}$, then $\mathcal{I}_{\infty}^{\De} \subset \mathcal{J}_{\infty}^{\De}$.
\item If $\De_1 \subset \De_2$, then $\mathcal{I}_{\infty}^{\De_1} \subset \mathcal{I}_{\infty}^{\De_2}$.
\item Suppose that $\mathcal{I}_{\infty}^{\De}$ is principal and monomial (with respect to $D$) at $a\in M$. 
Then $\mu_{a}(\mathcal{I},\De)< \infty$;
moreover, if $\mu_{a}(\mathcal{I},\De)> 0$, then there exists a regular vector field $X \in \De_{a}$.
\end{enumerate}
\end{lemma}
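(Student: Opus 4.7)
Parts (1) and (2) follow by straightforward monotonicity arguments on the defining chain. For (1), I induct on $k$: the base case is the hypothesis $\cI \subset \cJ$, and the inductive step uses $\De(\cI_k^\De) \subset \De(\cJ_k^\De)$ whenever $\cI_k^\De \subset \cJ_k^\De$, giving $\cI_{k+1}^\De \subset \cJ_{k+1}^\De$ and, on summing, $\cI_\infty^\De \subset \cJ_\infty^\De$. Part (2) uses the same template with $\De_1(\cI_k^{\De_1}) \subset \De_2(\cI_k^{\De_2})$ at each step.

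For (3), I work in coordinates $(\bu, \bw)$ centered at $a$ with $D = \{u_1 \cdots u_r = 0\}$ and write the hypothesis as $\cI_\infty^\De \cdot \cQ_a = (u^\bga)$ for some $\bga \in \IN^r$. Finiteness of $\mu_a(\cI, \De)$ is then immediate: the increasing union $\bigcup_k \cI_k^\De \cdot \cQ_a$ equals the principal ideal $(u^\bga)$, so its single generator $u^\bga$ lies in some $\cI_\mu^\De \cdot \cQ_a$, forcing $\cI_\mu^\De \cdot \cQ_a = (u^\bga)$. This step does not require Noetherianity of $\cQ_a$ precisely because the target ideal is principal.

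The substantive content is the existence of a regular vector field when $\mu_a > 0$, where \emph{regular} means $X \notin \fm_a \cdot \cD_{D,a}$. My plan is to argue by contradiction: assume every $X \in \De_a$ lies in $\fm_a \cdot \cD_{D,a}$, and produce $\cI_\infty^\De \cdot \cQ_a \subsetneq (u^\bga)$. Writing $X = \sum_i a_i u_i \frac{\p}{\p u_i} + \sum_j b_j \frac{\p}{\p w_j}$ with $a_i, b_j \in \fm_a$, the Leibniz rule yields
\[
X(f u^\bga) \,=\, \left( X(f) + f \sum_i a_i \ga_i \right) u^\bga,
\]
and both $X(f)$ and $f \sum_i a_i \ga_i$ lie in $\fm_a$. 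Hence $X$ sends $(u^\bga)$ into $\fm_a \cdot (u^\bga)$, and in particular $\fm_a \cdot (u^\bga)$ is stable under $\De_a$.

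To complete the contradiction, I observe that $\mu_a > 0$ rules out $\cI \cdot \cQ_a = (u^\bga)$; since $\cI \cdot \cQ_a \subset (u^\bga)$ and $\cQ_a$ is a domain (via injectivity of the Taylor map into the formal power series ring), this forces $\cI \cdot \cQ_a \subset \fm_a \cdot (u^\bga)$, for any element $f u^\bga \in \cI \cdot \cQ_a$ with $f \in \cQ_a^\times$ would recover $u^\bga \in \cI \cdot \cQ_a$. Iterating the invariance of $\fm_a \cdot (u^\bga)$ under $\De_a$ gives $\cI_k^\De \cdot \cQ_a \subset \fm_a \cdot (u^\bga)$ for all $k$, whence $\cI_\infty^\De \cdot \cQ_a \subset \fm_a \cdot (u^\bga) \subsetneq (u^\bga)$, contradicting the hypothesis. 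The main obstacle is spotting the correct intermediate submodule $\fm_a \cdot (u^\bga)$ sitting strictly between $\cI \cdot \cQ_a$ and $(u^\bga)$ and verifying its stability under non-regular log derivations---once that invariance is in hand, the rest is purely formal.
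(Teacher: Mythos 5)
Parts (1), (2), and the finiteness half of (3) are fine. For finiteness you exploit that the generator $\bu^{\bga}$ of the stalk $\cI^{\De}_{\infty,a}$ must already lie in some finite stage of the increasing chain $\bigcup_k \cI^{\De}_{k,a}$; the paper's own argument instead tracks the non-increasing sequence of orders $\ord(\cI^{\De}_{k,a}) \geq |\bga|$ and observes that once the order drops to $|\bga|$ the ideal must equal $(\bu^{\bga})$. Both are correct.

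The genuine gap is your reading of \emph{regular}. You take it to mean $X \notin \fm_a \cdot \cD_{D,a}$, but Remark~\ref{rem:regvf} defines $X$ to be regular at $a$ if there exists $f \in \cQ_a$ with $X(f)(a) \neq 0$. For a log derivation $X = \sum_i a_i u_i\,\p/\p u_i + \sum_j b_j\,\p/\p w_j$, evaluating $X(f)$ at $a$ only picks up $\sum_j b_j(a)\,(\p f/\p w_j)(a)$, since the $u_i$ vanish at $a$; hence $X$ is regular iff some $b_j$ is a unit, and the $a_i$ are irrelevant. In particular $u_1\,\p/\p u_1$ is \emph{singular} in the paper's sense although it lies outside $\fm_a\cdot\cD_{D,a}$. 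Your contradiction therefore begins from a too-strong hypothesis and produces only a vector field with \emph{some} unit coefficient (possibly an $a_i$); this is strictly weaker than the lemma's conclusion and does not support its applications (Lemma~\ref{lem:NormalFormIdeal} via Remark~\ref{rk:FreeCoordinates}), where regularity is precisely what yields a $\Phi$-free coordinate $w_l$---something $u_1\,\p/\p u_1$ does not supply.

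Correspondingly, your intermediate claim that $X$ maps $(\bu^{\bga})$ into $\fm_a\cdot(\bu^{\bga})$ is false under the correct notion of singular: for $\ga_1\neq 0$, the singular derivation $u_1\,\p/\p u_1$ sends $\bu^{\bga}$ to the unit multiple $\ga_1\bu^{\bga}$. However, the statement your argument actually uses---$\De_a$-stability of $\fm_a\cdot(\bu^{\bga})$---\emph{does} hold under the correct (weaker) hypothesis that every $X\in\De_a$ has $X(f)(a)=0$ for all $f$: in the identity $X(f\bu^{\bga}) = \bigl(X(f) + f\sum_i a_i\ga_i\bigr)\bu^{\bga}$, the first summand lies in $\fm_a$ because $X$ is singular, and the second lies in $\fm_a$ because $f\in\fm_a$, whether or not the $a_i$ are units. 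So your stability-based strategy (a genuinely different route from the paper's comparison of orders along the chain) can be repaired, but you must first adopt the paper's notion of singular and prove the stability of $\fm_a\cdot(\bu^{\bga})$ directly, not as a corollary of the false claim about $(\bu^{\bga})$.
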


\begin{remark}\label{rem:regvf} We recall that a vector field $X\in \cD_D$ is called \emph{regular} at $a$ if
$X$ does not vanish at $a$; i.e., there exists $f\in\cQ_a$ such that $X(f)(a)\neq 0$. A vector field $X$ is
\emph{singular} at $a$ if it is not regular at $a$.
\end{remark}

\begin{proof}
(1) and (2) are clear. To prove (3), suppose that $\mathcal{I}_{\infty}^{\De}$ is principal and monomial
at $a$, and consider a
coordinate system $(\pmb{u},\pmb{w})$ at $a$ compatible with $D$, as above. Then $\mathcal{I}^{\De}_{\infty,a}=(\pmb{u}^{\pmb{\gamma}})$, for some $\ga\in\IN^r$. Moreover, for all $k\in \mathbb{N}$, every $f\in \mathcal{I}_{k,a}^{\De}$
is divisible by $\bu^{\bga}$, since $\cI_k^\De \subset \cI_\infty^\De$. Therefore, the ideal $\mathcal{I}_{k,a}^{\De}$ has order
$\geq |\pmb{\gamma}|$, and $=|\pmb{\gamma}|$ only if $\mathcal{I}_{k,a}^{\De}= (\pmb{u}^{\pmb{\gamma}})$. It follows
that $ \mu_{a}(\mathcal{I},\De) < \infty$. Moreover, if every $X\in\De_{a}$ is singular, then the ideals $\mathcal{I}_{k,a}^{\De}$ 
all have the same order; this implies that $\mathcal{I}_a$ has order $|\pmb{\gamma}|$, so that
$\mu_{a}(\mathcal{I},\De)=0$. 
\end{proof}

The following two lemmas will be useful at several points of the proof of our main theorems (see,
for example, Lemma \ref{rk:BasisFormalIdeal} and \S\S\,\ref{subsec:I.A},\,\ref{subsec:I.B}). The notation $O(f)$ means a function (germ) that is divisible by $f$.

\begin{lemma}[Computation of the chain of ideals]\label{lem:ComputingInvariant}
Let $\mathcal{I}$ denote a privileged ideal sheaf, and $\De$ a privileged submodule of $\cD_D$. Let $a \in M$.
Let $F \in \mathcal{I}_{a}$, and let $X\in \De_{a}$ denote a singular vector field. Suppose that $F$ has the form
\[
F = \sum_{i=0}^d f_i + O(f),
\]
where $f$ and each $f_i\in\cQ_a$, $X(f)$ is divisible by $f$, and $X(f_i)=U_i f_i$, $i=0,\ldots,d$, where the $U_i\in\cQ_a$ are units
and $U_i(a)\neq U_j(a)$, $i\neq j$. Then
\[
f_i + O(f) \in \mathcal{I}_d^{\De} \cdot \mathcal{Q}_{a} \quad i=0,\ldots, d.
\]
\end{lemma}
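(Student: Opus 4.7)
The plan is to apply $X$ iteratively to $F$ and then invert a Vandermonde-type system, after observing that $X$ preserves the error-term ideal $(f)$. Writing $X(f)=Vf$, the Leibniz rule gives $X(hf)=(X(h)+Vh)f$ for every $h\in\cQ_a$, so $X$, and hence every iterate $X^k$, maps $(f)$ into itself. This keeps the term $O(f)$ under control throughout the iteration.

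Next, for each $i$ I would define $P_{i,k}\in\cQ_a$ by $X^k(f_i)=P_{i,k}f_i$; Leibniz together with the eigenvalue identity $X(f_i)=U_if_i$ yields the recursion $P_{i,k+1}=X(P_{i,k})+U_iP_{i,k}$ with $P_{i,0}=1$. Because $X$ is singular at $a$, every value $X(g)(a)$ vanishes, and an immediate induction on $k$ gives $P_{i,k}(a)=U_i(a)^k$. Combining this with preservation of $(f)$, for $k=0,1,\ldots,d$ one obtains in $\cQ_a$
\[
X^k(F) \;=\; \sum_{i=0}^{d} P_{i,k}\,f_i \;+\; R_k\,f,
\]
with some $R_k\in\cQ_a$. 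The $(d+1)\times(d+1)$ matrix $(P_{i,k})$ specializes at $a$ to the Vandermonde matrix in the values $U_0(a),\ldots,U_d(a)$, whose determinant $\prod_{i<j}(U_j(a)-U_i(a))$ is nonzero by the distinctness hypothesis. Hence this determinant is a unit in the local ring $\cQ_a$, the matrix is invertible over $\cQ_a$, and Cramer's rule expresses each $f_i$ as a $\cQ_a$-linear combination of $X^0(F),\ldots,X^d(F)$ modulo $(f)$. Since every $X^k(F)$ with $k\leq d$ lies by construction in $\cI_k^\De\cdot\cQ_a\subset\cI_d^\De\cdot\cQ_a$, the first assertion $f_i+O(f)\in\cI_d^\De\cdot\cQ_a$ follows.

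For the ``in particular'' clause, suppose $\cI_a=(F)$ and $f=f_{i_0}h$. Specializing the Cramer formula at $i=i_0$ gives $f_{i_0}(1+\alpha h)\in\cI_d^\De\cdot\cQ_a$ for some $\alpha\in\cQ_a$. In the generic case $h\in\fm_a$ the factor $1+\alpha h$ is a unit, hence $f_{i_0}\in\cI_d^\De\cdot\cQ_a$; then $f=f_{i_0}h$ and, by another application of Cramer, each $f_j$ also lies in $\cI_d^\De\cdot\cQ_a$. The remaining case that $h$ is a unit reduces, after absorbing $f_{i_0}$ into $O(f)$, to a shorter decomposition of $F$ of the same form to which the argument applies with one fewer term. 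Once $f$ and every $f_j$ are in $\cI_d^\De\cdot\cQ_a$, one verifies that $(\cI_d^\De)_a$ is closed under $\De_a$, giving $\cI_{d+1}^\De\cdot\cQ_a=\cI_d^\De\cdot\cQ_a$ and hence $\mu_a(\cI,\De)\leq d$. I anticipate the main obstacle to be precisely this last closure step: the chain $\cI_k^\De$ is defined using all of $\De$ rather than only the iterated $X$-action, so one must rule out that some $Y\in\De_a$ outside $\cQ_a\cdot X$ produces a new generator beyond what the Vandermonde inversion has already accounted for.
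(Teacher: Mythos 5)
Your proof of the main assertion is correct and is essentially the paper's own proof: apply the singular derivation $X$ repeatedly, note that $(f)$ is preserved and that $X^k(f_i)=U_{ik}f_i$ with $U_{ik}(a)=U_i(a)^k$, and invert the resulting Vandermonde system over the local ring $\cQ_a$. The details you add (the recursion $P_{i,k+1}=X(P_{i,k})+U_iP_{i,k}$ and the Leibniz computation showing that $X$ maps $(f)$ into itself) are exactly what the paper leaves implicit in the notation $O(f)$, and the observation $X^k(F)\in\cI_k^\De\cdot\cQ_a$ for $k\leq d$ correctly finishes the first claim.

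The obstacle you flag for the ``in particular'' clause is genuine, and the paper's proof is silent on it (it ends with ``the result follows''). In fact the clause does not follow from the displayed hypotheses alone when $\De$ is strictly larger than $\cQ_a\cdot X$: take $M=\IR^3$ with coordinates $(u,v_1,v_2)$, $D=\{u=0\}$, $\De=\cD_D$ generated by $u\,\p/\p u,\ \p/\p v_1,\ \p/\p v_2$, $X=u\,\p/\p u$, and $F=uv_2^2+u^2v_1$ with $f_0=uv_2^2$, $f_1=u^2v_1$, $U_0=1$, $U_1=2$, $f=f_0$, $O(f)=0$, $d=1$; then $\cI_1^\De\cdot\cQ_0=(u^2,uv_2)$ while $\cI_\infty^\De\cdot\cQ_0=(u)$, so $\mu_0(\cI,\De)=2>d$. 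What rescues the clause in every place the paper invokes it is an extra feature that you should make into a hypothesis rather than try to ``verify'': either $\De$ is generated by $X$ alone (as for $\De^\pi=(t\,\p/\p t)$ in Remark \ref{rem:Nak}), or the $f_i$ and $f$ are simultaneous eigenvectors of a full generating set of $\De$, so that $J:=(f_0,\ldots,f_d,f)$ is $\De$-closed. Granting that, and granting $f\in\cI_d^\De\cdot\cQ_a$ (your unit argument for $1+\alpha h$ when $h\in\fm_a$; note that your reduction in the remaining case loses the divisibility hypothesis for the shorter decomposition, so that branch does not close as written), one concludes via Lemma \ref{lem:BasicPropClosure}(1): $\cI\subset J$ gives $\cI_\infty^\De\subset J_\infty^\De=J\subset\cI_d^\De$, whence $\cI_d^\De\cdot\cQ_a=\cI_\infty^\De\cdot\cQ_a$. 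Without such a $\De$-closed intermediate ideal there is no mechanism forcing the chain to stabilize at level $d$, so your instinct not to wave this away was right.
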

\begin{proof}
Since $X$ is a singular derivation, it follows by repeated application of $X$ that, for every $i=0,\ldots,d$ and $k=0,\ldots,d$,
$X^k(f_i) = U_{ik}f_i$, where $U_{ik}\in\cQ_a$ and $U_{ik}(a)=U_i(a)^k$.
Therefore, for every $k=0,\ldots,d$,
\[
X^{k}(F) = \sum_{i=0}^d U_{ik} f_i + O(f);
\]
i.e., we have a system of equations
\[
\left(
\begin{matrix}
F\\
X(F)\\
\vdots\\
X^{d}(F)
\end{matrix}
\right)=
\left(
\begin{matrix}
1 & 1 & \cdots & 1\\
U_{01} & U_{11} & \cdots & U_{d1}\\
\vdots & \vdots & \ddots & \vdots\\
U_{0d} & U_{1d} & \cdots & U_{dd}\\
\end{matrix}
\right)
\left(
\begin{matrix}
f_0\\
f_1\\
\vdots\\
f_d
\end{matrix}
\right)  + O(f),
\]
where the matrix is invertible because it becomes a Vandermonde matrix when evaluated at $a$.
Since $X^k(F) \in \mathcal{I}_d^{\De} \cdot \mathcal{Q}_{a}$, $k=0,\ldots, d$, the result follows.
\end{proof}

\begin{remark}
\label{rk:ComputingInvariant}
Lemma \ref{lem:ComputingInvariant} will be used in the sequel usually in situations where $f=0$; in this case, it allows us to conclude that $(f_0,\ldots,f_d) \subset \mathcal{I}_d^{\De}\cdot \mathcal{Q}_{a}$. We will use this information to compute the log differential order $\mu_{a}(\mathcal{I},\De)$ of the ideal $\mathcal{I}_{a}$. For example, if $\mathcal{I}_a = (F)$ and $\Delta_a = (X)$, then we 
can conclude that $(f_0,\ldots,f_d) = \mathcal{I}_d^{\De}\cdot \mathcal{Q}_{a}$ and $\mu_{a}(\mathcal{I},\De) \leq d$ (see, for
example, Remark \ref{rem:Nak} below).

There are two places in the paper where we apply Lemma \ref{lem:ComputingInvariant} with $f \neq 0$ (see the proofs of the inductive assertions I.A.a$(m,n)$ and I.A.c$(m,n)$ in Section 7).
In both situations, use of the lemma is preceded by an argument to show that $f \in \mathcal{I}_d^{\De}\cdot \mathcal{Q}_{a}$. As a consequence, it allows us to conclude that $(f,f_0,\ldots,f_d) \subset \mathcal{I}_d^{\De}\cdot \mathcal{Q}_{a}$ and, once again, therefore, to control the log differential order.
\end{remark}

\begin{lemma}[Pullback of the ideal chain by blowings-up and power substitutions]
\label{lem:PreservingMu}
Let $\mathcal{I}\subset \cQ$ denote a privileged ideal and $\De$ be a privileged submodule of $\cD_D$. Let $\sigma : (\widetilde{M},\widetilde{D}) \to (M,D)$ denote the composite of a finite sequence of local blowings-up
and local power substitutions compatible with $D$. Suppose that 
$\widetilde{\De} = \sigma^{\ast}(\De)$ is a well-defined $\cQ$-submodule of $\cD_{\tD}$; i.e., there are no poles. Set
$\widetilde{\mathcal{I}}=\sigma^{\ast}(\mathcal{I})$. Then
\begin{enumerate}
\item $\sigma^{\ast}(\mathcal{I}_{k}^{\De}) = \widetilde{\mathcal{I}}_{k}^{\widetilde{\De}}$, for all $k\in \mathbb{N} \cup \{\infty\}$;
\item for every $\ta\in \widetilde{M}$,
$\mu_{\widetilde{a}}(\widetilde{\mathcal{I}},\widetilde{\De}) \leq \mu_{\s(\ta)}(\mathcal{I},\De)$.
\end{enumerate}
\end{lemma}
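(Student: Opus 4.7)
The plan is to prove (1) by induction on $k$, and then deduce (2) as an immediate corollary. Statement (2) is easy once (1) is known: if $\mu = \mu_{\s(\ta)}(\cI,\De)$, then by definition $\cI_\mu^\De \cdot \cQ_{\s(\ta)} = \cI_\infty^\De \cdot \cQ_{\s(\ta)}$, so applying $\s^*$ and using (1) gives $\tI_\mu^{\tDe} \cdot \cQ_{\ta} = \tI_\infty^{\tDe} \cdot \cQ_{\ta}$, whence $\mu_{\ta}(\tI,\tDe) \leq \mu$.

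For (1), I first treat the finite $k$, with the base case $k=0$ being the definition $\s^*(\cI) = \tI$. For the inductive step $\s^*(\cI_k^\De) = \tI_k^{\tDe} \Rightarrow \s^*(\cI_{k+1}^\De) = \tI_{k+1}^{\tDe}$, I use the basic compatibility $\s^*(X(f)) = (\s^*X)(\s^*f)$ for $X \in \De$ and $f \in \cQ_M$, which holds locally because compatible blowings-up lift log derivations to log derivations and compatible power substitutions do the same with a rational rescaling (the hypothesis that $\tDe = \s^*\De$ is a well-defined submodule of $\cD_{\tD}$ is exactly what ensures no poles appear). The inclusion $\s^*(\cI_{k+1}^\De) \subset \tI_{k+1}^{\tDe}$ is immediate from this compatibility together with the inductive hypothesis.

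The subtle direction is $\tI_{k+1}^{\tDe} \subset \s^*(\cI_{k+1}^\De)$, which is what I expect to be the main obstacle. An element of $\tDe(\tI_k^{\tDe})$ is a $\cQ_{\tM}$-linear combination of expressions $\tX(\tf)$ with $\tX \in \tDe$ and $\tf \in \tI_k^{\tDe}$. Using $\tDe = \s^*\De$ and the inductive hypothesis $\tI_k^{\tDe} = \s^*(\cI_k^\De)$, I can write $\tX = \sum_i \tg_i\, \s^*(X_i)$ and $\tf = \sum_j \tilde{h}_j\, \s^*(f_j)$ locally, for $X_i \in \De$, $f_j \in \cI_k^\De$ and $\tg_i, \tilde{h}_j \in \cQ_{\tM}$. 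The Leibniz rule then gives
\[
\tX(\tf) \;=\; \sum_{i,j} \tg_i \tilde{h}_j\, \s^*(X_i(f_j)) \;+\; \sum_{i,j} \tg_i\, \s^*(X_i)(\tilde{h}_j)\, \s^*(f_j),
\]
in which the first sum lies in $\s^*(\De(\cI_k^\De))$ and the second in $\s^*(\cI_k^\De) \cdot \cQ_{\tM}$; both are contained in $\s^*(\cI_{k+1}^\De)$. This closes the induction.

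Finally, I pass to $k = \infty$ by noting that $\cI_\infty^\De = \sum_k \cI_k^\De$ is a directed union (of an increasing chain) and that pull-back commutes with such sums of ideals, so $\s^*(\cI_\infty^\De) = \sum_k \s^*(\cI_k^\De) = \sum_k \tI_k^{\tDe} = \tI_\infty^{\tDe}$. The one genuinely technical point, as indicated above, is the combined use of the Leibniz rule with the hypothesis $\tDe = \s^*\De$; everything else is formal once the compatibility $\s^*(X(f)) = (\s^*X)(\s^*f)$ is in hand, which for local blowings-up and compatible power substitutions is standard.
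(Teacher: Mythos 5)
Your proof is correct and follows essentially the same route as the paper's: establish $\s^*(\cI_k^\De)=\tI_k^{\tDe}$ by induction on $k$, then deduce (2) from the definitions. The paper's own argument is only a few lines and leaves the one-step case and the Leibniz manipulation implicit; you have simply supplied the missing details, including the passage to $k=\infty$ via the directed union.
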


\begin{proof}
Clearly, $\widetilde{\De}$ is privileged.
Note that, if
$\sigma^{\ast}\left( \De(\mathcal{I})\right) = \widetilde{\De}(\widetilde{\mathcal{I}})$, then
$\sigma^{\ast}\left( \De^k(\mathcal{I}) \right) = \widetilde{\De}^k(\widetilde{\mathcal{I}})$, for all $k\in \mathbb{N}$.
The result follows from Definitions \ref{def:ChainAndClosure}.
\end{proof}

\subsection{Log derivatives tangent to a morphism}\label{subsec:tang}
Let $\Phi:(M,D) \to (N,E)$ denote a morphism of $\cQ$-manifolds with SNC divisors.

\begin{definition}\label{def:tang}
The sheaf of \emph{log derivatives tangent to} $\Phi$ (i.e., tangent to the fibres of $\Phi$) is the sheaf of
$\cQ$-submodules $\De^{\Phi}\subset \cD_D$ whose stalk at each $a\in M$ is
\[
\De^{\Phi}_{a} :=  \{X \in \cD_{D,a}:  X(f \circ \Phi)= 0 ,\, f\in \mathcal{Q}_{\Phi(a)}  \}.
\]
\end{definition}

\begin{remark}
The subsheaf $\De^{\Phi}$ is not necessary privileged for a quasianalytic class $\mathcal{Q}$, in general,
but it is of finite type wherever needed in this article; for example, if $\Phi$ is a monomial morphism (Lemma \ref{lem:BasisPhiDer} below).
\end{remark}

\begin{lemma}[Log derivatives tangent to a monomial morphism]\label{lem:BasisPhiDer}
Suppose that $\Phi:(M,D) \to (N,E)$ is a monomial morphism. Then $\De^{\Phi}$ is of finite type. If $a\in M$, and 
$\Phi$ is written in the form \eqref{eq:mon1} in ($\Phi$-monomial) coordinates 
$(\pmb{u},\pmb{v},\pmb{w})=(u_1,\ldots,u_r,v_{1},\ldots,v_{s},w_{1}, \ldots, w_t)$ at $a$ (cf.\ Definition \ref{def:monom2}),
then $\De^\Phi_a$ is generated by
\begin{equation}\label{eq:logder1}
Y^j := \sum_{i=1}^r \ga_{ji} u_i\frac{\p}{\p u_i}, \ \ j=1,\ldots, r-p,\quad \text{and}\quad Z^l := \frac{\p}{\p w_l}, \ \ l=1,\ldots, t,
\end{equation}
where $\bga_j = (\ga_{j1},\ldots,\ga_{jr})\in \IZ^r$, $j=1,\ldots,r-p$, form a basis of the
orthogonal complement of the $\IQ$-linear subspace spanned by $\bal_1,\ldots,\bal_p$
with respect to the standard scalar product $\langle \bga,\bal\rangle$.
\end{lemma}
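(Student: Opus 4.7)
\medskip
\noindent
\textbf{Proof proposal.} The plan is to write an arbitrary log derivation in local coordinates, translate the tangency condition $X(f\circ\Phi)=0$ into an algebraic/differential system on the coefficients, and solve that system using the $\IQ$-linear relations between $\bal_j$ and $\bbe_k$ built into the definition of monomiality.

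First I would work in the given $\Phi$-monomial chart at $a$. Every log derivation $X\in\cD_{D,a}$ has the form
\[
X \;=\; \sum_{i=1}^r a_i\, u_i\frac{\p}{\p u_i} \;+\; \sum_{k=1}^s b_k\frac{\p}{\p v_k} \;+\; \sum_{l=1}^t c_l\frac{\p}{\p w_l},
\]
with $a_i,b_k,c_l\in\cQ_a$. It suffices, by the chain rule, to impose $X(x_j\circ\Phi)=0$, $X(y_k\circ\Phi)=0$ and $X(z_l\circ\Phi)=0$. A direct computation from \eqref{eq:mon1} gives
\[
\begin{aligned}
X(\bu^{\bal_j}) &= \langle\bal_j,\ba\rangle\,\bu^{\bal_j},\\
X(\bu^{\bbe_k}(\xi_k+v_k)) &= \langle\bbe_k,\ba\rangle\,\bu^{\bbe_k}(\xi_k+v_k) + b_k\,\bu^{\bbe_k},\\
X(v_l) &= b_l,
\end{aligned}
\]
where $\langle\cdot,\cdot\rangle$ is the standard inner product on $\IQ^r$.

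Setting the first to zero for $j=1,\dots,p$ gives $\langle\bal_j,\ba\rangle=0$, i.e. $\ba=(a_1,\dots,a_r)$ lies (pointwise) in the orthogonal complement $V$ of $\Span_\IQ\{\bal_1,\dots,\bal_p\}$. Since each $\bbe_k$ is $\IQ$-linearly dependent on the $\bal_j$ by Definition \ref{def:monom1}(2), we automatically have $\langle\bbe_k,\ba\rangle=0$, so the second equation collapses to $b_k(\xi_k+v_k)\cdot 0 + b_k\bu^{\bbe_k}=0$; since $\bu^{\bbe_k}\neq 0$ (as $\bbe_k\neq\bzero$ and we may pass to the ring of germs where units are inverted, or equivalently just divide in $\cQ_a$ after clearing the non-zero-divisor $\bu^{\bbe_k}$), this forces $b_k=0$ for $k=1,\dots,q$. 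The equations $X(z_l\circ\Phi)=0$ give $b_l=0$ for $l=q+1,\dots,s$, while for $l>s$ they are vacuous since $z_l=0$. Hence the tangency conditions reduce precisely to: all $b_k=0$, the coefficients $c_l$ are unconstrained, and $\ba(\bu,\bv,\bw)$ is a $\cQ$-valued function taking values in $V$.

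Choosing a $\IZ$-basis $\bga_1,\dots,\bga_{r-p}$ of $V\cap\IZ^r$ (obtained by clearing denominators from any $\IQ$-basis of $V$), we can write $\ba=\sum_{j=1}^{r-p} f_j\bga_j$ uniquely with $f_j\in\cQ_a$, giving
\[
\sum_{i=1}^r a_i\, u_i\frac{\p}{\p u_i} \;=\; \sum_{j=1}^{r-p} f_j\,Y^j,
\]
and therefore $X=\sum_j f_jY^j+\sum_l c_lZ^l$. This shows the stalk $\De^\Phi_a$ is generated by \eqref{eq:logder1}. Finite type follows because the same generators $Y^j$, $Z^l$, defined globally in the chart by the formulas \eqref{eq:logder1}, generate $\De^\Phi$ at every nearby point (the monomial form persists on the chart by Corollary \ref{cor:MonOpen}, and the orthogonality data $\bga_j$ are constants), so $\De^\Phi$ is a quotient of a finite free $\cQ$-module on the chart.

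The only subtle point, which I would verify carefully rather than by hand-waving, is the passage from the identity $\langle\bbe_k,\ba\rangle(\xi_k+v_k)\bu^{\bbe_k}+b_k\bu^{\bbe_k}=0$ (an equation between elements of $\cQ_a$) to the pointwise vanishing $b_k=0$; this needs that $\bu^{\bbe_k}$ is a non-zero-divisor in $\cQ_a$, which follows from axiom \ref{def:quasian}(3) together with $\bbe_k\in\IN^r$, and the $\IQ$-linear dependence of $\bbe_k$ on the $\bal_j$, which is what prevents the ``twist'' $\xi_k+v_k$ from introducing additional tangency constraints. This interplay between the combinatorics of exponents and the analytic tangency equations is what I would consider the substantive content of the lemma.
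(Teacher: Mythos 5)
Your proof is correct, and it takes a genuinely different route from the paper's. The paper proves the lemma by induction on $\dim N$, peeling off one component of $\Phi$ at a time (the last $z_l$, or the last $y_k$, or the last $x_j$, depending on which of $s>q$, $s=q\neq 0$, $s=q=0$ holds), and in each case reducing to the inductive hypothesis for the truncated morphism $\Psi$. You instead write a general log derivation in the chart, impose the tangency conditions $X(x_j\circ\Phi)=X(y_k\circ\Phi)=X(z_l\circ\Phi)=0$ all at once, and solve the resulting linear system over $\cQ_a$: the constraints on the $u$-part of the coefficient vector $\ba$ are exactly $\langle\bal_j,\ba\rangle=0$ (after clearing the non-zero-divisor $\bu^{\bal_j}$), the $\IQ$-linear dependence of the $\bbe_k$ on the $\bal_j$ guarantees that the $y_k$-equations impose nothing new on $\ba$ and force $b_k=0$, and the $z_l$-equations force the remaining $b_l=0$. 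Your argument is more direct and arguably more transparent: it exposes exactly which combinatorial hypotheses in Definition \ref{def:monom1} (linear independence of the $\bal_j$, dependence of the $\bbe_k$, nonvanishing of $\xi_k$) are doing the work. The paper's inductive argument is a little closer in spirit to how the lemma gets used downstream (e.g.\ in Lemma \ref{lem:BUPreservingMonomialDeriv}, which also peels off a component), but for establishing the statement itself your computation is self-contained and correct, including the non-zero-divisor justification and the passage from germwise constraints $\langle\bal_j,\ba\rangle=0$ to a $\cQ_a$-linear combination $\ba=\sum_j f_j\bga_j$ (the $f_j$ are $\IQ$-linear combinations of the $a_i$, hence in $\cQ_a$). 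One small typo: in the display ``$b_k(\xi_k+v_k)\cdot 0+b_k\bu^{\bbe_k}=0$'' the first factor $b_k$ should be $\langle\bbe_k,\ba\rangle\bu^{\bbe_k}$, but since the coefficient is $0$ the conclusion $b_k\bu^{\bbe_k}=0$ is unaffected. Your observation about finite type (the generators \eqref{eq:logder1} with constant $\bga_j$ work simultaneously at all nearby points, using Corollary \ref{cor:MonOpen}) is also correct and is not spelled out in the paper, which simply asserts it.
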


\begin{remark}\label{rem:basis}
The set of vector fields $\{X^1,\ldots, X^{r-p+t}\} := \{Y^1,\ldots,Y^{r-p},Z^1,\ldots,Z^t\}$ will be called a \emph{monomial basis}
of $\De^{\Phi}_{a}$. 
\end{remark}

\begin{examples}
\begin{enumerate}
\item Let $\Phi: (\mathbb{C}^3,E) \to (\mathbb{C},D)$ be the monomial morphism given by
$z_1=v_1,\, E = \emptyset,\, D=\emptyset$. Then
$\Delta^{\Phi} = \{\p/\partial {w_1},\,\p/\partial {w_2}\}$, 
where $(v_1,w_1,w_2)$ are coordinates at $0$.
\smallskip
\item If $\Phi: (\mathbb{C}^3,E) \to (\mathbb{C},D)$ is given by
$x_1=u_1^{a_1}u_2^{a_2},\, E = \{u_1 u_2 =0\},\, D=\{x_1=0\}$, then
$\Delta^{\Phi} = \left\{\p/\partial {w},\, a_2u_1\p/\partial{u_1} - a_1u_2 \p/\partial{u_2}\right\}$,
where $(u_1,u_2,w)$ are $\Phi$-compatible coordinates at $0$.
\smallskip
\item If $\Phi: (\mathbb{C}^3,E) \to (\mathbb{C},D)$ is given by
$x_1=u_1^{a_1}u_2^{a_2}u_3^{a_3},\, E = \{u_1 u_2 u_3 =0\},\, D=\{x_1=0\}$, then
$$
\Delta^{\Phi} = \left\{a_2u_1\frac{\p}{\partial {u_1}} - a_1u_2 \frac{\p}{\partial{u_2}} , \
a_3u_1\frac{\p}{\partial {u_1}} - a_1u_3 \frac{\p}{\partial {u_3}}\right\}.
$$
\item If $\Phi: (\mathbb{C}^3,E) \to (\mathbb{C}^2,D)$ is given by
$x_1=u_1^{a_1}u_2^{a_2},\,z_1=v,\,
E = \{u_1 u_2 =0\},\, D=\{x_1=0\}$, then
$\Delta^{\Phi} = \left\{a_2u_1\p/\partial {u_1} - a_1u_2 \p/\partial{u_2}\right\}$.
\smallskip
\item If $\Phi: (\mathbb{C}^3,E) \to (\mathbb{C}^2,D)$ is given by
$x_1=u_1^{a_1}u_2^{a_2}u_3^{a_3},\,
x_2=u_1^{b_1}u_2^{b_2}u_3^{b_3},\,
E = \{u_1 u_2 u_3 =0\},\, D=\{x_1 x_2=0\}$, then
$$
\Delta^{\Phi} = \left\{c_1u_1\frac{\p}{\partial {u_1}} +c_2u_2\frac{\p}{\partial {u_2}} +c_3u_3\frac{\p}{\partial {u_3}}\right\},
$$
where $(c_1,c_2,c_3) = (a_1,a_2,a_3) \wedge (b_1,b_2,b_3)$.
\end{enumerate}
\end{examples}

\begin{proof}[Proof of Lemma \ref{lem:BasisPhiDer}]
Clearly, $\{Y^j, Z^l\} \subset \De^\Phi_a$. It is also clear we can assume that $s'=s$; i.e., that $\Phi$ is dominant.
We then argue by induction on $\dim N = n = p+s$. If $\dim N = 0$, then $\De^\Phi = \cD_D$, $r-p=r$, $(\bu,\bv,\bw)
= (\bu,\bw)$, and $Y^i = u_i \p/\p u_i$, $i=1,\ldots,r$, so the result is clear in this case.

By induction, assume the result holds when $\dim N < n$. We consider three cases, using the notation of
\eqref{eq:mon1}.
\begin{enumerate}
\item If $s>q$, we write $\Phi = (\Psi, \varphi)$, where $\vp = z_s = v_s$. By induction, $\De^{\Psi}_{a}$ is generated by
$X^1,\ldots, X^{r-p+t}$ and $X := \partial/\partial v_s$. Since $X^i(v_s) =0$, $i=1,\ldots,r-p+t$, and
$\De^{\Phi} \subset \De^{\Psi}$, it is clear that $\De^{\Phi}_a$ is generated by $X^1,\ldots, X^{r-p+t}$.
\smallskip
\item If $s=q\neq 0$, we write $\Phi = (\Phi', \varphi)$, where $\vp = y_q = \pmb{u}^{\bbe_q}(\xi_q + v_q)$. 
By induction, $\De^{\Phi'}_{a}$ is generated by
$X^1,\ldots, X^{r-p+t}$ and $X := \partial/\partial v_q$. Since $X^i(v_q) =0$, $i=1,\ldots,r-p+t$, and
$\De^{\Phi} \subset \De^{\Phi'}$, we again conclude that $\De^{\Phi}_a$ is generated by $X^1,\ldots, X^{r-p+t}$.
\smallskip
\item Finally, if $s=q=0$ (i.e., $n=p$), we write $\Phi = (\Phi', \varphi)$, where $\vp = x_p = \bu^{\bal_p}$. 
By induction, $\De^{\Phi'}_{a}$ has a monomial basis $\{X^1,\ldots,X^{r-p+t+1}\}$ analogous to \eqref{eq:logder1}.
Moreover, for all $i=1,\ldots,r-p+t+1$, $X^i(\pmb{u}^{\bal_p})= c_i \pmb{u}^{\bal_p}$, where $c_i\in \IZ$.
Since $\bal_1,\ldots,\bal_p$ are linearly independent over $\IQ$, $c_i \neq 0$ for some $i$, say $i=1$. Consider the
basis $\{Y^1,\ldots,Y^{r-p+t+1}\}$ of $\De^{\Phi'}_{a}$, where $Y^1=X^1$ and $Y^i = c_1X^i - c_iX^1$,
$i=2,\ldots,r-p+t+1$. Since $Y^i(\pmb{u}^{\pmb{\alpha}_{p}}) = 0$, $i=2,\ldots,r-p+t+1$, and $\De^{\Phi} \subset \De^{\Phi'}$, 
we conclude that $\{Y^2,\ldots, Y^{r-p+t+1}\}$ is a basis of $\De^{\Phi}$, and the result follows.
\end{enumerate}
\vspace{-\baselineskip}
\end{proof}

\begin{remark}\label{rk:FreeCoordinates}\emph{$\Phi$-free coordinate.}
It follows from Lemma \ref{lem:BasisPhiDer} that, if $\Phi:(M,D) \to (N,E)$ is a morphism that is monomial at a point $a\in M$, 
then there exists a regular vector field $X \in \De^{\Phi}_{a}$ if and only if there is coordinate system 
$(\pmb{u},\pmb{v},\pmb{w})=(u_1,\ldots,u_r,v_1,\ldots,v_s,\allowbreak w_1,\ldots,w_t)$ at $a$, where $t\geq 1$ and
$\Phi(\pmb{u},\pmb{v},\pmb{w}) =\Phi(\pmb{u},\pmb{v},0)$. We call $(w_1,\ldots,w_t)$ $\Phi$-\emph{free coordinates}.
\end{remark}

\begin{lemma}[Formal eigenvectors; cf.\,{\cite[\S7]{C1Fourier}}]\label{lem:FormalEigenVectors}
Let $\Phi:(M,D) \to (N,E)$ be a monomial morphism, and $a\in M$. In the notation of Lemma \ref{lem:BasisPhiDer}  and Remark \ref{rem:basis}, if $f\in\cQ_a$, then the formal
Taylor expansion $\hf_a(\bu,\bv,\bw)=T_af(\bu,\bv,\bw)$ of $f$ with respect to the coordinates $(\bu,\bv,\bw)$ 
(see \S\ref{subsec:alg} for the algebraic case) can be written
\begin{equation}\label{eq:eigen}
\widehat{f}_a(\pmb{u},\pmb{v},\pmb{w}) = \sum_{\pmb{\delta}\in \mathbb{N}^{t}} \pmb{w}^{\pmb{\delta}} \sum_{\pmb{\lambda} \in \mathbb{Z}^{r-p}} \widehat{f}_{\pmb{\delta}\pmb{\lambda}}(\pmb{u},\pmb{v}),
\end{equation}
where either $\widehat{f}_{\pmb{\delta},\pmb{\lambda}} = 0$, or the $\widehat{f}_{\pmb{\delta},\pmb{\lambda}}$ are eigenvectors of the monomial basis $\{X^1,\ldots, X^{r-p+t}\}$ of $\De^\Phi_a$; i.e.,
\[
X^i(\widehat{f}_{\pmb{\delta}\pmb{\lambda}}(\pmb{u},\pmb{w})) = \lambda_i \widehat{f}_{\pmb{\delta}\pmb{\lambda}}(\pmb{u},\pmb{v}), \quad i=1,\ldots, r-p+t
\]
(and $\la_i=0$ if $i>r-p$).
\end{lemma}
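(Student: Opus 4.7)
The plan is to prove this by directly decomposing the formal Taylor series monomial-by-monomial, using that the vector fields in the monomial basis act diagonally on the monomial basis of the ring of formal power series $\IK_a\llb \bu,\bv,\bw\rrb$ (in the algebraic case; $\IR\llb\bu,\bv,\bw\rrb$ or $\IC\llb\bu,\bv,\bw\rrb$ in the real/complex quasianalytic cases). First I would expand $\hf_a$ as a formal series
\[
\hf_a(\bu,\bv,\bw) \;=\; \sum_{\bmu \in \IN^r,\; \bzeta \in \IN^s,\; \bde \in \IN^t} c_{\bmu\bzeta\bde}\, \bu^{\bmu}\bv^{\bzeta}\bw^{\bde},
\]
so that every such monomial is an eigenvector of each $Y^j$, $j=1,\ldots,r-p$, with integer eigenvalue
\[
Y^j(\bu^{\bmu}\bv^{\bzeta}\bw^{\bde}) \;=\; \langle \bga_j,\bmu\rangle\, \bu^{\bmu}\bv^{\bzeta}\bw^{\bde},
\]
using that $Y^j = \sum_i \ga_{ji} u_i\p/\p u_i$ (Lemma \ref{lem:BasisPhiDer}, and $\bga_j \in \IZ^r$).

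Next, I would group the monomials as follows. For fixed $\bde \in \IN^t$ and $\bla = (\la_1,\ldots,\la_{r-p}) \in \IZ^{r-p}$, set
\[
\widehat{f}_{\bde\bla}(\bu,\bv) \;:=\; \sum_{\substack{\bmu \in \IN^r,\;\bzeta \in \IN^s \\ \langle \bga_j,\bmu\rangle = \la_j,\; \forall j}} c_{\bmu\bzeta\bde}\, \bu^{\bmu}\bv^{\bzeta}.
\]
Then $\widehat{f}_{\bde\bla}$ depends only on $(\bu,\bv)$, and by construction $\hf_a = \sum_{\bde} \bw^{\bde} \sum_{\bla} \widehat{f}_{\bde\bla}$. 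This gives the required expansion \eqref{eq:eigen}; the regrouping makes sense in the formal series ring because each monomial contributes to exactly one pair $(\bde,\bla)$.

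It remains to verify the eigenvector identities. For $j \leq r-p$, linearity of $Y^j$ and the computation above give $Y^j(\widehat{f}_{\bde\bla}) = \la_j \widehat{f}_{\bde\bla}$. For the vector fields $Z^l = \p/\p w_l$, the claim $Z^l(\widehat{f}_{\bde\bla}) = 0$ is immediate since $\widehat{f}_{\bde\bla}$ is independent of $\bw$; this corresponds to $\la_i = 0$ for $i > r-p$ in the statement. The one subtle point worth mentioning is that the inner sum over $\bla \in \IZ^{r-p}$ is genuinely infinite in general, but this causes no issue at the level of formal power series because, for each fixed total degree, only finitely many $\bla$ contribute; and the only reason we work with the Taylor series $\hf_a$ rather than a convergent representative of $f$ is precisely that this regrouping need not respect the quasianalytic class $\cQ$. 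No further step is needed, and I do not foresee a genuine obstacle.
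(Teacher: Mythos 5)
Your proof is correct and is essentially the same as the paper's: both expand $\hf_a$ into monomials, use that each $\bu^{\bmu}$ is an eigenvector of every $X^i$ with integer eigenvalue (the pairing $\langle\bga_j,\bmu\rangle$), and regroup by the pair $(\bde,\bla)$. The only cosmetic difference is that the paper keeps the $\bv$-dependence packaged as coefficient series $\widehat{f}_{\bde\bep}(\bv)$ rather than expanding fully, and your remark on degreewise finiteness of the regrouping is a valid (if implicit in the paper) justification.
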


\begin{proof}
We can write
\[
\widehat{f}_a(\pmb{u},\pmb{v},\pmb{w}) = \sum_{\pmb{\delta}\in \mathbb{N}^{t}} \pmb{w}^{\pmb{\delta}} \sum_{\pmb{\varepsilon} \in \mathbb{N}^{r}}  \pmb{u}^{\pmb{\varepsilon}} \widehat{f}_{\pmb{\delta}\pmb{\varepsilon}}(\pmb{v}).
\]
For every $\pmb{\varepsilon} \in \mathbb{N}^r$ and $i=1,\ldots,r-p+t$, we have $X^{i}(\pmb{u}^{\pmb{\varepsilon}}) = \lambda_{\pmb{\varepsilon},i} \pmb{u}^{\pmb{\varepsilon}}$, where
$\lambda_{\pmb{\varepsilon},i} \in \mathbb{Z}$. For each $\pmb{\lambda} \in \mathbb{Z}^{r-p+t}$, let
$I_{\pmb{\lambda}} := \{\pmb{\varepsilon} \in \mathbb{N}^r: \pmb{\lambda}_{\pmb{\varepsilon}}=\la\}$; then we can take
\[
\widehat{f}_{\pmb{\delta}\pmb{\lambda}}(\pmb{u},\pmb{v}) = \sum_{\pmb{\varepsilon}\in I_{\pmb{\lambda}}} \pmb{u}^{\pmb{\varepsilon}}
 \widehat{f}_{\pmb{\delta}\pmb{\varepsilon}}(\pmb{v}).
\]

\vspace{-2\baselineskip}

\end{proof}
\vspace{\baselineskip}

\begin{lemma}[Formal generators of a $\De^\Phi$-closed ideal]\label{rk:BasisFormalIdeal}
Let $\Phi:(M,D) \to (N,E)$ denote a monomial morphism and let $\mathcal{I}$ be a privileged ideal sheaf which is closed by 
$\De^{\Phi}$; i.e., $\De^{\Phi}(\mathcal{I})\subset \mathcal{I}$. Let $a \in M$. In the notation of Lemma \ref{lem:BasisPhiDer}  and Remark \ref{rem:basis}, the ideal of formal power series generated by
Taylor expansions of elements of $\mathcal{I}_{a}$ (which can be identified with the formal completion $\widehat{\mathcal{I}}_{a}$
of $\mathcal{I}_{a}$) admits a finite system of generators $\{\widehat{f}_{\pmb{\lambda}}(\pmb{u},\pmb{v})\}$, where each $\widehat{f}_{\pmb{\lambda}}(\pmb{u},\pmb{v})$ is an eigenvector of  $\{X^1,\ldots, X^{r-p+t}\}$. In particular, $\widehat{\mathcal{I}}_{a}$ admits a system of generators that are independent of the $\Phi$-free variables $\pmb{w}$.
\end{lemma}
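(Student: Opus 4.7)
The strategy is to work in the Noetherian complete local ring $\widehat{\cQ}_{a}$, the formal completion of $\cQ_{a}$, which is identified with $\IK_a\llb \bu,\bv,\bw\rrb$ in the $\Phi$-monomial coordinates of Lemma \ref{lem:BasisPhiDer}. Since $\cI$ is privileged, $\mathcal{I}_a$ is finitely generated, and so is its formal extension $\widehat{\mathcal{I}}_a \subset \widehat{\cQ}_a$. The proof will proceed in two stages: first, I shall eliminate dependence on the $\Phi$-free variables $\bw$; then, I shall decompose the resulting generators in $\IK_a\llb \bu, \bv\rrb$ into $Y^j$-weight eigenvectors.

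\emph{Stage one: eliminating $\bw$.} Set $R := \IK_a \llb \bu, \bv\rrb \subset \widehat{\cQ}_a$. I will show that $\widehat{\mathcal{I}}_a = J \cdot \widehat{\cQ}_a$, where $J := \widehat{\mathcal{I}}_a \cap R$. The key input is that each $Z^l = \p/\p w_l$ belongs to $\De^\Phi$, so $\widehat{\mathcal{I}}_a$ is closed under all $\p/\p w_l$. Given $F \in \widehat{\mathcal{I}}_a$ with Taylor expansion $F = \sum_{\bde} \bw^{\bde} F_{\bde}(\bu,\bv)$, for each $\bde$ and each $N$, I can construct by inclusion-exclusion on the formal Taylor expansion (suitable $\widehat{\cQ}_a$-linear combinations of the elements $\bw^{\bep}\, \p^{\bep} F / \p \bw^{\bep}$ that cancel every $\bw^{\bta}$-component with $|\bta| < N$ except the $\bw^{\bde}$-component) an element of $\widehat{\mathcal{I}}_a$ congruent to $F_{\bde}$ modulo $(\bw)^N \widehat{\cQ}_a$. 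Krull's intersection theorem, applied in the Noetherian local ring $\widehat{\cQ}_a/\widehat{\mathcal{I}}_a$, then yields $F_{\bde} \in \widehat{\mathcal{I}}_a$, hence $F_{\bde} \in J$, and therefore $F \in J\cdot \widehat{\cQ}_a$.

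\emph{Stage two: decomposition into eigenvectors.} The ideal $J \subset R$ is closed under $Y^j$ for $j = 1,\ldots, r-p$. For $F \in J$, Lemma \ref{lem:FormalEigenVectors} gives a weight decomposition $F = \sum_{\bla} F_{\bla}$, and I will show $F_{\bla_0} \in J$ for each $\bla_0 \in \IZ^{r-p}$. Each truncation $F \bmod \fm_R^N$ involves only the finite set $S_N$ of weights carried by monomials of $\fm_R$-order $< N$ in $F$. Lagrange interpolation in $r-p$ variables furnishes a polynomial $P_N \in \IK_a[y^1, \ldots, y^{r-p}]$ with $P_N(\bla_0) = 1$ and $P_N(\bla) = 0$ for all $\bla \in S_N \setminus \{\bla_0\}$. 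Then $P_N(Y^1, \ldots, Y^{r-p}) F \in J$ by $Y^j$-closure; moreover, since each weight component $F_{\bla}$ for $\bla \notin S_N$ already lies in $\fm_R^N$ (and $P_N(\bla)$ acts by the scalar $P_N(\bla)$ on $F_{\bla}$), the discrepancy is in $\fm_R^N$, so $P_N(Y)F \equiv F_{\bla_0} \pmod{\fm_R^N}$. A second application of Krull's intersection, this time in $R/J$, yields $F_{\bla_0} \in J$.

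Combining the two stages, every weight component of every formal generator of $\widehat{\mathcal{I}}_a$ lies in $J$ and is a $Y^j$-eigenvector by construction. Since $R$ is Noetherian, finitely many such eigenvector components already suffice to generate $J$; together with Stage one, they generate $\widehat{\mathcal{I}}_a = J\cdot \widehat{\cQ}_a$. The main technical obstacle, and the reason Krull's intersection is indispensable in both stages, is the infinitude of multiindices $\bde$ (resp.\ of weights $\bla$) that can appear in a formal power series, which prevents any naive finite-linear-combination argument and forces one to pass from congruences modulo arbitrary powers of an ideal to genuine membership via the Artin--Rees / Krull machinery available in the Noetherian complete setting.
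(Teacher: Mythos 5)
Your proof is correct, and the underlying idea---isolating the weight components of a formal series by applying polynomials in the derivations of the monomial basis of $\De^{\Phi}$---is the same as in the paper's proof. The execution differs in two respects. First, the paper regroups the (a priori infinite) eigencomponent expansion of each generator into \emph{finitely many} terms $\bw^{\bde_k}\widehat{f}_{\bde_k\bla_k}\widehat{U}_k$ with the $\widehat{U}_k$ units, and then recovers each term \emph{exactly} by inverting the Vandermonde system built from powers of a single singular derivation $X=\sum a_jY^j+\sum b_l\,w_lZ^l$ chosen to have pairwise distinct eigenvalues on those terms (Lemma \ref{lem:ComputingInvariant}); you instead keep the full infinite sum, isolate each component only modulo $\fm^N$ by interpolation in the commuting operators $w_l\,\p/\p w_l$ and $Y^j$, and pass from congruences to exact membership via Krull's intersection theorem in $\widehat{\cQ}_a/\widehat{\cI}_a$ and in $R/J$. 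Second, you separate the elimination of the $\bw$-variables from the $\bla$-decomposition, whereas the paper handles both simultaneously with the one derivation $X$. What your route buys is that it bypasses the paper's regrouping claim (that finitely many terms times units suffice), which is itself a small Noetherian/closedness argument left implicit there; the cost is the repeated appeal to Artin--Rees/Krull, harmless here since everything takes place in the complete Noetherian ring $\IK_a\llb\bu,\bv,\bw\rrb$. One small correction: being privileged does not make $\cI_a$ finitely generated (the local rings $\cQ_a$ need not be Noetherian in a general quasianalytic class); what your argument actually uses is that $\widehat{\cI}_a$, $J$ and $J\cdot\widehat{\cQ}_a$ are finitely generated and hence closed ideals, which holds because the formal power series ring is Noetherian and complete, so nothing is lost.
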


\begin{proof}
(See \cite[Lemma 3.7]{BdS} for a different proof.)
It is clear that $\widehat{\mathcal{I}}_{a}$ is also closed by $\De^{\Phi}_a$.
Let $\{{f}^\io\}_{\io \in I}$ denote a system of generators of $\mathcal{I}_{a}$. For each $\io\in I$,
consider the formal expansion \eqref{eq:eigen},
\[
\widehat{f}^{\iota}_a(\pmb{u},\pmb{v},\pmb{w}) = \sum_{\pmb{\delta}\in \mathbb{N}^{t}} \pmb{w}^{\pmb{\delta}} \sum_{\pmb{\lambda} \in \mathbb{Z}^{r-p}} \widehat{f}^{\iota}_{\pmb{\delta}\pmb{\lambda}}(\pmb{u},\pmb{v}).
\]
We will show that $\widehat{\cI}_{a}$ is the ideal $\widehat{J} = (\widehat{f}^{\iota}_{\pmb{\delta}\pmb{\lambda}}(\pmb{u},\pmb{v}))$  generated by all
$\widehat{f}^{\iota}_{\pmb{\delta}\pmb{\lambda}}$ (and, therefore, by finitely many of the latter, since the ring of formal
power series in Noetherian).

For each fixed $\iota \in I$, there are finitely many pairs 
$(\bde,\bla) = (\pmb{\delta}_k,\pmb{\lambda}_k)$, $k=1,\ldots,d=d_\io$, such that
\[
\widehat{f}^{\iota}_a(\pmb{u},\pmb{v},\pmb{w}) = \sum_{k=1}^d \pmb{w}^{\pmb{\delta}_k} \widehat{f}^{\iota}_{\pmb{\delta_k}\pmb{\lambda}_k}(\pmb{u},\pmb{v}) \widehat{U}_k(\pmb{u},\pmb{v},\pmb{w}),
\]
where every $\widehat{U}_k$ is a formal power series with $\widehat{U}_k(a)=1$. Therefore, 
$\widehat{\mathcal{I}}_{a} \subset \widehat{J}$.

Conversely (using the notation of Lemma \ref{lem:BasisPhiDer}), consider the vector fields
$W^l = w_l Z^l$, $l=1,\ldots,t$. Fix $\io\in I$. It is easy to find a $\mathbb{Z}$-linear combination 
$X = \sum_{j=1}^{r-p} a_j Y^j + \sum_{l=1}^t b_l W^l$ such that:
\[
X(\pmb{w}^{\pmb{\delta}_k} \widehat{f}^{\iota}_{\pmb{\delta_k}\pmb{\lambda}_k}(\pmb{u},\pmb{v})) =\rho_k \pmb{w}^{\pmb{\delta}_k} \widehat{f}^{\iota}_{\pmb{\delta_k}\pmb{\lambda}_k}(\pmb{u},\pmb{v}), \quad k=1,\ldots, d,
\]
where $\rho_k\neq \rho_{k'}$ whenever $k \neq k'$. By Lemma \ref{lem:ComputingInvariant} (applied with $F=\widehat{f}^{\iota}_a$, $f_k = \pmb{w}^{\pmb{\delta}_k} \widehat{f}^{\iota}_{\pmb{\delta_k}\pmb{\lambda}_k} \widehat{U}_k$ and $f= 0$), since 
$\De^{\Phi}_a(\widehat{\mathcal{I}}_{a})\subset \widehat{\mathcal{I}}_{a}$,
\[
\pmb{w}^{\pmb{\delta}_k} \widehat{f}^{\iota}_{\pmb{\delta_k}\pmb{\lambda}_k}(\pmb{u},\pmb{v}) \in 
(\widehat{\mathcal{I}}_a)^{\Phi}_{d} = \widehat{\mathcal{I}}_a, \quad k=1,\ldots, d.
\]
By applying the derivations $Z^l$, we get
$\widehat{f}^{\iota}_{\pmb{\delta_k}\pmb{\lambda}_k}(\pmb{u},\pmb{v}) \in \widehat{\mathcal{I}}_a$, $k=1,\ldots, d$.
Therefore, $\widehat{J}\subset \widehat{\mathcal{I}}_{a}$.
\end{proof}

\begin{remark}\label{rem:BasisFormalIdeal}
In the analytic or algebraic cases, the generators $\widehat{f}^{\iota}_{\pmb{\delta}\pmb{\lambda}}$ in the
preceding proof are convergent or algebraic power series, and it follows from Noetherianity that we get
a convergent or algebraic version of Lemma \ref{rk:BasisFormalIdeal}; in particular, the ideal $\cI_a$
of convergent or algebraic power series has a finite system of generators
$\{f_{\pmb{\lambda}}(\pmb{u},\pmb{v})\}$, where each $f_{\pmb{\lambda}}(\pmb{u},\pmb{v})$ is an eigenvector of  $\{X^1,\ldots, X^{r-p+t}\}$.
\end{remark} 

\begin{remarks}\label{rk:ToroidalHullResolution} 
(1) If $\Phi$ is a constant morphism, then $\De^\Phi$ is the full sheaf of log derivations $\cD_D = \Der_M(-\log D)$;  in this case,
Theorem \ref{thm:ideal} on resolution of singularities of a $\De^\Phi$-closed ideal sheaf has a simple proof. 
A $\cD_D$-closed (privileged) ideal sheaf $\mathcal{I}$ is formally generated at every point by monomials in the exceptional divisor $D$
(see Lemmas \ref{lem:FormalEigenVectors}, \ref{rk:BasisFormalIdeal}). It follows that $\cI$ is generated by monomials in $D$
(by the division axiom \ref{def:quasian}(1)). Therefore, $\cI$ admits a combinatorial resolution of singularities $\sigma : (\widetilde{M},\widetilde{D}) \to (M,D)$, 
and it follows that the pull-back of $\De^\Phi = \cD_D$ coincides with $\cD_{\tD}$.

\medskip\noindent
(2) \emph{Principalization of the toroidal hull}. In particular, if $\Phi$ is constant, then the  toroidal hull of $\mathcal{I}$ is $\De^\Phi$-closed
(see Example \ref{ex:ToroidalHull}), and can therefore be principalized by a sequence of combinatorial blowings-up 
$\sigma:(\widetilde{M},\widetilde{D}) \to (M,D)$. By Lemma \ref{lem:PreservingMu}(1), the closure of the pull-back $\widetilde{\mathcal{I}} = \sigma^{\ast}(\mathcal{I})$ by $\sigma^{\ast}(\De^{\Phi}) = \cD_{\tD}$ is principal monomial. By Lemma \ref{lem:BasicPropClosure}(3), 
therefore, for every point $\ta \in \widetilde{M}$, either $\widetilde{\mathcal{I}}$ is principal monomial at $\ta$, or the exceptional divisor 
$\widetilde{D}$ has at most $m-1$ components at $\ta$ (cf.\ \cite[Lemma 5.1]{BBAdvances}).
\end{remarks}

Control of the transform of $\De^{\Phi}$ as in Theorem \ref{thm:ideal} (and the preceding remarks) will play an important part in the proofs
of our monomialization theorems. Lemma \ref{lem:PreservingLogAdapted} following describes a general situation where we can provide such control.

\begin{lemma}\label{lem:PreservingLogAdapted}
Let $\Phi : (M,D) \to (N,E)$ denote a dominant monomial morphism, and let $\psi:M \to \mathbb{K}$ be a function. 
(In the algebraic case, $\IK$ should be replaced by $\IA^1_{\IK}$.)
Assume that $\Psi := (\Phi,\psi): M,D) \to (N \times \mathbb{K},F)$ is a morphism with respect to the divisor $F$ on
$N \times \mathbb{K}$ given either by $E \times \mathbb{K}$ or by $(E\times \mathbb{K}) \cup ( N \times \{0\} )$.
Suppose that, for all $a \in M$, there exists a set of generators $\{X^1,\ldots,X^{m-n}\}$ of $\De^{\Phi}_{a}$ such that:
$X^{i}(\psi) = 0$, $i=1,\ldots, m-n-1$.
Let $\sigma : (\widetilde{M},\widetilde{D}) \to (M,D)$ denote a sequence of local blowings-up and power substitutions,
and write $\widetilde{\Psi} = (\widetilde{\Phi},\widetilde{\psi}) := \Psi \circ \sigma = (\Phi \circ \sigma,\psi \circ \sigma)$. 
If $\sigma^{\ast}(\De^{\Phi}) = \De^{\widetilde{\Phi}}$, then $\sigma^{\ast}(\De^{\Psi}) = \De^{\widetilde{\Psi}}$.
\end{lemma}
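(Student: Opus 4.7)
The plan is to argue stalkwise at an arbitrary point $\ta \in \tM$, with $a := \sigma(\ta)$. The inclusion $\sigma^{\ast}(\De^\Psi) \subset \De^{\tPsi}$ is automatic: for any $X \in \De^\Psi_a$, the pullback $\sigma^{\ast}(X)$ is tangent to $\tPhi = \Phi\circ\sigma$ because $\sigma^{\ast}(\De^\Phi)\subset\De^{\tPhi}$, and the chain rule gives $\sigma^{\ast}(X)(\psi\circ\sigma) = \sigma^{\ast}(X(\psi)) = 0$. So the substance of the lemma is the reverse inclusion $\De^{\tPsi}_{\ta} \subset \sigma^{\ast}(\De^\Psi)_{\ta}$.

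First I would apply the hypothesis at $a$ to produce generators $X^1,\ldots,X^{m-n}$ of $\De^\Phi_a$ with $X^i(\psi)=0$ for $i=1,\ldots,m-n-1$, and set $\tX^i:=\sigma^{\ast}(X^i)$. By the assumption $\sigma^{\ast}(\De^\Phi)=\De^{\tPhi}$, the $\tX^i$ generate $\De^{\tPhi}_{\ta}$, and the chain rule yields $\tX^i(\psi\circ\sigma)=\sigma^{\ast}(X^i(\psi))=0$ for $i<m-n$. Given $\tX \in \De^{\tPsi}_{\ta}\subset \De^{\tPhi}_{\ta}$, write $\tX = \sum_{i=1}^{m-n} c_i \tX^i$ with $c_i\in \cQ_{\ta}$; the condition $\tX(\psi\circ\sigma)=0$ collapses this to the single equation
\[
c_{m-n}\cdot\sigma^{\ast}(X^{m-n}(\psi)) \;=\; 0.
\]

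The argument then splits into two cases. If $X^{m-n}(\psi)=0$ as a germ at $a$, then all of $X^1,\ldots,X^{m-n}$ already lie in $\De^\Psi_a$, and the expression $\tX=\sum c_i\tX^i$ directly exhibits $\tX\in\sigma^{\ast}(\De^\Psi)_{\ta}$. Otherwise $X^{m-n}(\psi)$ is a nonzero germ in $\cQ_a$; since $\cQ_a$ is an integral domain (its Taylor homomorphism is injective by quasianalyticity and lands in a formal power series ring), the same linear-algebra computation at $a$ forces $\De^\Psi_a = \cQ_a\langle X^1,\ldots,X^{m-n-1}\rangle$. To conclude $c_{m-n}=0$ and place $\tX$ in the $\cQ_{\ta}$-submodule generated by the pullbacks of $X^1,\ldots,X^{m-n-1}$, I need to know that $\sigma^{\ast}(X^{m-n}(\psi))$ is also a nonzero element of the integral domain $\cQ_{\ta}$.

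The principal obstacle is precisely this last point: the injectivity of the pullback homomorphism $\sigma^{\ast}\colon \cQ_a\to \cQ_{\ta}$. Since $\sigma$ decomposes as a composite of local blowings-up with smooth centres and local power substitutions, it suffices to verify injectivity one elementary step at a time. A smooth local blowing-up is birational and induces an injection on formal completions at any fibre point, while a power substitution $u_i=\tu_i^{k_i}$ is a finite ramified cover whose coordinate formula gives an obviously injective map on formal power series rings. Injectivity on the quasianalytic local rings themselves then follows from the fact that they embed in their formal completions via the Taylor homomorphism, which closes the cancellation and completes the proof.
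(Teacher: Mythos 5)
Your proof is correct and follows essentially the same route as the paper's: pull back the distinguished generators $X^1,\ldots,X^{m-n}$, observe that the first $m-n-1$ pullbacks annihilate $\widetilde\psi$ while $\sigma^{\ast}(X^{m-n}(\psi))$ remains nonzero, and conclude by the inclusion $\De^{\widetilde\Psi}\subset\De^{\widetilde\Phi}$. The only difference is that you spell out the injectivity of $\sigma^{\ast}$ on local rings and the cancellation $c_{m-n}=0$, which the paper's proof leaves implicit.
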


\begin{proof}
Let $\ta \in \widetilde{M}$ and let $a = \sigma(\ta) \in M$. By hypothesis, there is a system of generators 
$\{X^1,\ldots,X^{m-n}\}$ of $\De^{\Phi}_{a}$ such that $X^{i}(\psi) = 0$ if $i\neq m-n$. If $X^{m-n}(\psi) = 0$, then
$\De^{\Phi}=\De^{\Psi}$ and the Lemma holds trivially. Suppose that $X^{m-n}(\psi) \neq 0$. Then
$\De^{\Psi} = \left\{ X^1, \ldots, X^{m-n-1} \right\}$. By hypothesis, all vector fields $Y^i=\sigma^{\ast}(X^i)$ are well-defined
and $\De^{\widetilde{\Phi}} = \{Y^1,\ldots, Y^{m-n} \}$. Thus $ \{Y^1,\ldots, Y^{m-n-1} \} \subset \De^{\widetilde{\Psi}}$ and $Y^{m-n}(\widetilde{\psi}) = \sigma^{\ast}\left(X^{m-n}(\psi)\right)\neq 0$. Since $\De^{\widetilde{\Psi}} \subset \De^{\widetilde{\Phi}}$, we conclude that $\De^{\widetilde{\Psi}} = \{Y^1, \ldots, Y^{m-n-1}\} = \sigma^{\ast}(\De^{\Psi})$.
\end{proof}

\begin{lemma}[Preserving derivatives tangent to a monomial morphism]\label{lem:BUPreservingMonomialDeriv}
Let $\Phi : (M,D) \to (N,E)$ denote a dominant monomial morphism, and let $\psi:M \to \mathbb{K}$ be a function. 
Assume that $\Psi := (\Phi,\psi): (M,D) \to (N \times \mathbb{K},F)$ is a monomial morphism with respect to the divisor $F$ on
$N \times \mathbb{K}$ given either by $E \times \mathbb{K}$ or by $(E\times \mathbb{K}) \cup ( N \times \{0\} )$.
Let $\sigma : (\widetilde{M},\widetilde{D}) \to (M,D)$ denote a sequence of local blowings-up and power substitutions,
and write $\widetilde{\Psi} = (\widetilde{\Phi},\widetilde{\psi}) := \Psi \circ \sigma = (\Phi \circ \sigma,\psi \circ \sigma)$. 
If $\sigma^{\ast}(\De^{\Phi}) = \De^{\widetilde{\Phi}}$, then $\sigma^{\ast}(\De^{\Psi}) = \De^{\widetilde{\Psi}}$.
\end{lemma}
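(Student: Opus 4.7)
The plan is to deduce Lemma \ref{lem:BUPreservingMonomialDeriv} from Lemma \ref{lem:PreservingLogAdapted} by producing, at each point $a \in M$, a local basis of $\De^\Phi$ whose first $m-n-1$ elements annihilate $\psi$. The inclusion $\s^*(\De^\Psi) \subseteq \De^{\tPsi}$ is immediate from the hypotheses, since for $X \in \De^\Psi_a$ we have $\s^*X \in \s^*\De^\Phi = \De^{\tPhi}$ and $(\s^*X)(\widetilde\psi) = \s^*(X(\psi)) = 0$.

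Fix $a \in M$ and let $\{X^1,\ldots,X^{m-n}\}$ be the monomial basis of $\De^\Phi_a$ from Lemma \ref{lem:BasisPhiDer}. If some $X^{i_0}(\psi)$ is a unit at $a$, a unipotent elementary change of basis $\widetilde X^i := X^i - (X^i(\psi)/X^{i_0}(\psi))\,X^{i_0}$ for $i \neq i_0$, $\widetilde X^{i_0} := X^{i_0}$, gives a new local basis of $\De^\Phi$ with $\widetilde X^i(\psi) = 0$ for $i \neq i_0$; Lemma \ref{lem:PreservingLogAdapted} then yields the equality of stalks near $a$. If instead every $X^i(\psi)$ is identically zero near $a$, then $\De^\Phi = \De^\Psi$ locally and the pullback identity, combined with the fact that $\widetilde\psi$ is annihilated by every section of $\De^{\tPhi} = \s^*\De^\Phi$, gives $\De^{\tPhi} = \De^{\tPsi}$ and hence the claim.

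The main obstacle is the intermediate regime where each $X^i(\psi)$ vanishes at $a$ but some $X^i(\psi) \not\equiv 0$ nearby. The plan here is to split according to $F$. When $F$ contains $N \times \{0\}$, the morphism assumption forces $\psi = \bu^{\bga} U$ for some $\bga \in \IN^r$ and unit $U$, and the identities $Y^j(\psi) = \bu^{\bga}\bigl((\bga_j \cdot \bga)U + Y^j(U)\bigr)$ and $Z^l(\psi) = \bu^{\bga}\,\p U/\p w_l$ show that, after dividing out $\bu^{\bga}$, one of the coefficients is a unit at $a$ unless $\bga \in \langle \bal_1,\ldots,\bal_p\rangle_{\IQ}$ and $(\p U/\p w_l)(a)=0$ for every $l$; in this latter exceptional sub-case $\bu^{\bga}$ is (a root of) a pullback from the target, which after a suitable adjustment of target coordinates, possibly by a power substitution, can be absorbed so that the case $\De^\Phi = \De^\Psi$ is recovered. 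When $F = E \times \IK$, one combines the formal eigenvector decomposition of $\psi$ from Lemma \ref{lem:FormalEigenVectors} with the strong rigidity imposed on $\s$ by the pole-free hypothesis $\s^*\De^\Phi = \De^{\tPhi}$: regularity of $\s^*(\p/\p w_l)$ constrains $\s$ to act trivially enough in the $\Phi$-free directions that any relation $\sum \widetilde c_i \s^*(X^i(\psi)) = 0$ in $\cQ_{\ta}$ lifts to a relation in $\cQ_a$, exhibiting $\sum \widetilde c_i \s^*(X^i)$ as the pullback of an element of $\De^\Psi_a$. The principal technical challenge is making this rigidity argument precise in the non-Noetherian quasianalytic setting, where flatness and Artin-type lifting are unavailable; once all cases are assembled, Lemma \ref{lem:PreservingLogAdapted} applies pointwise and gives the desired sheaf equality $\s^*\De^\Psi = \De^{\tPsi}$.
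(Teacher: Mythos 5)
Your overall strategy (reduce to Lemma \ref{lem:PreservingLogAdapted} by exhibiting a basis of $\De^\Phi$ whose first $m-n-1$ elements annihilate $\psi$) is exactly the paper's, and your treatment of the two extreme cases (some $X^{i_0}(\psi)$ a unit; all $X^i(\psi)\equiv 0$) is fine. But your ``intermediate regime'' is a genuine gap, not a proof. For the induced divisor $F=E\times\IK$ you offer only a plan --- that the ``rigidity'' imposed by $\s^*\De^\Phi=\De^{\tPhi}$ should let one lift a relation $\sum\widetilde c_i\,\s^*(X^i(\psi))=0$ from $\cQ_{\ta}$ to $\cQ_a$ --- and you yourself flag that making this precise in the non-Noetherian quasianalytic setting is the ``principal technical challenge.'' No argument is supplied for that lifting, and none is obvious: this is precisely the kind of flatness/Artin-approximation step the paper goes out of its way to avoid. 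Your extended-divisor case is also shakier than you suggest: a coefficient $(\bga_j\cdot\bga)U+Y^j(U)$ need not be a unit at $a$ merely because $\bga_j\cdot\bga\neq 0$, and the ``suitable adjustment of target coordinates, possibly by a power substitution'' in the exceptional sub-case is not something the lemma permits ($\s$ is given; you cannot introduce new modifications).

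What you missed is that the lemma is only meant for, and only used when, $\Psi$ itself is a monomial morphism (hence the title and the fact that the paper's proof begins by choosing a \emph{$\Psi$-monomial} coordinate system $(\bu,\bv,\bw)$). This collapses $\psi$ to one of the normal forms $\psi=0$, $\psi=\bu^{\bal}$ with $\bal$ $\IQ$-independent of $\bal_1,\ldots,\bal_p$, $\psi=v$, or $\psi=\bu^{\bbe}(\xi+v)$ with $v$ a $\Phi$-free coordinate. In the second case $X^i(\bu^{\bal})=C_i\bu^{\bal}$ with \emph{integer} constants $C_i$, not all zero, so the constant-coefficient change of basis $Y^i:=C_{m-n}X^i-C_iX^{m-n}$ ($i\neq m-n$) already satisfies the hypothesis of Lemma \ref{lem:PreservingLogAdapted}; in the last two cases $\p/\p v$ is itself a member of the monomial basis and the remaining basis elements kill $\psi$. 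Your problematic ``intermediate regime'' for general $\psi$ simply does not arise. As written, your argument proves the easy cases of a more general statement and leaves the hard case of that generalization unproved, rather than proving the lemma the paper actually needs.
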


\begin{proof}
Let $\{X^1,\ldots, X^{m-n}\}$ denote a monomial basis of $\De^\Phi$ at $a\in M$ (as given by 
Lemma \ref{lem:BasisPhiDer}), and let $(\pmb{u},\pmb{v},\pmb{w})$ denote a $\Psi$-monomial coordinate system 
for $M$ at $a$. We have to consider each of the following cases.
\begin{enumerate}
\item $\psi = 0$. In this case, $\De^{\Psi}=\De^{\Phi}$ and the result is trivial.
\medskip
\item $\psi = \pmb{u}^{\pmb{\alpha}}$ (at $a$). Then $X^i(\pmb{u}^{\pmb{\alpha}}) = C_i \pmb{u}^{\pmb{\alpha}}$, 
$C_i \in \IZ$,  $i=1,\ldots, m-n$, where $C_i \neq 0$ for some $i$; say $C_{m-n}\neq 0$. Consider the basis 
$\{Y^1,\ldots, Y^{m-n}\}$ of $\De^{\Phi}$ given by $Y^i := C_{m-n}X^i - C_i X^{m-n}$ for each $i\neq m-n$, and 
$Y^{m-n}:=X^{m-n}$. This basis satisfies the hypothesis of Lemma \ref{lem:PreservingLogAdapted}, so the result follows.
\medskip
\item  $\psi = v$ or $\psi = \pmb{u}^{\pmb{\beta}}(v+\xi)$, $\xi\neq 0$. In these cases, we can assume that 
$X^{m-n} = \partial / \partial v$, and the result follows from Lemma \ref{lem:PreservingLogAdapted}.
\end{enumerate}
\vspace{-\baselineskip}
\end{proof}

\begin{corollary}[Combinatorial blowings-up preserve derivatives tangent to a monomial morphism]\label{cor:DerCombinatorialBU}
Let $\Phi:(M,D) \to (N,E)$ denote a monomial morphism and let $\sigma: (\widetilde{M},\widetilde{D}) \to (M,D)$ be a combinatorial blowing-up. Set $\widetilde{\Phi}:=\Phi \circ \sigma$. Then  $\sigma^{\ast}(\De^{\Phi}) = \De^{\widetilde{\Phi}}$.
\end{corollary}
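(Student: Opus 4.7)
The plan is to deduce the corollary from Lemma \ref{lem:BUPreservingMonomialDeriv} by induction on the number of components of $\Phi$. First I would reduce to the case that $\Phi$ is dominant: in the notation of Definitions \ref{def:monom1}--\ref{def:monom2}, the zero components $z_l=0$ (for $l=s+1,\ldots,s'$) contribute no constraint to $\De^{\Phi}$ or to $\De^{\widetilde{\Phi}}$ (any derivation annihilates $0$), so they may be dropped from the target, yielding a dominant monomial morphism $\Phi':(M,D)\to(N',E)$ with $\De^{\Phi'}=\De^{\Phi}$ and $\De^{\widetilde{\Phi'}}=\De^{\widetilde{\Phi}}$. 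Writing $\Phi'=(\varphi_1,\ldots,\varphi_n)$ with the components listed in the order $x_1,\ldots,x_p,\, y_1,\ldots,y_q,\, z_{q+1},\ldots,z_s$, I set $\Phi^{(k)}:=(\varphi_1,\ldots,\varphi_k):M\to N^{(k)}$ onto the coordinate subspace $N^{(k)}$ of $N'$ spanned by the first $k$ target coordinates, and equip it with the induced divisor $E^{(k)}$. Each $\Phi^{(k)}$ is a dominant monomial $\cQ$-morphism of pairs, since the conditions of Definition \ref{def:monom1} ($\IQ$-linear independence of the $\bal_j$, $\IQ$-linear dependence of the $\bbe_k$ on the $\bal_j$ with $\xi_k\neq 0$) are inherited by any initial segment of the components in this order.

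The base case $k=0$ asserts that $\s^{\ast}(\cD_D)=\cD_{\widetilde{D}}$ for a combinatorial blowing-up $\s$. This is a classical fact about logarithmic derivations with respect to SNC divisors (see \cite[Lemma 3.1]{BMfunct}, and also invoked in Remark \ref{rk:ToroidalHullResolution}(1)): on a standard chart of a combinatorial blowing-up with centre $\{u_{i_1}=\cdots=u_{i_l}=0\}$ given by $u_{i_1}=\widetilde{u}_{i_1}$ and $u_{i_j}=\widetilde{u}_{i_1}\widetilde{u}_{i_j}$ for $j>1$, the generators $u_{i_j}\p/\p u_{i_j}$ of $\cD_D$ pull back to the generators of $\cD_{\widetilde{D}}$, and the remaining generators are preserved. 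For the inductive step, assuming $\s^{\ast}(\De^{\Phi^{(k)}})=\De^{\widetilde{\Phi^{(k)}}}$, I apply Lemma \ref{lem:BUPreservingMonomialDeriv} with $\Phi=\Phi^{(k)}$ and $\psi=\varphi_{k+1}$, taking the divisor $F$ on $N^{(k)}\times\IK$ to be $(E^{(k)}\times\IK)\cup(N^{(k)}\times\{0\})$ when $\varphi_{k+1}$ is of monomial type ($x$- or $y$-component) and $E^{(k)}\times\IK$ when $\varphi_{k+1}$ is of free type (a $z$-component). The lemma then yields $\s^{\ast}(\De^{\Phi^{(k+1)}})=\De^{\widetilde{\Phi^{(k+1)}}}$, and after $n$ steps one reaches $\s^{\ast}(\De^{\Phi})=\De^{\widetilde{\Phi}}$.

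Essentially no further work is needed, since Lemma \ref{lem:BUPreservingMonomialDeriv} is already tailored for this component-by-component extension. The only points requiring care are bookkeeping ones: checking that each intermediate $\Phi^{(k)}$ is a bona fide morphism of pairs with respect to the divisor structure demanded by Lemma \ref{lem:BUPreservingMonomialDeriv}, and recognising that dropping the zero components of $\Phi$ does not affect either sheaf of log derivations tangent to the morphism. The one computational input — the base case $\s^{\ast}(\cD_D)=\cD_{\widetilde{D}}$ — is the standard log-geometric fact that combinatorial blowings-up preserve the full log tangent sheaf, verifiable directly in the affine charts of the blowing-up, and valid uniformly in all the categories considered (analytic, algebraic and quasianalytic).
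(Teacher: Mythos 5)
Your proposal is correct and follows essentially the same route as the paper's proof, which argues by induction on $\dim N$ (equivalently, on the number of components of $\Phi$), with the base case $\De^{\Phi}=\cD_D$ handled by the classical transformation law for log derivations and the inductive step handled by Lemma \ref{lem:BUPreservingMonomialDeriv}. Your extra bookkeeping — the reduction to the dominant case by discarding the zero components, and the explicit choice of divisor $F$ on $N^{(k)}\times\IK$ (extended for $x$- and $y$-components, induced for $z$-components) — is correct detail that the paper's terse proof leaves implicit.
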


\begin{proof}
We argue by induction on $\dim N$. If $\dim N = 0$, then $\Phi$ is constant
and $\De^{\Phi} = \cD_D = \Der_{M}(-\log D)$, so the result follows from the standard transformation formulas for
log derivatives (see \cite[Lemma 3.1]{BMfunct}). For the inductive step, we can use Lemma \ref{lem:BUPreservingMonomialDeriv}.
\end{proof}

\subsection{Log differential order of an ideal relative to a monomial morphism}\label{subsec:ideal}

\begin{definition}\label{def:InvariantPairs} 
Let $\Phi:(M,D) \to (N,E)$ denote a monomial morphism and let $\mathcal{I}$ denote a privileged ideal sheaf on $M$. 
We define the \emph{log differential order} $\nu_{a}(\mathcal{I},\Phi)$ of $\cI$ \emph{relative to} $\Phi$ at $a \in M$
as the log differential order of $\cI$ at $a$ relative to the sheaf of log derivatives $\De^\Phi$ tangent to $\Phi$
(see Definitions \ref{def:ChainAndClosure}); i.e.,
$$
\nu_{a}(\mathcal{I},\Phi):=\mu_{a}(\mathcal{I},\De^{\Phi}). 
$$
We also define the \emph{log differential closure} of $\cI$ \emph{relative to} $\Phi$ as the ideal sheaf
$\mathcal{I}_{\infty}^{\Phi} :=\mathcal{I}_{\infty}^{\De^{\Phi}}$.
\end{definition}

\begin{example}
Let $\Phi: (\mathbb{C}^3,E) \to (\mathbb{C}^2,D)$ be the monomial morphism defined by
\[
x_1=u_1u_2,\, x_2=u_2u_3,\quad
E = \{u_1 u_2 u_3 =0\},\quad D=\{x_1 x_2=0\}.
\]
Then $\Delta^{\Phi}$ is generated by the vector field
\[
X = u_1 \frac{\p}{\partial {u_1}} - u_2 \frac{\p}{\partial {u_2}} +u_3 \frac{\p}{\partial{u_3}}.
\]
Consider the principal complex-analytic ideal sheaf $\mathcal{I} = (f)$ generated by
\[
f(\pmb{u}) = u_1u_2 + u_2u_3 + u_2u_3^4 + u_1u_2^6.
\]
We compute the chain of ideals $\cI^\Phi_k = \cI^{\De^\Phi}_k$ associated to $\cI = \mathcal{I}_0^{\Phi}$.
Note first that
$$
X(f) =   3 u_2u_3^4 -5 u_1u_2^6,\quad X^2(f) = 9 u_2u_3^4 +25 u_1u_2^6, 
$$
so that 
$$
u_2u_3^4 = \frac{1}{24}\left( 5X(f)+ X^2(f)\right),\quad
u_1u_2^6 = \frac{1}{40}\left(- 3X(f)+ X^2(f)\right).
$$
It follows that
\begin{align*}
\mathcal{I}_0^{\Phi}&= (u_1u_2 + u_2u_3 + u_2u_3^4 + u_1u_2^6),\\
\mathcal{I}_1^{\Phi} &= (u_1u_2 + u_2u_3 + u_2u_3^4 + u_1u_2^6, \, 3 u_2u_3^4 -5 u_1u_2^6),\\
\mathcal{I}_2^{\Phi} &= (u_1u_2 + u_2u_3,\, u_2u_3^4,\, u_1u_2^6).
\end{align*}
It is not difficult to see that $\mathcal{I}_2^{\Phi}$ is invariant by $\Delta^{\Phi}$; therefore,
$\mathcal{I}^{\Phi}_{\infty} = \mathcal{I}_2^{\Phi}$. Note that $\mathcal{I}^{\Phi}_{\infty}$ is not generated by monomials, 
so cannot be principalized by combinatorial blowings-up.
\end{example}

\begin{lemma}[Weierstrass-Tschirnhausen normal form I]\label{lem:NormalFormIdeal}
Let $\Phi:(M,D) \to (N,E)$ denote a monomial morphism and let $\mathcal{I}$ be a privileged ideal sheaf on $M$. Let $a\in M$.
\begin{enumerate}
\item The ideal $\mathcal{I}_a$ is principal and monomial if and only if 
$\cI^\Phi_{\infty,a} := \mathcal{I}_{\infty}^{\Phi} \cdot \mathcal{Q}_{a}$ is principal and monomial, and $\nu_{a}(\mathcal{I},\Phi)=0$.
\medskip
\item If $\mathcal{I}_{\infty,a}^{\Phi}$ is principal and monomial, then $d:=\nu_{a}(\mathcal{I},\Phi) <\infty$. If, moreover,
$d>0$, then there exists a $\Phi$-monomial coordinate system $(\pmb{u},\pmb{v},\pmb{w})=(\pmb{u},\pmb{v},w_1,\widehat{\pmb{w}}_1)$ at $a$, such that $\mathcal{I}_{a}$ has a set of generators $\{F_\io\}_{\io\in I}$ of the form
\[
F_\io(\pmb{u},\pmb{v},\pmb{w}) = \pmb{u}^{\pmb{\gamma}}\left\{ \widetilde{F_\io}(\pmb{u},\pmb{v},\pmb{w}) w_1^d + \sum_{j=0}^{d-1} f_{\io j}(\pmb{u},\pmb{v},\widehat{\pmb{w}}_1)w^j_1  \right\},
\]
where $\mathcal{I}_{\infty,a} = (\pmb{u}^{\pmb{\gamma}})$ and there exists $\iota\in I$ such that $\widetilde{F_{\iota}}(0)\neq 0$, $f_{\iota,d-1} = 0$ and $f_{\io j}(0) =0$, $j=0,\ldots,d-2$.
\end{enumerate}
\end{lemma}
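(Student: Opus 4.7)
The forward direction of (1) is immediate from Lemma \ref{lem:BasisPhiDer}: in a $\Phi$-monomial coordinate system, each $Y^j$ in the monomial basis of $\De^\Phi_a$ scales $\bu^{\bga}$ by $\langle\bga_j,\bga\rangle$ and each $Z^l$ annihilates it, so if $\mathcal{I}_a=(\bu^{\bga})$ then $\De^\Phi(\mathcal{I}_a)\subset\mathcal{I}_a$; hence $\mathcal{I}_{\infty,a}^\Phi=\mathcal{I}_a$ and $\nu_a=0$. Conversely, $\nu_a=0$ gives $\mathcal{I}_a=\mathcal{I}_{\infty,a}^\Phi\cdot\cQ_a$, which is principal monomial by hypothesis.

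For (2), finiteness $d<\infty$ is Lemma \ref{lem:BasicPropClosure}(3). Assuming $d>0$, the same lemma supplies a regular field in $\De^\Phi_a$, so by Remark \ref{rk:FreeCoordinates} I may choose $\Phi$-monomial coordinates in which some $\Phi$-free coordinate $w_1$ is singled out from $\widehat{\bw}_1=(w_2,\ldots,w_t)$. Writing $\mathcal{I}_{\infty,a}^\Phi=(\bu^{\bga})$ and using the division axiom (axiom \ref{def:quasian}(1)) iteratively, every generator $g_\la$ of $\mathcal{I}_a$ factors as $g_\la=\bu^{\bga}G_\la$ with $G_\la\in\cQ_a$. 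The key preparatory step is the identity
\[
d=\min\bigl\{\,k+|\bde|:c_{\la,\bzero\bzero k\bde}\neq 0\text{ for some }\la\,\bigr\},
\]
where the $c_{\la,\bal\bbe k\bde}$ are the Taylor coefficients of $G_\la$ in $(\bu,\bv,w_1,\widehat{\bw}_1)$. To prove it, I factor $\mathcal{I}_k^\Phi\cdot\cQ_a=\bu^{\bga}\mathcal{J}_k$ and note that $\mathcal{J}_k$ is generated by iterates of the twisted derivations $\tilde Y^j:=Y^j+\langle\bga_j,\bga\rangle\,\mathrm{id}$ and $\tilde Z^l:=Z^l$ applied to the $G_\la$; using commutativity of the monomial basis together with $Y^j f(a)=0$ for every $f$, an iterated $\tilde X$-derivative evaluated at $a$ reduces to a constant times a pure $\bw$-partial derivative of some $G_\la$ at $a$, yielding the identity.

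Fix $\la^*$ achieving this minimum. The lowest homogeneous part of $G_{\la^*}(0,0,\bw)$ is then a nonzero form of degree $d$, and a generic $\IQ$-linear coordinate change within $\bw$ (still $\Phi$-free) arranges that its $w_1^d$ coefficient is nonzero. Iterating the division axiom in $w_1$ then gives, for every $\la$,
\[
G_\la=\widetilde{G}_\la\,w_1^d+\sum_{j=0}^{d-1}g_{\la,j}(\bu,\bv,\widehat{\bw}_1)\,w_1^j,\quad\widetilde{G}_\la,\,g_{\la,j}\in\cQ_a,
\]
with $g_{\la,j}=\tfrac{1}{j!}(\partial^jG_\la/\partial w_1^j)|_{w_1=0}$. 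By the formula for $d$, $\widetilde{G}_{\la^*}(0)\neq 0$ (so $\widetilde{G}_{\la^*}$ is a unit) and $g_{\la^*,j}(0)=0$ for every $j<d$. Finally, I kill $g_{\la^*,d-1}$ by Tschirnhausen: the function $Q:=\tfrac{1}{(d-1)!}\partial^{d-1}G_{\la^*}/\partial w_1^{d-1}$ satisfies $Q(0)=g_{\la^*,d-1}(0)=0$ and $(\partial Q/\partial w_1)(0)=d\,\widetilde{G}_{\la^*}(0)\neq 0$, so the implicit function theorem (axiom \ref{def:quasian}(2)) produces $\eta(\bu,\bv,\widehat{\bw}_1)\in\cQ_a$ with $\eta(0)=0$ and $Q(\bu,\bv,\eta,\widehat{\bw}_1)\equiv 0$. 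The substitution $\widetilde{w}_1:=w_1-\eta$ is a valid $\Phi$-monomial coordinate change, and in the new coordinates the redecomposed $g_{\la^*,d-1}$ equals $Q(\bu,\bv,\eta,\widehat{\bw}_1)=0$; the other conditions on $\widetilde{G}_{\la^*}$ and on $g_{\la^*,j}$ for $j<d-1$ persist because $\eta(0)=0$.

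The main obstacle is the middle step: translating the algebraic condition $\nu_a=d$ into the explicit Taylor-coefficient formula for $d$. This requires the twisted-derivation bookkeeping sketched above, exploiting commutativity of the monomial basis of $\De^\Phi_a$ together with the contrast between the singular vector fields $Y^j$ (which cannot move anything out of $\fm_a$) and the regular $Z^l$'s (which can).
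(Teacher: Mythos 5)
Your argument is correct, and its skeleton coincides with the paper's: Lemma \ref{lem:BasicPropClosure}(3) gives $d<\infty$ and a regular field in $\De^\Phi_a$, hence (Remark \ref{rk:FreeCoordinates}) a $\Phi$-free variable; a linear change in $\bw$ followed by iterated division in $w_1$ produces the Weierstrass-type decomposition; and your implicit-function-theorem step killing $g_{\la^*,d-1}$ is exactly the paper's Tschirnhausen transformation. Where you genuinely diverge is in identifying $d$ with the minimal $\bw$-order of the $G_\la(0,0,\bw)$. The paper obtains this from the formal eigenvector machinery: citing Lemmas \ref{lem:FormalEigenVectors} and \ref{rk:BasisFormalIdeal}, it asserts the existence of some $F\in\cI_a$ with $F=\bu^{\bga}\bigl(\widetilde{F}(\bw)+O(\bu,\bv)\bigr)$ and $\widetilde{F}$ of order exactly $d$. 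You instead compute the chain directly, writing $\cI_k^\Phi\cdot\cQ_a=\bu^{\bga}\cJ_k$ and observing that, since the singular fields $Y^j$ annihilate every value at $a$ while the basis commutes, any word of length at most $k$ in the conjugated derivations evaluated at $a$ collapses to a constant multiple of a pure $\bw$-partial of $G_\la$ of order at most $k$. This is a self-contained justification of precisely the step the paper treats most tersely (its appeal to Lemma \ref{rk:BasisFormalIdeal}, which is stated for $\De^\Phi$-closed ideals, whereas $\cI$ is not assumed closed), at the cost of two routine verifications you should make explicit: that $\cI_k^\Phi\cdot\cQ_a$ is generated by the words of length at most $k$ in the monomial basis applied to the chosen generators (Leibniz), and that the divisibility $\cI_a\subset(\bu^{\bga})$ defining the $G_\la$ comes from $\cI_a\subset\cI_{\infty,a}^\Phi$. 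Both approaches buy the same thing; yours trades the eigenvector decomposition for an elementary evaluation argument.
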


\begin{proof}
(1) holds because, on the one hand, if $\cI_a$ is principal and monomial, then $\cI^\Phi_{\infty,a} = \cI_a$, by
Lemma \ref{lem:BasisPhiDer}, and, on the other, $\nu_{a}(\mathcal{I},\Phi)=0$ if and only if 
$\cI^\Phi_{\infty,a} = \cI_a$, according to Definitions \ref{def:ChainAndClosure}.

(2) Suppose that $\mathcal{I}_{\infty,a}^{\Phi}$ is principal and monomial. Then $d:=\nu_{a}(\mathcal{I},\Phi) <\infty$,
 by Lemma \ref{lem:BasicPropClosure}(3). Suppose that $d>0$.  Again by Lemma \ref{lem:BasicPropClosure}(3), 
 there exists a regular vector field germ $X \in \De_{a}^{\Phi}$. Therefore there is a monomial basis
 $\{X^1,\ldots, X^{m-n}\}$ of $\De^\Phi_a$, as in Lemma \ref{lem:BasisPhiDer}, with respect to
 a $\Phi$-monomial coordinate system $(\pmb{u},\pmb{v},\pmb{w})$ with at least one $\Phi$-free $\pmb{w}$-variable
(Remark \ref{rk:FreeCoordinates}). By Lemma \ref{lem:FormalEigenVectors}, every $F \in \mathcal{I}_{a}$ 
has a formal Taylor expansion of the form
\[
\widehat{F} = \sum_{\pmb{\delta} \in \mathbb{N}^{t}} \pmb{w}^{\pmb{\delta}} \sum_{\pmb{\lambda} \in \mathbb{N}^{m-n-t}} \widehat{f}_{\pmb{\delta},\pmb{\lambda}}(\pmb{u},\pmb{v}),
\]
where every $\widehat{f}_{\pmb{\delta},\pmb{\lambda}}(\pmb{u},\pmb{v})$ is an eigenvector of $\{X^1,\ldots, X^{m-n}\}$. 

Since $\mathcal{I}_{\infty}^{\Phi}=(\pmb{u}^{\pmb{\gamma}})$, we see that $\pmb{u}^{\pmb{\gamma}}$ is an eigenvector.
It follows from Lemma \ref{rk:BasisFormalIdeal} that there exists $F \in \mathcal{I}$ such that
$F = \pmb{u}^{\pmb{\gamma}} \left( \widetilde{F}(\pmb{w})  + O(\pmb{u},\pmb{v}) \right)$,
where $\widetilde{F}(\pmb{w})$ has order $d$.
By a linear change of coordinates in $\pmb{w}$, we can assume that
\[
F = \pmb{u}^{\pmb{\gamma}} \left(\widetilde{F}(\pmb{u},\pmb{v},\pmb{w}) w_1^d + \sum_{j=0}^{d-1} f_j(\pmb{u},\pmb{v},\hat{\pmb{w}}_1)w^j_1 \right).
\]
Finally, we can use the implicit function theorem (Definition \ref{def:quasian}(2)) to transform the coordinates so that $f_{d-1}=0$
(essentially a Tschirnhausen transformation);
the new coordinate system is still $\Phi$-monomial because $w_1$ is $\Phi$-free. The result follows.
\end{proof}

\subsection{Pre-monomial morphism}\label{subsec:premonom}
The idea of pre-monomial form (Remark \ref{rem:idea}) will be made precise in Definition \ref{def:PreMonomial} below. 

\begin{lemma}[Algebraic dependence on a monomial morphism]\label{lem:AlgebraicDependentFunctions}
Let $\Phi:(M,D) \to (N,E)$ denote a morphism which is monomial at a point $a\in M$. 
Suppose that $\Phi$ is written in the form \eqref{eq:mon1} 
with respect to ($\Phi$-monomial) coordinates 
$(\pmb{u},\pmb{v},\pmb{w})$ at $a$ and $(\bx,\by,\bz)$ at $b=\Phi(a)$(cf.\ Definition \ref{def:monom2}). Let $G(\bu,\bv,\bw)
=\sum g_{\bga\bde\pmb{\varepsilon}} \bu^{\bga} \bv^{\bde} \bw^{\pmb{\varepsilon}}$
denote a formal power series at $a$. Then the following conditions are equivalent.
\begin{enumerate}
\item $dG \wedge d\Phi := dG \wedge dx_1 \wedge \ldots \wedge dx_p \wedge dy_1 \wedge \ldots \wedge dy_q \wedge dz_{q+1} \ldots \wedge dz_s = 0$.
\smallskip
\item For every $X \in \De^{\Phi}_{a}$, $X(G) = 0$.
\smallskip
\item $g_{\pmb{\gamma}\pmb{\delta}\pmb{\varepsilon}} =0$ whenever $\pmb{\varepsilon}\neq \pmb{0}$; moreover,
if $g_{\pmb{\gamma}\pmb{\delta}\pmb{0}} \neq 0$, then $\pmb{u}^{\pmb{\gamma}} = \pmb{x}^{\pmb{\rho}}$,
with $\pmb{\rho} \in \mathbb{Q}^{p}$.
\end{enumerate}
\end{lemma}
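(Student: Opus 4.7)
The plan is to prove (2)$\iff$(3) by direct termwise computation, and then (1)$\iff$(2) by passing to logarithmic differentials. To set up, I would invoke Lemma \ref{lem:BasisPhiDer} to fix the monomial basis of $\De^\Phi_a$: $Y^j = \sum_i \ga_{ji}u_i\p/\p u_i$ for $j=1,\ldots,r-p$ and $Z^l = \p/\p w_l$ for $l=1,\ldots,t$, where $\bga_1,\ldots,\bga_{r-p}$ is a $\IQ$-basis of the orthogonal complement of $\Span_\IQ\{\bal_1,\ldots,\bal_p\}$ in $\IQ^r$. The fundamental termwise identities are $Y^j(\bu^\bga\bv^\bde\bw^\bep) = \langle\bga_j,\bga\rangle\bu^\bga\bv^\bde\bw^\bep$ and $Z^l(\bu^\bga\bv^\bde\bw^\bep) = \ep_l\bu^\bga\bv^\bde\bw^{\bep-(l)}$, so these generators act diagonally, respectively by a simple index shift.

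For (2)$\iff$(3): assuming (2), applying each $Z^l$ termwise immediately forces $g_{\bga\bde\bep}=0$ whenever some $\ep_l>0$, hence whenever $\bep\ne\bzero$. With $G$ now supported on $\bep=\bzero$, the equations $Y^j(G)=0$ read $\langle\bga_j,\bga\rangle g_{\bga\bde\bzero}=0$ for every $\bga,\bde,j$, so each nonvanishing coefficient $g_{\bga\bde\bzero}$ requires $\bga\perp\bga_1,\ldots,\bga_{r-p}$; the double orthogonal-complement identity in $\IQ^r$ (the $\bga_j$ span the complement of $\Span_\IQ\{\bal_j\}$) then gives $\bga\in\Span_\IQ\{\bal_1,\ldots,\bal_p\}$, and hence $\bu^\bga=\bx^{\pmb{\rho}}$ with $\pmb{\rho}\in\IQ^p$. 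The converse is immediate from the same identities since $\{Y^j,Z^l\}$ generate $\De^\Phi_a$ as a $\cQ_a$-module.

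For (1)$\iff$(2), I would work in the formal power series ring $R=\IK\llb\bu,\bv,\bw\rrb$ and invert $u_1\cdots u_r$. In the logarithmic basis $\om_i=du_i/u_i$, $\eta_k=dv_k/(\xi_k+v_k)$, $dv_l\ (l>q)$, $dw_l$ of $\Omega^1$ over this localization, one has $dx_j/x_j = \sum_i\al_{ji}\om_i =: \bal_j^*$ and, using the $\IQ$-linear dependence $\bbe_k=\sum_j\mu_{kj}\bal_j$, $dy_k/y_k = \sum_j\mu_{kj}\bal_j^* + \eta_k$. Row-reducing in the exterior product gives
\[
d\Phi = \pm\, x_1\cdots x_p\,y_1\cdots y_q\cdot \bal_1^*\wedge\cdots\wedge\bal_p^*\wedge \eta_1\wedge\cdots\wedge\eta_q\wedge dv_{q+1}\wedge\cdots\wedge dv_s.
\]
Writing $dG = \sum_i(u_iG_{u_i})\om_i + \sum_{k\le q}((\xi_k+v_k)G_{v_k})\eta_k + \sum_{l>q}G_{v_l}dv_l + \sum_l G_{w_l}dw_l$ and using the standard criterion that a 1-form wedged with a decomposable $(p+s)$-form with $R$-linearly independent factors vanishes iff the 1-form lies in the $R$-span of the factors, condition (1) becomes equivalent to $G_{w_l}=0$ for every $l$ (i.e.\ $Z^l(G)=0$) together with $(u_1G_{u_1},\ldots,u_rG_{u_r})\in\Span_R\{\bal_1,\ldots,\bal_p\}\subset R^r$. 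The latter coincides, via the identity $\Span_R\{\bal_j\}=\{\bga_1,\ldots,\bga_{r-p}\}^\perp$ in $R^r$ (proved by extending $\{\bal_j\}\cup\{\bga_k\}$ to a $\IQ$-basis of $\IQ^r$ and invoking $\IQ$-invertibility of the Gram matrix of the $\bga_k$), with $\sum_i\ga_{ji}u_iG_{u_i}=0$, that is, $Y^j(G)=0$ for every $j$. The main obstacle is the bookkeeping in this log-differential reduction---especially the correct use of the $\mu_{kj}$ to clear $\bal_j^*$ out of $dy_k/y_k$, and the verification that the localization of $\Omega^1_R$ is torsion-free so the wedge computation is reversible---after which the identity $\Span_R\{\bal_j\}=\{\bga_k\}^\perp$ is routine linear algebra over $R$.
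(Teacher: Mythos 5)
Your proof is correct, and since the paper's own proof consists of the single phrase ``Direct computation,'' your argument --- the termwise action of the monomial basis $\{Y^j, Z^l\}$ of $\De^{\Phi}_a$ for (2)$\iff$(3), and the reduction of $d\Phi$ to a decomposable logarithmic $(p+s)$-form after localizing at $u_1\cdots u_r$ for (1)$\iff$(2) --- is precisely the computation the authors leave to the reader. The auxiliary facts you flag (freeness of the module of formal log differentials, injectivity of the localization, invertibility over $\IQ$ of the Gram matrix of the $\bga_k$, and the clearing of the $\bal_j^*$-components from $dy_k/y_k$ via the dependence $\bbe_k=\sum_j\mu_{kj}\bal_j$) are indeed routine and correctly handled.
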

\begin{proof}
Direct computation.
\end{proof}

\begin{definition}\label{def:PreMonomial}\emph{Pre-monomial morphism.}
Let $\Psi: (M,D)\to(P,F)$ denote a morphism of $\cQ$-manifolds with SNC divisors ($\dim M =m$,
$\dim P = n+1$). We say that $\Psi$ is \emph{pre-monomial} at a point $a\in M$ if there are coordinate
systems $(\bu,\bv,\bw)$ and $(\bx,\by,\bz,t)$ at $a$ and $\Psi(a)$, respectively ($t$ is a single variable),
in which 
\begin{equation}\label{eq:ep}
D = \{u_1\cdots u_r = 0\} \quad \text{and}\quad
F = \{x_1\cdots x_p y_1\cdots y_q t^{\varepsilon} =0\}, \text{ with } \varepsilon =0 \text{ or } 1,
\end{equation}
(notation of \eqref{eq:coords1}) and
\begin{equation*}
\Psi = (\Phi, \psi) = (\Phi_1,\ldots,\Phi_n, \psi),
\end{equation*}
where
\begin{enumerate}
\item $\Phi$ is a dominant monomial morphism with respect to $D$ and $E:=\{x_1\cdots x_p \cdot y_1\cdots y_q =0\}$,
in the form \eqref{eq:mon1}, i.e., $(\bu,\bv,\bw)$ and $(\bx,\by,\bz)$ are $\Phi$-monomial
coordinate systems as in Definition \ref{def:monom2};
\smallskip
\item $\psi = g + \vp$, where $g=g(u,v)$ is algebraically dependent on (the components of) $\Phi$, i.e.,
$$
dg \wedge d\Phi := dg\wedge d\Phi_1 \wedge d\Phi_n =0,
$$
and $t=\vp(\bu,\bv,\bw)$ is a function of one of the following four normal forms (we use the notation of \eqref{eq:mon1}):
\begin{align*}
\qquad\quad t &= 0,  &&\\
t &= \bu^{\bal},&&\bal, \bal_1,\ldots,\bal_p\, \text{ $\IQ$-linearly independent},\\
t &= \bu^{\bga}(\eta + w_1),&& \bga,\bal_1,\ldots,\bal_p\, \text{ $\IQ$-linearly dependent and } \bga\neq 0,\\
t &= w_1.&&
\end{align*}
\end{enumerate}

We call $g$ a \emph{$\Phi$-dependent} or \emph{remainder term}. (The decomposition $\psi=g+\vp$ is not necessarily 
unique.) Coordinate systems as above will be called \emph{$\Psi$-pre-monomial}.
\end{definition}

Although the monomial part $\Phi$ of a pre-monomial morphism $\Psi = (\Phi, \psi)$ is assumed dominant
in Definition \ref{def:PreMonomial}, the morphism $\Psi$ itself need not be dominant.

\begin{example}\label{ex:PreMonomial} 
The morphism $\Psi:(\mathbb{C},D) \to (\mathbb{C}^2,F)$ given by $\Psi(u) = (u^2,u^3)$, where $D=(u=0)$ and $F=(x=0)$ 
(in particular, $\varepsilon =0$), is pre-monomial but not dominant. Indeed, we can write $\Psi = (\Phi,\psi)$, 
where $\Phi:(\mathbb{C},D) \to (\mathbb{C},E)$ is given by $\Phi(u) = u^2$ and $E=(x=0)$, and $\psi(u)=g(u) + \varphi(u)$, 
where $g(u)=u^3$ and $\varphi(u)\equiv 0$. Note that the morphism $\Psi^g:=(\Phi,\varphi) = (u^2,0)$ is monomial (but not dominant); 
cf.\ Remark \ref{rem:premonom1} below.
\end{example}

\begin{remarks}\label{rem:premonom1}
Recall that, if $\Phi: (M,D) \to (N,E)$ is a $\cQ$-morphism, then the pre-image $\Phi^{-1}(E)$ is SNC as a space and its support 
lies in that of $D$, by definition (see Section 1). We will use this remark implicitly in the following.

In the preceding formulas for $\vp(\bu,\bv,\bw)$, $w_1$ is a $\Phi$-free variable. In the final case, necessarily $\varepsilon=0$ in \eqref{eq:ep} and $\Psi$ is monomial at $a$ (in the form \eqref{eq:mon1} after a change
of variable $w_1' = g(\bu,\bv) + w_1$ in the source). 
In the third case, $\varepsilon=1$ and $\eta\neq 0$.
If $n=0$, then a pre-monomial morphism $\Psi=(\Phi,\psi)$ is monomial
since a remainder $g$ satisfies $dg=0$.

Condition (2) of the definition means that 
$\Psi^g := (\Phi,\vp)$ defines a monomial morphism $\Psi^g: (M,D^g)\to (P,F^g)$ (locally), where
\begin{description}
\item[$(P,F^g)$] is induced from $(P,F)$ by the codimension one blowing-up of $P$ with centre
$\{t=0\}$ in the second and third cases of (2) above, if $\varepsilon=0$ (and $F^g = F$ in all other cases);
\smallskip
\item[$(M,D^g)$] is induced from $(M,D)$ by the codimension one blowing-up of $M$ with centre
$\{w_1 = 0\}$ in the third case of (2), if $\eta=0$ (so also $\varepsilon=0$), and $D^g=D$ in all other cases.
\end{description}
The divisors $D^g$ and $F^g$ are not necessarily uniquely determined by $\Psi$.
Note that, to write $\Psi^g$ in the monomial form \eqref{eq:mon1}
at $a$, we may have to permute the source or target variables.
The codimension one blowings-up above will come in the final step of the proof of our main theorems, in
transforming a pre-monomial to a monomial morphism (\S\ref{subsec:II}). These blowings-up can be compared
with the final blowing-up in algorithms for principalization of an ideal, where the strict transform is replaced by a 
component of the divisor.

If $\Psi$ is a pre-monomial at $a$, then $\De^\Psi_a = \De^{\Psi^g}_a$.
\end{remarks}

\begin{remarks}\label{rem:premonom2}
It follows from Definition \ref{def:PreMonomial} that (at least in the analytic and real quasianalytic cases)
we can write $P = N\times \IK$, locally at $b=\Psi(a)$, where $N = \{t=0\}$, and $\Psi = (\Phi, \psi): M \to N\times \IK$,
where $\Phi: (M,D) \to (N,E)$ is a dominant morphism that is monomial at $a$ and $E = \{x_1\cdots x_p y_1\cdots y_q =0\}$. 
In this notation, $F$ is either the \emph{induced divisor} $E\times \IK$ (in the case $\varepsilon = 0$ in \eqref{eq:ep}) or the \emph{extended divisor} $(E\times \IK) \cup (N\times \{0\})$
(in the case $\varepsilon=1$). (Compare with the notation following this remark.)

It will be convenient to use this notation, but it is important to note that, in the algebraic case, the
local product structure of $P$ only makes sense \emph{\'etale locally}, and $\IK$
should be replaced by $\IA^1_{\IL}$, where $\IL$ is the residue field $\IK_a$ of $a$ (see \S\ref{subsec:alg}).
The notation above should always be understood in this way in the algebraic case, though we may not
say so explicitly.
\end{remarks}

In the remainder of this subsection,
$\Phi : (M,D) \to (N,E)$ denotes a dominant monomial morphism, and $\psi:M \to \mathbb{K}$ denotes a function. We assume that $\Psi := (\Phi,\psi): (M,D) \to (N \times \mathbb{K},F)$ is a morphism, where $F$
denotes either the induced divisor $E \times \mathbb{K}$ or the extended divisor 
$(E\times \mathbb{K}) \cup ( N \times \{0\} )$ on $N \times \mathbb{K}$.
(Such $\Psi$ will sometimes be called a ``partially monomial'' morphism.)

\begin{definition}\label{def:InvariantPartMon}\emph{Log differential invariant for a partially monomial morphism.}
Let $\mathcal{J}_1^{\Psi}:= \De^{\Phi}(\psi)$, the sheaf of ideals (of finite type) 
given by all local sections of $\De^\Phi$ applied to the ideal $(\psi)$ generated by $\psi$.
We define the  \emph{log differential order} $\nu_a(\Psi)$ of $\Psi=(\Phi,\psi)$ at $a \in M$ as
$$
\nu_{a}(\Psi)=\mu_{a}(\mathcal{J}_1^{\Psi},\De^{\Phi})+1.
$$
We also define the \emph{log differential closure} $\cJ_\infty^\Psi$ of $\cJ_1^\Psi$ as the log differential closure of the
latter relative to $\Phi$; i.e.,
$$
\cJ_\infty^\Psi := \left(\cJ_1^\Psi\right)_\infty^{\De^\Phi}.
$$
\end{definition}

\begin{remarks}\label{rem:InvariantPartMon}
(1) Consider the chain of ideals associated to $\mathcal{J}_1^{\Psi}$,
\[
\De^{\Phi}(\psi) = \mathcal{J}_1^{\Psi} \subset \mathcal{J}_2^{\Psi} \subset \cdots \mathcal{J}_k^{\Psi} \subset \cdots,
\]
where $\mathcal{J}_{k+1}^{\Psi} = \mathcal{J}_{k}^{\Psi} + \De^{\Phi}(\mathcal{J}_{k}^{\Psi})$, $k=1,2,\ldots$. 
Then $\nu_{a}(\Psi)$ is the minimal $\nu \in \mathbb{N}\cup \{\infty\}$ such that
$\mathcal{J}_{\nu,a}^{\Psi} = \mathcal{J}_{\infty,a}^{\Psi}$ (i.e., $\nu_{a}(\Psi)$ corresponds to 
the number of times we have to derive $\psi$). 

\medskip
\noindent
(2) Note the difference between the preceding chain of ideals and that given by Definitions \ref{def:ChainAndClosure}
for the ideal $\cJ = (\psi)$ (see also Definition \ref{def:InvariantPairs}). 
The index one term in the chain given by Definitions \ref{def:ChainAndClosure} 
is $\cJ_1^{\De^\Phi} = (\psi) + \De^\Phi(\psi)$, whereas here we have $\cJ_1^\Psi = \De^\Phi(\psi)$ 
(the latter does not necessarily contain $\psi$). 
The comparison nevertheless explains the shift by $1$ in the definition of $\nu_a(\Psi)$ above.

There should be no confusion between $\cJ_\infty^\Psi$ above and the notation of Definition \ref{def:InvariantPairs}, because, 
in the latter, $\Phi$ is a monomial morphism, whereas
$\Psi$ is pre-monomial in Definition \ref{def:InvariantPartMon}, and $\Psi$ will be used to denote a pre-monomial morphism
throughout the rest of the paper.

\end{remarks}

\begin{lemma}[Weierstrass-Tschirnhausen normal form II]\label{lem:NormalFormPartialMonomial}
Following the notation above, consider a partially monomial 
morphism $\Psi= (\Phi,\psi):(M,D) \to (N\times \mathbb{K},F)$ 
and a point $a\in M$. 
\begin{enumerate}
\item The morphism $\Psi$ is pre-monomial at $a$ if and only if $\mathcal{J}_{\infty,a}^{\Psi}$ is either identically zero 
or principal and monomial, and $\nu_{a}(\Psi)=1$.
\medskip
\item If $\mathcal{J}_{\infty,a}^{\Psi}$ is principal and monomial, then $d:= \nu_{a}(\Psi) <\infty$. If, moreover, $d>1$, 
then there exists a $\Phi$-monomial coordinate system $(\pmb{u},\pmb{v},\pmb{w})=(\pmb{u},\pmb{v},w_1,\widehat{\pmb{w}}_1)$
at $a$ such that
\[
\psi(\pmb{u},\pmb{v},\pmb{w}) = g(\pmb{u},\pmb{v}) + \pmb{u}^{\pmb{\gamma}}\left( \widetilde{H}(\pmb{u},\pmb{v},\pmb{w}) w_1^d + \sum_{j=0}^{d-2} h_{j}(\pmb{u},\pmb{v},\widehat{\pmb{w}}_1)w^j_1  \right),
\]
where $\mathcal{J}_{\infty,a} = (\pmb{u}^{\pmb{\gamma}})$, $\widetilde{H}(0)\neq 0$, each $h_j(0)=0$,
and $dg \wedge d\Phi:= dg\wedge  d\varphi_1 \wedge \cdots \wedge d\varphi_n = 0$.
\end{enumerate}
\end{lemma}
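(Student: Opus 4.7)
The plan is to mirror the proof of Lemma \ref{lem:NormalFormIdeal}, with the shift $\nu_a(\Psi) = \mu_a(\cJ_1^\Psi, \De^\Phi)+1$ reflecting the fact that $\cJ_1^\Psi = \De^\Phi(\psi)$ contains only the derivatives of $\psi$, not $\psi$ itself. I work in $\Phi$-monomial coordinates $(\bu,\bv,\bw)$ at $a$, using the monomial basis $\{Y^1,\ldots,Y^{r-p},Z^1,\ldots,Z^t\}$ of $\De^\Phi_a$ from Lemma \ref{lem:BasisPhiDer} and the formal eigenvector expansion $\widehat\psi = \sum_{\bde,\bla} \bw^\bde \widehat\psi_{\bde\bla}(\bu,\bv)$ of Lemma \ref{lem:FormalEigenVectors}.

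\emph{Part (1), forward direction.} Write $\psi=g+\vp$; since $g$ is algebraically dependent on $\Phi$, Lemma \ref{lem:AlgebraicDependentFunctions} gives $\De^\Phi(g)=0$, and hence $\cJ_1^\Psi=\De^\Phi(\vp)$. A direct computation on each of the four normal forms of $\vp$ (using that $Y^j(\bu^\bep)=\langle\bga_j,\bep\rangle \bu^\bep$ and that $\langle\bga_j,\cdot\rangle$ vanishes precisely on the $\IQ$-span of $\{\bal_1,\ldots,\bal_p\}$) produces $\cJ_1^\Psi$ equal to $0$, $(\bu^\bal)$, $(\bu^\bga)$, or $\cQ$ respectively. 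In each case $\cJ_1^\Psi$ is already stable under $\De^\Phi$, so $\cJ_\infty^\Psi=\cJ_1^\Psi$ and $\nu_a(\Psi)=1$.

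\emph{Part (1), reverse direction.} If $\cJ_\infty^\Psi=0$, then $\De^\Phi(\psi)=0$ and Lemma \ref{lem:AlgebraicDependentFunctions} gives $\psi$ algebraically dependent on $\Phi$; set $g=\psi$, $\vp=0$. Otherwise $\cJ_1^\Psi=\cJ_\infty^\Psi=(\bu^\bga)$. When $\bga=\bzero$ we have $\cJ_1^\Psi=\cQ$; since $Y^j(\psi)(a)=0$, some $\p\psi/\p w_l(a)\neq 0$, and the implicit function theorem (axiom \ref{def:quasian}(2)) replaces the $\Phi$-free coordinate $w_l$ by $w_l'=\psi-\psi(a)$, producing the fourth form. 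When $\bga\neq\bzero$, the containment $\cJ_1^\Psi\subset(\bu^\bga)$, combined with the disjointness of the monomial supports of distinct $(\bde,\bla)$-components, forces $\bu^\bga\mid\widehat\psi_{\bde\bla}$ for every $(\bde,\bla)\neq(\bzero,\bzero)$. Thus $\widehat\psi=\widehat\psi_{\bzero\bzero}+\bu^\bga\widehat H$ as formal series, and iterated application of axiom \ref{def:quasian}(1) promotes this to a quasianalytic identity $\psi=g+\bu^\bga H$ with $g$ algebraically dependent on $\Phi$ (by Lemma \ref{lem:AlgebraicDependentFunctions}) and $H\in\cQ_a$. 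If $\bga$ is $\IQ$-independent of $\{\bal_j\}$, the condition $\cJ_1^\Psi=(\bu^\bga)$ forces $H(a)\neq 0$ (apply $Y^j$ with $\langle\bga_j,\bga\rangle\neq 0$), and an implicit-function change in the $u_i$-coordinates absorbs the unit to yield the second form. If $\bga$ is $\IQ$-dependent, all $Y^j(\bu^\bga H)$ vanish at $a$, so $\cJ_1^\Psi=(\bu^\bga)$ requires $\p H/\p w_l(a)\neq 0$ for some $\Phi$-free $w_l$; the implicit function theorem then produces the third form with $\eta=H(a)$.

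\emph{Part (2).} Lemma \ref{lem:BasicPropClosure}(3) applied to $\cJ_1^\Psi$ and $\De^\Phi$ gives $d=\nu_a(\Psi)<\infty$ and, when $d>1$, the existence of a regular vector field in $\De^\Phi_a$; Lemma \ref{lem:BasisPhiDer} and Remark \ref{rk:FreeCoordinates} then supply a $\Phi$-free coordinate $w_1$. The formal analysis of part (1) yields $\psi=g+\bu^\bga H$ with $g$ algebraically dependent, and iterated application of $\p/\p w_1\in\De^\Phi$ together with Lemma \ref{lem:ComputingInvariant}-style bookkeeping translates $d$ into the condition that $\p^d H/\p w_1^d(a)$ is a unit while $\p^j H/\p w_1^j(a)=0$ for $0\leq j<d$. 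A linear change in $\bw$ analogous to the corresponding step of Lemma \ref{lem:NormalFormIdeal}(2) brings $H$ to the form $\widetilde H\,w_1^d+\sum_{j=0}^{d-1}h_j(\bu,\bv,\widehat{\bw}_1)w_1^j$ with $\widetilde H(a)\neq 0$, and a final Tschirnhausen substitution (implicit function theorem to eliminate the $w_1^{d-1}$ coefficient) gives the stated normal form. I expect the main obstacle to be promoting the formal decomposition $\widehat\psi=\widehat g+\bu^\bga\widehat H$ to a quasianalytic identity in the general setting, where Weierstrass preparation and Noetherianity are unavailable; this is handled using Lemma \ref{rk:BasisFormalIdeal} and the division axiom \ref{def:quasian}(1), in analogy with the corresponding step of Lemma \ref{lem:NormalFormIdeal}(2).
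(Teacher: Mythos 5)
Your overall route is the paper's: formal eigenvector decomposition of $\psi$ with respect to a monomial basis of $\De^\Phi_a$ (Lemmas \ref{lem:FormalEigenVectors}, \ref{rk:BasisFormalIdeal}), the division axiom to split off the $\Phi$-dependent part $g$, and a Weierstrass--Tschirnhausen preparation in a $\Phi$-free variable for part (2). There is, however, a genuine error in the reverse direction of part (1), in the sub-case where $\bga$ is $\IQ$-independent of $\{\bal_j\}$: you claim that $\cJ_1^\Psi=(\bu^{\bga})$ then forces $H(a)\neq 0$, ``by applying $Y^j$ with $\langle\bga_j,\bga\rangle\neq 0$''. This is false. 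The generators of $\cJ_1^\Psi$ are $Y^j(\psi)=\bu^{\bga}\left(\langle\bga_j,\bga\rangle H+Y^j(H)\right)$ and $Z^l(\psi)=\bu^{\bga}Z^l(H)$; since the $Y^j$ are singular, a generator of the first kind is $\bu^{\bga}$ times a unit iff $\langle\bga_j,\bga\rangle H(a)\neq0$, but the ideal can just as well be realized by a generator of the second kind, with $H(a)=0$ and $Z^l(H)(a)\neq0$. Concretely, take $\Phi=(u_1)\colon(\IK^3,\{u_1u_2=0\})\to(\IK,\{x_1=0\})$ and $\psi=u_2w_1$: then $\bga=(0,1)$ is independent of $\bal_1=(1,0)$, and $\cJ_1^\Psi=(u_2)=(\bu^{\bga})$ is principal, monomial and $\De^\Phi$-closed, yet $H=w_1$ vanishes at $a$.

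The correct dichotomy --- the one the paper's proof uses --- is on whether $H$ (the paper's $R$) is a unit at $a$, not on whether $\bga$ is independent. If $H(a)\neq0$ and some $\langle\bga_j,\bga\rangle\neq0$, one lands in the second normal form after absorbing the unit, as you describe; if $H(a)\ne 0$ and $\bga$ is dependent, or if $H(a)=0$, one must instead have $Z^l(H)(a)\neq0$ for some $\Phi$-free $w_l$, and the implicit function theorem puts $\psi$ in the form $g+\bu^{\bga}(\eta+w_1)$ with $\eta=H(a)$, possibly zero (this is the paper's case (b) with $d=1$, and it also absorbs your $\bga=\bzero$ case). Note that you cannot repair the argument simply by routing the independent, $H(a)=0$ case through your ``dependent'' branch as written, since the third normal form of Definition \ref{def:PreMonomial} carries the dependence hypothesis on $\bga$; per Remarks \ref{rem:premonom1} that hypothesis is only material when $\eta\neq0$ (when $\eta=0$ the codimension-one blowing-up of $\{w_1=0\}$ makes the resulting exponent independent in either case), and the paper's proof treats the $\eta=0$ outcome uniformly. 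The rest of your proposal --- part (1) forward, the $\cJ_\infty^\Psi=(0)$ case, and the sketch of part (2) --- is sound and matches the paper's argument.
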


\begin{proof}
The ``only if'' direction in (1) follows from Lemma \ref{lem:BasisPhiDer} and the normal forms for a pre-monomial morphism 
given in Definition \ref{def:PreMonomial}. 

Suppose that $\mathcal{J}_{\infty,a}^{\Phi}$ is either identically zero or principal and monomial. Then
$d := \nu_{a}(\Psi)< \infty$, by Lemma \ref{lem:BasicPropClosure}(3).
Let $\{X^1,\ldots, X^{m-n}\}$ denote a monomial basis of $\De^\Phi_a$, with respect to
a $\Phi$-monomial coordinate system $(\pmb{u},\pmb{v},\pmb{w})$, as given by Lemma \ref{lem:BasisPhiDer}.

First, if $\mathcal{J}_{\infty,a}^{\Psi}=(0)$, then $\mathcal{J}_{1,a}^{\Psi}=(0)$ and $d=1$. In this case, 
$\psi(\pmb{u},\pmb{v})$ is an eigenvector of $\{X^1,\ldots, X^{m-n}\}$ with corresponding eigenvalue $0$;
therefore, $\Psi$ is pre-monomial.

Secondly, suppose that $\mathcal{J}_{\infty}^{\Psi}=(\pmb{u}^{\pmb{\gamma}})$. Then $\pmb{u}^{\pmb{\gamma}}$ 
is an eigenvector of $\{X^1,\ldots, X^{m-n}\}$, and it follows from Lemma \ref{rk:BasisFormalIdeal} that $\psi$
has formal expansion at $a$ of the form
\[
\widehat{\psi} =\widehat{\psi}_{\pmb{0} \pmb{0}}(\pmb{u},\pmb{v}) + \pmb{u}^{\pmb{\gamma}} R(\pmb{u},\pmb{v},\pmb{w}), 
\]
where
\[
R(\pmb{u},\pmb{v},\pmb{w})=\sum_{\pmb{\delta} \in \mathbb{N}^t} \pmb{w}^{\pmb{\delta}} \sum_{\pmb{\lambda} \in \mathbb{N}^{m-n-t}} \widehat{\psi}_{\pmb{\delta} \pmb{\lambda}}(\pmb{u},\pmb{v}),
\]
$\widehat{\psi}_{\pmb{0} \pmb{0}}(\pmb{u},\pmb{v})$ is an eigenvector with eigenvalue $0$, and 
each $\pmb{u}^{\pmb{\gamma}}\widehat{\psi}_{\pmb{\delta} \pmb{\lambda}}(\pmb{u},\pmb{v})$ is an eigenvector with 
eigenvalue nonzero. 
Moreover, by Lemma \ref{lem:FormalEigenVectors},
\begin{equation}\label{eq:ComputationOfJ}
\mathcal{J}^{\Phi}_{\infty} = \pmb{u}^{\pmb{\gamma}} \left(\widehat{\psi}_{\pmb{\delta} \pmb{\lambda}}(\pmb{u},\pmb{v})\right) = (\pmb{u}^{\pmb{\gamma}}),
\end{equation}
so that $\widehat{\psi}_{\pmb{\delta} \pmb{\lambda}}$ is a unit, for some $\pmb{\delta},\pmb{\lambda}$. 
We consider two cases.

\medskip
\noindent
(a) If $R(\pmb{u},\pmb{v},\pmb{w})$ is a unit, then $\pmb{u}^{\pmb{\gamma}}$ is an eigenvector with nonzero eigenvalue. 
It follows that $\pmb{\gamma}$ is linearly independent of all  $\pmb{\alpha}_i$, and that $d=1$. Then, after a change of coordinates in the source, the morphism $\Psi$ is in pre-monomial form.

\medskip
\noindent
(b) If $R(\pmb{u},\pmb{v},\pmb{w})$ is not a unit, then, by \eqref{eq:ComputationOfJ}, there exists a regular vector field 
germ $X \in \De_{a}^{\Phi}$, and, therefore, a $\Phi$-free $\pmb{w}$-variable. Therefore,
\[
\widehat{\psi} =\widehat{\psi}_{\pmb{0} \pmb{0}}(\pmb{u},\pmb{v}) + \pmb{u}^{\pmb{\gamma}} \left(\widetilde{\psi}(\pmb{w})  + O(\pmb{u},\pmb{v}) \right),
\]
where $\widetilde{\psi}(\pmb{w})$ is a formal power series of order $d$, and 
$\widehat{\psi}_{\pmb{0} \pmb{0}}(\pmb{u},\pmb{v})$ is an eigenvector with eigenvalue $0$. After a linear change of 
coordinates in $\pmb{w}$, we can distinguish one of the $\Phi$-free variables and write
\[
\widehat{\psi} = \widehat{\psi}_{\pmb{0} \pmb{0}}(\pmb{u},\pmb{v}) + \pmb{u}^{\pmb{\gamma}} \left(S(\pmb{u},\pmb{v},\pmb{w}) w^d_1 + \sum_{j=0}^{d-1} s_j(\pmb{u},\pmb{v},\widehat{\pmb{w}}_1)w^j_1 \right),
\]
where $S$ is a unit. It follows from the division axiom \ref{def:quasian}(1) that there is a function $g(\pmb{u},\pmb{v})$ of class $\cQ$ 
such that
\[
\psi = g(\pmb{u},\pmb{v}) + O(\pmb{u}^{\pmb{\gamma}}),
\]
where the
formal expansion of $g(\pmb{u},\pmb{v})$ at $a$ coincides with $\widehat{\psi}_{\pmb{0},\pmb{0}}(\pmb{u},\pmb{v})$ 
modulo the ideal $(\pmb{u}^{\pmb{\gamma}})$. Therefore,
\[
\psi = g(\pmb{u},\pmb{v}) + \pmb{u}^{\pmb{\gamma}} \left(\widetilde{H}(\pmb{u},\pmb{v},\pmb{w}) w^d_1 + \sum_{j=0}^{d-1} h_j(\pmb{u},\pmb{v},\widehat{\pmb{w}}_1)w^j_1 \right),
\]
where $\tH$ is a unit.
As in the proof of Lemma \ref{lem:NormalFormIdeal}, we can use the implicit function theorem (axiom \ref{def:quasian}(2)) to change  
coordinates so that $f_{d-1}=0$;
the new coordinate system is still $\Phi$-monomial because $w_1$ is $\Phi$-free. If $d=1$, we get pre-monomial form after absorbing the unit $\widetilde{H}$ into $w_1$. This completes the proof of (1) and (2). 
\end{proof}

\begin{example}\label{ex:Closure}
Let $\Psi : (\mathbb{C}^3,D) \to (\mathbb{C}^3,E)$ denote the morphism given by
\[
x_1= u^2_1 u_2,\quad
x_2 = u^4_1 u_2 u_3,\quad
z = u^4_1 u_2 u_3 \left( (u_2- u_3)^2 + (u_2 + u_3)^3\right),
\]
where $D=\{u_1 u_2  u_3 =0\}$ and $E = \{x_1 x_2=0\}$. The log Fitting ideals at $0$ are
\[
\mathcal{F}_{2}(\Psi) = (1), \quad \mathcal{F}_{1}(\Psi) = (1), \quad \mathcal{F}_{0}(\Psi)=u^4_1 u_2 u_3 \left(2(u_2- u_3)^2 + 3(u_2 + u_3)^3\right);
\]
in particular, the morphism is not yet monomial. Write $\Psi = (\Phi,\varphi)$, where $\Phi$ corresponds to the monomial part
of $\Psi$ (i.e., the first two components). Then $\Delta^{\Phi}$ is generated by the vector-field
\[
X = -u_1 \frac{\p}{\partial {u_1}} +2u_2\frac{\p}{\partial {u_2}}+2u_3\frac{\p}{\partial {u_3}},
\]
and we calculate
\[
\mathcal{J}_1^{\Psi} =u^4_1 u_2 u_3 \left(2(u_2- u_3)^2 + 3(u_2 + u_3)^3\right) = \mathcal{F}_{0}(\Psi).
\]
Consider the toroidal hull $\mathcal{H}(\mathcal{J}_1^{\Psi})$ and the $\Delta^{\Phi}$-closure 
$\mathcal{J}_{\infty}^{\Psi}$ of $\mathcal{J}_1^{\Psi}$;
\[
\mathcal{H}(\mathcal{J}_1^{\Psi}) = u^4_1 u_2 u_3 \left(u_2^2, u_2u_3,u_3^2\right),\quad
\mathcal{J}_{\infty}^{\Psi}=u^4_1 u_2 u_3 \left((u_2- u_3)^2,(u_2 + u_3)^3\right).
\]
The combinatorial blowing-up $\sigma_1$ with center $(u_2,u_3)$ principalizes $\mathcal{H}(\mathcal{J}_1^{\Psi})$, 
but does not principalize $\mathcal{J}_{\infty}^{\Psi}$. Indeed, at the point of $\s_1^{-1}(0)$ where
$\s_1$ is given in local coordinates $(u_1,\tu_2,v)$ by 
\[
u_2 = \widetilde{u}_2,\quad u_3 = \widetilde{u}_2(v+1),
\]
the morphism $\Psi_1:= \Psi \circ \sigma_1$ is given by
\[
x_1= u^2_1 \tu_2,\quad
x_2 = u^4_1 \tu_2^2 (v+1),\quad
z = u^4_1 \tu_2^4 (v+1) \left( v^2 + \tu_2(2 + v)^3\right),
\]
and
\[
\sigma^{\ast}\Delta^{\Phi} = \left(-u_1\frac{\p}{\partial {u_1}} - 2\tu_2\frac{\p}{\partial {\tu_2}}\right) = \Delta^{\Phi_1},\quad
\text{where }\, \Phi_1 := \Phi \circ \sigma_1.
\]
The morphism $\Psi_1$ is not yet monomial, nor does there yet exist a $\Phi$-free coordinate, as can be seen
from the $\Delta^{\Phi_1}$-closure
$\sigma^{\ast}(\mathcal{J}_{\infty}^{\Psi})= \mathcal{J}_{\infty}^{\Psi_1} = u^4_1 \tu_2^4 \left(v^2,\tu_2\right)$,
but not from the toroidal hull:
$\sigma^{\ast}(\mathcal{H}(\mathcal{J}_1^{\Psi})) = (u^4_1 \tu_2^4)$. 
After principalization of  $\mathcal{J}_{\infty}^{\Psi}$,
we guaranteed the existence of a $\Phi$-free variable $w$. Details of the computation are left
to the reader.
\end{example}

\begin{remark}\label{rem:moninvt}\emph{An invariant for pre-monomialization.}
Let $\Phi : (M,D) \to (N,E)$ be a dominant morphism. Consider the log Fitting ideals $\cF_k(\Phi)$, 
$k = 0,\ldots, n-1$. Given $a\in M$, set
\[
k(a):= \min\{k: \mathcal{F}_k(\Phi) \cdot \mathcal{Q}_a = \mathcal{Q}_a\}
\]
(with $k(a) := n$ if $\cF_{n-1}(\Phi)_a$ is not generated by a unit). Then $k(a)$ is an upper-semicontinuous
invariant.
By Theorem \ref{thm:LogFittingCharacterization}, $k(a)=0$ if and only if $\Phi$ is monomial at $a$.
It follows that, in general, there are coordinates $\bt = (t_1,\ldots,t_n)$ at $b=\Phi(a)$, compatible with $E$,
such that the morphism $\Phi^{k(a)}_{\bt} := (t_1,\ldots,t_{n-k(a)})$ is monomial at $a$.

The morphisms $\Phi^{k(a)}_{\pmb{t},j} = (t_1,\ldots,t_{n-k(a)},t_j)$, $j=n-k(a)+1,\ldots, n$,
are thus partially monomial. Let
\[
\nu_{a}^{k(a)}(\Phi,\pmb{t}) := \min\left\{\nu_a\left(\Phi^{k(a)}_{\pmb{t},j}\right):  j=n-k(a)+1,\ldots, n\right\}.
\]
We can define an invariant $\nu_{a}^{k(a)}(\Phi)$ as the minimum of $\nu_{a}^{k(a)}(\Phi,\pmb{t})$ over
all local coordinate systems $\bt = (t_1,\ldots,t_n)$ at $b=\Phi(a)$, as above (\'etale local, in
the algebraic case). Then the pair
$\left(k(a),\, \nu_{a}^{k(a)}(\Phi)\right)$ (ordered lexicographically) is upper-semicontinuous as a function of $a\in M$.

By Lemma \ref{lem:NormalFormPartialMonomial}, if $k(a)>0$, then there is a coordinate system $\bt$ such that
$\Psi:=\Phi^{k(a)}_{\pmb{t},n-k(a)+1}$ is pre-monomial at $a$ if and only if $\nu_{a}^{k(a)}(\Phi)=1$ and $\cJ^\Psi_{\infty,a}$
is a  principal monomial ideal. We will prove that a partially monomial morphism can be transformed to pre-monomial
essentially by decreasing the invariant $\left(k(a),\, \nu_{a}^{k(a)}(\Phi)\right)$. This invariant will not be formally
needed in the paper, however, since all of our arguments are local (cf.\ Remarks \ref{rem:idea} and \ref{rem:relorder2}).
\end{remark}

\section{Pre-monomial morphisms}\label{sec:premon}
In this section, we develop techniques that will be used to transform a pre-monomial morphism
to monomial, in the inductive proof of our main theorems. This is the only place in the article where
there is a dichotomy between the algebraic/analytic and general real quasianalytic cases, and
where we will need power substitutions in the latter case, precisely because of
the extension problem in real quasianalytic classes (see Examples \ref{ex:halfline}).

Consider a morphism $\Psi$ which is pre-monomial at a point $a$; i.e., $\Psi=(\Phi,\psi)$ in a
neighbourhood of $a$, where $\Phi$ is monomial and $\psi(\bu,\bv)=g(\bu,\bv)+\vp(\bu,\bv)$ as
in Definition \ref{def:PreMonomial}; in particular, $g$ is a remainder term---algebraically dependent
on $\Phi$ in the sense of Lemma \ref{lem:AlgebraicDependentFunctions}. We would like to transform
$g$ into a composite $g=h\circ\Phi$, where $h(\bx,\by,\bz)$ is a function of class $\cQ$, because,
in this case, $\Psi$ is a monomial morphism, in the form \eqref{eq:mon1} after
 a coordinate change $\widetilde{t} = t-h(\bx,\by,\bz)$
in the target (in fact, extra care is needed if $\{t=0\}$ is a component of the divisor $F$---see
Remark \ref{rem:cleaning} and \S\ref{subsec:II}).

Using combinatorial techniques developed in \S\ref{subsec:comb}, we will
reduce this problem to the case that the formal Taylor expansion $G(\bu,\bv)$ of $g$ at $a$
is the composite of $H(\bx^{1/d},\bz)$ (where $H(\bx,\bz)$ is a power series and $\bx^{1/d}:=
(x_1^{1/d},\ldots,x_p^{1/d})$) with the formal expansion of $\Phi$ at $a$. The
challenge is to make further transformations to eliminate the fractional exponents in $H$.

This is simple if we use power substitutions. (We are forced to take this
route in the general real quasianalytic case; see below). On the other hand, while the composite function
problem (i.e., given a formal solution $H$ of the equation $g=h\circ\Phi$, can we solve for $h$
of class $\cQ$?) admits a solution in the analytic or algebraic cases, there is no
solution, in general, in real quasianalytic classes, because of the extension problem. We will 
overcome this obstacle in \S\ref{subsec:realquasian}.

In the analytic (likewise, algebraic) case, we can treat the problem of fractional exponents in $H(\bx^{1/d},\bz)$
by introducing a polynomial relation (with analytic coefficients) satisfied by the latter. (This is the
strategy of Cutkosky \cite{C1,C3}.) By \cite{Gab}, $H(\bx,\bz)$ is the formal expansion of an
analytic function $h(\bx,\bz)$. (Assuming, for simplicity, that $\bx$ is a single variable $x$), the product
$$
R(x,\bz,t) := \prod_{i=1}^d \left(t-h(\varepsilon^ix^{1/d}, \bz)\right),
$$
where $\varepsilon$ denotes the primitive $d$th root of unity,
provides such a relation. In \S\ref{subsec:rel}, we will develop techniques to find sequences
of blowings-up in the source and target, after which there is a relation of degree $<d$; thus
eventually of degree $1$, so that $h(x^{1/d},\bz)$ is itself transformed to an analytic function.

This approach does not adapt to the real quasianalytic case: Consider also
$$
S(\bxi,\bz,t) := \prod_{i=1}^d \left(t-h(\xi_i,\bz)\right),
$$
where $\xi_1,\ldots,\xi_d$ are independent variables. Then $S$ is symmetric in $\xi_1,\ldots,\xi_d$,
so that $S$ is a polynomial in $t$ with coefficients $b_j$ that are functions of $\bz$
and the elementary symmetric polynomials $\s_1,\ldots,\s_d$ in $(\xi_1,\ldots,\xi_d)$. The coefficients
$a_j(x,\bz)$ of $R$ are obtained by substituting $\xi_i = \varepsilon^i x^{1/d}$.

\begin{remark}\label{rem:symm}
If $h$ is, in fact, a power series over a field $\IK$, then the coefficients $a_j$ are also power series over $\IK$
 because $\s_d(x^{1/d}, \varepsilon x^{1/d},\ldots, \varepsilon^{d-1}x^{1/d}) = (-1)^{d-1}x$. If $H$ is convergent (or algebraic), we
 likewise get convergent (or algebraic) coefficients over the original field $\IK$.
 \end{remark}
 
 In the real quasianalytic case, however, the coefficients $b_j$ above are well-defined only on the image
 of the mapping $\s =(\s_1,\ldots,\s_d): \IR^d\to\IR^d$. When we substitute $\xi_i = \varepsilon^i x^{1/d}$, for some
 $x\in\IR$, we do get a real value for $\s$, but this value is not, in general, in $\s(\IR^d)$, so that the $b_j$ cannot
 necessarily be defined at these points, because of the extension problem.
 
\subsection{Combinatorial lemmas}\label{subsec:comb}

\begin{lemma}\label{lem:ModificationTargetExtension}
Let $\Phi: (V,D) \to (W,E)$ denote a morphism.
\begin{enumerate}
\item If $\tau: (\widetilde{W},\widetilde{E}) \to (W,E)$ is a combinatorial blowing-up, then there is a commutative diagram
\begin{equation}\label{eq:mainCommutativeRelationI}
\begin{tikzcd}
(\widetilde{V},\widetilde{D}) \arrow{d}{\widetilde{\Phi}} \arrow{r}{\s} & (V,D) \arrow{d}{\Phi}\\
(\widetilde{W},\widetilde{E}) \arrow{r}{\tau} & (W,E)
\end{tikzcd}
\end{equation}
where $\s$ is a composite of finitely many smooth combinatorial blowings-up.

\medskip
\item Moreover, if $\Phi$ is a monomial morphism, then $\widetilde{\Phi}$ is a monomial morphism.
\end{enumerate}
\end{lemma}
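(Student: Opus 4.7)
The plan is to use the universal property of blowing-up, combined with combinatorial principalization of monomial ideals. First I would note that, since $\tau$ is combinatorial, the center $C$ of $\tau$ is locally an intersection of components of $E$, so the ideal sheaf $\cI_C \subset \cQ_W$ is locally generated by a product of local equations for components of $E$. By the universal property of the blowing-up $\tau$, the morphism $\Phi$ lifts through $\tau$ if and only if the pullback ideal $\Phi^{\ast}(\cI_C)$ is invertible; so the task is to make $\Phi^{\ast}(\cI_C)$ invertible by modifying the source within the combinatorial world.

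Next I would observe that $\Phi^{\ast}(\cI_C)$ is a monomial ideal with respect to $D$. Indeed, since $\Phi: (V,D) \to (W,E)$ is a morphism of pairs, $\Phi^{-1}(E)_{\reduced} \subset D$, so each local equation of a component of $E$ pulls back to a unit times a monomial in the local equations of components of $D$. Consequently $\Phi^{\ast}(\cI_C)$ is everywhere locally generated by monomials in components of $D$. Such a monomial ideal admits principalization by a finite sequence $\sigma : (\widetilde{V},\widetilde{D}) \to (V,D)$ of smooth combinatorial blowings-up (the standard toric/combinatorial procedure, which can also be seen as a special case of the principalization discussed in Remarks \ref{rk:ToroidalHullResolution}). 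The universal property of $\tau$ then yields a unique morphism $\widetilde{\Phi}: \widetilde{V} \to \widetilde{W}$ making \eqref{eq:mainCommutativeRelationI} commute. Compatibility with divisors, i.e.\ $\widetilde{\Phi}^{-1}(\widetilde{E})_{\reduced} \subset \widetilde{D}$, is automatic: using $\widetilde{E} = \tau^{-1}(E)_{\reduced}$, $\widetilde{D} = \sigma^{-1}(D)_{\reduced}$, and commutativity, one computes $\widetilde{\Phi}^{-1}(\widetilde{E})_{\reduced} = \sigma^{-1}(\Phi^{-1}(E)_{\reduced})_{\reduced} \subset \sigma^{-1}(D)_{\reduced} = \widetilde{D}$.

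For part (2), suppose in addition that $\Phi$ is monomial. Since $\sigma$ is a composite of combinatorial blowings-up compatible with $D$, the composition $\Phi \circ \sigma$ is monomial by Corollary \ref{cor:MonCombinatorialBU}(1). But $\Phi \circ \sigma = \tau \circ \widetilde{\Phi}$, and $\tau$ is a combinatorial blowing-up; Corollary \ref{cor:MonCombinatorialBU}(3) then forces $\widetilde{\Phi}$ itself to be monomial, completing the argument.

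The main point to verify carefully is that combinatorial principalization of a monomial ideal can be carried out entirely by smooth combinatorial blowings-up in the quasianalytic setting---this reduces to a purely combinatorial fact about fans/exponent cones, and does not require Noetherianity of the local rings, since only the monomial part of the ideal (which is of finite type) matters. Beyond that, the argument is a clean application of the universal property of the blowing-up together with Corollary \ref{cor:MonCombinatorialBU}.
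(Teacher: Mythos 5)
Your proof follows the same approach as the paper: the pullback of the center ideal is a monomial ideal with respect to $D$, which is principalized by combinatorial blowings-up, and the lift $\widetilde{\Phi}$ is obtained from the universal property of the blowing-up $\tau$, with monomiality in part (2) coming from Corollary \ref{cor:MonCombinatorialBU}. One small slip: you describe $\cI_C$ as "locally generated by a product of local equations for components of $E$", but since $C$ is a (not necessarily codimension-one) intersection of components, $\cI_C$ is generated by the individual local equations, not their product; your subsequent reasoning uses this correctly, so the conclusion stands.
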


\begin{proof}
(1) The pull-back of the ideal of the centre of $\tau$ is a monomial ideal, and can therefore be principalized by
a sequence of combinatorial blowings-up $\s: \tV\to V$. The morphism $\tPhi$ exists by the universal property
of blowing up. 

(2) Moreover, if $\Phi$ is a monomial morphism, then so is  $\tPhi$, by Corollary \ref{cor:MonCombinatorialBU}.
\end{proof}

In the remainder of this subsection, $\Phi: V\to W$ denotes a monomial morphism given by \eqref{eq:mon1}
with respect to $\Phi$-monomial coordinate systems at points $a\in V$ and $b=\Phi(a)$,
compatible with divisors $D,\,E$, as in 
Definition \ref{def:monom2}. Here $V$ and $W$ are coordinate charts of class $\cQ$, or \'etale coordinate
charts, and the constants $\xi_k$ belong to a field $\IK$ (perhaps a finite extension of the base field).

Every monomial $\bx^{\bga}$, where $\bga \in \IQ^p$, pulls back to a monomial $\bu^{\bal(\bga)}$,
where $\bal(\bga) = \sum_{j=1}^p \ga_j\bal_j$. 
We write $\bu^{\bal(\bga)}=\bx^{\bga}$. Of course, $\bu^{\bal(\bga)}\in \cQ_{U,a}$ if and only if $\bal(\bga)\in\IN^s$.

\begin{lemma}[Algebraically dependent monomials]\label{lem:RelationTrick}
Let 
\[
\cS = \{ \pmb{x}^{\pmb{\gamma}}: \pmb{\gamma}\in \mathbb{Z}^{l},\ \Phi^{\ast} (\pmb{x}^{\pmb{\gamma}}) \in \cQ_{U,a}\}.
\]
Then $\cS$ is a finitely generated multiplicative semigroup (with identity $\pmb{x}^{\pmb{0}}=1$),
and there is a commutative diagram \eqref{eq:mainCommutativeRelationI}
such that
\begin{enumerate}
\item $\s$ and $\tau$ are composites of finitely many smooth combinatorial blowings-up (in particular, $\sigma^{\ast}(\De^{\Phi})= \De^{\widetilde{\Phi}} $);
\smallskip
\item $\widetilde{\Phi}$ is a monomial morphism;
\smallskip
\item $\tau^{\ast}(\cS) \subset \mathcal{Q}_{\widetilde{W},\widetilde{b}}$, for all\, $\widetilde{b} \in \widetilde{\Phi}(\sigma^{-1}(a))$.
\end{enumerate}
\end{lemma}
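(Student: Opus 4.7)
The plan is to combine Gordan's lemma (for finite generation of $\cS$) with combinatorial principalization of monomial ideals (to construct the target modification $\tau$), and then to invoke Lemma~\ref{lem:ModificationTargetExtension} and Corollary~\ref{cor:DerCombinatorialBU} to assemble the full commutative diagram.

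\emph{Finite generation.} The $\IQ$-linear map $\bal\colon \IQ^p \to \IQ^r$ defined by $\bga \mapsto \sum_j \ga_j \bal_j$ is injective because $\bal_1,\ldots,\bal_p$ are $\IQ$-linearly independent. Hence $\cS$ is in bijection with the integer points of the rational polyhedral cone $C := \bal^{-1}(\IQ_{\geq 0}^r)$ in $\IQ^p$, and by Gordan's lemma these integer points form a finitely generated monoid; I fix a set of generators $\bga^{(1)},\ldots,\bga^{(N)}$, giving the first claim.

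\emph{Construction of $\tau$ and assembly.} For each $i$ I split $\bga^{(i)} = \bga^{(i)}_+ - \bga^{(i)}_-$ into disjointly-supported parts in $\IN^p$, and consider the monomial ideal $\cI_i := (\bx^{\bga^{(i)}_+}, \bx^{\bga^{(i)}_-})$ on $W$, whose support lies in $E$. Standard combinatorial principalization of such ideals (cf.\ Remark~\ref{rk:ToroidalHullResolution}(2)) produces a composite $\tau\colon (\tW, \tE) \to (W, E)$ of combinatorial blowings-up after which every $\tau^{\ast}\cI_i$ is principal; equivalently, at each point of $\tW$ one of the two monomials $\tau^{\ast}\bx^{\bga^{(i)}_{\pm}}$ divides the other. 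Applying Lemma~\ref{lem:ModificationTargetExtension} then yields the commutative square \eqref{eq:mainCommutativeRelationI} with $\s$ a composite of combinatorial blowings-up and $\tPhi$ monomial, and Corollary~\ref{cor:DerCombinatorialBU} gives $\s^{\ast}(\De^{\Phi}) = \De^{\tPhi}$; this delivers conditions (1) and (2).

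\emph{Regularity on the image and main obstacle.} Given $\tb \in \tPhi(\s^{-1}(a))$ and $\ta \in \s^{-1}(a)$ with $\tPhi(\ta) = \tb$, I need to show that $\tau^{\ast}(\bx^{\bga^{(i)}}) \in \cQ_{\tW, \tb}$ for each generator. The pullback
\[
\tPhi^{\ast}\tau^{\ast}(\bx^{\bga^{(i)}}) = \s^{\ast}\Phi^{\ast}(\bx^{\bga^{(i)}}) = \s^{\ast}(\bu^{\bal(\bga^{(i)})})
\]
is regular at $\ta$ because $\bga^{(i)} \in \cS$. On the other hand, $\tPhi$ is monomial at $\ta$, so each component of $\tE$ through $\tb$ pulls back to a non-unit monomial in $\tD$-components at $\ta$; consequently $\tPhi^{\ast}$ carries any non-unit monomial in $\tE$-components at $\tb$ to a non-unit at $\ta$. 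By the principalization, $\tau^{\ast}(\bx^{\bga^{(i)}})$ at $\tb$ is either a regular monomial in $\tE$-components, or the reciprocal of such a non-unit monomial---and only the former is compatible with the regularity of the pullback. This forces $\tau^{\ast}\bx^{\bga^{(i)}_-}$ to divide $\tau^{\ast}\bx^{\bga^{(i)}_+}$ at $\tb$, i.e., $\tau^{\ast}(\bx^{\bga^{(i)}}) \in \cQ_{\tW, \tb}$. The main obstacle is precisely this last step: the combinatorial principalization and diagram assembly are routine, but excluding the ``wrong'' divisibility direction at points in $\tPhi(\s^{-1}(a))$ relies crucially on the $\IQ$-linear independence of the exponent vectors of $\tPhi$ along the divisor.
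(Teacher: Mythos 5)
Your proof is correct and follows essentially the same path as the paper's: finitely generate $\cS$, principalize the pair ideals $(\bx^{\bga^{(i)}_+}, \bx^{\bga^{(i)}_-})$ by combinatorial blowings-up in the target, lift via Lemma~\ref{lem:ModificationTargetExtension} to a commutative square with $\tPhi$ monomial and $\s^{\ast}(\De^{\Phi})=\De^{\tPhi}$, and then exclude the wrong divisibility direction at $\tb\in\tPhi(\s^{-1}(a))$ by the pole-contradiction (since $\tPhi$ is monomial with nonzero exponent vectors, a pole at $\tb$ would pull back to a pole at $\ta$, contradicting regularity of $\s^{\ast}\Phi^{\ast}(\bx^{\bga^{(i)}})$). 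The only cosmetic difference is that you invoke Gordan's lemma directly on the strongly convex cone $\bal^{-1}(\IQ^r_{\geq 0})$ (using injectivity of $\bal$), whereas the paper derives the same finite generating set by an elementary induction on $|\bal|$ through the monomial ideal of mixed-sign exponents.
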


\begin{proof}
Let $I \subset \mathcal{Q}_{V,a}$ denote the ideal generated by all $\bu^{\bal} \in \mathbb{N}^r$
such that $\bal = \sum \ga_j\bal_j$, where
$\bga=(\ga_1,\ldots,\ga_p)\in \IZ^p$ with at least one negative component $\ga_j$.
Let $\bu^{\pmb{\varepsilon}_1},\ldots,\bu^{\pmb{\varepsilon}_m}$ denote a finite subcollection of the $\bu^{\bal}$
which generates $I$. We claim that,
for every element $\bu^{\bal}$ of $I$, there exist $\la_1,\ldots,\la_m \in \IN$, such that
\begin{equation}\label{eq:ualpha}
\bu^{\bal} = \prod_{l=1}^m (\bu^{\pmb{\varepsilon}_l})^{\la_l}\cdot \bu^{\bbe},\ \text{ where } 
\bu^{\bbe} = \prod_{j=1}^p \bx_j^{\ga_j},\ \ga_j\geq 0.
\end{equation}
Indeed, each $\bu^{\bal} = \bu^{\pmb{\varepsilon}_l}\bu^{\bbe}$, for some $l$. If $\bu^{\bbe} = \prod x_j^{\ga_j}$ with all
$\ga_j\geq 0$, then we are done. Otherwise, $\bu^{\bbe} \in I$ and $|\bbe|<|\bal|$, so the claim follows by induction.

Each $\bu^{\pmb{\varepsilon}_l} = \bx^{\bga_l} = \bx^{\bga_l^+}/\bx^{\bga_l^-}$, where $\bga_l^+,\, \bga_l^- \in \IN^p$,
and $\cS$ is the multiplicative semi-group generated by $\bx^{\bga_1},\ldots,\bx^{\bga_m}$
(cf.\ Gordon's Lemma \cite[p.\,12]{Ful}).

Consider the monomial ideal 
\[
\mathcal{J} := \prod_{l=1}^{m} (\pmb{x}^{\pmb{\gamma}_l^{+}},\pmb{x}^{\pmb{\gamma}_l^{-}}).
\] 
Let $\tau: (\widetilde{W},\widetilde{E})\to (W,E)$ be a sequence of combinatorial blowings-up which principilizes $\mathcal{J}$. 
By Lemma \ref{lem:ModificationTargetExtension}, there is a sequence of combinatorial blowings-up $\sigma$ and a monomial morphism $\widetilde{\Phi}$ such that the diagram \eqref{eq:mainCommutativeRelationI} commutes. In particular, 
$(\tau\circ\widetilde{\Phi})^{\ast}(\cS) = (\Phi\circ\s)^{\ast}(\cS) \subset \mathcal{Q}_{\widetilde{U},\widetilde{a}} $ for every $\widetilde{a} \in \sigma^{-1}(a)$. 

Consider $\widetilde{a} \in \sigma^{-1}(a)$ and $\widetilde{b} = \widetilde{\Phi}(\widetilde{a})$. Since $\tau$ principalizes
$\cJ$, for each $l=1,\ldots, m$, either $\tau^*(\pmb{x}^{\pmb{\gamma}_l^{-}})$ divides $\tau^*(\pmb{x}^{\pmb{\gamma}_l^{+}})$
at $\tb$, 
so that $\tau^{\ast}(\pmb{x}^{\pmb{\gamma}_l}) \in\mathcal{Q}_{\widetilde{W},\widetilde{b}}$, as desired, or 
$\tau^*(\pmb{x}^{\pmb{\gamma}_l^{+}})$ strictly divides $\tau^*(\pmb{x}^{\pmb{\gamma}_l^{-}})$, so that 
$\tau^{\ast}(\pmb{x}^{\pmb{\gamma}_l})$ has a pole at $\widetilde{b}$, yet pulls back to a well-defined function 
at $\widetilde{a}$ (a contradiction).
\end{proof}

\begin{lemma}[Algebraically dependent monomials II]\label{lem:TranslationTrick}
Let $e(b)$ denote the number of components of the divisor $E$ at $b=\Phi(a)$ (i.e., $e(b)=p+q$), and
let $i(a)$ denote the number of components of $E$ at $b$ that are independent at $a$ (i.e., $i(a)=p$). Then there is 
a commutative diagram $\eqref{eq:mainCommutativeRelationI}$ such that
\begin{enumerate}
\item $\s$ and $\tau$ are composites of finitely many smooth combinatorial blowings-up (in particular, $\sigma^{\ast}(\De^{\Phi})= \De^{\widetilde{\Phi}} $);
\smallskip
\item $\widetilde{\Phi}$ is a monomial morphism;
\smallskip
\item for every $\widetilde{a} \in \sigma^{-1}(a)$, the number of components $e(\tb)$ of $\tE$ at 
$\widetilde{b}=\widetilde{\Phi}(\widetilde{a})$ is at most $p$; i.e.,
$e(\widetilde{b}) \leq i(a) = p \leq p+q=e(b)$.
\end{enumerate}
\end{lemma}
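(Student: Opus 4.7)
My plan is to principalize certain monomial ideals in $W$ constructed from the $\IQ$-linear dependence of each $\bbe_k$ on $\{\bal_1,\ldots,\bal_p\}$. First, for each $k=1,\ldots,q$ there exist a positive integer $d_k$ and integers $n_{kj}$ with $d_k\bbe_k=\sum_j n_{kj}\bal_j$; write $n_{kj}=n_{kj}^+ - n_{kj}^-$ with $n_{kj}^\pm\in\IN$. The monomial ideal
\[
\cJ_k \;:=\; \bigl(y_k^{d_k}{\textstyle\prod_j}\,x_j^{n_{kj}^-},\;\; {\textstyle\prod_j}\,x_j^{n_{kj}^+}\bigr) \;\subset\; \cQ_W
\]
has principal pullback $\Phi^*(\cJ_k) = (\bu^{\sum_j n_{kj}^+\bal_j})$, since one generator pulls back to $\bu^{\sum_j n_{kj}^+\bal_j}$ and the other to this same monomial times the unit $(\xi_k+v_k)^{d_k}$. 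I would then let $\tau\colon (\widetilde W,\widetilde E) \to (W,E)$ be a sequence of combinatorial blowings-up principalizing $\cJ := \prod_k \cJ_k$; by Lemma~\ref{lem:ModificationTargetExtension}, this extends to a commutative diagram~(5.1.1) with $\sigma$ a composite of combinatorial blowings-up and $\widetilde\Phi$ a monomial morphism, establishing items (1) and (2).

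To verify item (3), I would fix $\widetilde a\in\sigma^{-1}(a)$, $\widetilde b=\widetilde\Phi(\widetilde a)$, and $\widetilde\Phi$-monomial coordinates at both points. Writing $R_k := \prod_j x_j^{n_{kj}^+}/(y_k^{d_k}\prod_j x_j^{n_{kj}^-})$, the principalization of $\cJ_k$ at $\widetilde b$ combined with $\widetilde\Phi^*\tau^*(R_k)=(\xi_k+v_k)^{-d_k}$---a unit at $\widetilde a$---will force $\tau^*(R_k)$ itself to be a unit at $\widetilde b$. Since $\tau$ is combinatorial, the pullbacks $\tau^*(x_j), \tau^*(y_k)$ are pure monomials in the $\widetilde E$-coordinates of the chart, with factors corresponding to components not passing through $\widetilde b$ contributing only units there. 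Matching the $\widetilde\bv$-dependence of $\widetilde\Phi^*\tau^*(x_j)=\sigma^*\bu^{\bal_j}$ (which is $\widetilde\bv$-free) and of $\widetilde\Phi^*\tau^*(y_k)=\sigma^*(\bu^{\bbe_k})(\xi_k+v_k)$ (linear in $v_k$) will show that $\tau^*(x_j)$ carries no $\widetilde y$-type exponent at $\widetilde b$, and that each $y_k$ contributes to at most one $\widetilde y$-type component via an injection $k\mapsto l^*(k)$ determined by $\widetilde v_{l^*(k)}=v_k$. The unit identity $\tau^*(R_k)=\mathrm{unit}$ will then translate into the integer exponent relation $d_k\widetilde\bbe_{l^*(k)}=\sum_j n_{kj}\widetilde\bal_{l(j)}$, pinning each surviving $\widetilde\bbe_l$ to an integer combination of the $\widetilde\bal_l$'s. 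Combined with the $\IQ$-linear independence of $\{\widetilde\bal_l\}$ in the $p$-dimensional $\IQ$-span of $\{\sigma^*(\bal_j)\}$, a lattice-counting argument will yield $\widetilde p+\widetilde q\leq p = i(a)$.

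The main obstacle will be the last counting step, which requires careful bookkeeping of which $\widetilde E$-components at $\widetilde b$ arise from strict transforms of the original components versus from exceptional divisors of $\tau$, and of how the unit factors $(\xi_k+v_k)$ are distributed (possibly necessitating target-coordinate absorptions to bring $\widetilde\Phi$ into the precise monomial form). The essential observation will be that after principalization, every $\widetilde E$-component at $\widetilde b$ is pinned to an exponent vector in the $p$-dimensional $\IQ$-span of $\{\sigma^*(\bal_1),\ldots,\sigma^*(\bal_p)\}$, and the monomial-form constraints---$\IQ$-independence of the $\widetilde\bal_l$'s and forced $\IQ$-dependence of the $\widetilde\bbe_l$'s---forbid more than $p$ such components from coexisting through $\widetilde b$.
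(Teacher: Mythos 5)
Your construction is the same as the paper's: the ideal you principalize, $\cJ=\prod_k\bigl(y_k^{d_k}\bx^{\bga_k^+},\,\bx^{\bga_k^-}\bigr)$, is exactly the ideal $\cK$ of the paper's proof, the lifting via Lemma \ref{lem:ModificationTargetExtension} is the same, and your observation that principalization plus the identity $(\tau\circ\widetilde\Phi)^{\ast}(R_k)=(\xi_k+v_k)^{-d_k}$ forces $\tau^{\ast}(R_k)$ to be a unit at $\widetilde b$ is correct (it is the same argument as in Lemma \ref{lem:RelationTrick}). The problem is the final step, which you defer to a ``lattice-counting argument,'' and the mechanism you propose for it does not work.

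Concretely: you claim that pinning every $\widetilde E$-component at $\widetilde b$ to an exponent vector in the $p$-dimensional $\IQ$-span of $\{\s^{\ast}\bal_1,\ldots,\s^{\ast}\bal_p\}$, together with the monomial-form constraints, ``forbids more than $p$ such components from coexisting through $\widetilde b$.'' It does not: the original point $b$ is a counterexample to that very mechanism, since all $p+q$ exponent vectors $\bal_1,\ldots,\bal_p,\bbe_1,\ldots,\bbe_q$ already lie in that $p$-dimensional span, and $p+q>p$ components coexist there. Dependent components $y_k=\bu^{\bbe_k}(\xi_k+v_k)$ are permitted precisely because of their unit factors, so no bound on $e(\widetilde b)$ can come from the dimension of the span of the exponents alone. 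What is actually needed is to produce $q$ independent \emph{non-divisorial} directions in the target at $\widetilde b$. The paper does this as follows: since $\tau$ is combinatorial, the old coordinates satisfy $X_j=\widetilde{\bX}^{\bep_j}(\pmb{\rho}+\widetilde{\pmb z})^{\bde_j}$ with a unimodular exponent matrix (so $\#\widetilde{\bX}+\#\widetilde{\pmb z}=p+q$); the unit identities then take the form $(\pmb{\rho}+\widetilde{\pmb z})^{\bla_k}=(\xi_k+v_k)^{d_k}$, the independence of the functions $(\xi_k+v_k)^{d_k}$ forces the matrix $(\bla_k)$ to have rank $q$, and the implicit function theorem lets one solve $v_k=F_k(\widetilde{\pmb z})$ and change coordinates so that $\widetilde z_k=v_k$. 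Hence at least $q$ of the $p+q$ new target directions are non-exceptional, giving $e(\widetilde b)\leq p$. Your injection $k\mapsto l^{\ast}(k)$ points in the opposite direction (attaching each $y_k$ to a putative $\widetilde y$-type component rather than converting it into a $\widetilde z$-coordinate), so the gap is not merely bookkeeping.
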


\begin{proof}
It will be convenient to keep track of the exceptional coordinates in the target without distinguishing between $\pmb{x}$ 
and $\pmb{y}$, so we will write $(\pmb{x},\pmb{y})=\pmb{X}  = (X_1,\ldots,X_{p+q})$. 
Since $y_k = \pmb{u}^{\pmb{\beta}_k}(\xi_k + v_k)$ ($k=1,\ldots, q$), where $\pmb{\beta}_k$ is $\mathbb{Q}$-linearly dependent 
on the exponents $\{\pmb{\alpha}_j\}$ of the monomials $x_j=\pmb{u}^{\pmb{\alpha}_j}$, there is (a smallest) 
$d_k\in \mathbb{N}$, and $\pmb{\gamma}^{+}_k,\, \pmb{\gamma}^{-}_k \in \mathbb{N}^{p}$ such that:
\begin{equation}\label{eq:LemmaBeta}
y^{d_k}_k \cdot \frac{ \pmb{x}^{\pmb{\gamma}^{+}_k}}{\pmb{x}^{\pmb{\gamma}^{-}_k}} = (\xi_k + v_k)^{d_k},
\qquad k=1,\ldots, q.
\end{equation}

Consider  the monomial ideal 
\[
\mathcal{K} = \prod_{k=1}^{q} (y^{d_k}_k \pmb{x}^{\pmb{\gamma}^{+}_k}, \pmb{x}^{\pmb{\gamma}^{-}}_k).
\]
Let $\tau: (\widetilde{W},\widetilde{E})\to (W,E)$ be a sequence of combinatorial blowings-up which principilizes $\mathcal{K}$. 
By Lemma \ref{lem:ModificationTargetExtension}, there is sequence of combinatorial blowings-up $\sigma$ and a 
monomial morphism $\widetilde{\Phi}$ such that \eqref{eq:mainCommutativeRelationI} commutes. 
Since all blowings-up in $\s$ (and in $\tau$) are combinatorial, the coordinates $(\pmb{v},\pmb{w})$ in the source are preserved
(as are the coordinates $\bz$ in the target). Therefore, at each point $\widetilde{a} \in \sigma^{-1}(a)$,
 there exists a $\widetilde{\Phi}$-compatible coordinate system of the form $(\widetilde{\pmb{u}},\widetilde{\pmb{v}},\widetilde{\pmb{w}},\pmb{v},\pmb{w})$ at $\widetilde{a}$ (i.e., $\widetilde{\pmb{v}}$ and $\widetilde{\pmb{w}}$ 
 represent coordinates of the type ``$\bv$'' and ``$\bw$'', respectively). Note that
\begin{equation}\label{eq:Trick1}
(\tau\circ \widetilde{\Phi})^{\ast} \left(y^{d_k}_k \cdot \frac{ \pmb{x}^{\pmb{\gamma}_k^{+}}}{\pmb{x}^{\pmb{\gamma}_k^{-}}} \right)= (\xi_k + v_k)^{d_k}, \qquad k=1,\ldots, q.
\end{equation}
There is also a coordinate system of the form $(\widetilde{\pmb{x}},\widetilde{\pmb{y}},\widetilde{\pmb{z}},\pmb{z})= (\widetilde{\pmb{X}},\widetilde{\pmb{z}},\pmb{z})$
at $\widetilde{b} := \widetilde{\Phi}(\widetilde{a})$, such that
\begin{equation}\label{eq:Trick2}
X_j = \widetilde{\pmb{X}}^{\pmb{\varepsilon}_j}(\pmb{\rho}+ \widetilde{\pmb{z}})^{\pmb{\delta}_j}, \qquad j=1,\ldots,p+q,
\end{equation}
where the $(\pmb{\varepsilon}_j, \pmb{\delta}_j)$ are integer vectors such that the matrix $E$
with rows $(\pmb{\varepsilon}_j,\pmb{\delta}_j)$, $j=1,\ldots,p+q$, has determinant $1$, and $\pmb{\rho}$ is a vector with
entries all nonzero. It follows from  \eqref{eq:Trick1}, \eqref{eq:Trick2} that, for every $k=1,\ldots, q$, there exists a 
vector $\pmb{\lambda}_k$ such that
\[
(\pmb{\rho}+\widetilde{\pmb{z}} )^{\pmb{\lambda}_k} = (\xi_k+ v_k )^{d_k}
\]
and the matrix $L= (\pmb{\lambda}_k)$ has maximal rank. By the implicit function theorem, we can solve for 
$v_k = F_k(\widetilde{\pmb{z}})$, where the 
matrix $(\nabla F_k)$ has maximal rank. Therefore, after a change of the coordinates $\widetilde{\pmb{z}}$, we can assume that $\widetilde{z}_k = v_k$. The result follows.
\end{proof}

Let us write also $e(a):=e(\Phi(a))$. Note that $e(a)=0$ if and only if $i(a)=0$. We will use Lemma \ref{lem:TranslationTrick}
as the basis for induction on $(e(a),i(a))$ (where $\leq$ means that each component is $\leq$). Condition (3) of the lemma
implies that, for every $\ta\in\s^{-1}(a)$, either $(e(\ta),i(\ta)) < (e(a),i(a))$, or $q = e(\cdot) - i(\cdot) = 0$ both at $a$ and 
at $\ta$. The following is a simple application.

\begin{lemma}[Principal ideals in the target]\label{lem:PrincipalIdealTrick}
Suppose that $\Phi$ is dominant. Let $h$ denote a function of class $\cQ$ on $W$, such that $h\circ\Phi$ is a monomial
$\bu^{\pmb{\varepsilon}}$ times a unit, in some neighbourhood of $a$.
Then (after shrinking $V$ and $W$ to suitable neighbourhoods of $a$ and $b$)
there is a finite number of commutative diagrams 
\begin{equation}\label{eq:PrincipalIdealTrick}
\begin{tikzcd}
(V_\la,D_{\la}) \arrow{d}{\Phi_{\la}} \arrow{r}{\s_\la} & (V,D) \arrow{d}{\Phi}\\
(W_\la,E_{\la}) \arrow{r}{\tau_\la} & (W,E)
\end{tikzcd}
\end{equation}
where
\begin{enumerate}
\item each $\s_\la$ and $\tau_\la$ is a composite of finitely many smooth local combinatorial blowings-up (in particular, $\sigma^{\ast}_{\la}(\De^{\Phi})= \De^{\Phi_{\la}} $);
\smallskip
\item the families of morphisms $\{\s_\la\}$ and $\{\tau_\la\}$ cover $V$ and $W$, respectively, and (in the analytic or
real quasianalytic cases) there are compact subsets $K_\la \subset V_\la$, $L_\la \subset W_\la$, for all $\la$, such that
$\bigcup \s_\la(K_\la)$ and $\bigcup \tau_\la(L_\la)$ are (compact) neighbourhoods of $a$ and $b$,
respectively;
\smallskip
\item each $\Phi_\la$ is a monomial morphism;
\smallskip
\item for each $\la$ and $\ta \in \s_\la^{-1}(a)$, $h_{\lambda} := h\circ \tau_{\lambda}$ generates a (principal) monomial ideal 
in a neighbourhood of $\tb = \Phi_\la(\ta)$.
\end{enumerate}
\end{lemma}

\begin{proof}
We argue by induction on $(e(a),i(a))$. If $e(a)=0$ (equivalently, $i(a)=0$), then $h$ is a unit,
and the result is trivial. So we assume the result for $(e(\cdot),i(\cdot))<(e(a),i(a)))$. We have one
of two possibilities.

\smallskip
(1) $q = e(a)- i(a)) > 0$. In this case, we apply Lemma \ref{lem:TranslationTrick}, to get sequences of combinatorial 
blowings-up $\sigma$ and $\tau$ such that, for all\, $\widetilde{a} \in \sigma^{-1}(a)$, $e(\ta) < e(a)$.
Since $h\circ \tau \circ \widetilde{\Phi}= h\circ \Phi \circ \sigma$ is a monomial times a unit at $\ta$, the assertion is true at $\ta$, by induction. 

\smallskip
(2) $q = e(a)-i(a) = 0$. In this case,
consider the formal Taylor expansions $H$ of $h$ at $b$, and $G$ of $h\circ\Phi$ at $a$.
Since $q=0$ at $a$, we can write
\[
H = H(\bx,\bz) = \sum_{\pmb{\ga} \in \mathbb{N}^{p}} \sum_{\pmb{\de} \in \mathbb{N}^{s}} h_{\pmb{\ga} \pmb{\de}} 
\pmb{x}^{\pmb{\ga}} \pmb{z}^{\pmb{\de}},
\]
and $z_l = v_l$, $l=1,\ldots,s$. On the other hand,
\[
G = G(\bu,\bv) = \pmb{u}^{\pmb{\varepsilon}} \left( \sum_{\pmb{\al} \in \mathbb{N}^{r}} \sum_{\pmb{\de} \in \mathbb{N}^{s}} g_{\pmb{\al}\pmb{\de}} \pmb{u}^{\pmb{\al}} \pmb{v}^{\pmb{\de}} \right),
\]
where $g_{\bzero \bzero} \neq 0$.
Since $\bga \mapsto \bal(\bga):=\sum_{j=1}^p \ga_j\bal_j$ is injective, and $G$ is the pull-back of $H$,
we can write $\bu^{\pmb{\varepsilon}} = \bx^{\bla}$ (i.e., $\pmb{\varepsilon} = \bal(\bla)$), for unique $\bla \in \IN^p$, and (for each
$(\bal,\bde)$ such that $g_{\bal \bde}\neq 0$)
$\bu^{\bal} = \bx^{\pmb{\ga}'}$ (i.e., $\bal = \bal(\pmb{\ga}')$), for unique $\pmb{\ga}' \in \IZ^p$.

Moreover, in this case, we can apply Lemma \ref{lem:RelationTrick}
to get sequences of combinatorial blowings-up $\sigma$ and $\tau$ such that, for all\, $\widetilde{a} \in \sigma^{-1}(a)$, 
\begin{itemize}
\item[either] $(e(\ta),i(\ta))<(e(a),i(a))$, so the result is true at $\ta$, as in (1);
\smallskip
\item[or] $q = e(\ta) -i(\ta) =0$. Since the pullback by $\tau$ to $\tb=\tPhi(\ta)$ of each monomial $\pmb{\ga'}$ above
is a well-defined element of $\mathcal{Q}_{\widetilde{W},\widetilde{b}}$ (by Lemma \ref{lem:RelationTrick}), 
it follows in this case that the pullback of $H$ is a 
monomial $\bx^{\bla}$ times
an invertible power series in the monomial coordinates $(\bx,\bz)$ at $\tb$. Therefore
(by the quasianalyticity and division axioms), $h\circ\tau$ is locally a monomial times a unit.
\end{itemize}

The result follows since $\s$ is proper and property $(4)$ is open in the source.
\end{proof}

\begin{remark}[Simplifying the normal form of a monomial morphism]\label{rk:ReducingNormalForm}
Consider a monomial morphism $\Phi: (M,D) \to (N,E)$ and fix a point $a\in M$. As a consequence of Lemma \ref{lem:TranslationTrick}, 
we can show there is a family of commutative diagrams $\eqref{eq:PrincipalIdealTrick}$ satisfying conditions $(1)$, $(2)$ and $(3)$
of Lemma \ref{lem:PrincipalIdealTrick}, such that, for each $\lambda$,
the open sets $V_{\lambda}$ and $W_{\lambda}$ have $\Phi_{\lambda}$-monomial coordinate systems $(\pmb{u},\pmb{v},\pmb{w})$ and $(\pmb{x},\pmb{z})$ (respectively) with respect to which $\Phi_{\lambda}$ has the form \eqref{eq:mon2}. Indeed, the 
normal form \eqref{eq:mon2}
corresponds to the case that $e(a)-i(a) =0$. We
can now argue by induction 
on $e(a)-i(a)$, using Lemma \ref{lem:TranslationTrick}, just as in the proof of Lemma \ref{lem:PrincipalIdealTrick}.
\end{remark}

In the following subsections, we will use Lemmas \ref{lem:RelationTrick} and \ref{lem:TranslationTrick}
in a way that is similar to that of the preceding proof, together with Lemma \ref{lem:denombound} following.

\begin{lemma}[Bound on the denominators of rational exponents]\label{lem:denombound}
Let $G(\bu,\bv)$ denote a formal power series at $a$ that is algebraically dependent on the morphism $\Phi$;
i.e., $dG \wedge d\Phi = 0$ (see Lemma \ref{lem:AlgebraicDependentFunctions}). Then:
\begin{enumerate}
\item There is a positive integer $d$ and a formal Laurent series
$$
H(\bx,\bv) = \sum_{\bga\in\IZ^p}\sum_{\bde\in\IN^s} h_{\bga \bde}\bx^{\bga}\bv^{\bde}
$$
such that $G(\bu,\bv) = H(\bx^{1/d},\bv)$,
where $\bx^{1/d} := (x_1^{1/d},\ldots,x_p^{1/d})$; i.e.,
$$
G(\bu,\bv) = \sum_{\bga\in\IZ^p}\sum_{\bde\in\IN^s} h_{\bga \bde}\bu^{\bal(\bga)/d}\bv^{\bde}.
$$
(In particular, if $q=0$, then $G(\bu,\bv) = H(\bx^{1/d},\bz)$.)
\smallskip
\item There is a commutative diagram \eqref{eq:mainCommutativeRelationI}, where
$\s$ and $\tau$ are composites of finitely many smooth combinatorial blowings-up,
$\widetilde{\Phi}$ is a monomial morphism, and,
for every\, $\ta\in\s^{-1}(a)$, there are $\tPhi$-monomial coordinate systems $(\tbu,\tbv,\tbw)$ at $\ta$
and $(\tbx,\tby,\tbz)$ at $\tb:=\tPhi(\ta)$, and a formal power series $\tH(\tbx,\tbv)$, such that
$$
\tG(\tbu,\tbv) = \tH(\tbx^{1/d},\tbv),\quad \text{where}\quad \tG := \s^*_{\ta}(G).
$$
\end{enumerate}
\end{lemma}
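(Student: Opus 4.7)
The plan is to split the lemma into (1) an algebraic reduction producing the common denominator $d$ and the Laurent series $H$, and (2) a toric reduction using the combinatorial Lemmas \ref{lem:TranslationTrick} and \ref{lem:RelationTrick} that converts $H$ into a genuine power series $\tH$.

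For (1), Lemma \ref{lem:AlgebraicDependentFunctions} forces the formal expansion of $G$ at $a$ to have the form $G(\bu,\bv) = \sum_{\brho,\bde} g_{\brho,\bde}\,\bu^{\bal(\brho)}\bv^{\bde}$ with $\brho\in\IQ^p$ and $\bal(\brho):=\sum_j\rho_j\bal_j\in\IN^r$. I extract a common denominator $d$ via lattice saturation: let $L:=\bal(\IZ^p)\subset\IZ^r$ and $\widetilde L:=(L\otimes\IQ)\cap\IZ^r$. Because $\bal_1,\dots,\bal_p$ are $\IQ$-linearly independent, both lattices are free of rank $p$, so $d:=[\widetilde L:L]$ is finite; then $d\brho\in\IZ^p$ whenever $\bal(\brho)\in\IZ^r$, and in particular for every $\brho$ in the support of $G$. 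Writing $\brho=\bbeta/d$ with $\bbeta\in\IZ^p$ and setting $h_{\bbeta,\bde}:=g_{\bbeta/d,\bde}$ defines a formal Laurent series $H(\bx,\bv):=\sum h_{\bbeta,\bde}\bx^{\bbeta}\bv^{\bde}$ whose support lies in $\cS_d:=\{\bbeta\in\IZ^p:\bal(\bbeta)/d\in\IN^r\}$, and $G(\bu,\bv)=H(\bx^{1/d},\bv)$ by construction.

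For (2), I first apply Lemma \ref{lem:TranslationTrick} to reduce to the situation in which, at every preimage point $\ta$, the ``impure'' components of the target divisor have vanished ($q=0$ at $\tb:=\tPhi(\ta)$), so that the $\tPhi$-monomial target coordinates take the form $(\tbx,\tbz)$ with no $\tby$'s. I then apply Lemma \ref{lem:RelationTrick} to the integer semigroup $\cS:=\{\bga\in\IZ^p:\Phi^*(\bx^{\bga})\in\cQ_{V,a}\}=\{\bga\in\IZ^p:\bal(\bga)\in\IN^r\}$. The composite diagram has $\s$ and $\tau$ composites of combinatorial blowings-up, $\tPhi$ monomial, and $\tau^*(\bx^{\bga})\in\cQ_{\tW,\tb}$ for every $\bga\in\cS$. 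Because $\tau$ is combinatorial, each $\tau^*(\bx^{\bga})$ is a monomial $\tbx^{A\bga}$ with $A\bga\in\IN^{\tilde p}$, where $A$ is the integer linear map encoding the toric change of variables. Since $\cS_d\subset\cS$, the transformed series $\tH(\tbx,\tbv):=\sum h_{\bbeta,\bde}\tbx^{A\bbeta}\tbv^{\bde}$ is a bona fide formal power series, and the identity $\tG(\tbu,\tbv)=\tH(\tbx^{1/d},\tbv)$ follows by pulling back $G=H(\bx^{1/d},\bv)$ through $\s$ and $\tau$ term-by-term, using the commutativity $\Phi\circ\s=\tau\circ\tPhi$ and the preservation of the $\bv$-coordinates under combinatorial blowings-up of the source.

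The main technical point to verify is that the same denominator $d$ from Part (1) continues to work after the combinatorial transformations of Part (2); this follows from the fact that combinatorial blowings-up act by unimodular transformations on the relevant exponent lattices, so both the duality defining $\cS$ and the saturation index $d$ are preserved. A subsidiary obstacle---that $\tH$ be expressible purely in the $\tbx$ variables, with no $\tby$-dependence---is handled by the preliminary invocation of Lemma \ref{lem:TranslationTrick}.
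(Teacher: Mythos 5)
Your proposal is correct and follows the same overall architecture as the paper's proof: part (1) reduces via Lemma \ref{lem:AlgebraicDependentFunctions} to producing a uniform denominator $d$ for all rational exponent vectors $\bga\in\IQ^p$ with $\bal(\bga)\in\IN^r$, and part (2) is deduced from the combinatorial Lemma \ref{lem:RelationTrick}. The one genuinely different ingredient is the mechanism producing $d$. The paper argues directly: writing each component of $\bga$ as an integer plus a reduced fraction, the fractional contribution $\sum (p_j/q_j)\bal_j$ lies in the integer lattice and is uniformly bounded, so only finitely many denominators occur and $d$ is their lcm. You instead take $d$ to be the index of $L=\bal(\IZ^p)$ in its saturation $\widetilde{L}=(L\otimes\IQ)\cap\IZ^r$; both lattices are free of rank $p$, so the index is finite and annihilates $\widetilde{L}/L$, whence $d\bga\in\IZ^p$ by injectivity of $\bal$. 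Both arguments are correct; yours is more structural, and has the advantage of making it transparent that $d$ is preserved under the unimodular changes of exponent lattice effected by combinatorial blowings-up, which is precisely the ``main technical point'' you flag for part (2). For part (2) the paper gives only the one-line citation of Lemma \ref{lem:RelationTrick}; your preliminary (necessarily iterated) use of Lemma \ref{lem:TranslationTrick} to reach $q=0$ mirrors how these lemmas are combined in Propositions \ref{prop:cleaning} and \ref{prop:PreparationPreMonomial2}, and correctly identifies the elimination of $\tby$-dependence as the delicate point. The only caveat is that the blowings-up of Lemma \ref{lem:RelationTrick} can themselves create points of the new exceptional components where $q>0$ again, so the pullbacks $\tau^*(\bx^{\bga})$ need not be pure monomials at every $\ta\in\s^{-1}(a)$; in the paper such points are absorbed by the induction on $(e(\cdot),i(\cdot))$ in the two propositions where the lemma is applied, and your argument should record the same dichotomy.
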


\begin{proof}
Consider $\Gamma := \{\bga\in\IQ^p: \bal(\bga):=\sum\ga_j\bal_j \in \IN^r\}$. We claim there is
a positive integer $d$ such that every $\bga\in\Gamma$ can be written $\bga = \pmb{\ga'}/d$, where
$\pmb{\ga'}\in\IZ^p$. Indeed, for each $\bga=(\ga_1,\ldots,\ga_p)\in\Gamma$, write
$\ga_j= n_j + p_j/q_j$ with smallest $q_j$, where $n_j,p_j,q_j\in \IZ$, $q_j>0$ and $|p_j|<q_j$, $j=1,\ldots,p$.
Then
$$
\sum \ga_j\bal_j - \sum n_j\bal_j = \sum (p_j/q_j)\bal_j .
$$
Since the left-hand side lies in the integral lattice, and the right-hand side is bounded
(over all $\bga$), there are only finitely many possible denominators $q_j$, for all $\bga$.
Let $d=\lcm\{q_1,\ldots,q_p\}$.

The assertion (1) is a consequence of Lemma \ref{lem:AlgebraicDependentFunctions} and the preceding claim,
and (2) then follows from Lemma \ref{lem:RelationTrick}.
\end{proof}

\subsection{The real quasianalytic case}\label{subsec:realquasian}
In this subsection, $\cQ$ denotes a real quasianalytic class. (The results apply to quasianalytic classes
in general, but we will give stronger results in the algebraic or analytic cases, so, for simplicity of exposition,
we use real notation here.) The following is the main result of this subsection.

\begin{proposition}[Real quasianalytic relations]\label{prop:cleaning}
Let $\Phi: (M,D)\to (N,E)$ denote a dominant morphism of class $\cQ$, which is monomial at a point $a\in M$,
and let $g$ denote a function of class $\cQ$ in a neighbourhood of $a$, which is algebraically dependent on $\Phi$; i.e.,
$$
dg\wedge d\Phi := dg\wedge d\Phi_1\wedge\cdots\wedge d\Phi_n = 0
$$
(see Lemma \ref{lem:AlgebraicDependentFunctions}). Then there are neighbourhoods
$V$ of $a$, $W$ of $b=\Phi(a)$, and a finite number of commutative diagrams 
\begin{equation}\label{eq:diag1}
\begin{tikzcd}
(V_\la,D_{\la}) \arrow{d}{\Phi_{\la}} \arrow{r}{\s_\la} & (V,D) \arrow{d}{\Phi}\\
(W_\la,E_{\la}) \arrow{r}{\tau_\la} & (W,E)
\end{tikzcd}
\end{equation}
such that
\begin{enumerate}
\item each $\s_\la$ and $\tau_\la$ is a composite of finitely many smooth local combinatorial blowings-up and power substitutions (in particular, $\sigma^{\ast}_{\la}(\De^{\Phi})= \De^{\Phi_{\la}} $);
\smallskip
\item the families of morphisms $\{\s_\la\}$ and $\{\tau_\la\}$ cover $V$ and $W$, and there are compact subsets 
$K_\la \subset V_\la$, $L_\la \subset W_\la$, for all $\la$, such that
$\bigcup \s_\la(K_\la)$ and $\bigcup \tau_\la(L_\la)$ are (compact) neighbourhoods of $a$ and $b$, respectively;
\smallskip
\item each $\Phi_\la$ is a monomial morphism;
\smallskip
\item for every $\la$, there exists $h_\la \in \cQ(W_\la)$ such that $h_\la \circ \Phi_\la = g\circ \s_\la$.
\end{enumerate}
\end{proposition}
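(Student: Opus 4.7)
The plan is to reduce, in three stages, to a situation in which the factorization $g = h\circ\Phi$ is essentially manifest, and then to extract $h$ by exploiting the vanishing of tangent log-derivations together with the quasianalyticity axiom. Throughout, Corollary~\ref{cor:DerCombinatorialBU} and Lemma~\ref{lem:BUPreservingMonomialDeriv} will be invoked repeatedly to guarantee that the pullback of $\De^{\Phi}$ remains the log-tangent derivation sheaf of the pulled-back morphism, so that the hypothesis $dg\wedge d\Phi = 0$ (equivalently, $\De^{\Phi}(g) = 0$) propagates at each stage.

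\textbf{Reduction stages.} First, apply Lemma~\ref{lem:denombound}(2) at $a$ to obtain combinatorial blowings-up $\sigma_0, \tau_0$ of source and target, after which $\widetilde{\Phi} := \Phi\circ\sigma_0$ is monomial and, at every point $\widetilde{a} \in \sigma_0^{-1}(a)$, the formal Taylor expansion of $\widetilde{g} := g\circ\sigma_0$ is a formal power series in $\widetilde{\bx}^{1/d}$ and $\widetilde{\bv}$ for a uniform $d \in \IN$, $d \geq 1$. Second, apply Lemma~\ref{lem:TranslationTrick} so that the target divisor has no $y$-type components at any image point; locally $\widetilde{\Phi}$ becomes $\widetilde{x}_j = \widetilde{\bu}^{\bal_j}$, $\widetilde{z}_l = \widetilde{v}_l$. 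Third, apply degree-$d$ power substitutions $P_s$ in the source ($\widetilde{u}_i = \pm\hat{u}_i^d$) and $P_t$ in the target ($\widetilde{x}_j = \pm\hat{x}_j^d$, $\widetilde{z}_l = \hat{z}_l$); with appropriate sign-pairing across the components of the disjoint unions, the induced $\hat{\Phi}$ is monomial of the form $\hat{x}_j = \epsilon_j\hat{\bu}^{\bal_j}$, $\hat{z}_l = \hat{v}_l$, and $\hat{g} := \widetilde{g}\circ P_s$ now has formal expansion at every source point lying in the subring generated by the \emph{integer} monomials $\hat{\bu}^{\bal_j}$ and the coordinates $\hat{\bv}$.

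\textbf{Construction of $h$.} Since $\De^{\hat{\Phi}}(\hat{g}) = 0$, we first use $Z^l(\hat{g}) = \partial\hat{g}/\partial\hat{w}_l = 0$ together with the quasianalyticity axiom to conclude that $\hat{g}$ is independent of the $\hat{\Phi}$-free coordinates $\hat{\bw}$. For the remaining log-tangent generators $Y^j = \sum_i\gamma_{ji}\hat{u}_i\,\partial/\partial\hat{u}_i$, further modify the diagram: combining additional combinatorial blowings-up in the source (each of which, in a chart, implements an elementary positive column operation on the exponent matrix $A = (\alpha_{ji})$) with further power substitutions (which scale its columns by positive integers) and matching combinatorial blowings-up in the target (which implement positive row operations), iterate to reduce $A$ to the standard form in which $\bal_j$ is the $j$-th standard basis vector of $\IZ^r$. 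In that reduced form $\hat{\Phi}$ becomes the projection $(\hat{u}_1,\dots,\hat{u}_r,\hat{\bv},\hat{\bw})\mapsto(\hat{u}_1,\dots,\hat{u}_p,\hat{\bv})$, and $Y^j = \hat{u}_{p+j}\,\partial/\partial\hat{u}_{p+j}$; the vanishing $Y^j(\hat{g}) = 0$ then yields $\partial\hat{g}/\partial\hat{u}_{p+j} = 0$ by quasianalyticity, so that $\hat{g}$ is independent of $\hat{u}_{p+1},\dots,\hat{u}_r$ as well. Setting
\[
h(\hat{\bx},\hat{\bz}) := \hat{g}(\hat{x}_1,\dots,\hat{x}_p,0,\dots,0,\hat{z}_1,\dots,\hat{z}_s,0,\dots,0)
\]
gives a quasianalytic function on the corresponding $\hat{W}_\la$ satisfying $h\circ\hat{\Phi} = \hat{g}$, which is exactly conclusion (4).

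\textbf{The hard part} will be the exponent-normal-form reduction in Step 4. The standard Smith normal form uses unimodular integer row and column operations with possibly negative entries, whereas the operations available to us (combinatorial blowings-up in source and target, together with power substitutions) realize only certain \emph{positive} integer row and column operations; combining them carefully, in the spirit of a Euclidean algorithm on the exponent lattice while maintaining commutativity of the diagrams and compatibility with the divisors, is the combinatorial crux. This is also precisely why power substitutions are indispensable in the general real quasianalytic setting (as opposed to the analytic or algebraic case, where polynomial relations à la Cutkosky can be used instead, as explained in the prose preceding the proposition).
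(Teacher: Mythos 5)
Your reduction stages (denominator control via Lemma~\ref{lem:denombound}, removal of $y$-type variables via Lemma~\ref{lem:TranslationTrick}, degree-$d$ power substitutions to clear fractional exponents) are all faithful to the paper, and your observation that a projection makes the factorization manifest is correct. But Step~5 — the reduction of the exponent matrix $A=(\alpha_{ji})$ to the form $\bal_j=e_j$ by combinatorial blowings-up and power substitutions alone — is impossible when $r>p$, and this is a fatal gap. A concrete obstruction: take $\Phi(u_1,u_2)=u_1u_2$, so $p=1$, $r=2$, $\bal_1=(1,1)$, $D=\{u_1u_2=0\}$. Every combinatorial blowing-up at the origin sends $(1,1)$ to $(1,2)$ or $(2,1)$; power substitutions scale entries by positive integers; target modifications with $p=1$ do nothing. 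After any such sequence of modifications, the strict transforms of \emph{both} $\{u_1=0\}$ and $\{u_2=0\}$ remain components of the divisor, and the pullback of $x$ is still divisible by both of them, so the exponent vector can never acquire a zero entry. Hence $\hat{\Phi}$ can never become a projection. Note that Lemma~\ref{lem:transftoid}, on which your Smith-normal-form argument implicitly leans, is stated and proved only under the hypothesis $r=p$; it does not extend to $r>p$, for exactly the reason just illustrated.

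The paper circumvents this in a way you do not touch at all: after steps analogous to your stages 1--3, it invokes the \emph{quasianalytic continuation theorem} \cite[Thm.\,1.3]{BBC} to promote the single-point formal identity $G = H\circ\hPhi_a$ to a genuine pointwise factorization $g = h\circ\Phi$ on a compact neighbourhood $K$, where $h$ is merely a function on $\Phi(K)$ that may fail to extend quasianalytically. The residual extension problem is then handled by Proposition~\ref{prop:QuasianalyticExtensionReplacement}, which applies Lemma~\ref{lem:cubes} to slice the cube along $\{u_i=\pm 1\}$ — this is where $r$ is reduced to $p$, namely by moving \emph{off} the intersection of the divisor components rather than by combinatorial modifications at the origin — and only then applies Lemma~\ref{lem:transftoid} to the resulting equidimensional morphisms. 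Both the continuation theorem and the slicing argument are load-bearing, and your proposal has no substitute for either. The remark you make about power substitutions being indispensable is correct as a fact, but you attribute it to the Smith-normal-form combinatorics, whereas the paper's reason is the non-extendability phenomenon of \S\ref{subsec:power} — a different (and deeper) obstruction.

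Finally, a minor point: your Step~2 (eliminating $y$-variables via Lemma~\ref{lem:TranslationTrick}) must be run as an induction on $(e(a),i(a))$ as in the paper, since after the blowings-up of Lemmas~\ref{lem:denombound} and \ref{lem:RelationTrick} new points of the fibre may have $q>0$ again; a single application does not suffice. This is easily repaired, but should be stated.
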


\begin{remark}\label{rem:cleaning}
Let $\Psi$ denote a morphism that is pre-monomial at a point $a\in M$, as in Definition \ref{def:PreMonomial}.
Write $\Psi=(\Phi,\psi)$, $\psi = g + \vp$, following the notation of the latter. We can
also write $\Psi$ locally as in Remark \ref{rem:premonom2}. Applying Proposition \ref{prop:cleaning} to
$\Phi$ and $g$, we get a family of commutative diagrams
\begin{equation*}
\begin{tikzcd}
(V_{\la},D_{\la}) \arrow{d}{\Psi_{\la}} \arrow{r}{\s_\la} & (V,D) \arrow{d}{\Psi}\\
(W_\la\times\IR,F_{\la}) \arrow{r}{} & (W\times\IR,F)
\end{tikzcd}
\end{equation*}
where $\Psi_{\la}=(\Phi_\la,\psi_\la)$, $\psi_\la=\psi\circ\s_\la$, and the bottom arrow is
$\tau_\la\times\mathrm{identity}$. We can assume that, for each $\la$, $\Psi_\la$ is
a pre-monomial morphism, and $\psi_\la = g_\la + \vp_\la$, where $g_\la = g\circ\s_\la=h_\la\circ\Phi_\la$ and
$\vp_\la = \vp\circ\s_\la$.

If $F = E\times\IR$ (so that each $F_\la = E_\la\times\IR$, then we can make a coordinate change
\begin{equation}\label{eq:coordchange}
t' = t - h_\la
\end{equation} 
in the target of $\Psi_\la$ (for every $\la$); therefore, $\Psi_\la$ is a monomial morphism with
respect to the divisors $D_\la^{g_\la}$ and $F_\la^{g_\la}$ (i.e., after codimension one
blowings-up in the target and source, if necessary, according to Remarks \ref{rem:premonom1}). 
This will provide one
of the principal steps in the inductive proof our main theorems
(see Step II in Remark \ref{rem:idea}, or II($m,n$) in Section \ref{sec:LocalMon}), in the case that $\{t=0\}$ is 
not a component of the divisor $F$. If $\{t=0\}$ is a component of $F$, however, \eqref{eq:coordchange}
is not a valid coordinate change, and an additional argument will be needed (see \S\ref{subsec:II}).
\end{remark}

We will use the techniques of \S\ref{subsec:comb} together with the \emph{quasianalytic continuation theorem}
\cite[Thm.\,1.3]{BBC} and the following result to prove Proposition
\ref{prop:cleaning}.

\begin{proposition}[Regularity of a real quasianalytic relation]\label{prop:QuasianalyticExtensionReplacement}
Let $\Phi: (M,D)\to (N,E)$ denote a dominant morphism of class $\cQ$, which is monomial at a point $a\in M$.
Let $K$ be a compact neighbourhood of $a$, and let $g$ denote a function of class $\cQ$ in a neighbourhood of $K$.
Assume that $g|_K = h\circ\Phi$, where $h: \Phi(K)\to\IR$ is a function. 
Then there are neighbourhoods
$V$ of $a$, $W$ of $b=\Phi(a)$, and a finite number of commutative diagrams \eqref{eq:diag1}
such that
\begin{enumerate}
\item each $\s_\la$ and $\tau_\la$ is a composite of finitely many smooth local combinatorial blowings-up and power substitutions (in particular, $\sigma^{\ast}_{\la}(\De^{\Phi})= \De^{\Phi_{\la}} $);
\smallskip
\item the families of morphisms $\{\s_\la\}$ and $\{\tau_\la\}$ cover $V$ and $W$, and there are compact subsets 
$K_\la \subset V_\la$, $L_\la \subset W_\la$, for all $\la$, such that
$\bigcup \s_\la(K_\la)$ and $\bigcup \tau_\la(L_\la)$ are (compact) neighbourhoods of $a$ and $b$, respectively;
\smallskip
\item each $\Phi_\la$ is a monomial morphism;
\smallskip
\item $h\circ\tau_\la \in \cQ(W_\la)$, for all $\la$.
\end{enumerate}
\end{proposition}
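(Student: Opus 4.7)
The plan is to combine Lemma \ref{lem:denombound}, a compatible pair of power substitutions in source and target, and the quasianalytic continuation theorem \cite[Thm.\,1.3]{BBC}.

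\textbf{Step 1 (algebraic dependence and formal reduction).} Since $g|_K = h\circ \Phi$, the function $g$ is constant on every fibre of $\Phi|_K$ passing through a regular point of $\Phi$, so $dg$ lies in the span of $d\Phi_1,\ldots,d\Phi_n$ on the interior of $K$; by quasianalyticity this holds on a whole neighbourhood of $a$. Thus $g$ is algebraically dependent on $\Phi$ at $a$ in the sense of Lemma \ref{lem:AlgebraicDependentFunctions}. Applying Lemma \ref{lem:denombound}(2), I get a positive integer $d$ and a commutative diagram of combinatorial blowings-up $\s_0\colon(V',D')\to(V,D)$, $\tau_0\colon(W',E')\to(W,E)$ such that the lift $\Phi'$ is monomial and the formal Taylor expansion of $g':=g\circ\s_0$ at every $a'\in\s_0^{-1}(a)$ takes the form $\tilde H(\tbx^{1/d},\tbv)$ with $\tilde H$ a formal power series having non-negative integer exponents.

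\textbf{Step 2 (power substitutions).} Next, I apply an extended power substitution $\pi\colon(W'',E'')\to(W',E')$ of exponent $d$ in each exceptional target variable $\tilde x_j$, paired with a compatible power substitution $\rho\colon(V'',D'')\to(V',D')$ in the source (and sign variants, to obtain a semiproper covering) so that $\Phi'\circ\rho$ factors through $\pi$ as a monomial morphism $\Phi''$; this factorization is possible by Corollary \ref{cor:MonCombinatorialBU}(2),(4). The formal Taylor expansion of $g'':=g\circ\s_0\circ\rho$ at each point above $a$ is then an honest formal power series $H(\tbx'',\tbv'')$ with non-negative integer exponents. Set $\s_\la:=\s_0\circ\rho$ and $\tau_\la:=\tau_0\circ\pi$; monomiality of $\Phi_\la:=\Phi''$ and the covering/compactness properties of (1),(2) are built into this construction, and $\s_\la^*(\De^\Phi)=\De^{\Phi_\la}$ follows from Lemma \ref{lem:BUPreservingMonomialDeriv} and Corollary \ref{cor:DerCombinatorialBU}.

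\textbf{Step 3 (quasianalytic realization of $h\circ\tau_\la$).} On the sub-quasianalytic set $\tau_\la^{-1}(\Phi(K))$ the composition $h\circ\tau_\la$ is already defined and satisfies $(h\circ\tau_\la)\circ\Phi_\la = g\circ\s_\la$, a quasianalytic function of class $\cQ$. At each $b'':=\Phi_\la(a'')$ with $a''\in\s_\la^{-1}(a)$, the formal series $H$ is a candidate Taylor expansion for $h\circ\tau_\la$, and the log-derivation calculus of Section \ref{sec:tang} guarantees that $H$ has coefficients compatible with the class $\cQ$ (its eigenvector decomposition is inherited from that of $g''$ via Lemma \ref{lem:FormalEigenVectors}). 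The quasianalytic continuation theorem \cite[Thm.\,1.3]{BBC} applies to precisely this situation: it upgrades a function on a sufficiently thick sub-quasianalytic set, equipped with a compatible quasianalytic Taylor series at each boundary point, to a genuine quasianalytic function on a neighbourhood. This produces $h_\la\in\cQ(W_\la)$ with $h_\la=h\circ\tau_\la$ on their common domain, which is what is required.

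\textbf{Main obstacle.} The essential technical difficulty is Step 3, the application of \cite[Thm.\,1.3]{BBC}. To invoke it I must check that the set $\tau_\la^{-1}(\Phi(K))$ is thick enough near each $b''\in\Phi_\la(\s_\la^{-1}(a))$; this is exactly where the power substitution from Step 2 is indispensable, for without it $\Phi(K)$ typically lies in a half-space near $b$ (as in Examples \ref{ex:halfline}) and the continuation/extension fails. The $d$-th power substitution replaces each such half-space by the full image of $\tilde x_j\mapsto \tilde x_j^d$, after which the set $\tau_\la^{-1}(\Phi(K))$ is thick enough at $b''$ to carry a quasianalytic extension via \cite{BBC}.
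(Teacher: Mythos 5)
The key difficulty that Proposition~\ref{prop:QuasianalyticExtensionReplacement} is designed to handle is precisely the extension problem for quasianalytic functions from $\Phi(K)$ (a closed set, typically with boundary, as in Examples~\ref{ex:halfline}) to a full neighbourhood, and your Step~3 misattributes to the quasianalytic continuation theorem \cite[Thm.\,1.3]{BBC} the power to solve exactly this problem. That theorem propagates a formal composite relation $\hat g_{a'}=H_{\Phi(a')}\circ\hat\Phi_{a'}$ along nearby points $a'$ in the \emph{source}; it does \emph{not} upgrade a function defined on a closed sub-quasianalytic set in the \emph{target}, equipped with compatible formal series, to a genuine quasianalytic function on an open neighbourhood. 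Such an extension is exactly what is known to fail (Examples~\ref{ex:halfline}), and it is not repaired merely by a $d$-th power substitution: even after $x_j\mapsto\tilde x_j^d$, the set $\tau_\la^{-1}(\Phi(K))$ is typically still a proper closed set with boundary (a polyhedral cone), so the extension obstruction persists and having a formal Taylor series compatible with the class is necessary but not sufficient.

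Your Steps~1--2 are also partly redundant: in the paper, the reduction via Lemma~\ref{lem:denombound} and the pairing of power substitutions (to eliminate fractional exponents and apply \cite[Thm.\,1.3]{BBC}) is carried out \emph{inside} the proof of Proposition~\ref{prop:cleaning}, \emph{before} Proposition~\ref{prop:QuasianalyticExtensionReplacement} is invoked, and the latter is what then handles the genuine extension step. The mechanism the paper actually uses is entirely concrete and circumvents extension. First, Lemma~\ref{lem:cubes} expresses $\Phi(C^{r+s})$ as a finite union $\bigcup_\mu \Phi_\mu(C^{p+s})$, where each $\Phi_\mu$ is obtained from $\Phi$ by setting $r-p$ of the exceptional coordinates $u_i$ equal to $\pm 1$ and is dominant of equal source and target dimension. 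Then, because $h\circ\Phi_\mu=g|_{\{u_{i}=\pm 1\}}$ is already of class $\cQ$, one applies Lemma~\ref{lem:transftoid} to factor each $\Phi_\mu$, by combinatorial blowings-up and even power substitutions, until $\Phi_{\mu\nu}=\mathrm{id}$; the commutativity $\Phi_\mu\circ\s_{\mu\nu}=\tau_{\mu\nu}\circ\Phi_{\mu\nu}=\tau_{\mu\nu}$ then gives $h\circ\tau_{\mu\nu}=(h\circ\Phi_\mu)\circ\s_{\mu\nu}$, which is visibly of class $\cQ$ --- no extension theorem is needed. Finally these target modifications are lifted to the full source via Lemma~\ref{lem:ModificationTargetExtension} and Remark~\ref{rem:powersubst}. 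Your proposal omits Lemma~\ref{lem:cubes} and Lemma~\ref{lem:transftoid} entirely, and substitutes an appeal to an extension result that is not available; this is a genuine gap, not a stylistic difference.
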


\begin{proof}[Proof of Proposition \ref{prop:cleaning}]
We argue by induction on $(e(a),i(a))$ (see Lemma \ref{lem:TranslationTrick}). 
If $e(a)=0$ (equivalently, $i(a)=0$), then $\Phi$ is the identity morphism, so we can take $h=g$.
So we can assume the result for $(e(\cdot),i(\cdot))<(e(a),i(a)))$. We have one of two possibilities.

\smallskip
(1) $q = e(a)- i(a)) > 0$. In this case, we apply Lemma \ref{lem:TranslationTrick}, to get sequences of combinatorial 
blowings-up $\sigma$ and $\tau$ such that, for all $\widetilde{a} \in \sigma^{-1}(a)$, $e(\ta) < e(a)$. Since $\tPhi$
is monomial and $\tg = g\circ\s$ is algebraically dependent on $\tPhi$ at $\ta$, $\ta \in \sigma^{-1}(a)$, 
the result follows by induction.

\smallskip
(2) $q = e(a)- i(a)) =0$. In this case, consider the formal Taylor expansion $G(\bu,\bv)$ of $g$ at $a$. By 
Lemma \ref{lem:denombound}(1), there is a positive integer $d$ such that $G(\bu,\bv) = H(\bx^{1/d},\bz)$, where
$H(\bx,\bz)$ is a formal Laurent series. Moreover, we can apply Lemma \ref{lem:denombound}(2) 
to get sequences of combinatorial blowings-up $\sigma$ and $\tau$ such that , for all $\widetilde{a} \in \sigma^{-1}(a)$, 
\begin{itemize}
\item[either] $(e(\ta),i(\ta))<(e(a),i(a))$, so the result is true at $\ta$, by induction;
\smallskip
\item[or] $q = e(\ta) -i(\ta) =0$, and there is a formal power series $\tH(\tx,\tz)$ such that
$G(\tbu,\tbv) = H(\tbx^{1/d},\tbz)$, where $\tG$ is the formal expansion of $g\circ\s$ at $\ta$. Since $\s$ is
proper, it is enough to prove the assertion of the proposition at $\ta$. Now, by a power substitution with
respect to the coordinates $(\tu_1,\ldots, \tu_r)$ in the source, and a power substitution with
respect to $(\tx_1,\ldots,\tx_p)$ in the target, we can reduce to the case that $d=1$. So we have
reduced the proposition to the case that $G = H\circ\hPhi_a$, where $H$ is a power series and
$\hPhi_a$ is the formal expansion of $\Phi$ at $a$.
\end{itemize}

In the latter case, by the quasianalytic continuation theorem \cite[Thm.\,1.3]{BBC}, there is a compact
neighbourhood $K$ of $a$ such that, if $a'\in K$ and $\hg_{a'}$ denotes the formal expansion of $g$
at $a'$, then $\hg_{a'} = H_{\Phi(a')}\circ\hPhi_{a'}$, where $H_{\Phi(a')}$ is a formal power series
at $\Phi(a')$ (depending only on the image point). In particular, there is a function $h$ on $\Phi(K)$
such that $g=h\circ\Phi$ on $K$. (Of course, $h$ is continuous, but we do not need to use this.)
The result now follows from Proposition \ref{prop:QuasianalyticExtensionReplacement}.
\end{proof}

\begin{remark}\label{rem:cleaning1}
In the case that $\cQ$ is the class of real-analytic functions, we do not need Proposition
\ref{prop:QuasianalyticExtensionReplacement} because the condition $G = H\circ\hPhi_a$,
where $G$ is convergent and $\Phi$ is generically of maximal rank, implies
that $H$ is convergent; this result was first proved by Gabrielov \cite{Gab}
and three different proofs can be found in \cite{BBC,EH,Malg}.
\end{remark}

It remains to prove Proposition \ref{prop:QuasianalyticExtensionReplacement}, which addresses
the problem of extension of a quasianalytic function defined on $\Phi(K)$ to a neighbourhood of $b=\Phi(a)$.
Our proof of Proposition \ref{prop:QuasianalyticExtensionReplacement} depends on explicit combinatorial
methods involving power substitutions and combinatorial blowings-up, for which it will be more convenient
to use cubes rather than arbitrary compact sets. We begin with two lemmas and a more careful
coordinate description of combinatorial blowings-up and power substitutions.

Let $\Phi: (V,D) \to (W,E)$ denote a monomial morphism given by \eqref{eq:mon1}
with respect to $\Phi$-monomial coordinate systems at points $a\in V$ and $b=\Phi(a)$
(where $V$ and $W$ are coordinate charts of class $\cQ$),
compatible with divisors $D,\,E$, as in 
Definition \ref{def:monom2}. Assume that $\Phi$ is dominant, so that $s'=s$. We will also
ignore the free coordinates $(w_1,\ldots,w_t)$ since they have no effect on the arguments
following. Thus $m=r+s$, $n=p+s$, and we can assume that $V=\IR^{r+s}$, $W=\IR^{p+s}$.
The mapping $\Phi: \IR^{r+s}\to \IR^{p+s}$ is a submersion on $O\times\IR^s$, for any
open orthant $O$ of $\IR^r$.

For any $l \in \IN$, write 
\begin{equation*}
C^l \,= [-1,1]^l \subset \IR^{l},\qquad
C^l_+ = [0,1]^l \subset \IR^{l}.
\end{equation*}

\begin{lemma}[Image of a real monomial mapping]\label{lem:cubes}
Consider the finite family of monomial mappings $\Phi_\mu: \IR^{p+s}\to \IR^{p+s}$ that
we obtain by setting $r-p$ of the coordinates $u_i$ each $= \pm 1$ in \eqref{eq:mon1}, and
where we take only the mappings of this kind which have generic rank $p+s$. Then
$$
\Phi(C^{r+s}) = \bigcup_\mu \Phi_\mu(C^{p+s}).
$$
\end{lemma}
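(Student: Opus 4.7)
The inclusion $\supseteq$ is immediate, since each $\Phi_\mu$ is literally the restriction of $\Phi$ to the submanifold obtained by fixing $r-p$ of the $u_i$ to $\pm 1$, and the corresponding slice of $C^{r+s}$ is a copy of $C^{p+s}$ in the remaining $p+s$ variables. The entire content is in $\subseteq$, and the plan is to use a torus action along the fibres of $\Phi$ to move any point of $C^{r+s}$ to a point with $r-p$ of its $\bu$-coordinates equal to $\pm 1$.

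More precisely, let $A$ denote the $p\times r$ integer matrix with rows $\bal_1,\ldots,\bal_p$, and set $L := \ker A \subset \IR^r$, so $\dim L = r-p$. Because the $\bal_j$ are $\IQ$-linearly independent and each $\bbe_k$ is $\IQ$-linearly dependent on them, any $h$ in the connected component $H_+$ of the identity of $\exp L$ satisfies $h^{\bal_j} = 1$ and $h^{\bbe_k} = 1$; hence $\Phi(h\bu,\bv) = \Phi(\bu,\bv)$ for $\bu$ in the positive orthant. Working first with $\bu \in (0,1]^r$, I would examine the polyhedron $P := (\log \bu + L) \cap \IR^r_{\leq 0}$ in $\IR^r$. $P$ is nonempty (it contains $\log \bu$) and contains no line: any line in $P$ would give $\ell \in L$ with both $\ell \leq 0$ and $-\ell \leq 0$, forcing $\ell = 0$. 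Therefore $P$ has a vertex $\bt^*$.

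At such a vertex at least $r-p$ of the constraints $t_i \leq 0$ are active. From this set of active indices, standard polyhedral theory (exchange/basis arguments) lets me extract a subset $J \subset \{1,\ldots,r\}$ with $|J| = r-p$ such that the coordinate projection $L \to \IR^J$ is an isomorphism, equivalently such that the $p\times p$ minor $A|_{J^c}$ of $A$ obtained by deleting the columns of $J$ is invertible. This last condition is exactly the condition that the restriction $\bal_j|_{J^c}$ remains $\IQ$-linearly independent, i.e., that the mapping $\Phi_\mu$ obtained by fixing $u_i = 1$ for $i \in J$ has generic rank $p+s$. Setting $\bu^* := \exp \bt^*$, one has $\bu^* \in H_+ \cdot \bu$, $u^*_i = 1$ for $i \in J$, and $u^*_i \in (0,1]$ for $i \notin J$; hence $\Phi(\bu,\bv) = \Phi(\bu^*,\bv) = \Phi_\mu\bigl((u^*_i)_{i\notin J},\bv\bigr)$, which lies in $\Phi_\mu(C^{p+s})$.

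A general orthant is handled by writing $u_i = \ep_i \tilde u_i$ with $\ep_i \in \{-1,1\}$ and $\tilde u_i \in [0,1]$; the above argument applied to $\tbu$ produces $\tilde\bu^* \in H_+\cdot \tbu$ with $\tilde u^*_i = 1$ on some index set $J$, so that $u^*_i = \ep_i$ for $i \in J$, and gives the corresponding $\Phi_\mu$ with the appropriate sign choices. Boundary points where some $u_i = 0$ are absorbed using continuity and compactness: $\Phi(C^{r+s})$ is closed, the union $\bigcup_\mu \Phi_\mu(C^{p+s})$ is a finite union of compact sets hence closed, and the above argument covers the dense subset $\{u_i \neq 0 \text{ for all } i\}$ of $C^{r+s}$. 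The main step requiring care is the polytope argument producing the vertex together with a valid rank-$(p+s)$ subset $J$, since we must simultaneously ensure that $|J| = r-p$ and that the resulting $\Phi_\mu$ belongs to the distinguished family; everything else is bookkeeping with signs and continuity.
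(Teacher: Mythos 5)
Your proof is correct, and it takes a genuinely different route from the paper. The paper argues by induction on $r-p$, reducing one hyperplane $\{u_i=1\}$ at a time via \eqref{eq:codim1cubes}; the key claim there is that the fibre $\Phi^{-1}(\Phi(a,b))\cap(O\times\IR^s)$ is an unbounded connected smooth set of dimension $r-p$, established by triangularizing the system $\bu^{\bal_j}=c_j$. Your argument instead goes in a single step: after taking $\log$ on the $\bu$-coordinates, the fibre through $\bu$ inside $(0,1]^r$ corresponds to the pointed polyhedron $P=(\log\bu+L)\cap\IR^r_{\leq 0}$, $L=\ker A$; at a vertex of $P$ you can pick $r-p$ active constraints forming a basis of $L^\ast$, which simultaneously produces $u_i^*=\pm1$ on an index set $J$ of the right size and guarantees the $p\times p$ minor $A|_{J^c}$ is invertible, i.e., that the resulting $\Phi_\mu$ has generic rank $p+s$. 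The main advantage of your approach is that the rank condition on $\Phi_\mu$ comes out automatically and explicitly from the choice of basis among the active constraints; in the paper's induction, one must verify (this is implicit in their argument) that the fibre can only reach a face $\{u_i=1\}$ for which $e_i\notin L^\perp$, i.e., those $i$ for which $\Phi^{(i)}$ retains full generic rank. The paper's approach, on the other hand, uses only elementary connectivity and requires no appeal to polyhedral vertex structure. One small bookkeeping point worth spelling out in your version: $P$ is a polyhedron in an affine space of dimension $r-p$, so at a vertex the set $S$ of active indices satisfies $|S|\geq r-p$ and the functionals $\{t_i|_L:i\in S\}$ span $L^\ast$; the extraction of a basis $J\subseteq S$ with $|J|=r-p$ is the standard step, and your identification of ``projection $L\to\IR^J$ is an isomorphism'' with ``$A|_{J^c}$ invertible'' is correct since $\ker(\pi_J|_L)=L\cap\IR^{J^c}$ and $\dim L+\dim\IR^{J^c}=r$. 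Your closure argument for the strata with some $u_i=0$ is also correct; note that only closedness of the finite union $\bigcup_\mu\Phi_\mu(C^{p+s})$ is actually needed, together with $\Phi(\overline{D})\subseteq\overline{\Phi(D)}$.
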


\begin{proof} It is enough to prove that 
\begin{equation}\label{eq:cubes}
\Phi(C^r_+\times C^s) = \bigcup_\mu \Phi_\mu(C^p_+\times C^s).
\end{equation}
We will prove this assertion by induction on $r-p$. (It is trivial if $r=p$.) For each
$i=1,\ldots,r$, set $H_i^1 := \{u_i=1\}$ and $\Phi^{(i)} := \Phi|_{H_i^1}$.

Suppose that $r>p$. By induction, we can assume that $\Phi^{(i)}$ satisfies \eqref{eq:cubes},
for each $i$ such that $\Phi^{(i)}$ has generic rank $p+s$. (Clearly, $\Phi^{(i)}$ has no
component $x_j=\bu^{\bzero} = 1$ in this case.) Therefore, it is enough to prove that
\begin{equation}\label{eq:codim1cubes}
\Phi(C^r_+\times C^s) = \bigcup_{i=1}^r \Phi^{(i)}(C^{r-1}_+\times C^s)
\end{equation}
(and we can include in the preceding union only those $i$ such that $\Phi^{(i)}$ has 
generic rank $p+s$). The right-hand side of \eqref{eq:codim1cubes} is the image by $\Phi$ 
of the closure of $\left((\bdry C^r_+)\times C^s\right) \cap (O\times\IR^s)$,
where $O$ is the open positive orthant of $\IR^r$; therefore, \eqref{eq:codim1cubes} is a consequence of the
following claim.

\smallskip
\noindent
Claim. Let $(a,b) \in (0,1)^r\times [-1,1]^s$. Then $\Phi^{-1}(\Phi(a,b))$ intersects
$(\bdry C^r_+)\times C^s$ at points of $O\times \IR^s$.
\smallskip

To establish this claim, we will show that, after a permutation of the variables $(u_1,\ldots,u_r)$, the
system of equations $\bu^{\bal_j} = c_j$, $j=1,\ldots,p$ (for any $c_j>0$), can be rewritten as 
as $u_i^{m_i} = e_i u_{i+1}^{\ga_{i,i+1}}\cdots u_r^{\ga_{ir}}$ in the positive orthant, where,
for all $i$, $e_i> 0$, $m_i$ is a positive integer, and the exponents $\ga_{il}$ are
integers (perhaps $\leq 0$). It follows that the fibre $\Phi^{-1}(c,d)$ (in $O\times\IR^s$), where
$c = (c_1,\ldots,c_p)$ and $d \in \IR^s$, 
is an unbounded connected set (smooth, of dimension $r-p$), and the claim is an immediate consequence.

Of course, the first equation $\bu^{\bal_1} = c_1$ can be rewritten $u_1^{\al_{11}} =
c_1 u_2^{-\al_{12}}\cdots u_r^{-\al_{1r}}$, after a permutation of the variables.
Suppose that the first $h$ equations can be rewritten as asserted. Then, after
raising the $(h+1)$st equation to a suitable power and substituting
$u_i^{m_i} = e_i u_{i+1}^{\ga_{i,i+1}}\cdots u_r^{\ga_{ir}}$,
$i=1,\ldots,h$, the $(h+1)$st equation can be rewritten as required, after a 
permutation of the variables $(u_{h+1},\ldots, u_r)$. 
\end{proof} 

Consider $\IR^{r+s}$ with coordinates $(\bu,\bv) = (u_1,\ldots,u_r, v_1,\ldots,v_s)$ as above, and exceptional
divisor $D := \{u_1\cdots u_r = 0\}$.

\begin{definition}\label{def:combblup}\emph{Standard coordinate charts of a combinatorial blowing-up.}
Let $\rho: (\tV,\tD)\to (V,D)$ denote a combinatorial blowing-up (see Definitions \ref{def:compat}).
The centre of $\rho$ is of the form $\{u_1=\cdots = u_t = 0\}$, for some $t\leq r$,
after a permutation of the $u$-coordinates. In this case, $\tV$ is covered by $t$
coordinate charts $\tV_l,\, l=1,\ldots,t$, where $\tV_l$ has coordinates $(\tu_1,\ldots,\tu_r, v_1,\ldots,v_s)$
in which $\rho$ is given by $u_i = \tu_l\tu_i$ if $i\in \{1,\ldots,t\}\backslash\{l\}$, and $u_i=\tu_i$ otherwise.
(The transform $\tD$ of $D$ by $\rho$ is given in $\tV_l$ by $\tD = \{\tu_1\cdots \tu_r = 0\}$; for each $i\neq l$, $\{\tu_i = 0\}$ is
the strict transform of $\{u_i=0\}$, and $\{\tu_l=0\}$ is the new component of $\tD$.) We call the charts $V_l$ the
\emph{standard coordinate charts} of $\rho$. Note that the cubes $C^{r+s}$  in $\tV_l=\IR^{r+s}$, for all $l$, cover the cube 
$C^{r+s}$ in $V=\IR^{r+s}$.

If $\rho: \tV\to V=\IR^{r+s}$ is a composite of combinatorial blowings-up, then the \emph{standard coordinate charts}
of $\tV$ are defined in an evident way, by induction. The cubes $C^{r+s}$ in the standard coordinate charts of $\tV$
cover the cube $C^{r+s}$ in $V=\IR^{r+s}$.
\end{definition}

\begin{definition}\label{def:powersubst}\emph{Standard coordinate charts of a power substitution.}
Consider a power substitution $\rho: \IR^{r+s} \to \IR^{r+s}$ relative to $D$; say 
$u_i = \tu_i^{k_i},\, v_k=\tv_k$, where each $k_i$ is a positive integer.
(The transform $\tD$ is $\tD := \{\tu_1\cdots\tu_r = 0\}$.) 
We also consider $\rho_{\pmb{\varepsilon}}:  \IR^{r+s} \to \IR^{r+s}$, 
given by
$u_i = \varepsilon_i \tu_i^{k_i}$ (and $v_k=\tv_k$), where $\varepsilon_i = \pm 1$ if $k_i$ is even, and
$\varepsilon_i = 1$ if $k_i$ is odd
(recall Definitions \ref{def:defs}).

Following Definitions \ref{def:defs},
we extend $\rho$ to a more general \emph{power substitution}
$$
P: \coprod_{\pmb{\varepsilon}} \IR^{r+s}_{\pmb{\varepsilon}} \to \IR^{r+s},
$$
where $\coprod$ denotes disjoint union, $\pmb{\varepsilon}= (\varepsilon_1,\ldots,\varepsilon_r)$
(with $\varepsilon_i $ as above), each $\IR^{r+s}_{\pmb{\varepsilon}}$ is 
a copy of $\IR^{r+s}$, and $P|_{\IR^{r+s}_{\pmb{\varepsilon}}} = \rho_{\pmb{\varepsilon}}$.
We call the
$\IR^{r+s}_{\pmb{\varepsilon}}$ the \emph{standard coordinate charts} of (the source of) $P$. We define the \emph{transform}
$\tD$ of $D$ by $P$ as above. If
$C^{r+s}_{\pmb{\varepsilon}}$ denotes the cube $C^{r+s}$ in $\IR^{r+s}_{\pmb{\varepsilon}}$, then 
$\bigcup_{\pmb{\varepsilon}} \rho_{\pmb{\varepsilon}}(C^{r+s}_{\pmb{\varepsilon}}) =  C_{r+s} \subset \IR^{r+s}$.
\end{definition}

\begin{remark}\label{rem:powersubst}\emph{Lifting of a power substitution.}
Consider a power substitution $\tau: x_j = \tx_j^{t_j}$, $j=1,\ldots,p$ (with respect to $E$) in the target 
of $\Phi$ (other variables unchanged). Then there is a commutative diagram
\begin{equation}\label{eq:powersubst1}
\begin{tikzcd}
\IR^{r+s} \arrow{d}{\tPhi} \arrow{r}{\rho} & \IR^{r+s} \arrow{d}{\Phi}\\
\IR^{p+s} \arrow{r}{\tau} & \IR^{p+s}
\end{tikzcd}
\end{equation}
where $\rho$ is a power substitution  $u_i = \tu_i^{k_i}$, $i=1,\ldots,r$, and $\tPhi$ is a monomial morphism in 
standard form \eqref{eq:mon1}. We say that $\rho$ is a \emph{lifting} of $\tau$ by $\Phi$. Clearly,
there is a lifting $\rho: u_i = \tu_i^{t}$, where $t=\lcm\{t_1,\ldots,t_p\}$.

Let $Q: \coprod_{\pmb{\delta}} \IR^{p+s}_{\pmb{\delta}} \to \IR^{p+s}$, $\pmb{\delta} \in \{-1,1\}^p$ as above, denote
the generalized power substitution induced by $\tau$ (in particular, $Q|_{\IR^{p+s}_{\pmb{\delta}}} = \tau_{\pmb{\delta}}$, 
for each $\pmb{\delta}$, as above). Then \eqref{eq:powersubst1} induces a commutative diagram
\begin{equation}\label{eq:powersubst2}
\begin{tikzcd}
\coprod_{\pmb{\varepsilon}} \IR^{r+s}_{\pmb{\varepsilon}} \arrow{d}{\tPhi} \arrow{r}{P} & \IR^{r+s} \arrow{d}{\Phi}\\
\coprod_{\bde} \IR^{p+s}_{\bde(\pmb{\varepsilon})} \arrow{r}{Q} & \IR^{p+s}
\end{tikzcd}
\end{equation}
where $P$ is induced by $\rho$ and, for each $\pmb{\varepsilon}$, $\tPhi_{\pmb{\varepsilon}} := \tPhi |_{\IR^{r+s}_{\pmb{\varepsilon}}}:
\IR^{r+s}_{\pmb{\varepsilon}} \to \IR^{p+s}_{\bde}$, for a unique $\bde = \bde(\pmb{\varepsilon})$, and $\tPhi_{\pmb{\varepsilon}}$ is given
simply by the morphism $\tPhi$ of \eqref{eq:powersubst1}. (Not all $\IR^{p+s}_{\bde}$ are necessarily covered; i.e.,
not all $\bde$ necessarily $= \bde(\pmb{\varepsilon})$, for some $\pmb{\varepsilon}$.) We call $P$ a \emph{lifting} of $Q$.

We can replace \eqref{eq:powersubst2} by the collection of relations $\Phi\circ \rho_{\pmb{\varepsilon}} 
= \tau_{\bde(\pmb{\varepsilon})} \circ \tPhi_{\pmb{\varepsilon}}$ corresponding to the restriction to standard
coordinate charts of the power substitution $P$ (as we do in Lemma \ref{lem:transftoid} and the proof of Proposition
\ref{prop:QuasianalyticExtensionReplacement} below, although $Q$ above is needed to be able to say that 
we have a covering of a neighbourhood in the target of $\Phi$).
\end{remark}

\begin{lemma}[Factorization by blowings-up and power substitutions]\label{lem:transftoid}
Suppose that $r=p$, so that $\Phi: \IR^{p+s} \to \IR^{p+s}$. Then there is a finite number
of commutative diagrams
\begin{equation}\label{eq:transftoid}
\begin{tikzcd}
\IR^{p+s}_\nu \arrow{d}{\Phi_\nu} \arrow{r}{\s_\nu} & \IR^{p+s} \arrow{d}{\Phi}\\
\IR^{p+s}_\nu\arrow{r}{\tau_\nu} & \IR^{p+s}
\end{tikzcd}
\end{equation}
where
\begin{enumerate}
\item each $\s_\nu$ and $\tau_\nu$ is (the restriction to a coordinate chart of) the composite
of a finite sequence of local combinatorial blowings-up and power substitutions over standard
coordinate charts (thus $\s_\nu^*(\De^{\Phi}) = \De^{\Phi_\nu}$);
\smallskip
\item each $\Phi_\nu =$ identity;
\smallskip
\item $\bigcup_\nu\s_\nu(C^{p+s}_\nu)$ is the cube $C^{p+s}$ in the
source of $\Phi$.
\end{enumerate}
\end{lemma}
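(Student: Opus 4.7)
Since $r=p$ and there are no free coordinates, the monomial basis of $\De^{\Phi}$ in Lemma \ref{lem:BasisPhiDer} is empty, so $\De^{\Phi}=0$ and the conclusion $\s_\nu^{\ast}(\De^{\Phi})=\De^{\Phi_\nu}$ is automatic. The plan is to factor $\Phi$ through the identity in three stages, each of which is compatible with the standard-chart description of combinatorial blowings-up and even power substitutions (Definitions \ref{def:combblup}, \ref{def:powersubst}), so that the cubes $C^{p+s}$ in the standard charts of the sources of $\s_\nu$ cover $C^{p+s}$ in the source of $\Phi$.

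First I would eliminate the $y$-components. Because the $\bbe_k$ are $\IQ$-linearly dependent on the $\bal_j$ (which span $\IQ^p$ since the matrix $A=(\al_{ji})$ is $p\times p$ and nonsingular), Lemma \ref{lem:TranslationTrick} applies and, after a finite sequence of combinatorial blowings-up $\sigma$ and $\tau$ of the source and target, the transformed monomial morphism $\tPhi$ has, at every point of $\sigma^{-1}(a)$, at most $p$ exceptional components in the target. Since $p$ of these are forced by the independent monomials $x_j = \bu^{\bal_j}$, none remain of the form $y_k = \bu^{\bbe_k}(\xi_k+v_k)$; that is, $q=0$ on every resulting chart (possibly after repeating the lemma, as the proof constructs new $\Phi$-compatible coordinates $\widetilde{z}_k = v_k$ in the target). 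After this reduction, on each standard chart the morphism has the form
\[
x_j = \tbu^{\tbal_j}\ (j=1,\ldots,p), \qquad \tz_l = \tv_l\ (l=1,\ldots,s),
\]
and the $\tz_l$-block is already the identity.

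Next I would resolve the toric monomial part $\tbu \mapsto (\tbu^{\tbal_j})$ by combinatorial (toric) modifications of the source and target. The $p\times p$ matrix $\tA$ of exponents is nonsingular and nonneg integral; its associated linear map preserves $\IR^p_{\geq 0}$. By iterated star subdivisions of the positive cone --- that is, by a finite sequence of combinatorial blowings-up of the source and target --- we can refine the source fan so that each refined cone is mapped by $\tA$ into a single one-dimensional face (ray) of a refined target cone, and in addition each $\tbal_j$ becomes, on each standard chart, a positive multiple of a single standard basis vector. Equivalently (this is the classical toric resolution argument underlying Gordon's Lemma and the toroidalization methods of \cite{AKMW}), we reach a situation where on every standard chart the morphism takes the form
\[
x_{\pi(i)} = \tu_i^{m_i}\ (i=1,\ldots,p), \qquad \tz_l = \tv_l,
\]
for some permutation $\pi$ and positive integers $m_i$. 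Monomiality is preserved by Corollary \ref{cor:MonCombinatorialBU}, and the standard-chart cubes keep covering $C^{p+s}$ at every stage by Definition \ref{def:combblup}.

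Finally I would trivialize the remaining diagonal monomial map by even power substitutions. If any $m_i$ is odd, apply the (even) power substitution $\tu_i = \bar u_i^{2}$ in the source, which replaces $m_i$ by $2m_i$. Now every $m_i$ is even, so the target power substitution $\tau_\nu: \bar x_{\pi(i)} \mapsto \bar x_{\pi(i)}^{m_i'}$ (with $m_i' = 2m_i$ or $m_i$, even in either case) lifts through $\Phi$ to an even power substitution in the source, as in Remark \ref{rem:powersubst}, producing a commutative square with bottom arrow $\tau_\nu$, top arrow the corresponding source power substitution $\s_\nu$, and $\Phi_\nu$ = identity (after the coordinate permutation $\pi$, which is cosmetic). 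The cube coverage in (3) is obtained by intersecting the cube coverings supplied at each combinatorial blowing-up (Definition \ref{def:combblup}) and at each even power substitution (Definition \ref{def:powersubst} together with the lifting cubes in Remark \ref{rem:powersubst}).

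The main obstacle is the interaction between the two types of modifications in the toric step: one must arrange the combinatorial blowings-up so that the refined source fan maps cone-into-cone into the refined target fan, while simultaneously ensuring that the exponents $m_i$ that appear are compatible with \emph{even} power substitutions lifting between source and target (evenness is needed so that the lift exists at every orthant, not just the positive one). This can be resolved by doubling all relevant multiplicities at the outset of the power-substitution step, at the cost of more standard charts; the fact that the required lifts exist globally on each standard chart is precisely the content of Remark \ref{rem:powersubst}.
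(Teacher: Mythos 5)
Your decomposition (eliminate the $y$-components, reduce the exponent matrix to permuted-diagonal form, then kill the remaining exponents by even power substitutions) is sensible, and the first and third stages are essentially fine: Lemma \ref{lem:TranslationTrick} does force $q=0$ on every chart (since dominance keeps $i(\ta)=p$, condition (3) of that lemma gives $e(\tb)=i(\ta)$), and Remark \ref{rem:powersubst} handles the lifting of even power substitutions together with the cube coverage. The problem is the middle stage, which is where all the difficulty of the lemma lives and which you have black-boxed. The assertion that iterated smooth star subdivisions of the source and target orthants reduce the exponent matrix to the form $x_{\pi(i)}=\tu_i^{m_i}$ on every standard chart is not supplied by Gordan's lemma (a finiteness statement about semigroups) or by \cite{AKMW} (weak factorization of birational maps); the relevant combinatorial statement is closer to the toroidalization/weak-semistable-reduction circle of ideas, but in the precise form you need --- the image of every cone of the refined source fan must be a \emph{smooth} cone of a refined target fan, both fans obtained by sequences of smooth star subdivisions, so that each generator of each source cone maps onto a ray of the corresponding target cone --- it is essentially equivalent to the lemma itself and requires proof. (As literally written, ``each refined cone is mapped by $\tA$ into a single one-dimensional face (ray)'' cannot be what you mean, since $\tA$ is injective; presumably you intend that each \emph{ray} of each refined source cone maps onto a ray of the target cone.)

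The paper sidesteps this toric diagonalization entirely by interleaving the power substitutions with the blowings-up, column by column, in a Gaussian-elimination scheme: for the first column, an even power substitution in the source followed by one in the target normalizes all nonzero entries $\al_{j1}$ to $1$; with these pivots equal to $1$, combinatorial blowings-up in the source (away from $u_1$) totally order the exponent vectors $\bal_j$, and combinatorial blowings-up in the target then subtract the smallest from the others, clearing the first column to $(1,0,\ldots,0)^T$. Iterating over columns produces an upper-triangular matrix with unit diagonal, which further combinatorial blowings-up in the target reduce to the identity; the case $s>0$ and the $y$-components are handled at the end by additional combinatorial blowings-up in the target. If you want to keep your three-stage structure, you must either prove the toric normal-form statement in stage two (with explicit control that the subdivisions are realized by smooth star subdivisions, so that the standard-chart cube bookkeeping of Definition \ref{def:combblup} applies), or adopt the paper's device of using power substitutions early to make the pivots units, after which the elimination by combinatorial blowings-up is elementary.
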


\begin{proof} First assume that $s=0$. Let $I_1 := \{j \in \{1,\ldots,p\}: \al_{j1}\neq 0\}$. We  reduce
to the case that $\al_{j1} = 1$, for all $j\in I_1$, by making a power substitution $\rho_{\pmb{\varepsilon}}: u_1 = 
\varepsilon_1 \tu_1,
u_i = \varepsilon_i \tu_i^q$,
$i\geq 2$, (with respect to each standard chart) in the source, where $q = \lcm\{\al_{j1}: j\in I_1\}$, to make each $\al_{ji}$ a multiple of $\al_{j1}$ ($j\in I_1$) (and $\pmb{\varepsilon}_1 = 1$,
$\pmb{\varepsilon}_i = \pm 1$, $q$ even, or $\pmb{\varepsilon}_i =1$, $q$ odd, $i\geq 2$), and then making a
corresponding power substitution $x_j = \pmb{\varepsilon}^{\bal_j}\tx_j^{\al_{j1}}$
(where $\pmb{\varepsilon}=(\varepsilon_1,\ldots,\varepsilon_p)$), $j=1,\ldots,p$, in the target.

Now, we make blowings-up in the source (combinatorial with respect to $\{u_2\cdots u_p \allowbreak = 0\}$)
to order all the exponents $\bal_j$ (while keeping all $\al_{j1} = 1$). Then, by combinatorial
blowings-up in the target followed by a reordering of the variables, we reduce to the case
that $\al_{11}=1$ and $\al_{j1}=0$, for all $j>1$. 

By repeating the preceding argument using $x_2,\ldots,x_p$, then using $x_3,\ldots,x_p$, etc.,
we transform the matrix $(\al_{ji})$ to an upper-triangular matrix with diagonal entries $=1$.

By further combinatorial blowings-up in the target, we can make all $a_{jp}=0$, $j=1,\ldots,p-1$.
Finally we can repeat this argument for the $(p-1)$st column, $(p-2)$nd column, etc., to transform $(\al_{ji})$
to the identity matrix, as required for the case $s=0$.

It follows that, given $\Phi: \IR^{p+s}\to \IR^{p+s}$, there is a finite number of 
commutative diagrams \eqref{eq:transftoid}, where (1), (3) are satisfied, and each $\Phi_\nu$ has
the form \eqref{eq:mon1} with $x_j = u_j$, $j=1,\ldots,p$. The lemma follows by further combinatorial
blowings-up over coordinate charts in the target, for each $\nu$.
\end{proof}

\begin{proof}[Proof of Proposition \ref{prop:QuasianalyticExtensionReplacement}]
We can assume that $\Phi$ is the morphism $\Phi: (V=\IR^{r+s},D) \allowbreak\to (W=\IR^{p+s},E)$ above,
and that $K$ is the cube $C^{r+s}$ in $V$.
Consider the finite family of morphisms $\Phi_\mu: \IR^{p+s} \to \IR^{p+s}$ given by Lemma \ref{lem:cubes}.
For each $\mu$, consider the finite family of commutative diagrams (indexed by $\nu \in \La(\mu)$, say) given
by Lemma \ref{lem:transftoid}; we write $\s_{\mu\nu},\,\tau_{\mu\nu},\,\Phi_{\mu\nu}$ for the morphisms
involved, so that $\Phi_\mu\circ\s_{\mu\nu} = \tau_{\mu\nu}\circ\Phi_{\mu\nu}$ and $\Phi_{\mu\nu}=$ identity.
Since each $\Phi_{\mu\nu}=$ identity, if follows that, for each $\mu$,
$$
\bigcup_{\nu\in\La(\mu)} \tau_{\mu\nu}(C^{p+s}_{\mu\nu}) = \Phi_\mu(C^{p+s})
$$
and each $h\circ \tau_{\mu\nu}$ extends to a function of class $\cQ$ in a neighbourhood of $C^{p+s}_{\mu\nu}$.
Moreover, by Lemma \ref{lem:cubes},
$$
\bigcup_{\mu,\nu} \tau_{\mu\nu}(C^{p+s}_{\mu\nu}) = \Phi(C^{r+s}).
$$

For each $\mu$, we apply Lemma \ref{lem:ModificationTargetExtension} or Remark \ref{rem:powersubst}
successively to the local combinatorial blowings-up or 
power substitutions
comprising $\tau_{\mu\nu}$, to lift $\tau_{\mu\nu}$ to the source $V$ of $\Phi$. 
We obtain a finite family of commutative diagrams
\[
\begin{tikzcd}
V_{\mu\nu} \arrow{d}{\Psi_{\mu\nu}} \arrow{r}{\rho_{\mu\nu}} & V \arrow{d}{\Phi}\\
W_{\mu\nu} \arrow{r}{\tau_{\mu\nu}} & W
\end{tikzcd}
\]
where $\Psi_{\mu\nu}$ is a monomial morphism, and the result follows easily.
\end{proof}

\subsection{Relations for a pre-monomial morphism}\label{subsec:rel}

\begin{definition}\label{def:PreMonomial-Relation}
Let $\Psi=(\Phi,\psi): (M,D)\to (N\times \mathbb{K},F)$ denote a morphism which is pre-monomial at
a point $a\in M$ (see Definition \ref{def:PreMonomial} and Remarks \ref{rem:premonom2}).
Let $g$ denote a remainder term for $\Psi$ at $a$. (We can assume that $\psi(a)=0=g(a)$;
in particular, $\Psi(a)= (b,0)$, where $b=\Phi(a)$.) A \emph{relation} for $g$ is a nonzero germ
of a function $R$ of class $\cQ$ on $N\times\IK$ at $(b,0)$ (i.e., a function $R(x,y,z,t)$ in coordinates
in the notation of Definition \ref{def:PreMonomial}), such that
$$
R(\Phi, g) = 0.
$$
\end{definition}

\begin{remarks}\label{rem:rel} (1) The existence of a remainder and associated relation is a
condition that is open in the source of $\Psi$.

\medskip\noindent
(2) In the real quasianalytic case, Proposition \ref{prop:cleaning} shows that, after suitable
sequences of local combinatorial blowings-up and power substitutions in the source and target
of $\Phi$, we can assume there is a relation of the form 
\begin{equation}\label{eq:realrel}
R(x,y,z,t)=t-h(x,y,z).
\end{equation}

\noindent
(3) If $\Psi$ has a remainder $g$ at $a$, and a relation $R$ which is monomial with respect
to the divisor $F$, then $g=0$. In this case, after codimension one blowings-up of the target and
source if necessary (according to Remarks \ref{rem:premonom1}),
the morphism $\Psi$ is monomial at $a$. Note that neither of these codimension one blowings-up
change the sheaf of derivations $\De^\Psi$. (This is clear for the blowing-up of the target. For the 
blowing-up with centre $\{w_1=0\}$ in the source, as in Remarks \ref{rem:premonom1}, it is true because
$X(w_1)=0$, for every $X\in \De^\Psi$, since $X$ annihilates $\psi,\, g$ and $\bu^{\bga}$.)
\end{remarks}

\begin{definition}[Order of a relation]\label{def:PreMonomial-OrderRelation}
Given a relation $R$ as in Defintion \ref{def:PreMonomial-Relation}, we define the \emph{order}
$\rho_a(R)$ as
$$
\rho_a(R) := \nu_{(b,0)}(\cI_R, \pi) = \mu_{(b,0)}(\cI_R, \De^\pi),
$$
where $\cI_R$ is the ideal generated by $R$, and $\pi: N\times\IK \to N$ denotes the projection
(see Definition \ref{def:InvariantPairs}).
\end{definition}

\begin{remarks}\label{rem:relorder1} (1) The sheaf $\De^\pi$ of log derivatives tangent to $\pi$
is generated either by $\p /\p t$ or by $t\,\p/\p t$, according as $F$ is the induced divisor $E\times\IK$
or the extended divisor $(E\times\IK) \cup (N\times\{0\})$.

\medskip\noindent
(2) If $g=0$, then $R = t$ is relation, and $\rho_a(R) =1$ or $0$, according as $F$ is the induced
or extended divisor. On the other hand, if $\rho_a(R)=0$, then $\Psi$ is monomial at $a$ and
$F$ is necessarily the extended divisor (i.e., $\{t=0\} \subset F$).

\medskip\noindent
(3) In \eqref{eq:realrel}, if $h\neq 0$ (i.e., $g\neq 0$), then $\rho_a(R) = 1$ (for either case of $F$).
\end{remarks}

\begin{remark}\label{rem:relorder2}
We can define an invariant $\rho_a(\Psi)$ of $\Psi$ at $a$ as the minimum of $\rho_a(R)$ over
all remainders $g$ and associated relations $R$. (We take $\rho_a(\Psi) = \infty$ if there is
no $R$ for any $g$.) Although we do not formally need $\rho_a(\Psi)$ for the local constructions
in this article, we will prove that a pre-monomial morphism can be transformed to monomial
essentially by decreasing this invariant (cf.\ Remarks \ref{rem:idea} and \ref{rem:moninvt}).
\end{remark}

\begin{example}\label{ex:Relation}
Let $\Psi=(\Phi,\psi): (\mathbb{C}^4,E) \to (\mathbb{C}^3,D)$ denote the pre-monomial morphism 
\[
x_1=u_1^2u_2^2,\quad
x_2=u_2^2u_3^2,\quad
t= u_1u_2^2u_3 + u_1^3u_2^3 + u_1u_2^{5}(w-1) = g(\bu) +  u_1u_2^{5}(w-1),
\]
where $E = \{u_1 u_2 u_3 =0\}$, $D=\{x_1 x_2 t=0\}$, and $g(\bu) =  u_1u_2^2u_3 + u_1^3u_2^3$
is a remainder.
Then $g(\pmb{u}) = h \circ \Phi$, where $h(\pmb{x}) = (x_1x_2)^{1/2} + x_1^{3/2}$, and
\[
R(\pmb{x},t) = t^4  - x_1(x_1^2+x_2)t^2 + x_1^2(x_1^2-x_2)^2 
\]
is a relation for $g$. 
Note that $\Delta^{ \pi } = (t\, \p/\partial t)$, where $\pi$ is the projection $\pi(\pmb{x},t)=t$. It is easy
to calculate $\rho_{\pmb{0}}(R) = 2$.
\end{example}

The following proposition isolates the only part of the proofs of our main theorems in the
analytic or algebraic cases, where we explicitly require that our morphism be analytic or algebraic.

\begin{proposition}[Relation in the analytic or algebraic case]\label{prop:PreparationPreMonomial2}
Let $\Psi=(\Phi,\psi): (M,D)\to (N\times \mathbb{K},F)$ denote an analytic or algebraic morphism, 
which is pre-monomial at a point $a \in M$. Then there exist open neighbourhoods
$V$ of $a$ in $M$ and $W$ of $b:=\Phi(a)$ in $N$, and a finite number of commutative diagrams 
\begin{equation}\label{eq:transftorel}
\begin{tikzcd}
(V_\la,D_{\la}) \arrow{d}{\Phi_{\la}} \arrow{r}{\s_\la} & (V,D) \arrow{d}{\Phi}\\
(W_\la,E_{\la}) \arrow{r}{\tau_\la} & (W,E)
\end{tikzcd}
\end{equation}
such that 
\begin{enumerate}
\item each $\s_\la$ and $\tau_\la$ is a composite of finitely many smooth local combinatorial blowings-up 
(in particular, $\sigma^{\ast}_{\la}(\De^{\Phi})= \De^{\Phi_{\la}} $);
\smallskip
\item the families of morphisms $\{\s_\la\}$ and $\{\tau_\la\}$ cover $V$ and $W$, respectively, and (in the
analytic case) there are compact subsets $K_\la \subset V_\la$, $L_\la \subset W_\la$, for all $\la$, such that
$\bigcup \s_\la(K_\la)$ and $\bigcup \tau_\la(L_\la)$ are (compact) neighbourhoods of $a$ and $b$, respectively;
\smallskip
\item each $\Phi_\la$ is a monomial morphism;
\smallskip
\item for all $\la$ and for every $\widetilde{a} \in \sigma_\la^{-1}(a)$, the morphism
$\Psi_\la = (\Phi_\la,\psi_\la)$, where $\psi_\la:=\psi \circ \sigma_\la$, is pre-monomial at $\ta$
(with respect to the transform $F_\la$ of $F$ induced by \eqref{eq:transftorel}), 
and there exist a remainder $\widetilde{g}$ and relation $\widetilde{R}$ for $\tg$ at $\ta$, such that
$\rho_{\ta}(R)< \infty$.
\end{enumerate}
\end{proposition}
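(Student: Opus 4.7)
The plan is to reduce the remainder $g$ of $\Psi$ to a genuine composite $h \circ \tPhi$ (possibly after taking $d$-th roots of the target coordinates $\bx$), and then build an explicit algebraic relation by symmetrizing over those roots. Start from a pre-monomial decomposition $\psi = g + \varphi$ at $a$ (Definition \ref{def:PreMonomial}) and apply Lemma \ref{lem:denombound} to $g$. This produces a positive integer $d$ and composites $\sigma, \tau$ of combinatorial blowings-up fitting into a commutative diagram as required, such that at each $\ta \in \sigma^{-1}(a)$ the formal Taylor expansion of $\tg := g \circ \sigma$ can be written $\tG(\tbu,\tbv) = \tH(\tbx^{1/d}, \tbv)$ for some formal power series $\tH$. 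Combinatorial blowings-up preserve monomiality of $\tPhi$ (Corollary \ref{cor:MonCombinatorialBU}), transport $dg \wedge d\Phi = 0$ to $d\tg \wedge d\tPhi = 0$, and act on exponent vectors by invertible integer matrices that preserve the $\IQ$-linear independence or dependence relations defining the normal forms of $\varphi$ in Definition \ref{def:PreMonomial}; hence $\Psi_\la = (\Phi_\la,\psi_\la)$ remains pre-monomial at each $\ta$.

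Next I upgrade $\tH$ from formal to a genuine function of class $\cQ$. Passing (\'etale-locally, in the algebraic case) to the $d$-fold branched cover $\hat{\bu}_i^d = \tu_i$, the formal identity becomes $\tg(\hat{\bu}^d,\tbv) = \tH(\hat{\bu}^{\tbal}, \tbv)$, where $\tbal_1,\ldots,\tbal_p$ are the $\IQ$-linearly independent exponent vectors of the monomial components $\tx_j = \tbu^{\tbal_j}$ of $\tPhi$. The map $(\hat{\bu},\tbv) \mapsto (\hat{\bu}^{\tbal}, \tbv)$ is then generically submersive onto its image, so Gabrielov's theorem in the analytic case (cf.\ Remark \ref{rem:cleaning1}), or its algebraic counterpart via Artin-style approximation over the residue field $\IL = \IK_{\ta}$ in the algebraic case, implies that $\tH$ is convergent, respectively algebraic.

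With $\tH \in \cQ$ in hand, define
$$
\tR(\tbx,\tbv,t) \,:=\, \prod_{\bxi} \Bigl(t - \tH(\bxi \cdot \tbx^{1/d}, \tbv)\Bigr),
$$
where $\bxi = (\xi_1,\ldots,\xi_p)$ ranges over all $p$-tuples of $d$-th roots of unity and $\bxi \cdot \tbx^{1/d} := (\xi_1 \tx_1^{1/d}, \ldots, \xi_p \tx_p^{1/d})$. The product is invariant under the coordinate-wise action of $\mu_d^p$, so by Remark \ref{rem:symm} its expansion as a monic polynomial in $t$ has coefficients that are functions of $(\tbx,\tbv)$ of class $\cQ$. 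Since $\tg$ equals the $\bxi = \bone$ factor, $\tR(\tPhi,\tg) = 0$, and $\tR$ is a relation for $\tg$ at $\ta$ in the sense of Definition \ref{def:PreMonomial-Relation}. Finiteness of $\rho_{\ta}(\tR)$ follows because $\tR$ is a monic polynomial in $t$ of positive degree: repeated differentiation with respect to $t$ (or application of $t\,\partial/\partial t$, depending on whether $F$ is induced or extended) eventually produces a unit, so $(\cI_{\tR})^{\De^\pi}_\infty$ is the unit ideal and $\rho_{\ta}(\tR) < \infty$ by Lemma \ref{lem:BasicPropClosure}(3).

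The principal obstacle is the second step: Gabrielov's theorem (and its algebraic analog) is the single ingredient specific to the analytic or algebraic setting, and it fails in the general real quasianalytic case because of the extension issue highlighted in Examples \ref{ex:halfline}, which is precisely why \S\ref{subsec:realquasian} must use power substitutions instead. The covering and compactness conditions in (2) and the identity $\sigma_\la^*(\De^\Phi) = \De^{\Phi_\la}$ in (1) are already provided by Lemma \ref{lem:denombound} and Corollary \ref{cor:DerCombinatorialBU}, since only combinatorial blowings-up are used.
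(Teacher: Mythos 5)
Your proposal follows the same essential route as the paper (Lemma \ref{lem:denombound} to control denominators, symmetrization over $d$-th roots of unity to obtain $R$, Gabrielov to promote $H$ from formal to convergent/algebraic, and degree considerations to bound $\rho$), but it contains a genuine gap that the paper's induction on $(e(a),i(a))$ is designed to close.

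The problem is that a relation for $g$ in the sense of Definition \ref{def:PreMonomial-Relation} must be a germ of class $\cQ$ on $N\times\IK$ at $(b,0)$, i.e.\ a function of the \emph{target} coordinates $(\bx,\by,\bz,t)$. Lemma \ref{lem:denombound}(2) only gives you a formal power series $\tH(\tbx,\tbv)$ where $\tbv$ lives on the \emph{source}. The function $\tR(\tbx,\tbv,t)=\prod_{\bxi}\bigl(t-\tH(\bxi\cdot\tbx^{1/d},\tbv)\bigr)$ you write down is therefore a function on $\tbx$-space $\times\,\tbv$-space $\times\,\IK$, not on $N\times\IK$. When $q>0$, some of the $\tv_k$ are related to $\ty_k$ by $\ty_k=\tbu^{\tbbe_k}(\txi_k+\tv_k)$, and solving $\tv_k = \ty_k\tbu^{-\tbbe_k}-\txi_k$ introduces negative exponents that you have not controlled. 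Your construction is a relation precisely when $q=0$, so that $\tbv=\tbz$. The paper's proof enforces $q=0$ before invoking the symmetric product: it inducts on the pair $(e(a),i(a))$, using Lemma \ref{lem:TranslationTrick} in case (a) ($q>0$) to drop the invariant, and then in case (b) ($q=0$) applies Lemma \ref{lem:denombound}(1) together with Lemma \ref{lem:RelationTrick} to produce a genuine formal power series $H(\bx,\bz)$ in target variables (rather than a Laurent series, and with $\bz=\bv$). Your proof needs this preliminary reduction; invoking Lemma \ref{lem:denombound}(2) directly does not give you coordinates in which $\tbv$ is a target variable.

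A smaller issue: your argument that $\rho_{\ta}(\tR)<\infty$ says repeated application of $t\,\p/\p t$ (extended divisor case) ``eventually produces a unit, so $(\cI_{\tR})^{\De^\pi}_\infty$ is the unit ideal.'' Since $t\,\p/\p t$ preserves the $t$-degree, no power of it can make the $t^{pd}$-term into a unit; $(\cI_{\tR})^{\De^\pi}_\infty$ contains $t^{pd}$ and each $A_i t^i$, but need not be the unit ideal nor even principal monomial, so Lemma \ref{lem:BasicPropClosure}(3) does not directly apply. The correct justification is Remark \ref{rem:Nak}: since the monic polynomial $\tR$ is already of the form $\sum_i a_i t^{k_i}\tU_i$ with distinct $k_i$ and units $\tU_i$, one has $\rho_{\ta}(\tR)\leq$ (number of nonzero terms) $<\infty$ by Lemma \ref{lem:ComputingInvariant}.

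Finally, your appeal to ``Artin-style approximation'' for the algebraic case is not quite what the paper does (it relies on Remark \ref{rem:symm} and the fact that symmetrization returns to the original field); this is a presentational point rather than an error, but be aware the paper does not invoke Artin approximation.
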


\begin{proof}
Given a commutative diagram \eqref{eq:transftorel}, where $\s_\la,\,\tau_\la$ are as in (1),
$\tPhi_\la$ is monomial and $\tPsi_\la$ pre-monomial at every $\ta\in\s_\la^{-1}(a)$, by
Corollary \ref{cor:MonCombinatorialBU} and Definition \ref{def:PreMonomial} (or Lemma
\ref{lem:NormalFormPartialMonomial}(1)); this observation will be used implicitly in the proof.

We argue by induction on $(e(a),i(a))$. If $e(a)=0$ (equivalently, $i(a)=0$), then $\Phi$ has
the form $\Phi(\bv,\bw) = \bv$ (i.e., $\Phi$ is the morphism $\bz=\bv$, in the notation of 
Definition \ref{def:PreMonomial}), so that $R(\bz,t) := t-g(\bz)$ is a relation, where $g(\bv)$ is a
remainder for $\Psi$ at $a$. By induction, therefore, we can assume the result for $(e(\cdot),i(\cdot))< (e(a),i(a))$.
We consider two cases.

\medskip\noindent
(a) $q = e(a)-i(a)>0$. By Lemma \ref{lem:TranslationTrick}, there is a commutative diagram 
\eqref{eq:mainCommutativeRelationI}, where $\s$ and $\tau$ are sequences of combinatorial
blowings-up, such that,  for all $\widetilde{a} \in \sigma^{-1}(a)$, $(e(\ta),i(\ta))<(e(a),i(a))$. Since
$\widetilde{\Psi}=(\widetilde{\Phi},\,\widetilde{\psi}=\psi \circ \sigma)$ is pre-monomial on $\s^{-1}(a)$, the result follows 
by induction and the properness of $\sigma$. 

\medskip\noindent
(b) $q = e(a)-i(a)=0$. Then there are pre-monomial coordinate systems $(\bu,\bv,\bw)$ and $(\bx,\by,\bz,t)=(\bx,\bz,t)$ 
for $\Psi$ at $a$ and $(b,0)$, where $b=\Phi(a)$ (respectively). Let $g(\bu,\bv)$ denote a remainder for $\Psi$ at $a$.
By Lemma \ref{lem:denombound}(1),
$$
G(\bu,\bv) = H(\bx^{1/d}, \bz),
$$
where $G$ is the formal expansion of $g$ at $a$, $H$ is a formal Laurent series, and $d$ is a positive integer.

Then, by Lemma \ref{lem:RelationTrick}, there is a commutative diagram 
\eqref{eq:mainCommutativeRelationI}, where $\s$ and $\tau$ are sequences of combinatorial
blowings-up, such that, for all $\widetilde{a} \in \sigma^{-1}(a)$, 
\begin{itemize}
\item[either] $(e(\ta),i(\ta))<(e(a),i(a))$, and we can conclude by induction at $\ta$, as in (a);

\smallskip
\item[or] $(e(\ta),i(\ta))=(e(a),i(a))$ and, in this case, we can assume that $H$ is a formal
power series (rather than just a Laurent series).
\end{itemize}

In the latter case, let $\varepsilon = e^{2\pi i/d}$ and set
\begin{align*}
R(\bx,\bz,t) :&= \prod_{i_1,\ldots,i_p =1}^d \left(t - H(\varepsilon^{i_1}x_1^{1/d},\ldots, \varepsilon^{i_p}x_p^{1/d},\bz)\right)\\
                    &= t^{pd} + \sum_{i=0}^{pd-1} A_i(\bx,\bz)t^i.
\end{align*}
Then the $A_i(\bx,\bz)$ and $R(\bx,\bz,t)$ are formal power series over $\IK$, and are convergent
(or algebraic) in the analytic (or algebraic) cases (see Remark \ref{rem:symm}). Clearly,
$R(\Phi,g)=0$ and $\rho_a(R)<\infty$, as required.
\end{proof}

\subsection{Control of the order of a relation}\label{subsec:relorder}

 The two lemmas of this subsection allow us to control the the order $\rho_a(R)$ of a relation $R$ after transformation 
 by blowings-up that are compatible with the log derivations $\De^\Phi$. These lemmas play important
 parts in the proof of our main theorems in Section \ref{sec:LocalMon}, in the step on transforming
 a pre-monomial to a monomial morphism by decreasing $\rho_a(R)$. These lemmas apply to
 a quasianalytic class $\cQ$, in general, and will be needed
 even in the case of a relation of the form \eqref{eq:realrel} (e.g., in the real quasianalytic case)
 when $\{t=0\}$ is a component of the divisor $F$.
 
 \begin{lemma}[Control of a relation I: blowings-up independent of $t$]\label{lem:PreMonomial-ContollRelations}
Let $\Psi = (\Phi,\psi): (M,D) \to (N\times \mathbb{K},F)$ denote a morphism of class $\cQ$ which is pre-monomial
at a point $a\in M$. Assume we have neighbourhoods $V$ of $a$ in $M$ and $W$ of $b=\Phi(a)$ in $N$,
and a finite number of commutative diagrams \eqref{eq:transftorel} satisfying the conditions:
\begin{enumerate}
\item each $\s_\la$ and $\tau_\la$ is a composite of finitely many smooth local blowings-up 
compatible with $D,\,E$, and\,
$\sigma^{\ast}_{\la}(\De^{\Phi})= \De^{\Phi_{\la}} $;
\smallskip
\item[(2),\,(3)] as in Proposition \ref{prop:PreparationPreMonomial2}.
\end{enumerate}
For each $\la$, consider the morphism $\Psi_{\la}=(\Phi_{\la},\psi_\la): (M_\la,D_{\la}) \to (N_{\la}\times \mathbb{K},F_{\la})$, where $\psi_\la := \psi \circ \sigma_{\la}$ and $F_{\la}$ is the transform of $F$ induced by \eqref{eq:transftorel}. Then, 
at every point $\ta \in \s_\la^{-1}(a)$, for each $\la$,
\begin{enumerate}
\item[(a)] $\sigma_{\la}^{\ast}(\De^{\Psi}) = \De^{\Psi_{\la}}$;
\smallskip
\item[(b)] $\Psi_{\la}$ is pre-monomial;
\smallskip
\item[(c)] if $g$ is a remainder for $\Psi$ at $a$ and $R$ is a corresponding relation,
then $g_{\la}:=g\circ \sigma_{\la}$ is a remainder for $\Psi_{\la}$ at $\ta$ and 
$R_\la := (\tau_{\la}\times \mathrm{Id})^{\ast}R$ is a relation; moreover,
$\rho_{\ta}(R_{\la}) \leq \rho_{a}(R)$. 
\end{enumerate}
\end{lemma}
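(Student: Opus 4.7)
I will derive (a), (b), (c) by combining the transformation lemmas of Section~\ref{sec:tang}---specifically Lemmas~\ref{lem:BUPreservingMonomialDeriv} and~\ref{lem:PreservingMu}---with the characterisations of pre-monomial form and of remainders provided by Lemmas~\ref{lem:NormalFormPartialMonomial}(1) and~\ref{lem:AlgebraicDependentFunctions}. Part (a) is immediate from Lemma~\ref{lem:BUPreservingMonomialDeriv} applied to the dominant monomial morphism $\Phi$ and the function $\psi$: the hypothesis $\sigma_\la^{\ast}(\De^\Phi)=\De^{\Phi_\la}$ of the current lemma is exactly what that lemma requires, and its conclusion is $\sigma_\la^{\ast}(\De^\Psi)=\De^{\Psi_\la}$.

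For (b), by Lemma~\ref{lem:NormalFormPartialMonomial}(1) it suffices to show that $\cJ^{\Psi_\la}_{\infty,\widetilde a}$ is either zero or principal monomial (with respect to $D_\la$), and that $\nu_{\widetilde a}(\Psi_\la)=1$. Starting from $\cJ^\Psi_1=\De^\Phi(\psi)$ and using (a), I obtain $\sigma_\la^{\ast}(\cJ^\Psi_1)=\De^{\Phi_\la}(\psi_\la)=\cJ^{\Psi_\la}_1$. Applying Lemma~\ref{lem:PreservingMu}(1) with $\cI=\cJ^\Psi_1$ and $\De=\De^\Phi$, it follows that $\sigma_\la^{\ast}(\cJ^\Psi_k)=\cJ^{\Psi_\la}_k$ for every $k\in\IN\cup\{\infty\}$. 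Since $\cJ^\Psi_{\infty,a}$ is zero or principal monomial with respect to $D$, and $\sigma_\la$ is a composite of smooth blowings-up compatible with $D$, its pull-back has the same form with respect to $D_\la$. Lemma~\ref{lem:PreservingMu}(2) then yields $\nu_{\widetilde a}(\Psi_\la)\le\nu_a(\Psi)=1$, hence $\nu_{\widetilde a}(\Psi_\la)=1$ since the invariant is always $\ge 1$ by definition.

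For (c), I first verify that $g_\la=g\circ\sigma_\la$ is a remainder for $\Psi_\la$ at $\widetilde a$. By Lemma~\ref{lem:AlgebraicDependentFunctions}, $dg\wedge d\Phi=0$ at $a$ is equivalent to $X(g)=0$ for all $X\in\De^\Phi_a$; pulling back along $\sigma_\la$ and using $\sigma_\la^{\ast}(\De^\Phi)=\De^{\Phi_\la}$, we get $Y(g_\la)=0$ for all $Y\in\De^{\Phi_\la}_{\widetilde a}$, which is $dg_\la\wedge d\Phi_\la=0$. The pull-back of the identity $R\circ(\Phi,g)=0$ by $\sigma_\la$, together with commutativity of~\eqref{eq:transftorel}, yields $R_\la\circ(\Phi_\la,g_\la)=0$, so $R_\la$ is a relation. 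For the order inequality, let $\pi\colon N\times\IK\to N$ and $\pi_\la\colon N_\la\times\IK\to N_\la$ denote the projections; since $\tau_\la\times\id$ acts as the identity on the $\IK$-factor and $\De^\pi$ is generated either by $\p/\p t$ or $t\,\p/\p t$ (Remark~\ref{rem:relorder1}), we have $(\tau_\la\times\id)^{\ast}(\De^\pi)=\De^{\pi_\la}$ with no poles. Lemma~\ref{lem:PreservingMu}(2) applied to $\tau_\la\times\id$ with $\cI=\cI_R$ and $\De=\De^\pi$ then gives $\rho_{\widetilde a}(R_\la)\le\rho_a(R)$, as required. No step of this plan is technically demanding; the only care needed is checking that the various pull-back operations commute with the ideal and derivation constructions of Section~\ref{sec:tang}, which is guaranteed by the lemmas cited.
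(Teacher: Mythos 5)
Your treatment of (a) and (b) matches the paper's in substance, with one small imprecision in (a): Lemma~\ref{lem:BUPreservingMonomialDeriv} is proved using a $\Psi$-monomial coordinate system, i.e.\ under the implicit assumption that $\Psi$ itself is monomial, whereas here $\Psi$ is only pre-monomial. The paper instead invokes Lemma~\ref{lem:PreservingLogAdapted} directly, observing that for $\psi=g+\vp$ with $X(g)=0$ for $X\in\De^\Phi$, the normal forms for $\vp$ in Definition~\ref{def:PreMonomial}(2) furnish the required basis of $\De^\Phi$; this is the same argument as in the proof of Lemma~\ref{lem:BUPreservingMonomialDeriv} but run on $\vp$ rather than on $\psi$. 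For (b), your chain-of-ideals computation together with Lemmas~\ref{lem:PreservingMu} and~\ref{lem:NormalFormPartialMonomial}(1) is exactly what the paper does.

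There is, however, a genuine gap in (c). You verify that $dg_\la\wedge d\Phi_\la=0$, which shows only that $g_\la$ is $\Phi_\la$-dependent. But a remainder for $\Psi_\la$ at $\ta$ (Definition~\ref{def:PreMonomial}(2)) is a $\Phi_\la$-dependent $g_\la$ \emph{together with the property that} $\psi_\la-g_\la$ can be brought, in $\Phi_\la$-monomial coordinates at $\ta$, to one of the four normal forms $0$, $\tbu^{\bal}$ (with $\bal$ $\IQ$-independent of the exponents of $\Phi_\la$), $\tbu^{\bga}(\eta+w_1)$, or $w_1$. Pulling back $\vp=\psi-g$ merely gives $\psi_\la-g_\la=\tbu^{\tbga}\cdot U$ for a unit $U$ (when $\cJ^\Psi_1\ne(0)$), which is not yet a normal form. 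The paper closes this by a case analysis on whether $\tbga$ is $\IQ$-independent or dependent of the exponents of $\Phi_\la$: in the first case a change of the $\tbu$-coordinates absorbs the unit, and in the second, Lemma~\ref{lem:BasisPhiDer} together with $\De^{\Phi_\la}(\psi_\la)=(\tbu^{\tbga})$ gives $\psi_\la=g_\la+\tbu^{\tbga}(\xi+\tw_1\tU+a_0)$ and a change in the $\Phi_\la$-free variable $\tw_1$ yields the normal form $\tbu^{\tbga}(\xi+w_1)$. Without this verification, the claim that $g_\la$ is a remainder (and hence that $R_\la$ is a relation \emph{for} $g_\la$ in the sense of Definition~\ref{def:PreMonomial-Relation}) is not established. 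Your derivation of the order bound via $(\tau_\la\times\mathrm{Id})^{\ast}(\De^\pi)=\De^{\pi_\la}$ and Lemma~\ref{lem:PreservingMu}(2) is fine once $R_\la$ is known to be a relation.
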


\begin{proof} 
(a) follows from Lemma \ref{lem:PreservingLogAdapted} and the normal forms in Definition
\ref{def:PreMonomial} (the proof is analogous to that of Lemma \ref{lem:BUPreservingMonomialDeriv}).

\medskip\noindent
(b) Let $\cJ_1^\Psi := \De^\Phi(\psi)$ (see Definition \ref{def:InvariantPartMon}). By Lemma
\ref{lem:NormalFormPartialMonomial}(1), either $\mathcal{J}^{\Psi}_{1} = (\pmb{u}^{\pmb{\gamma}})$
or $\mathcal{J}^{\Psi}_{1} =(0)$. (We use the notation of Definition \ref{def:PreMonomial}.) Given
$\ta\in\s_\la^{-1}(a)$, for some $\la$, let $(\tbu,\tbv,\tbw)$ denote a $\Phi_\la$-monomial coordinate system
at $\ta$. By (1) and Lemma \ref{lem:PreservingMu}, either
$\mathcal{J}^{\Psi_{\la}}_{1} = \sigma^{\ast}_{\la}(\mathcal{J}^{\Psi}_{1}) =  (\widetilde{\pmb{u}}^{\widetilde{\pmb{\gamma}}})$
or $\mathcal{J}^{\Psi_{\la}}_{1}=(0)$. By Lemma \ref{lem:NormalFormPartialMonomial}(1), therefore,
$\Psi_{\la}$ is pre-monomial at $\ta$. 

\medskip\noindent
(c) First, consider the case $\mathcal{J}^{\Psi}_{1} =(0)$. Then $\psi(\bu,\bv,\bw) = g(\bu,\bv)$
(from Definition \ref{def:PreMonomial} and Lemma \ref{lem:BasisPhiDer}). Therefore,
$\psi_\la = \psi\circ\s_\la = g_\la$ and
\begin{equation}\label{eq:PreMonomial-RelationZeroComputation}
R_{\la}(\Phi_{\la},g_{\la})= R(\tau_{\la} \circ \Phi_{\la}, g_{\la})=R(\Phi\circ \sigma_{\la},g\circ \sigma_{\la}) = R(\Phi,g) \circ \sigma_{\la} = 0,
\end{equation}
so that $R_\la$ is a relation for $g_\la$. 

\medskip
Secondly, suppose that $\mathcal{J}^{\Psi}_{1} =(\pmb{u}^{\pmb{\gamma}})$. Then
$\psi \circ \sigma_{\la} = g \circ \sigma_{\la} + \widetilde{\pmb{u}}^{\widetilde{\pmb{\gamma}}} U(\widetilde{\pmb{u}},\widetilde{\pmb{v}},\widetilde{\pmb{w}})$,
where $U$ is a unit. Note that, if $g_\la$ is a remainder for $\Psi_\la$, then it follows again from
\eqref{eq:PreMonomial-RelationZeroComputation} that $R_\la$ is a relation. To show that $g_\la$ is
a remainder, we consider two cases.
\begin{enumerate}
\item[(i)] If $\widetilde{\pmb{\gamma}}$ is $\mathbb{Q}$-linearly independent of the exponents 
$\widetilde{\pmb{\alpha}}_i$ of the monomial morphism $\Phi_{\la}$, then, after 
a change of coordinates, we can assume that
$\psi \circ \sigma_{\la} = g \circ \sigma_{\la} + \widetilde{\pmb{u}}^{\widetilde{\pmb{\gamma}}}$;
therefore, $g_{\lambda}$ is a remainder.
\smallskip
\item[(ii)] If $\widetilde{\pmb{\gamma}}$ is linearly dependent on the
$\widetilde{\pmb{\alpha}}_i$, then, without loss of generality, we can assume that
\[
\psi \circ \sigma_{\la} = g \circ \sigma_{\la} + \widetilde{\pmb{u}}^{\widetilde{\pmb{\gamma}}} \left(\xi + \tw_1\widetilde{U}(\widetilde{\pmb{u}},\widetilde{\pmb{v}},\widetilde{\pmb{w}}) 
+ a_0(\widetilde{\pmb{u}},\widetilde{\pmb{v}},\widehat{\widetilde{\pmb{w}}})\right)
\]
where $\widetilde{U}$ is a unit and $\widehat{\widetilde{\pmb{w}}}$ denotes $\widetilde{\pmb{w}}$ with the first
entry $\tw_1$ removed (using Lemma \ref{lem:BasisPhiDer} and the fact that $\De^{\Phi_\la}(\psi_\la) = (\tbu^{\tbga})$).
After a change of coordinates in the source, 
\[
\psi \circ \sigma_{\la} = g \circ \sigma_{\la} + \widetilde{\pmb{u}}^{\widetilde{\pmb{\gamma}}} (\xi + w_1),
\]
so that $g_{\lambda}$ is a remainder.
\end{enumerate}

\smallskip
Finally, in either case $\mathcal{J}^{\Psi}_{1} = (0)$ or $(\pmb{u}^{\pmb{\gamma}})$, let
$\pi_{\la}: N_{\la}\times \mathbb{K} \to N_{\la}$ denote the projection. Then 
$\pi_{\la} = \pi \circ (\tau_{\la}\times \text{Id})$, so that 
$(\tau_{\la}\times\text{Id})^{\ast}(\De^{\pi}) = \De^{\pi_{\la}}$. By Lemma \ref{lem:PreservingMu},
$\rho_{\ta}(R_{\la}) \leq \rho_{a}(R)$.
\end{proof}

\begin{lemma}[Control of a relation II: reduction from extended to induced divisor]\label{lem:PreMonomial-ContollRelations2}
Let $\Psi = (\Phi,\psi): (M,D) \to (N\times \mathbb{K},F)$ denote a morphism of class $\cQ$ which is pre-monomial
at a point $a\in M$. Let $g$ denote a remainder for $\Psi$ at $a$, and $R$ a relation for $g$ such that
$0<\rho_a(R)<\infty$. (We can assume that $\psi(a)=0=g(a)$.) Assume that
\begin{equation}\label{eq:control2}
\begin{aligned}
g(\bu,\bv) &= \bu^{\bde} V(\bu.\bv),\\
R(\bx.\by,\bz,t) &= \sum_{i=0}^d \bx^{\pmb{\varepsilon}_i} \by^{\bzeta_i} t^{k_i} \tU_i(\bx.\by,\bz,t)
\end{aligned}
\end{equation}
(in the notation of Definition \ref{def:PreMonomial}), where $k_0 < k_1 < \cdots < k_d$, $V(a) \neq 0$,
and $\tU_i(b,0) \neq 0$, $i=0,\ldots,d$, where $b=\Phi(a)$. If $F$ is the extended divisor
$(E\times \mathbb{K}) \cup (N\times\{0\})$ at $(b,0)$,
then there exist open neighbourhoods
$V$ of $a$ in $M$ and $\tW$ of $(b,0)$ in $N\times\IK$, and a finite number of commutative diagrams 
\begin{equation}\label{eq:transftorel2}
\begin{tikzcd}
(V_\la,D_{\la}) \arrow{d}{\Psi_{\la}} \arrow{r}{\s_\la} & (V,D) \arrow{d}{\Psi}\\
(\tW_\la,F_{\la}) \arrow{r}{\tau_\la} & (\tW,F)
\end{tikzcd}
\end{equation}
such that 
\begin{enumerate}
\item each $\s_\la$ and $\tau_\la$ is a composite of finitely many smooth local combinatorial blowings-up 
(in particular, $\sigma^{\ast}_{\la}(\De^{\Psi})= \De^{\Psi_{\la}} $);
\smallskip
\item the families of morphisms $\{\s_\la\}$ and $\{\tau_\la\}$ cover $V$ and $\tW$, respectively, and (in the
analytic case) there are compact subsets $K_\la \subset V_\la$, $L_\la \subset \tW_\la$, for all $\la$, such that
$\bigcup \s_\la(K_\la)$ and $\bigcup \tau_\la(L_\la)$ are (compact) neighbourhoods of $a$ and $(b,0)$, respectively;
\smallskip
\item each $\Psi_\la$ is a pre-monomial morphism;
\smallskip
\item for every $\la$ and $\ta \in \s_\la^{-1}(a)$, there is a remainder $g_\la$ for $\Psi_\la$ at $\ta$, with
a relation $R_\la$, such that $\rho_\ta(R_\la) \leq \rho_a(R)$ and, if\, $\rho_\ta(R_\la) = \rho_a(R)$,
then $F_\la$ is the induced divisor $E_\la \times \IK$ at $\ta$.
\end{enumerate}
\end{lemma}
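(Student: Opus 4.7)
Since $F$ is the extended divisor at $(b,0)$, the sheaf of log derivations tangent to $\pi$ is $\De^\pi = \langle t\,\p/\p t\rangle$. Because the exponents $k_0 < k_1 < \cdots < k_d$ are distinct, each summand $\bx^{\bep_i}\by^{\bzeta_i} t^{k_i}\tU_i$ is an eigenvector of $t\,\p/\p t$ with eigenvalue $k_i$ modulo $O(t^{k_i+1})$. Applying Lemma~\ref{lem:ComputingInvariant} iteratively with $X = t\,\p/\p t$ and $f$ a sufficiently high power of $t$, the resulting Vandermonde system (on the eigenvalues $k_i$) extracts each summand from the iterated derivatives $R, XR, \ldots, X^d R$, thereby identifying (up to units) the log-differential closure $\cI_R^\infty\cdot\cQ_{(b,0)}$ with the monomial ideal $\cJ := (\bx^{\bep_i}\by^{\bzeta_i} t^{k_i} : 0 \leq i \leq d)$. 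In particular $\rho_a(R) \leq d$.

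The ideal $\cJ$ is monomial with respect to $F$, so there is a composite $\tau$ of combinatorial blowings-up of a neighborhood $\tW$ of $(b,0)$ that principalizes $\cJ$ (toric principalization). By Lemma~\ref{lem:ModificationTargetExtension}, $\tau$ lifts to a composite $\sigma$ of combinatorial blowings-up of a neighborhood $V$ of $a$, fitting into diagram~\eqref{eq:transftorel2} with $\Phi_\la$ monomial; Corollary~\ref{cor:DerCombinatorialBU} together with Lemma~\ref{lem:BUPreservingMonomialDeriv} then gives $\sigma^*\De^\Psi = \De^{\Psi_\la}$. By Lemma~\ref{lem:PreservingMu}(1), $\tau^*\cI_R^\infty = \cI_{R_\la}^\infty$ is principal and monomial at every point, and Lemma~\ref{lem:PreservingMu}(2) yields $\rho_\ta(R_\la) \leq \rho_a(R)$ for all $\ta \in \sigma^{-1}(a)$. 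Pre-monomiality of $\Psi_\la$ and the validity of $R_\la := (\tau\times\id)^* R$ as a relation for $g_\la := g\circ\sigma$ are then obtained from Lemma~\ref{lem:PreMonomial-ContollRelations}; in particular, the remainder $g_\la$ retains the monomial form $\tbu^{\tbde}\widetilde{V}$ with $\widetilde{V}(\ta)\neq 0$.

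At each $\ta \in \sigma^{-1}(a)$, setting $\tb := \tPsi_\la(\ta)$, the principalized ideal $\tau^*\cJ\cdot\cQ_{\tb}$ is generated by a single monomial $M$, so $R_\la = M\cdot \tilde{R}_\la$, where $\tilde{R}_\la$ has a unit-coefficient leading term in one of the $(t')^{k_{i_0}}$. Chart-by-chart, I then argue:
\begin{itemize}
\item[(a)] If the strict transform of $\{t=0\}$ does not pass through $\tb$---that is, the principalization has absorbed the $t$-direction into other components of the exceptional divisor---then no coordinate hypersurface at $\tb$ plays the role of the transformed $\{t=0\}$, so $F_\la = E_\la\times\IK$ is the induced divisor at $\tb$; this is the equality case of (4).
\item[(b)] If the strict transform of $\{t=0\}$ persists as a component of $F_\la$ at $\tb$, then examination of the pulled-back summands shows that either $\tilde{R}_\la$ is a unit at $\ta$ (so $\rho_\ta(R_\la) = 0 < \rho_a(R)$), or $\tilde{R}_\la$ has strictly fewer distinct effective $t'$-eigenvalues than $R$ did, in which case the Vandermonde extraction of paragraph one now stabilizes the chain $\cI_{R_\la}^k$ in strictly fewer than $\rho_a(R)$ steps, again giving $\rho_\ta(R_\la) < \rho_a(R)$.
\end{itemize}
The main obstacle will be rigorously establishing case (b)---namely, showing that whenever $\{t=0\}$ survives in $F_\la$ at $\tb$, the combinatorial principalization must have collapsed some pair of distinct $t$-eigenvalues in $\cJ$ into a common effective eigenvalue at $\tb$. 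This requires a careful analysis of the toric principalization charts together with the hypothesis that each $\tU_i(b,0)\neq 0$ (ensuring units remain units under pullback), and it may be convenient to iterate the principalization using the Weierstrass--Tschirnhausen structure of Lemma~\ref{lem:NormalFormPartialMonomial}.
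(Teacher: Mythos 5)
Your overall strategy---principalize a monomial ideal attached to the relation $R$ by combinatorial blowings-up of the target, lift via Lemma~\ref{lem:ModificationTargetExtension}, then analyze what happens chart by chart---does align with the paper's. But there is a genuine gap, which you yourself flag at the end: you do not establish case (b). In fact, the mechanism you propose there (``strictly fewer distinct effective $t'$-eigenvalues'') does not work in general. When the strict transform of $\{t=0\}$ does pass through $\tb$, $t$ pulls back to an exceptional monomial times the new $t$-coordinate $t'$, and the eigenvalues $k_i$ of $t\,\p/\p t$ on the summands of $R$ are carried unchanged to eigenvalues of $t'\,\p/\p t'$ on the corresponding summands of $R_\la$; the units $\tU_i$ remain units under pullback. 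So the Vandermonde count does not drop by this argument alone.

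The paper's resolution is different in structure. It first makes two preliminary reductions that you omit: (i) induction on $(e(a),i(a))$ using Lemma~\ref{lem:TranslationTrick} to eliminate the $\by$-variables, so one may take $q=0$ and the ideal to principalize is $\mathcal K = (\bx^{\bep_i}t^{k_i})$ with no $\by$'s; and (ii) combinatorial blowings-up of the source, using the explicit normal forms \eqref{eq:PreMonomial-Eq1} for $\psi$, to reduce to $\bde<\bga$ (or else conclude that $\Psi$ is already monomial). With these in hand, the paper does not need to argue that the order decreases when $\{t=0\}$'s transform survives---it proves that this situation cannot occur. Writing the transform of $(\bx,t)$ at $\tb$ in the form $x_j=\tbx^{\bla_j}(\bxi+\tbz)^{\brho_j}$, $t=\tbx^{\bla_0}(\bxi+\tbz)^{\brho_0}$, the paper shows by contradiction that $\bla_0$ must be $\IQ$-linearly dependent on $\bla_1,\ldots,\bla_p$: if not, $t$ pulls back to a pure monomial, hence $R\circ\tau$ is principal monomial at $\tb$ (since $\tau$ principalizes $\mathcal K$); but then, because $\bde<\bga$ allows the morphism $\Psi_g=(\Phi,g)$ to factor through $\tau$ (replace the higher-order part of $\psi$ by zero), one gets $(\Phi,g)^*\circ\tau^* R=0$, forcing some pulled-back coordinate to vanish---contradicting either dominance of $\Phi$ or $g\neq 0$. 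The linear dependence of $\bla_0$ then yields $t=\tbx^{\bla_0}(\xi_0+\tz_0)$ with $\xi_0\neq 0$, so the strict transform of $\{t=0\}$ misses $\tb$, $\tau^*(\De^\pi)=\De^{\widetilde\pi}$, and the divisor is induced at $\tb$; Lemma~\ref{lem:PreservingMu}(2) then gives $\rho_\ta(R_\la)\leq\rho_a(R)$. So to repair your proof, you should not try to show the eigenvalue count drops in case (b); you should first carry out the reductions to $q=0$ and $\bde<\bga$, and then show case (b) is vacuous via the factorization/contradiction argument.
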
 

\begin{remark}\label{rem:Nak}
Assume that $F$ is the extended divisor at $(b,0)$ (so that $\De^\pi$ is generated by $t\,\p/\p t$) and that $R$ has the form
\begin{equation}\label{eq:Nak}
R(\bx.\by,\bz,t) = \sum_{i=0}^d a_i(\bx.\by,\bz)\, t^{k_i} \tU_i(\bx.\by,\bz,t),
\end{equation}
where $k_0 < k_1 < \cdots < k_d$ and $\tU_i(b,0) \neq 0$, $i=0,\ldots,d$. Then each term
$a_i(\bx.\by,\bz) t^{k_i} \in \left(\cI_R\right)^{\De^\pi}_d$ (notation of Definitions \ref{def:ChainAndClosure}),
by Lemma \ref{lem:ComputingInvariant} applied with 
$F= R$, $f_i =a_it^{k_i}\tilde{U}_i$, $f= 0$ and $X= t\,\p/\p t$, and it follows that $\rho_a(R) \leq d$. Moreover, $R$ has
an expression of the form \eqref{eq:Nak} with $d=\rho_a(R)$, essentially by Nakayama's lemma.

We recall that, in the analytic and algebraic cases, the existence of a relation of finite order comes from
Proposition \ref{prop:PreparationPreMonomial2}; then the form \eqref{eq:Nak} is a consequence essentially of
Noetherianity. On the other hand, in the general quasianalytic case, where we use power substitutions, Remark
\ref{rem:rel}(2) provides a relation in the form  \eqref{eq:Nak}. See also \S\ref{subsec:II}.
\end{remark}

\begin{proof}[Proof of Lemma \ref{lem:PreMonomial-ContollRelations2}]
We argue by induction on $(e(a),i(a))$. If $e(a)=0$ (or $i(a)=0$), then $\Phi$ has
the form $\Phi(\bv,\bw) = \bv$, and $F$ is the induced divisor at $a$ (according to 
the normal forms in Definition \ref{def:PreMonomial}(2)). 
By induction, therefore, we can assume the result for $(e(\cdot),i(\cdot))< (e(a),i(a))$.
We consider two cases.

\medskip\noindent
(a) $q = e(a)-i(a)>0$. By Lemma \ref{lem:TranslationTrick}, there is a commutative diagram 
\eqref{eq:mainCommutativeRelationI} (with $V=M,\,, W=N$), where $\s$ and $\tau$ are sequences of combinatorial
blowings-up, such that,  for all $\widetilde{a} \in \sigma^{-1}(a)$, $\tPhi$ is monomial at $\ta$ and $(e(\ta),i(\ta))<(e(a),i(a))$.
Clearly, for all $\widetilde{a} \in \sigma^{-1}(a)$, $\tPsi = (\tPhi, \widetilde{\psi}=\psi\circ\s)$ is pre-monomial at $\ta$; moreover,
by  Lemma \ref{lem:PreMonomial-ContollRelations}, $\tg = g\circ\s$ is a remainder for $\tPsi$ at $\ta$, and 
$\tR = (\tau\times\text{Id})^*R$ is a relation with $\rho_{\ta}(\tR) \leq \rho_a(R)$. The result, therefore, follows by induction.

\medskip\noindent
(b) $q = e(a)-i(a)=0$. (Using the notation of Definition \ref{def:PreMonomial}), $\psi$ can be written in one of the
following three forms:
\begin{equation}\label{eq:PreMonomial-Eq1}
\psi = g + \varphi = \begin{cases}
\bu^{\pmb{\delta}} V(\bu,\bv)\\
 \bu^{\pmb{\delta}} V(\bu,\bv) + \pmb{u}^{\pmb{\gamma}} \\
 \bu^{\pmb{\delta}} V(\bu,\bv) + \pmb{u}^{\pmb{\gamma}} (\eta + w_1)
 \end{cases}
 \end{equation}
 where $\bga$ is linearly independent of, or linearly dependent on the $\bal_j$, in the second or third cases, respectively
 (and $\bde$ is dependent on the $\bal_j$). In the second or third cases, after a sequence of combinatorial
 blowings-up in the source, we can assume that either $\bde<\bga$ or $\bga\leq \bde$,
 respectively. Suppose that  $\bga\leq \bde$. Then, in the second case, $\bga <\bde$, and we can
 absorb the remainder into $\vp$ to get a monomial morphism. In the third case, $\psi = \bu^{\bga}(\eta + \bu^{\bde-\bga} V + w_1)$,
 and $\eta + \bu^{\bde-\bga} V$ is a unit, even if $\bga=\bde$ (otherwise, $\Psi$ is not a morphism, because the divisor $F$
 includes $\{t=0\}$); thus $\psi = \bu^{\bga}(\teta + \tw_1)$, where $\teta\neq 0$), after a change of the coordinate $w_1$,
 and $\Psi$ is again a monomial morphism. Therefore, we can assume that $\bde  < \bga$.
 
 Let $\cK$ denote the ideal sheaf $(\bx^{\pmb{\varepsilon}_i} t^{k_i})$ (there are no $\by$'s because $q=0$.
 Let $\tau: (\tN, \tF) \to (N\times \IK, F)$ be a sequence of combinatorial blowings-up such that $\tau^*\cK$
 is principal and monomial. By Lemma \ref{lem:ModificationTargetExtension}, there is a commutative diagram
\begin{equation}\label{eq:diag}
\begin{tikzcd}
(\tM,\tD) \arrow{d}{\tPsi} \arrow{r}{\s} \arrow{dr}{\Psi'} & (M,D) \arrow{d}{\Psi}\\
(\tN,\tF) \arrow{r}{\tau} & (N\times\IK,F)
\end{tikzcd}
\end{equation}
where $\s$ is a sequence of combinatorial blowings-up. Clearly, the diagonal morphism $\Psi' = (\Phi',\psi')$ 
is pre-monomial at every $\ta \in \s^{-1}(a)$, with remainder $g'=g\circ\s$ and relation $R'=R$; moreover,
$\rho_{\ta}(R') \leq \rho_a(R)$, by Lemma \ref{lem:PreMonomial-ContollRelations}.

Let $\ta \in \s^{-1}(a)$. If $(e(\ta),i(\ta))<(e(a),i(a))$, then we can finish by induction.

Therefore, we can suppose that $(e(\ta),i(\ta))=(e(a),i(a))$. Then we can write $x_j = \tbu^{\tbal_j}$, $j=1,\ldots,p$ (with
respect to pre-monomial coordinates at $\ta$, say $\tbu = (\tu_1,\ldots,\tu_{r'})$), 
where $\tbal_1,\ldots,\tbal_p$ are $\IQ$-linearly independent.

Since $\tau$ is a sequence of combinatorial blowings-up, $\tau^{-1}((b,0))$ is covered by charts 
with coordinates $(\bar{\pmb{x}},\pmb{z})$ in which
\[
x_j = \bar{\pmb{x}}^{\pmb{\eta}_j},\  j=1,\ldots, p, \quad \text{and}\quad  t = \bar{\pmb{x}}^{\pmb{\eta}_0},
\]
where the matrix with rows $\pmb{\eta}_0,\ldots,\pmb{\eta}_p$ has determinant $\pm 1$. Let $\tb = \tPsi(\ta)$,
so that $\tau( \tb ) = (b,0)$. It follows that there are coordinates $(\widetilde{\pmb{x}},\widetilde{\pmb{z}},\pmb{z})$ 
at $\widetilde{c}$ in which
\[
x_j = \widetilde{\pmb{x}}^{\pmb{\lambda}_j}(\pmb{\xi} + \widetilde{\pmb{z}})^{\pmb{\rho}_j}, \ j=1,\ldots, p,\quad
\text{and}\quad t = \widetilde{\pmb{x}}^{\pmb{\lambda}_0}(\pmb{\xi} +\widetilde{\pmb{z}})^{\pmb{\rho}_0},
\]
where every entry of $\pmb{\xi}$ is nonzero. Say $\tbx = (\tx_1,\ldots, \tx_{p'})$; thus $p'\leq p+1$. 

Let $L$ denote the $p \times p'$ matrix with rows $\bla_1,\ldots,\bla_p$. We can write 
$\widetilde{x}_j = \tbu^{\pmb{\gamma}_j} W_j$, $j=1,\ldots,p'$, where the $W_j$ are units.
Let $A$ and $C$ denote the $p\times r'$ and $p'\times r'$ matrices
with rows $\tbal_1,\ldots,\tbal_p$ and $\bga_1,\ldots,\bga_{p'}$, respectively. 
By \eqref{eq:diag}, $C\cdot L = A$. It follows that $L$ has rank $p$; therefore, $p' \geq p$ and 
$\bla_1,\ldots,\bla_p$ are linearly independent. 

\medskip\noindent
We claim that  $\bla_0$ is $\IQ$-linearly dependent on $\bla_1,\ldots,\bla_p$; this will be
proved below. Assuming the claim, then $p'=p$ and, after a coordinate
change in the target of $\tPsi$, we can write
\begin{equation}\label{eq:ControlingRelations1b}
x_j = \widetilde{\pmb{x}}^{\pmb{\lambda}_j}, \ j=1,\ldots, p,\quad
\text{and}\quad t = \widetilde{\pmb{x}}^{\pmb{\lambda}_0}(\xi_0 +\tz_0).
\end{equation}
After a coordinate change in $\tbu$ (absorbing units), we can also write $\widetilde{x}_j = \tbu^{\pmb{\gamma}_j}$, $j=1,\ldots,p$.
Therefore, $\widetilde{\Phi} := (\widetilde{x}_1,\ldots, \widetilde{x}_p)$ is a monomial morphism. Moreover,
let $\widetilde{\pi}$ denote the projection $\widetilde{\pi}(\widetilde{\pmb{x}},\widetilde{z}_0,\pmb{z})=\widetilde{z}_0$. Since
\[
\tau^{\ast}\left( t\,\frac{\p}{\p t}\right) = \left(\xi_0 + \widetilde{z}_0\right) \frac{\p}{\p \widetilde{z}_0}\,,
\]
it follows that $\tau^{\ast}(\De^{\pi}) = \De^{\widetilde{\pi}}$, and, since $\xi_0 + \widetilde{z}_0$ is unit, the divisor $\widetilde{F}$
is of induced type $\tE\times\IK$  at $\tb$. 

Write $\bu^{\bde} = \tbu^{\tbde}$, $\bu^{\bga} = \tbu^{\tbga}$ (so that $\tbde < \tbga$). Then
$A \cdot L^{-1} \cdot \pmb{\lambda_0}   = \tbde$ and, by \eqref{eq:PreMonomial-Eq1}, \eqref{eq:ControlingRelations1b},
\[
\widetilde{z_0} = \begin{cases}
 V(\tbu,\bv) -\xi_0\\
 V(\tbu,\bv)-\xi_0 + \tbu^{\tbga-\tbde} \\
  V(\tbu,\bv)-\xi_0 + \tbu^{\tbga-\tbde} (\eta + w_1)
 \end{cases}
\]
where $V(\ta) -\xi_0 = 0$ (since the coordinate systems are centred at $\ta$ and $\tb$). 
In particular, $\widetilde{g} := V(\tbu,\bv) -\xi_0$\, is a remainder for $\tPsi$ at $\ta$.
Moreover, if $\widetilde{R} := R \circ \tau$, then 
\[
\widetilde{R}(\widetilde{\Phi},\widetilde{g})= R(\Phi,g) = 0,
\]
by \eqref{eq:ControlingRelations1b}, so that $\tR$ is a relation. 
Since $\tau^*(\De^{\pi}) = \De^{\widetilde{\pi}}$, it then follows from Lemma \ref{lem:PreservingMu}(2) that
$\rho_{\ta}(\tR) \leq \rho_a(R)$, which completes the proof.

\medskip\noindent
Finally, assume that $\bla_0, \bla_1,\ldots,\bla_p$ are linearly independent; we have to show that this assumption leads to
a contradiction. Indeed, instead of \eqref{eq:ControlingRelations1b}, we have
\begin{equation}\label{eq:ControlingRelations2a}
x_j = \widetilde{\pmb{x}}^{\pmb{\lambda}_j}, \ j=1,\ldots, p,\quad
\text{and}\quad t = \widetilde{\pmb{x}}^{\pmb{\lambda}_0}.
\end{equation}
Therefore, 
$\tau^*(t^{k_i}\pmb{x}^{\pmb{\varepsilon}_i}) 
= \widetilde{\pmb{x}}^{k_i \pmb{\lambda}_0 + \sum_{j=1}^{p}\varepsilon_{ij}\pmb{\lambda}_i}$,
for all $i$, and the exponents in the right-hand sides of these expressions are all distinct 
(because the $k_i$ are distinct). Since $\tau$ principilizes the ideal $\mathcal{K}$, it follows that 
$\widetilde{R} := R \circ \tau$ is principal and monomial at $\tb$; i.e.,
\begin{equation}\label{eq:PreMonomial-2}
\widetilde{R} = \widetilde{\pmb{x}}^{\pmb{\varepsilon}} \widetilde{U}(\widetilde{\pmb{x}},\pmb{z}),
\end{equation}
where $\widetilde{U}$ is a unit.

Consider the morphism $\Psi_g:=(\Phi,g)$. We claim that there exists a morphism $\widetilde{\Psi}_g$ such that 
$\Psi_g\circ\s = \tau \circ \widetilde{\Psi}_g $ and $\widetilde{\Psi}_g (\ta)=\widetilde{b}$. Indeed,
according to \eqref{eq:ControlingRelations2a}, the morphism $\tPsi$ is given by 
$\Psi\circ\s$ followed by the monomial mapping with
rational exponents determined by the inverse of the $(p+1) \times (p+1)$ matrix with
rows $\bla_j$, $j=0,\ldots,p$. Since the last component of $\Psi$ is
                $ t = \bu^{\bde} (V + \bu^{\bga-\bde} X)$, for
some expression $X$ (from \eqref{eq:PreMonomial-Eq1}), and $\bde < \bga$, this morphism will still be well-defined if we
replace $X$ by zero.

Therefore, $\widetilde{R}(\widetilde{\Psi}_g) = R(\Psi_g) = 0$, so that, by \eqref{eq:PreMonomial-2},
$\widetilde{\Psi}_g^{\ast}(\widetilde{x}_j) = 0$, for some $j=1,\ldots,p+1$. It follows from
\eqref{eq:ControlingRelations2a} that
\begin{itemize}
\item[either] $\Psi_g^{\ast}(x_j) = 0$, for some $j$, which means that $\Phi$ is not dominant,
\smallskip
\item[or] $\Psi_g^{\ast}(t) = 0$, which means that $g=0$.
\end{itemize}
In either case, we have a contradiction.
\end{proof}

\section{Proof of monomialization}\label{sec:LocalMon}

\subsection{The inductive scheme}\label{subsec:indshceme}
Let $\Phi:(M,D) \to (N,E)$ denote a dominant monomial morphism, of quasianalytic class $\cQ$.
Set $m=\dim M$ and $n=\dim N$. In this section, we will prove our main 
results---Theorems \ref{thm:mainR2}, \ref{thm:mainA2} on monomialization, and Theorem
\ref{thm:ideal} on desingularization of an ideal relative to a monomial 
morphism---within a common inductive scheme. Induction is with respect to pairs $(m,n)$,
ordered lexicographically. Recall that $\De^\Phi$ denotes the subsheaf of
$\cD_D = \Der_M(-\log D)$ of logarithmic derivatives tangent to $\Phi$ (Definition
\ref{def:tang}).

We consider the following list of assertions or claims. (The full inductive scheme below
will be needed for general quasianalytic classes, but can be considerably simplified in the
analytic or algebraic cases; see Remarks \ref{rem:Noeth1}, \ref{rem:Noeth2}.)

\medskip
\noindent
I.A.a($m,n$). Let $\mathcal{I}$ denote a privileged ideal (sheaf) on $M$, and let $a \in M$. 
There exist open neighbourhoods
$V$ of $a$ in $M$ and $W$ of $b=\Phi(a)$ in $N$, and a finite number of commutative diagrams
\begin{equation}\label{eq:mainCommutative}
\begin{tikzcd}
(V_\la,D_{\la}) \arrow{d}{\Phi_{\la}} \arrow{r}{\s_\la} & (V,D) \arrow{d}{\Phi}\\
(W_\la,E_{\la}) \arrow{r}{\tau_\la} & (W,E)
\end{tikzcd}
\end{equation}
where
\begin{enumerate}
\item each $\s_\la$ and $\tau_\la$ is a composite of finitely many smooth local blowings-up 
(and local power substitutions);
\smallskip
\item the families of morphisms $\{\s_\la\}$ and $\{\tau_\la\}$ cover $V$ and $W$, 
(and there are compact subsets $K_\la \subset V_\la$, $L_\la \subset W_\la$, for each $\la$, 
such that
$\bigcup \s_\la(K_\la)$ and $\bigcup \tau_\la(L_\la)$ are (compact) neighbourhoods of $a$ and $b$,
respectively);
\smallskip
\item each $\Phi_\la$ is a monomial morphism;
\smallskip
\item the pull-back $\sigma^{\ast}_{\la}(\De^{\Phi}) = \De^{\Phi_{\la}}$;
\smallskip
\item for every $\widetilde{a} \in V_{\la}$, either
\smallskip
\begin{enumerate}
\item[(a)] the ideal $\widetilde{\mathcal{I}} := \sigma^{\ast}_{\la}(\mathcal{I})$ is principal and
monomial at $\widetilde{a}$,
\smallskip
\item[or (b)] there exists a regular vector field (germ) $X \in \De^{\Phi_{\la}}_{\ta}$.
\end{enumerate}
\end{enumerate}

\begin{remark}\label{rem:indcases}
In condition (1) above, ``(and local power substitutions)'' should be understood to mean
that local power substitutions are needed \emph{except} in the analytic or algebraic cases,
where only local blowings-up are needed. Similarly, in (2), the phrase in brackets does not apply
to the algebraic case.

Power substitutions are needed in the real quasianalytic case only for Proposition \ref{prop:cleaning}
(see also Remark \ref{rem:cleaning}). Proposition \ref{prop:cleaning} is used in the inductive step
in this section (see \S\ref{subsec:II}), but power substitutions are not otherwise involved in 
the arguments below.
\end{remark}

\smallskip
\noindent
I.A.b($m,n$). Let $\mathcal{I}$ be a privileged ideal which is $\De^{\Phi}$-closed, i.e.,
$\De^{\Phi}(\mathcal{I})\subset \mathcal{I}$, and let $a \in M$. There exist open neighbourhoods
$V$ of $a$ and $W$ of $b=\Phi(a)$, and a finite number of commutative diagrams \eqref{eq:mainCommutative}, 
satisfying conditions (1)--(4) above, together with

\begin{enumerate}
\item[(5)] for all $\widetilde{a} \in V_{\la}$, the ideal $\widetilde{\mathcal{I}} := \sigma^{\ast}_{\la}(\mathcal{I})$ is principal and monomial at $\widetilde{a}$.
\end{enumerate}

\medskip
\noindent
I.A.c($m,n$). Let $\mathcal{I}$ be a privileged ideal and let $a \in M$. There exist open neighbourhoods
$V$ of $a$ and $W$ of $b=\Phi(a)$, and a finite number of commutative diagrams \eqref{eq:mainCommutative}, satisfying conditions (1)--(3) above, together with

\begin{enumerate}
\item[(4)] for all $\widetilde{a} \in V_{\la}$, the ideal $\widetilde{\mathcal{I}} = \sigma^{\ast}_{\la}(\mathcal{I})$ is principal and monomial at $\widetilde{a}$.
\end{enumerate}

\medskip
\noindent
I.B.a($m,n$). Let $\psi : M\to \mathbb{K}$ denote a function of class $\cQ$. 
Assume that $\Psi = (\Phi,\psi):(M,D) \to (N\times \mathbb{K},F)$ is a well-defined morphism, where
$F$ denotes the divisor given either by $E \times \mathbb{K}$ or by $(E \times \mathbb{K})\cup (N\times \{0\})$ (see Definition \ref{def:PreMonomial} and Remark \ref{rem:premonom2}).
Let $a\in M$. There exist open neighbourhoods
$V$ of $a$ and $W$ of $b=\Phi(a)$, and a finite number of commutative diagrams \eqref{eq:mainCommutative}, satisfying conditions (1)--(4) of I.A.a(m,n) above, together with

\begin{enumerate}
\item[(5)] for every $\widetilde{a} \in V_{\la}$, either
\begin{enumerate}
\item the transformed morphism $\Psi_{\la} = (\Phi_{\la}, \psi_{\la})=(\Phi_{\la},\psi \circ \sigma)$ is pre-monomial at $\widetilde{a}$,
\smallskip
\item[or (b)] there exists a regular vector field $X \in \De^{\Phi_{\la}}_{\ta}$.
\end{enumerate}
\end{enumerate}

\medskip
\noindent
I.B.b($m,n$). Let $\psi : M\to \mathbb{K}$ be a function of class $\cQ$. 
Assume that $\Psi = (\Phi,\psi):(M,D) \to (N\times \mathbb{K},F)$ is a well-defined morphism, where
$F$ denotes the divisor given either by $E \times \mathbb{K}$ or by $(E \times \mathbb{K})\cup (N\times \{0\})$.
Let $a\in M$. There exist open neighbourhoods
$V$ of $a$ and $W$ of $b=\Phi(a)$, and a finite number of commutative diagrams \eqref{eq:mainCommutative}, satisfying conditions (1)--(3) of I.A.a(m,n), together with

\begin{enumerate}
\item[(4)] for every $\widetilde{a} \in V_{\la}$ the morphism 
$\Psi_{\la} = (\Phi_{\la}, \psi_{\la})=(\Phi_{\la},\psi \circ \sigma)$ is pre-monomial at $\widetilde{a}$.
\end{enumerate}

\medskip
\noindent
II($m,n$). Let $a\in M$ and let $\psi: M \to \IK$ be a function of class $\cQ$; say $\psi(a)=0$. Assume that 
$\Psi = (\Phi,\psi):(M,D) \to (N\times \mathbb{K},F)$ is a well-defined 
morphism that is \emph{pre-monomial} at $a$, where
$F$ denotes either the induced divisor
$E \times \mathbb{K}$ or the extended divisor $(E \times \mathbb{K})\cup (N\times \{0\})$.
There exist open neighbourhoods
$V$ of $a$ and $W$ of $b=\Phi(a)$, and a finite number of commutative diagrams \begin{equation}\label{eq:mainCommutative2}
\begin{tikzcd}
(V_\la,D_{\la}) \arrow{d}{\Psi_{\la}} \arrow{r}{\s_\la} & (V,D) \arrow{d}{\Psi} \\
(W_\la,F_{\la}) \arrow{r}{\tau_\la} & (W,F) 
\end{tikzcd}
\end{equation}
satisfying conditions (1) and (2) of I.A.a(m,n), together with

\begin{enumerate}
\item[(3)] each $\Psi_\la$ is a monomial morphism;
\smallskip
\item[(4)] the pull-back $\sigma^{\ast}_{\la}(\De^{\Psi}) = \De^{\Psi_{\la}}$.
\end{enumerate}

\begin{remark}\label{rem:short}
We will use the following shorthand when referring to the assertions above:
I.A($m,n$) means I.A.a($m,n$), I.A.b($m,n$) and I.A.c($m,n$) (likewise, I.B($m,n$));
I($m,n$) means I.A($m,n$) and I.B($m,n$); I.A means I.A($m,n$), for all ($m,n$); etc.
\end{remark}

Before proving all the claims above, we show that our main theorems follow. 
Theorems \ref{thm:ideal} and \ref{thm:corideal}, in fact, are clearly equivalent to I.A.b
and I.A.c (respectively).

\begin{proof}[Proof of Theorems \ref{thm:mainR2}, \ref{thm:mainA2}]
Let $\Phi : (M,D) \to (N,E)$ denote a morphism, and let $a \in M$. Consider a local
coordinate system $\pmb{x} = (x_1,\ldots,x_n)$ for $N$ at $b =\Phi(a)$, such that 
$E=\{x_1 \cdots x_l =0\}$, for some $l\leq n$, and write $\Phi = (\Phi_1,\ldots, \Phi_n)$
with respect to these coordinates. For each $k=1,\ldots,n$, we consider the morphism 
$\Phi^{k} = (\Phi_1,\ldots, \Phi_k): (M,D) \to (N^{k},E^{k})$, defined in a neighbourhood of $a$,
where $N^k := \{x_{k+1}=\cdots =x_n=0\}$ and $E^k := \{x_1 \cdots x_{l'} =0\}$, $l'= \min\{k,l\}$. 
(In the algebraic case, this makes sense as usual with respect to
an \'etale neighbourhood of $b$, perhaps including extension of the base field.)

We will prove the following claim by induction on $k$. (Theorems \ref{thm:mainR2}, \ref{thm:mainA2} correspond to the case $k=n$.)

\medskip
\noindent
\emph{Claim.} For fixed $k$, $1\leq k\leq n$, there exist open neighbourhoods $V$ of $a$ in $M$ and 
$W^k$ of $b:=\Phi^{k}(a)$ in $N^k$, and a finite number of commutative diagrams
\begin{equation}\label{eq:mainCommutative3}
\begin{tikzcd}
(V_\la,D_{\la}) \arrow{d}{\Phi_{\la}^{k}} \arrow{r}{\s_\la} & (V,D) \arrow{d}{\Phi^{k}}\\
(W_\la^k,E_{\la}^k) \arrow{r}{\tau_\la} & (W^k,E^k)
\end{tikzcd}
\end{equation}
where

\begin{enumerate}
\item each $\s_\la$ and $\tau_\la$ is a composite of finitely many smooth local blowings-up (and local power substitutions);
\smallskip
\item the families of morphisms $\{\s_\la\}$ and $\{\tau_\la\}$ cover $V$ and $W^k$, 
(and there are compact subsets $K_\la \subset V_\la$, $L_\la \subset W^k_\la$, for all $\la$, such that
$\bigcup \s_\la(K_\la)$ and $\bigcup \tau_\la(L_\la)$ are (compact) neighbourhoods of $a$ and $b$,
respectively);
\smallskip
\item each $\Phi^k_\la$ is a monomial morphism.
\end{enumerate}

\smallskip
Assume that the claim is true for $0\leq k'<k$. Then we can assume (after transformation by 
a finite family of commutative diagrams of the form \eqref{eq:mainCommutative3}), that 
$\Phi^{k}=(\Theta^{r},0\ldots,0,\Phi_k)$, where $\Theta^r$ is a dominant monomial morphism 
of rank $r \leq k-1$. 
Consider the morphism $\widetilde{\Phi}^{k} := (\Theta^r,\Phi_k)$. By the assertions 
I.B.b($m,r$) and II($m,r$), there is a finite family of commutative diagrams of the form
\eqref{eq:mainCommutative3}, transforming $\widetilde{\Phi}^{k}$ to a monomial morphism.
Therefore, $\Phi^{k}$ is transformed to a monomial morphism by a finite family of commutative 
diagrams \eqref{eq:mainCommutative3}.
\end{proof}

\noindent
{\bf The inductive steps.} \emph{Given $(m,n)$, we will prove each of the assertions
listed above under the assumptions that all the assertions hold for $(m',n') < (m,n)$,
and the preceding assertions in the list hold for $(m,n)$.}

\medskip
To begin the induction, it is therefore enough to prove I.A.a($0,0$), which is trivial.
It follows that all of the assertions in the list are true, for all $(m,n)$.

\begin{remarks}\label{rem:indscheme}
(1) In fact, if $m=0$, then all the assertions in the list above are trivial, and, if $n=0$,
then all the assertions follow essentially from resolution of singularities---I.A($m,0$) from
desingularization of the ideal $\cI$ (Remark \ref{rk:ToroidalHullResolution}(1) is enough
for I.A.a($m,0$) and I.A.b($m,0$)), and  I.B($m,0$) from desingularization
of the ideal generated by $\psi$ (again Remark \ref{rk:ToroidalHullResolution}(1) is enough
for I.B.a($m,0$)); II($m,0$) follows from the fact that, if $n=0$, then every pre-monomial morphism is monomial (see Remarks \ref{rem:premonom1}).

\medskip\noindent
(2) In the proofs of each of our assertions, in the following subsections, 
it should be understood that we are making the inductive
assumptions above---we will not repeat the assumptions.

We will begin with the proof of I.A.a($m,n$), which is the most delicate.
(The proof of I.A.c($m,n$), for example, is simpler and follows the same 
essential steps.)
\end{remarks}

\begin{remark}\label{rem:Noeth1}
In the analytic or algebraic cases, we can prove I.A.b($m,n$) directly, assuming I.A.b($m',n'$) and II($m',n'$),
for $(m',n') < (m,n)$, by induction. The proof of II($m,n$) below already requires only I.A.b($m,n$) and, moreover,
I.A.c is obtained as a corollary of I.A.b, again by induction. In particular, in these cases,
the proofs of Theorems \ref{thm:ideal} and \ref{thm:corideal} 
do not depend on I.A.a and I.B; we can begin by proving I.A.b($m,n$) in a way that
mimics the proof of I.A.a($m,n$) below, but is simplified using Remark \ref{rem:BasisFormalIdeal}.
A brief idea will be given in Remark \ref{rem:Noeth2}.
\end{remark} 

\subsection{Desingularization relative to a
monomial morphism: assertions I.A}\label{subsec:I.A}

\begin{proof}[Proof of I.A.a($m,n$)]
We can assume that there is no $\Phi$-free variable in any $\Phi$-monomial coordinate system (otherwise, we would be done
since condition (5)(b) would be satisfied; see Lemma \ref{lem:BasisPhiDer} and Remark \ref{rk:FreeCoordinates}). 

We can monomialize the toroidal hull of $\mathcal{I}$ by combinatorial blowings-up (see 
Example \ref{ex:ToroidalHull} and Remarks
\ref{rk:ToroidalHullResolution}); then the transform of $\cI$ is principal and monomial at any $m$-point (a point lying in $m$
components of $D$) over $a$, and condition (4) is satisfied at such a point, by Corollary 
\ref{cor:DerCombinatorialBU}.
Therefore, we can assume that $a$ is not an $m$-point; i.e., there is a $\Phi$-monomial coordinate system 
$(u_1,\ldots, u_r,v_{r+1},\ldots, v_m)$ at $a$ with $r<m$. In particular, we can express the monomial morphism $\Phi$ 
as $\Phi = (\varphi,\psi)$ at $a$, with
\begin{equation}\label{eq:varphi0}
\text{either}\quad \psi(\pmb{u},\pmb{v},w) = w\quad \text{or}\quad \psi(\pmb{u},\pmb{v},w) = \pmb{u}^{\pmb{\beta}}(\xi + w),
\end{equation}
where $w$ is a $\varphi$-free coordinate and, in the second case, $\xi \neq 0$ and $\pmb{\beta}$ is $\mathbb{Q}$-linearly dependent on the exponents $\{\pmb{\alpha}_j\}$ of the monomial morphism $\varphi$ (using the notation of \eqref{eq:mon1});
here the variables $(v_{r+1},\ldots, v_m)$ above are written $(\bv,w)$.

The log derivations $\De^{\varphi}$, $\De^{\Phi}$ tangent to the morphisms $\vp$, $\Phi$
have monomial bases at $a$ given by Lemma \ref{lem:BasisPhiDer}:
\begin{equation}\label{eq:BasisDer}
\begin{aligned}
\De^{\varphi} &= \left(X^1,\ldots, X^{m-n},\,Y\right),\, \text{ where } X^i(\psi) = 0  \text{ and } Y = \frac{\partial}{\partial w},\\
\De^{\Phi} &= \left(X^1,\ldots, X^{m-n}\right).
\end{aligned}
\end{equation}
Lemma \ref{lem:PreservingLogAdapted} allows us to control the pull-back of $\De^{\Phi}$ (in order to get condition (4)), 
provided that we control the pull-back of $\De^{\varphi}$; this is used implicitly throughout the proof.

Consider the closure $\mathcal{I}_{\infty}^{\varphi}$ of $\mathcal{I}$ by $\De^{\varphi}$ (see Definition \ref{def:InvariantPairs}). 
By definition, $Y(\mathcal{I}_{\infty}^{\varphi}) \subset \mathcal{I}_{\infty}^{\varphi}$. Let 
$\mathcal{K} := \mathcal{I}_{\infty}^{\varphi}|_{w=0}$ denote the restriction of $\mathcal{I}_{\infty}^{\varphi}$ to $\{w=0\}$. 
Then $\mathcal{K}$ and $\mathcal{I}_{\infty}^{\varphi}$ induce the same ideal in the ring of formal power series in 
$(\pmb{u},\pmb{v},w)$ at $a$, by Lemma \ref{rk:BasisFormalIdeal}.
Since $\vp$ and $\mathcal{K}$ are both independent of $w$, we can apply I.A.b($m-1,n-1$) to $\varphi$ and $\mathcal{K}$
to reduce to the case that $\mathcal{K}$ is principal and monomial. We can also assume that \eqref{eq:varphi0} is
preserved; i.e., $\Phi$ is a monomial morphism. Therefore, $\mathcal{I}_{\infty}^{\varphi}$ is principal and monomial, 
by the division and quasianalyticity axioms of Definition \ref{def:quasian}.

By Lemma \ref{lem:NormalFormIdeal}, since $\mathcal{I}_{\infty}^{\varphi}$ is principal and monomial,
$$
d:= \nu_a(\cI,\vp) = \mu_{a}(\mathcal{I},\De^{\vp}) < \infty;
$$
moreover, $d=0$ if and only if $\mathcal{I}$ is principal monomial---in this case, we are done.

We now argue by induction on $d$.
We can assume that $w$ is the only $\varphi$-free variable since, otherwise, $\Phi$ would have a free variable and 
condition (5)(b) would be satisfied. 

As in Lemma \ref{lem:NormalFormIdeal}, $\mathcal{I}$ has a system of generators $F_\iota$, $\iota \in I$, of the form
\begin{equation}\label{eq:wprep}
F_\iota(\pmb{u},\pmb{v},w) = \pmb{u}^{\pmb{\gamma}}\left( \widetilde{F}_{\iota}(\pmb{u},\pmb{v},w) w^d + \sum_{j=0}^{d-1} f_{\iota j}(\pmb{u},\pmb{v})w^j  \right)
\end{equation}
where $\mathcal{I}_{\infty}^{\varphi} = (\pmb{u}^{\pmb{\gamma}})$, $\widetilde{F}_{\iota_d}(0)\neq 0$ for some $\io = \iota_d\in I$,
and $f_{\iota j}(a) =0$ for all $\iota \in I$, $0\leq j\leq d-1$. 
By the implicit function theorem (Definition \ref{def:quasian}, axiom (2)) applied to the equation
$\p^{d-1}F_{\io_d}(\bu,\bv,w)/\p w^{d-1} = 0$,
we can make a change of coordinates to reduce to the case
that $f_{\iota_d,d-1}=0$ (i.e., $F_{\iota_d}$ has Tschirnhausen form). 

But $\psi$ is now of the form
$\psi(\pmb{u},\pmb{v},w) = \pmb{u}^{\pmb{\beta}}(\xi + w + h(\pmb{u},\pmb{v}))$, where $h(a)=0$
(and we admit the possibility that $\bbe=\bzero$ and $\xi=0$ to cover both cases of \eqref{eq:varphi0}).
Let us assume, more generally, that
\begin{equation}\label{eq:varphi2}
\psi(\pmb{u},\pmb{v},w) = g(\bu,\bv) + \pmb{u}^{\pmb{\beta}}(\xi + w + h(\pmb{u},\pmb{v})),
\end{equation}
where $dg \wedge d\vp = 0$ (see Definition \ref{def:PreMonomial}) because we will apply I.B.a($m-1,n$) in the argument below,
and will therefore need to use this weaker assumption in 
the inductive step.

Consider the privileged ideal sheaves $\cK_j := (f_{\io j})_{\io \in I}$, $j=0,\ldots,d-1$. After applying
I.A.a($m-1,n-1$) to the monomial morphism $\vp$ and the ideals $\cK_j$, we can assume that either 
$\cK_j=(0)$ or $\cK_j$ is principal and monomial, $j=0,\ldots,d-1$; i.e.,
\begin{equation}\label{eq:BasisIdeal}
F_\iota(\pmb{u},\pmb{v},w) = \pmb{u}^{\pmb{\gamma}}\left( \widetilde{F}_\iota(\pmb{u},\pmb{v},w) w^d + \sum_{j=0}^{d-1} 
a_{\iota j}(\pmb{u},\pmb{v})\pmb{u}^{\br_j}w^j  \right), \quad \iota\in I,
\end{equation}
where 
$a_{\io_d,d-1}=0$ and, for each $j=0,\ldots,d-1$, either $a_{\io j} =0$ for all $\io$, or
there exists $\io(j)\in I$ such that $a_{\io(j) j}(0)\neq0$. Note that the blowings-up involved in this process have centres defined
by an ideal with generators depending only on $(\pmb{u},\pmb{v})$. It follows that, after a coordinate change in order to absorb a unit into $w$, \eqref{eq:varphi2} remains of the same form.

We now transform $\Phi$ to a pre-monomial morphism without changing the coordinate $w$:
Applying I.B.a($m-1,n$) to the morphism $\Phi|_{w=0} = (\varphi,\psi)|_{w=0}$, we can assume that
\begin{enumerate}
\item[either]there exists a $\Phi$-free variable (not $w$), so that condition (5)(b)) is satisfied; then
we can apply II($m,n-1$) to transform $\Phi$ to a monomial morphism satisfying conditions (4) and (5)(b)
as a consequence of II($m,n-1$)(4);
\smallskip
\item[or]$w$ is the only $\varphi$-free variable; in this case,
 since all previous blowings-up preserve the $w$-axis and $\psi = \psi|_{w=0} + w$, it follows from \eqref{eq:varphi2} that $\psi$ has the form
\begin{equation}\label{eq:varphi3}
\psi(\pmb{u},\pmb{v},w) =  g(\pmb{u},\pmb{v}) +\pmb{u}^{\pmb{\beta}}(\xi + w + \eta\,\pmb{u}^{\pmb{\lambda}} ),
\end{equation}
where $dg \wedge d\vp = 0$ and $\eta$ is a constant; moreover, we can assume that, if $\eta\neq 0$, then
$\bla$ is independent of the $\bal_j$ (otherwise, we can absorb $\eta\,\bu^{\bbe+\bla}$ into $g$).
\end{enumerate}

We have to treat the latter case.
Write $\widetilde{w} := w+\eta\,\pmb{u}^{\pmb{\lambda}}$. Then $\Phi=(\varphi,\psi)$ is pre-monomial in the 
coordinates $(\pmb{u},\pmb{v},\widetilde{w})$; i.e., in these coordinates, 
$\De^{\varphi}$, $\De^{\Phi}$ have monomial bases (given by Lemma \ref{lem:BasisPhiDer}), as follows:
\begin{equation}\label{eq:BasisDer3}
\begin{aligned}
\De^{\varphi} &= \left(Z^1,\ldots, Z^{m-n},\,Z\right), &&\text{ where } Z^i(\widetilde{w}) = 0  \text{ and } 
Z = \frac{\partial}{\partial \widetilde{w}},\\
\De^{\Phi}  &= \left(W^1,\ldots, W^{m-n}\right), &&\text{ where } W^i=Z^i
\end{aligned}
\end{equation}
(note that $Z = \partial/\partial \widetilde{w} = \partial/\partial w$). Consider the auxiliary sheaf of derivations
 $$
 \De:= \De^{\varphi} \cap \Der(-\log(w=0)). 
 $$
 By Lemma \ref{lem:ComputingInvariant} applied to $(F_{\iota_d})$ (i.e., applied with 
 $F=F_{\iota_d}$, $f_d=\pmb{u}^{\pmb{\gamma}} w^d \widetilde{F}_{\iota_d}$, $f_k =a_{\iota_d k}\pmb{u}^{\pmb{\gamma}+\br_k}w^k$ (for
 $k=0,\ldots,d-1$), $f=0$ and $X = w\,\partial/\partial w$), we have $\bu^{\bga}w^d \in \cI^{\De}_{d-1}$. Then, by Lemma \ref{lem:ComputingInvariant} applied to $(F_{\iota(j)})$ and $f=\bu^{\bga}w^d$ (i.e., applied with $F=F_{\iota(j)}$, 
 $f_k =a_{\iota(j) k}\pmb{u}^{\pmb{\gamma}+\br_k}w^k$ (for $k=0,\ldots,d-1$), $f=\bu^{\bga}w^d$ and $X = w\,\partial/\partial w$), we get $\bu^{\bga}\bu^{\br_j}w^j \in \cI^{\De}_{d-1}$, for all $j=0,\ldots,d-1$ for which not all $a_{\iota j}$ are zero. Therefore, by \eqref{eq:BasisIdeal} and Lemma \ref{lem:BasicPropClosure}(1),
\[
\mathcal{I}_{\infty}^{\De} = \mathcal{I}_{d-1}^{\De} = \pmb{u}^{\pmb{\gamma}}\left(w^d, \pmb{u}^{\br_j}w^j\right)
\]
(where we use the preceding condensed expression to denote the terms $\pmb{u}^{\br_{j}}w^{j}$, with $j=0,\ldots,d-2$, for which not all $a_{\iota j}$ are zero), and $\mu_a(\mathcal{I},\De) \leq d-1$.
 
We now proceed to ``partially" principalize $\mathcal{I}_{\infty}^{\De}$ in a way that preserves the property that
the morphism $\Phi$ is pre-monomial with respect to the coordinates $(\pmb{u},\pmb{v},\widetilde{w})$ (more precisely,
after suitable modifications, we will be able to assume that either $\mathcal{I}_{\infty}^{\De}$ is principal monomial or there exists a regular vector-field in $\Delta^{\Phi}$).

Let $\cK$ denote the closure of $\mathcal{I}_{\infty}^{\De}$ by $\De^{\Phi}$. Now, either $\eta =0$ (and $w=\tilde{w}$), or there exists a vector-field $X = W^j$ in $\De^{\Phi}$, for some $j$, such that $X(\pmb{u}^{\pmb{\lambda}}) =  C\pmb{u}^{\pmb{\lambda}}$, where $C$ is a non-zero constant. Recalling that $w = \tilde{w} -  \eta\pmb{u}^{\pmb{\lambda}}$ and applying Lemma \ref{lem:ComputingInvariant} again 
(more precisely, for each $j=1,\ldots,d$, we apply Lemma \ref{lem:ComputingInvariant} with $F= w^j=(\tilde{w} - \eta\pmb{u}^{\pmb{\lambda}})^j$, $f_k= (-1)^k\binom{j}{k}  \eta^k\pmb{u}^{k\pmb{\lambda}}w^{j-k}$ (for $k=0,\ldots,j$), $f= 0$ 
and $X$ as above), we obtain

\begin{equation}
\label{eq:CombinatorialK}
\mathcal{K} = \pmb{u}^{\pmb{\gamma}}\left( \pmb{u}^{\br_j} \widetilde{w}^i(\pmb{u}^{\pmb{\lambda}})^{j-i}  \right) 
= \pmb{u}^{\pmb{\gamma}}\left(\pmb{u}^{\br_j} w^i(\pmb{u}^{\pmb{\lambda}})^{j-i}  \right)
\end{equation}
(where $i=0,\ldots,j$, and either $0 \leq j \leq d-1$ and not all $a_{\iota j}$ are zero, or $j=d$ and $r_d=0$) and we conclude that $\mathcal{K}$ is closed by $\De^{\Phi}$ and $\De$.

We claim that $\mathcal{K}$ can be principalized by blowings-up that are combinatorial simultaneously 
with respect to the coordinate systems $(\pmb{u},w)$ and $(\pmb{u},\widetilde{w})$. 
To see this, we argue by induction on $|\pmb{\lambda}|$, allowing $\br_d \neq 0$ in \eqref{eq:CombinatorialK}.
Consider a first blowing-up with centre $\{w = u_k = 0\}$ (equivalently, $\{\widetilde{w} = u_k=0\}$),
for some $k$ such that $\lambda_k \neq 0$. 
Outside the strict transform of $\{w=0\}$ or of $\{\widetilde{w}=0\}$, the result follows from the fact that the pullback of $w$ (or 
of $\widetilde{w}$), equals $u$ times a unit, which implies that the pull back of $\mathcal{K}$ is generated by monomials in $\pmb{u}$.
Note that this argument is enough to treat the case $|\pmb{\lambda}|=1$ (because, in this case, 
after the first blowing-up, the strict transforms of $\{w=0\}$ and $\{\widetilde{w}=0 \}$ are disjoint). So we can suppose $|\pmb{\lambda}|>1$. Therefore, we only have to consider points in the strict transform 
of $\{w=0\}$, i.e., in the $u_k$-chart. After pulling back and factoring by $u_k$, $\mathcal{K}$ again satisfies \eqref{eq:CombinatorialK} with a new exponent $\widetilde{\pmb{\lambda}} = \pmb{\lambda} - \pmb{e}_k$,
(where $\pmb{e}_k$ means the exponent with $1$ in the $k$th place and $0$ elsewhere), so we can conclude by induction.

Principalizing $\cK$ in this way, since the blowings-up are combinatorial with respect to $(\pmb{u},\widetilde{w})$, 
it follows from Corollary \ref{cor:DerCombinatorialBU} that $\varphi$ is a monomial morphism, 
$\Phi$ is pre-monomial, and that $\De^{\Phi}$ satisfies the required pullback property. We note that, moreover, 
$\De^{\varphi}$ equals the pull-back of $\De$ (rather than of the original $\De^{\varphi}$, since we blow up with centres 
in $\{w=0\}$). Now, one of the following three possibilities holds.

\begin{enumerate}
\item[(i)] There exists a regular vector field in $\De^{\Phi}$, and therefore there exists a $\Phi$-free variable. In this case,
we can finish by applying II($m, n-1$) to $\Phi$, to obtain the conditions (3), (4) and (5)(b).
\smallskip
\item[(ii)] There is no regular vector field in $\De^{\Phi}$, but there is in $\De^{\varphi}$. Then, by 
Lemma \ref{lem:BasicPropClosure}(3), $\mathcal{I}_{\infty}^{\varphi}$ is principal and monomial, and, 
by Lemma \ref{lem:PreservingMu}, $\nu_{a}(\mathcal{I},\varphi) \leq d-1$. Since $\Phi$ is pre-monomial, we can finish 
by induction on $d$. 
\smallskip
\item[(iii)] There is no regular vector field either in $\De^{\Phi}$ or in $\De = \De^{\varphi}$. Then $\mathcal{I}$ is principal and 
monomial, by Lemma \ref{lem:BasicPropClosure}(3), and we can finish by applying II($m,n-1$) to $\Phi$.
\end{enumerate}
\vspace{-\baselineskip}
\end{proof}

\begin{proof}[Proof of I.A.b($m,n$)]
After applying I.A.a($m,n$) to the monomial morphism $\Phi$ and the ideal $\mathcal{I}$, 
we can assume that either $\mathcal{I}$ is principal and monomial at $a$ (so we are done),
or there exists a regular vector field $X$ in $\De^{\Phi}$ at $a$. 

In the latter case,
by Remark \ref{rk:FreeCoordinates}, there exists a $\Phi$-monomial coordinate system $(\pmb{u},\pmb{v},\pmb{w})$ at $a$ with at least one $\Phi$-free coordinate (i.e., where $\pmb{w}$ has at least one
component). Consider the ideal sheaf $\mathcal{K}:= \mathcal{I}|_{\pmb{w}=0}$;
by hypothesis, $\mathcal{K}$ is $\De^{\Phi}$-closed. Moreover, by Lemma \ref{rk:BasisFormalIdeal}, 
$\mathcal{K}$ and $\mathcal{I}_{\infty}^{\Phi}$ induce the same ideal in the ring of formal power
series in $(\pmb{u},\pmb{v},\pmb{w})$ at $a$. Since $\mathcal{K}$ is independent of the variables 
$\pmb{w}$, we can apply I.A.b($m-1,n$) to $\Phi|_{\pmb{w}=0}$ and $\cK$, to reduce to the case
that $\mathcal{K}$ is principal and monomial. By the division and quasianalyticity axioms, we 
conclude that $\mathcal{I}$ is principal and monomial.
\end{proof}

\begin{proof}[Proof of I.A.c($m,n$)]
Consider the closure $\mathcal{I}_{\infty}^{\Phi}$ of $\mathcal{I}$ by $\De^{\Phi}$ (see Definition \ref{def:ChainAndClosure}); by definition, 
$\De^{\Phi}(\mathcal{I}_{\infty}^{\Phi})\subset \mathcal{I}_{\infty}^{\Phi}$. After applying I.A.b($m,n$)
to the monomial morphism $\Phi$ and the ideal $\mathcal{I}_{\infty}^{\Phi}$, we can assume that
$\mathcal{I}_{\infty}^{\Phi}$ is principal and monomial at $a$. By Lemma \ref{lem:NormalFormIdeal}, 
$d:=\nu_{a}(\mathcal{I},\Phi)<\infty$; moreover, $d=0$ if and only if $\mathcal{I}$ is principal and
monomial at $a$. 

We now argue by induction on $d$ (where the induction hypothesis requires that 
$\mathcal{I}_{\infty}^{\Phi}$ be principal and monomial).
By Lemma \ref{lem:NormalFormIdeal}, there is a $\Phi$-monomial coordinate system 
$(\pmb{u},\pmb{v},\pmb{w})=(\pmb{u},\pmb{v},w_1,\widehat{\bw})$ at $a$, in which
$\mathcal{I}$ has a system of generators $F_\iota$, $\iota \in I$, of the form
\[
F_\iota(\pmb{u},\pmb{v},\bw) = \pmb{u}^{\pmb{\gamma}}\left( \widetilde{F}_{\iota}(\pmb{u},\pmb{v},\bw) w_1^d + \sum_{j=0}^{d-1} f_{\iota j}(\pmb{u},\pmb{v},\widehat{\bw})w_1^j  \right),
\]
where $\mathcal{I}_{\infty}^{\Phi} = (\pmb{u}^{\pmb{\gamma}})$, $\widetilde{F}_{\iota_d}(0)\neq 0$ for some $\io = \iota_d\in I$,
and $f_{\iota j}(a) =0$ for all $\iota \in I$, $0\leq j\leq d-1$. As in the proof of I.A.a($m,n$),
after a change of coordinates, we can assume that $f_{\iota_d,d-1}=0$.

Consider the ideal sheaves $\cK_j := (f_{\io j})_{\io \in I}$, $j=0,\ldots,d-1$. After applying
I.A.c.($m-1,n$) to the monomial morphism $\Phi|_{w_1=0}$ 
and the ideals $\cK_j$, we can assume that either 
$\cK_j=(0)$ or $\cK_j$ is principal and monomial, $j=0,\ldots,d-1$; i.e.,
\begin{equation}\label{eq:BasisIdeal2}
F_\iota(\pmb{u},\pmb{v},\bw) = \pmb{u}^{\pmb{\gamma}}\left( \widetilde{F}_\iota(\pmb{u},\pmb{v},\bw) w_1^d + \sum_{j=0}^{d-1} 
a_{\iota j}(\pmb{u},\pmb{v},\widehat{\bw})\pmb{u}^{\br_j}w_1^j  \right), \quad \iota\in I,
\end{equation}
where 
$a_{\io_d,d-1}=0$ and, for each $j=0,\ldots,d-1$, either $a_{\io j} =0$ for all $\io$, or
there exists $\io(j)\in I$ such that $a_{\io(j) j}(0)\neq0$.

We now make a codimension one blowing-up with center $\{w_1=0\}$ (which thus becomes a component
of the divisor). This blowing-up does not change any of the local normal forms (in particular, $\Phi$ remains
monomial), but it does change the derivations $\De^{\Phi}$ tangent to $\Phi$. In particular, $\partial/\partial w_1$ no longer belongs to $\De^{\Phi}$, but $X=w_1\,\partial/\partial w_1$ does.

By Lemma \ref{lem:ComputingInvariant} applied to $(F_{\iota_d})$ (i.e., applied with 
$F=F_{\iota_d}$, $f_d=\pmb{u}^{\pmb{\gamma}} w_1^d \widetilde{F}_{\iota_d}$, $f_k =a_{\iota_d k}\pmb{u}^{\pmb{\gamma}+\br_k}w_1^k$ 
(for $k=0,\ldots,d-1$), $f = 0$ and $X = w_1\,\partial/\partial w_1$), 
we have $\bu^{\bga}w_1^d \in \cI^{\De}_{d-1}$, where $\De = \De^{\Phi}$. Then, by Lemma \ref{lem:ComputingInvariant} applied to $(F_{\iota(j)})$ 
 with $f=\bu^{\bga}w_1^d$ (i.e., applied with $F=F_{\iota(j)}$, $f_k =a_{\iota(j) k}\pmb{u}^{\pmb{\gamma}+\br_k}w^k_1$ 
 (for $k=0,\ldots,d-1$), $f=\bu^{\bga}w^d_1$ and $X = w_1\,\partial/\partial w_1$), 
 we get $\bu^{\bga}\bu^{\br_j}w_{1}^j \in \cI^{\De}_{d-1}$, $j=0,\ldots,d-1$.
 Therefore, by \eqref{eq:BasisIdeal2} and Lemma \ref{lem:BasicPropClosure}(1),
\[
\mathcal{I}_{\infty}^{\De} = \mathcal{I}_{d-1}^{\De} 
= \pmb{u}^{\pmb{\gamma}}\left(w_1^d, \pmb{u}^{\br_j}w_1^j\right),
\]
and $\nu_a(\cI,\Phi) = \mu_a(\mathcal{I},\De) \leq d-1$.

After principalizing $\mathcal{I}_{\infty}^{\Phi}$ by combinatorial blowings-up, we can assume 
(using Corollary \ref{cor:DerCombinatorialBU} and Lemma \ref{lem:PreservingMu}(2)) that 
$\nu_a(\mathcal{I},\Phi)\leq d-1$, $\mathcal{I}_{\infty}^{\Phi}$ is principal monomial and $\Phi$ 
is monomial at $a$. The result follows by induction on $d$.
\end{proof}

\begin{remark}\label{rem:Noeth2}
In the analytic or algebraic cases, the proof of I.A.a($m,n$) above can be modified to prove
I.A.b($m,n$) directly; see Remark \ref{rem:Noeth1}. We leave the full details to the reader, but 
provide the following outline for guidance.

We can assume that there is no $\Phi$-free variable in any $\Phi$-monomial coordinate system; otherwise, we are done by I.A.b($m-1,n$),
because Remark \ref{rem:BasisFormalIdeal} guarantees that the ideal $\mathcal{I}$ has a system of generators independent of a $\Phi$-free variable.
Following the proof of I.A.a($m,n$) above, we can take $\mathcal{K} := \mathcal{I}_{\infty}^{\varphi}$; 
since $\vp$ and $\mathcal{K}$ are both independent of $w$ (by Remark \ref{rem:BasisFormalIdeal}), 
and can apply I.A.b($m-1,n-1$) to reduce to the case that $\mathcal{I}_{\infty}^{\varphi}$ is principal and monomial.

Arguing by induction on $d$ as in the proof of I.A.a($m,n$), we can assume that $w$ is the only $\varphi$-free variable
(otherwise, $\Phi$ has a free variable and we can conclude by I.A.b($m-1,n$), using
Remark \ref{rem:BasisFormalIdeal}).
Again by Remark \ref{rem:BasisFormalIdeal}, 
$\cI$ has a system of generators \eqref{eq:wprep}, where each $F_{\iota}$ is a eigenvector of $\Delta^{\Phi}$,
and we can assume that $\widetilde{F}_{\iota_d}=1$ (using the Weierstrass preparation theorem).
But $\psi$ is now of the form
$\psi(\pmb{u},\pmb{v},w) = \pmb{u}^{\pmb{\beta}}(\xi + w + h(\pmb{u},\pmb{v}))$, where $h(a)=0$
and $h$ is an eigenvector of $\De^\Phi$ and of $\De^\vp$, so that \eqref{eq:varphi2} can be replaced by
the assumption that $\psi(\pmb{u},\pmb{v},w) = g(\bu,\bv) + \pmb{u}^{\pmb{\beta}}(\xi + w )$, where $dg \wedge d\vp = 0$---this
is pre-monomial form.

The ideal sheaves $\cK_j$ are $\Delta^{\varphi}$- and $\Delta^{\Phi}$-closed. After applying
I.A.a($m-1,n-1$) to $\vp$ and the ideals $\cK_j$, we can assume that either 
$\cK_j=(0)$ or we have \eqref{eq:BasisIdeal}. We note that $\Phi$ is pre-monomial because condition (4) of I.A.b and Lemma \ref{lem:PreservingMu}
guarantee that the characterization of pre-monomial in Lemma \ref{lem:NormalFormPartialMonomial}(1) is preserved.

We can now simply introduce $\De$ as in the proof of I.A.a($m,n$), and can principalize $\mathcal{I}_{\infty}^{\De}$ by blowings-up that
are combinatorial with respect to the coordinates $(\pmb{u},w)$; this guarantees that the lifting of $\Phi$ is pre-monomial.
Then one of the following two possibilities holds.
\begin{enumerate}
\item[(i)] There is a regular vector field in $\De^{\Phi}$, and therefore a $\Phi$-free variable. In this case,
we can finish by applying II($m, n-1$) to $\Phi$ (which preserves the existence of a free variable at every point of the fibre)
and then applying I.A.b($m-1,n$) (since $\mathcal{I}$ has a system of generators independent of a $\Phi$-free variable, by
Remark \ref{rem:BasisFormalIdeal}).
\smallskip
\item[(ii)] There is no regular vector field in $\De^{\Phi}$. Then $\mathcal{I}$ is principal and 
monomial, by Lemma \ref{lem:BasicPropClosure}(3), and we can finish by applying II($m,n-1$) to $\Phi$.
\end{enumerate} 
\end{remark}

\subsection{Pre-monomialization: assertions I.B}\label{subsec:I.B}

\begin{proof}[Proof of I.B.a($m,n$)]
Consider the ideal sheaf $\mathcal{J}_1^{\Psi}:=\De^\Phi(\psi)$ (see Definition \ref{def:InvariantPartMon}).
After applying I.A.a($m,n$) to the monomial morphism $\Phi$ and the ideal $\mathcal{J}_1^{\Psi}$, 
we can assume that, either condition (5)(b) is satisfied, or $\mathcal{J}_1^{\Psi}$ is principal and
monomial at $a$, by Lemma \ref{lem:PreservingMu}(1). In the later case, $\nu_a(\Psi) = 1$,
and (5)(a) follows from  \ref{lem:NormalFormPartialMonomial}(1).
\end{proof}

\begin{proof}[Proof of I.B.b($m,n$)]
Consider the log differential closure $\mathcal{J}_{\infty}^{\Psi}$ of the ideal 
$\mathcal{J}_1^{\Psi}=\De^\Phi(\psi)$ (see Definition \ref{def:InvariantPartMon}); by
definition, $\De^{\Phi}(\mathcal{J}_{\infty}^{\Psi})\subset \mathcal{J}_{\infty}^{\Psi}$. After applying 
I.A.b($m,n$) to the monomial morphism $\Phi$ and the ideal $\mathcal{J}_{\infty}^{\Psi}$, 
we can assume that $\mathcal{J}_{\infty}^{\Psi}$ is principal and monomial at $a$. By Lemma \ref{lem:NormalFormPartialMonomial}, $d:= \nu_{a}(\Phi) <\infty$; moreover, $d=1$ if and only if 
the morphism $\Psi= (\Phi,\psi)$ is pre-monomial at $a$.

We argue by induction on $d$ (where the induction hypothesis requires that 
$\mathcal{J}_{\infty}^{\Psi}$ be principal and monomial).
By Lemma \ref{lem:NormalFormPartialMonomial}, there exists a $\Phi$-monomial coordinate system 
$(\pmb{u},\pmb{v},\pmb{w})=(\pmb{u},\pmb{v},w_1,\widehat{\bw})$ at $a$, such that 
\[
\psi(\pmb{u},\pmb{v},\pmb{w}) = g(\pmb{u},\pmb{v}) + \pmb{u}^{\pmb{\gamma}}\left( \widetilde{H}(\pmb{u},\pmb{v},\pmb{w}) w^d_1 + \sum_{j=0}^{d-2} h_{j}(\pmb{u},\pmb{v},\widehat{\pmb{w}})w^j_1  \right),
\]
where $\mathcal{J}_{\infty}^{\Psi} = (\pmb{u}^{\pmb{\gamma}})$, $\widetilde{H}(0)\neq 0$,
each $h_j(0)=0$, and $dg \wedge d\Phi= dg \wedge  d\varphi_1 \wedge \cdots \wedge d\varphi_n =0$. 
If $h_j = 0$ for all $j=0,\ldots, d-2$, then the morphism $\Psi$ is pre-monomial after blowing up with 
center $(w_1=0)$ in the source and $(\psi=0)$ in the target. In the following, therefore, we assume
that not all $h_j$ vanish identically.

After applying I.A.c($m-1,n$) to the monomial morphism $\Phi$ and the ideal sheaves $(h_j)$,
$j=1,\ldots, d-2$, we can assume that
\[
\psi(\pmb{u},\pmb{v},\pmb{w}) = g(\pmb{u},\pmb{v}) + \pmb{u}^{\pmb{\gamma}}\left( \widetilde{H}(\pmb{u},\pmb{v},\pmb{w}) w^d_1 + \sum_{j=1}^{d-2} b_{j}(\pmb{u},\pmb{v},\widehat{\pmb{w}})\pmb{u}^{r_j}w^j_1  + h_{0}(\pmb{u},\pmb{v},\widehat{\pmb{w}})\right),
\]
where either $b_{j} = 0$ or $b_{j}(a)\neq 0$, $j=1,\ldots, d-2$. (Note that we may apply I.A.c($m-1,n$), instead of I.A.c($m,n$), 
because both $\Phi$ and the ideal $(h_j)$ are independent of the variable $w_1$. Strictly speaking, we apply I.A.c($m-1,n$) to 
$\Phi|_{w_1=0}$ and $(h_j)|_{w_1=0}$, and we take the sequence of blowing-up with centres given by the products of those given by
I.A.c($m-1,n$) with the $w_1$-axis). 
Now, after applying I.B.b($m-1,n$) 
to $\Psi|_{w_1=0}= (\Phi,\psi)|_{w_1=0}$, we can assume that
\begin{equation}\label{eq:varphi}
\psi(\pmb{u},\pmb{v},\pmb{w}) = g(\pmb{u},\pmb{v}) + \pmb{u}^{\pmb{\gamma}}\left( \widetilde{H}(\pmb{u},\pmb{v},\pmb{w}) w^d_1+ \sum_{j=1}^{d-2} b_{j}(\pmb{u},\pmb{v},\widehat{\pmb{w}})\pmb{u}^{r_j}w^j_1  +\eta \,\pmb{u}^{\pmb{\lambda}}(\xi + w_2)^{\varepsilon}\right),
\end{equation}
where $\varepsilon,\,\eta \in \{0,1\}$ and, if $\varepsilon=1$, then $\xi\neq 0$ and $\Psi|_{w_1=0}$ is pre-monomial. In particular, either $\eta=0$ or there exists a vector field $Y \in \De^{\Phi}$ such that 
$Y(\pmb{u}^{\pmb{\gamma} +\pmb{\lambda}}(\xi + w_2)^{\varepsilon})
=\pmb{u}^{\pmb{\gamma} +\pmb{\lambda}}$ and $Y(w_1)= 0$. It follows that
\begin{equation}\label{eq:LastMonomial}
\eta\, \pmb{u}^{\pmb{\gamma} +\pmb{\lambda}} + \pmb{u}^{\pmb{\gamma}}O(w_1^d,\pmb{u}^{r_j}w_1^j) \in \mathcal{J}_1^{\Psi}.
\end{equation}

We now consider the auxiliary divisor $D':= D \cup \{w_1=0\}$, and we write $\Psi'$ to denote the morphism 
$\Psi$ with the divisor $D$ replaced by $D'$. (Note that $(\Psi')^{-1}(E)=D \subset D'$, but this will not 
intervene in the proof.) Using Lemma \ref{lem:ComputingInvariant} (applied with $X=w_1\,\partial/\partial w_1$, 
$F=X(\psi)$, $f_{0}=X(w_1^{d}\pmb{u}^{\pmb{\gamma}}\tilde{H})$, $f_j = X(b_{j} \pmb{u}^{\pmb{\gamma}+r_j} w_1^j)$ 
(for $i=1,\ldots,d-2$), and $f= 0$), 
we can conclude from \eqref{eq:varphi}, \eqref{eq:LastMonomial} and Lemma \ref{lem:BasicPropClosure}(1), that
\[
\mathcal{J}_{\infty}^{\Psi'} = \mathcal{J}_{d-1}^{\Psi'} 
= \pmb{u}^{\pmb{\gamma}}\left(w_1^d,\, \pmb{u}^{r_j}w_1^j,\, \eta\,\pmb{u}^{\pmb{\lambda}}\right).
\]
This implies that $\nu_a(\Psi')\leq d-1$. After making blowings-up that are combinatorial (with respect to 
$(\pmb{u},w_1)$), we can assume that $\mathcal{J}_{\infty}^{\Psi'}$ is principal and monomial. Now, one
of the following two possibilities holds.
\begin{enumerate}
\item[(i)] The transform of $\Psi$ locally coincides with that of $\Psi'$. In this case,
 we conclude (using Corollary \ref{cor:DerCombinatorialBU} and Lemma \ref{lem:PreservingMu}(2)) 
 that $\nu_{a}(\Psi)\leq d-1$, $\mathcal{J}_{\infty}^{\Psi}$ is principal and monomial, and $\Phi$ is monomial. So we can finish by induction on $d$.
\smallskip
\item[(ii)] We are at a point in the strict transform of $(w_1=0)$. In this later case, 
$\mathcal{J}_{\infty}^{\Psi'}$ is generated either by
the pull-back of $\pmb{u}^{\pmb{\gamma}+\pmb{\lambda}}$ (if $\eta\neq 0$) or by
the pull-back of $\pmb{u}^{\pmb{\gamma}}+r_jw_1^j$ (where $j$ is minimal). After possibly making a
codimension one blowing-up of $(w_1=0)$ in the source, we can conclude 
that $\Psi$ is pre-monomial, directly from \eqref{eq:varphi}.
\end{enumerate}
\vspace{-\baselineskip}
\end{proof}

\subsection{Monomialization: assertion II}\label{subsec:II}

\begin{proof}[Proof of II($m,n$)] 
We use the notation of Definition \ref{def:PreMonomial}. By Proposition \ref{prop:cleaning} or Proposition
\ref{prop:PreparationPreMonomial2}, we can assume that $\Psi$ has a remainder $g$ and relation $R$ at $a$,
such that $\rho := \rho_a(R) < \infty$ (we can assume that $g(a)=0$). (Recall that, in the real quasianalytic case,
there is, in fact, a relation of the form $R(\bx,\by,\bz,t)=t-h(\bx,\by,\bz)$, so that $\rho=1$; see Remark \ref{rem:cleaning}.)

In general, we will argue by induction on $\rho$. Let
$$
r_i(\bx,\by,\bz) := \frac{1}{i!} \frac{\p^iR}{\p t^i}(\bx,\by,\bz,0),\quad, i=0,1,\ldots,
$$
so that $R$ has a formal expansion $\sum r_i(\bx,\by,\bz)t^i$. Let $\cR$ denote the privileged ideal generated
by the $r_i$. The pulled back ideal $\Phi^*\cR$ is clearly $\De^\Phi$-closed. By applying I.A.b($m,n$) to the
monomial morphism $\Phi$ and the ideal $\Phi^*\cR$, we can reduce to the case that $\Phi^*\cR$ is a principal
monomial ideal, generated by $\Phi^*(r_{i_0})$, for some $i_0$. Then, after applying Lemma \ref{lem:PrincipalIdealTrick}
to $r_{i_0}$, we can also assume that $r_{i_0}$ is a monomial times a unit at $b=\Phi(a)$, and it follows from the
division axiom that $r_{i_o}$ divides all $r_i$. (Lemma \ref{lem:PreMonomial-ContollRelations} guarantees that the
pull-back of $R$ by the blowings-up in the target involved in I.A.b($m,n$) and Lemma \ref{lem:PrincipalIdealTrick}
is still a relation of order $\leq \rho$.) Moreover, since $r_{i_0}$ is a monomial independent of $t$, the quotient of $R$
by $r_{i_0}$ is still a relation; therefore, we can assume that
\begin{equation}\label{eq:PreparedRelation}
R = U(\pmb{x},\by,\bz,t) t^k + \sum_{i=0}^{k-1} a_i(\pmb{x},\pmb{y},\pmb{z})t^i,
\end{equation}
where $U(\pmb{x},\by,\bz,t)$ is a unit and $a_i(0) = 0$, $i=0,\ldots,k-1$. We can also assume that $k\geq \rho$
(otherwise, we are finished, by induction). We consider two cases:

\medskip\noindent
(1) $F$ is the induced divisor $E\times\IK$. In this case, $k=\rho$. (In the real quasianalytic case, with
$R$ as above, $\Psi$ is already a monomial morphism at $a$, after codimension one blowings-up
in the target and source if necessary, according to Remarks \ref{rem:premonom1} and \ref{rem:cleaning}.)
In general, by the implicit function theorem, after
a coordinate change in $t$, we can assume that
$$
R = U(\pmb{x},\by,\bz,t) t^\rho + \sum_{i=0}^{\rho-2} a_i(\pmb{x},\pmb{y},\pmb{z})t^i.
$$

By I.A.b($m,n$) applied to $\Phi$ and the ideal generated by $g$, we can assume also that
$g(\bu,\bv) = \bu^{\bde} V(\bu,\bv)$, where $V$ is a unit. Then, by further combinatorial blowings-up
in the source, we can reduce to the case that either $\bde < \bga$ or $\bga\leq\bde$ (in the notation of 
\eqref{eq:PreMonomial-Eq1}), as in the proof of 
Lemma \ref{lem:PreMonomial-ContollRelations2}. Suppose that  $\bga\leq \bde$. Then, in the second case
of \eqref{eq:PreMonomial-Eq1}, $\bga <\bde$, and $\Psi$ is a monomial morphism.
 In the third case, $\psi = \bu^{\bga}(\eta + \bu^{\bde-\bga} V + w_1)$. In this case, if $\bga < \bde$, or if $\bga = \bde$
 but $\eta + V$ is a unit, then $\Psi$ is a monomial morphism. If $\bga = \bde$ but $\eta + V$ is not a unit, then $\Psi$
 is a monomial morphism after the  codimension one blowing-up in the source with centre $\{\eta + V(\bu,\bv)  + w_1 = 0\}$.
 Such blowing-up has no effect on the log derivations $\De^\Psi$ because every $X\in\De^\Psi$ annihilates 
 $\psi,\, g$ and $\bu^{\bga}$, so that $X(w_1)=0$.
Therefore, we can assume that $\bde  < \bga$.

We can now make a codimension one blowing-up of the target with centre $\{t=0\}$; the effect is to make
$\{t=0\}$ a component of the divisor $F$, and the preceding assumptions
on $g$ guarantee that $\Psi$ is still a morphism. But $\De^\pi$ is now generated by $t\,\p/\p t$.
By Lemma \ref{lem:ComputingInvariant} applied to the ideal $\cI_R$ generated by $R$ (i.e., applied with $F=R$, 
$f_{\rho-1}=t^{\rho}U$, $f_i =a_{i}t^i$ (for $i=0,\ldots,\rho-2$), 
$f= 0$ and $X=t\,\p/\p t$), and Lemma \ref{lem:BasicPropClosure}(1),
\[
I_{R,\infty}^{\pi} = I_{R,\rho-1}^{\pi} = \left(t^{\rho},a_i(\pmb{x},\pmb{y},\pmb{z})t^i \right),
\]
so that $\rho_a(R) = \nu_{(b,0)}(I_R,\pi)\leq \rho - 1$. 

\medskip\noindent
(2) $F$ is the extended divisor $F=(E\times \mathbb{K}) \cup (N\times \{0\})$. In this case, we will reduce 
to case (1) without changing $\rho$: By applying I.A.b($m,n$) to the ideal generated by $g\cdot \prod_{i=0}^{k-1} \Phi^*(a_i)$,
and then applying Lemma \ref{lem:PrincipalIdealTrick} and using Lemma \ref{lem:PreMonomial-ContollRelations} (as in case (1)), 
we can reduce to the case that $g$ and $R$ are of the form \eqref{eq:control2}. By
Lemma \ref{lem:PreMonomial-ContollRelations2}, we have a finite number of commutative diagrams (as in the latter)
such that, at every $\ta \in \s_{\la}^{-1}(a)$, for each $\la$, we have either a relation of order $< \rho$, or 
a relation of order $\rho$, but $F = E\times\IK$ (i.e., we reduce to case (1)). 
\end{proof}

This completes the proof of our main theorems.

\bibliographystyle{amsplain}

\end{document}